\documentclass{amsart}

\usepackage[latin1]{inputenc}
\usepackage{amssymb,amsmath,amsthm}
\usepackage{amsfonts}
\usepackage{ifthen}
\usepackage[neverdecrease,pointedenum]{paralist}

\newcommand{\vect}[1]{\ensuremath{\mathbf{#1}}}
\newcommand{\card}[1]{\ensuremath{\lvert{#1}\rvert}}
\newcommand{\cl}[1]{\ensuremath{\mathcal{#1}}}

\newcommand\fBold[1]{\textbf #1\relax}

\newcommand{\nset}[1]{\ensuremath{[{#1}]}}

\newcommand{\couples}[1][n]{\ensuremath{\binom{#1}{2}}} 
\newcommand{\symm}[1]{\ensuremath{\Sigma_{#1}}} 
\newcommand{\alt}[1]{\ensuremath{A_{#1}}} 

\newcommand{\pos}[2]{\overset{\text{\makebox[0mm][c]{$\overset{#1}{\downarrow}$}}}{\phantom{\makebox[0mm]{f}}{#2}}}

\DeclareMathOperator{\deck}{deck}                
\DeclareMathOperator{\Inv}{Inv}                  
\DeclareMathOperator{\supp}{supp}                
\DeclareMathOperator{\oddsupp}{oddsupp}          
\DeclareMathOperator{\ofo}{ofo}                  
\DeclareMathOperator{\msupp}{ms}                 
\DeclareMathOperator{\set}{set}                  
\DeclareMathOperator{\range}{Im}                 
\DeclareMathOperator{\pr}{pr}                    
\DeclareMathOperator{\id}{id}                    

\theoremstyle{plain}
\newtheorem{theorem}{Theorem}[section]
\newtheorem{proposition}[theorem]{Proposition}
\newtheorem{lemma}[theorem]{Lemma}
\newtheorem{corollary}[theorem]{Corollary}

\theoremstyle{definition}
\newtheorem{definition}[theorem]{Definition}
\newtheorem{fact}[theorem]{Fact}
\newtheorem{example}[theorem]{Example}
\newtheorem{question}[theorem]{Question}

\theoremstyle{remark}
\newtheorem{remark}[theorem]{Remark}
\newtheorem{claim}{Claim}
\numberwithin{claim}{theorem}
\newenvironment{pfclaim}[1][Proof]{\begin{trivlist}
\item[\hskip \labelsep {\textit{{#1}.}}]} {{\footnotesize $\blacksquare$}\end{trivlist}}

\hyphenation{de-cades Ki-sie-le-wicz}

\begin{document}
\title[On the reconstructibility of totally symmetric functions]{On the reconstructibility of totally symmetric functions and of other functions with a unique identification minor}
\author{Erkko Lehtonen}
\address{University of Luxembourg \\
Computer Science and Communications Research Unit \\
6, rue Richard Coudenhove-Kalergi \\
L--1359 Luxembourg \\
Luxembourg}
\email{erkko.lehtonen@uni.lu}
\date{October 11, 2012}
\begin{abstract}
We investigate the problem whether a function of several arguments can be reconstructed from its identification minors. We focus on functions with a unique identification minor, and we establish some positive and negative results on the reconstruction problem. In particular, we show that totally symmetric functions (of sufficiently large arity) are reconstructible and the class of functions weakly determined by the order of first occurrence (of sufficiently large arity) is weakly reconstructible.
\end{abstract}

\maketitle


\section{Introduction}
\label{sec:intro}

Reconstruction problems have received great attention over the past decades. Perhaps the most famous reconstruction problem is the following: Can every graph with at least three vertices be reconstructed, up to isomorphism, from its collection of one-vertex-deleted subgraphs?
It was conjectured by Kelly~\cite{Kelly1942} (see also Ulam's problem book~\cite{Ulam}) that the answer is positive.
The conjecture has been verified by computer for graphs with at most 11 vertices (McKay~\cite{McKay}), and it has been proved for several infinite classes of graphs, such as trees (Kelly~\cite{Kelly1957}), regular graphs, disconnected graphs, and so on. We refer the reader to survey articles, textbooks, and reference books~\cite{Babai,Bondy,BonHem,BonMur,Harary1974,Manvel,NashWilliams} for further details and references.

The reconstruction problem stated above can be varied in several ways. For example, we might consider the collection of subgraphs formed by deleting edges instead of vertices (see Harary~\cite{Harary1964} and Ellingham~\cite{Ellingham}), or we could consider directed graphs (infinite nonreconstructible families have been constructed by Stockmeyer~\cite{Stockmeyer}) or hypergraphs (infinite nonreconstructible families have been constructed by Kocay~\cite{Kocay}).
Reconstruction problems have been formulated also for other kinds of mathematical objects, such as relations (see Fra\"iss\'e~\cite{Fraisse}), posets (see the survey by Rampon~\cite{Rampon}), matrices (see Manvel and Stockmeyer~\cite{ManSto}), matroids (see Brylawski~\cite{Brylawski1974,Brylawski1975}), and integer partitions (see Monks~\cite{Monks}).

In this paper we formulate a reconstruction problem for functions of several arguments. We shall take as the derived objects of a function $f \colon A^n \to B$ its identification minors, i.e., functions obtained from $f$ by identifying a pair of its arguments. The notion of isomorphism is based on the equivalence relation that relates two $n$-ary functions if and only if each one can be obtained from the other by permutation of arguments. The reconstruction problem can thus be stated as follows: Can a function $f \colon A^n \to B$ be reconstructed, up to equivalence, from its identification minors? This study is a first step towards answering this question. We focus on functions with a unique identification minor, and we obtain several results, both positive and negative, about the reconstructibility of such functions.

Identification minors and equivalence of functions are related in an essential way to a quasiordering of functions, the so-called minor relation defined as follows: a function $f \colon A^n \to B$ is a minor of another function $g \colon A^m \to B$, if there exists a map $\sigma \colon \{1, \dots, m\} \to \{1, \dots, n\}$ such that $f(a_1, \dots, a_n) = g(a_{\sigma(1)}, \dots, a_{\sigma(m)})$ for all $(a_1, \dots, a_n) \in A^n$. The reconstruction problem is thus a deep and intriguing question about the structure of the minor ordering of functions.
Minors have been widely studied in the literature, under different names.
Minors are called
``identification minors'' by Ekin, Foldes, Hammer, and Hellerstein~\cite{EFHH},
``$\mathcal{I}$-minors'' (where $\mathcal{I}$ stands for the set containing just the identity function on $A$) by Pippenger~\cite{Pippenger},
``subfunctions'' by Zverovich~\cite{Zverovich},
``functions obtained by simple variable substitution'' by Couceiro and Foldes~\cite{CouFol2005},
``$\mathcal{J}$-subfunctions'' (where $\mathcal{J}$ stands for the clone of projections on $A$) by Lehtonen~\cite{Lehtonen2006},
``$\mathcal{P}_A$-minors'' (where $\mathcal{P}_A$ stands for the clone of projections on $A$) by Lehtonen and Szendrei~\cite{LehSze2009},
and
``simple minors'' by Couceiro and Lehtonen~\cite{CouLeh2009}.

This paper is organised as follows.
In Section~\ref{sec:preliminaries}, we provide basic definitions on functions and identification minors that will be needed in the sequel, and we recall some facts about permutation groups.
In Section~\ref{sec:reconstruction}, we formulate a reconstruction problem for functions and identification minors, and we recall the usual terminology of reconstruction problems in the current setting. We present some examples of reconstructible and recognizable classes of functions, as well as examples of nonreconstructible functions and reconstructible parameters. One of our first results is that functions determined by $\supp$ or $\oddsupp$ (of sufficiently large arity) are reconstructible.
In Section~\ref{sec:unique}, we introduce functions with a unique identification minor; these include functions that are $2$-set-transitive or determined by the order of first occurrence.
In Section~\ref{sec:symmetric}, we establish that totally symmetric functions (of sufficiently large arity) are reconstructible.
The remainder of the paper deals with functions (weakly) determined by the order of first occurrence.
In Section~\ref{sec:prsupp} we show that functions determined by $(\pr, \supp)$ (of sufficiently large arity) are reconstructible. These functions form a subclass of the class of functions weakly determined by the order of first occurrence. In the particular case of functions defined on a two-element set, these classes coincide. Consequently, functions defined on a two-element set that are weakly determined by the order of first occurrence are reconstructible.
In Section~\ref{sec:eqeq}, we investigate conditions under which functions determined by the order of first occurrence that are equivalent are actually equal.
This will find applications in Section~\ref{sec:results}, where we show that the class of functions weakly determined by the order of first occurrence (of sufficiently large arity) is weakly reconstructible.
Occasionally throughout the paper, we present open problems to indicate directions for future work.


\section{Preliminaries}
\label{sec:preliminaries}

\subsection{General}
\label{sec:preliminaries:general}

Throughout this paper, we let $k$, $m$ and $n$ be positive integers, and we let $A$ and $B$ be arbitrary sets with at least two elements. For reasons that will become clear in Remark~\ref{rem:finiteA}, we may assume that these sets are finite, and $k$ usually stands for the cardinality of $A$.
For a positive integer $n$, we denote the set $\{1, \dots, n\}$ by $\nset{n}$. We denote the set of all $2$-element subsets of $\nset{n}$ by $\couples$. We denote tuples by bold-face letters and components of a tuple by the corresponding italic letters, e.g., $\vect{a} = (a_1, \dots, a_n)$. We reserve the symbol $\vect{k}$ to denote the $k$-tuple $(1, 2, \dots, k) \in \nset{k}^k$.

Let $\vect{a} \in A^n$, and let $\sigma \colon \nset{m} \to \nset{n}$. We will write $\vect{a} \sigma$ to denote the $m$-tuple $(a_{\sigma(1)}, \dots, a_{\sigma(m)})$. Note that if $A = \nset{k}$, then every $n$-tuple over $A$ is of the form $\vect{k} \sigma$ for some $\sigma \colon \nset{n} \to \nset{k}$. Since the $n$-tuple $\vect{a}$ can be formally seen as the map $\vect{a} \colon \nset{n} \to A$, $i \mapsto a_i$, the $m$-tuple $\vect{a} \sigma$ is just the composite map $\vect{a} \circ \sigma \colon \nset{m} \to A$. We are going to use this notation in a recurrent way, so, for instance, if $\tau \colon \nset{\ell} \to \nset{m}$, then we write $\vect{a} \sigma \tau$ for the $\ell$-tuple $(a_{\sigma(\tau(1))}, \dots, a_{\sigma(\tau(\ell))})$, or, equivalently, for the composite map $\vect{a} \circ \sigma \circ \tau \colon \nset{\ell} \to A$. It is worth stressing here that we always compose functions and permutations right-to-left, so $\sigma \tau$ or $\sigma \circ \tau$ means ``do $\tau$ first, then do $\sigma$''.

Let
\begin{equation}
\label{eq:Anneq}
A^n_{\neq} := \{(a_1, \dots, a_n) \in A^n : \text{$a_1$, $a_2$, \dots, $a_n$ are pairwise distinct}\}.
\end{equation}
Note that if $n \geq \card{A}$, then $A^n_{\neq} = \emptyset$.

\subsection{Multisets}

Let $\mathbb{N} := \{0, 1, 2, \dots\}$.
A \emph{finite multiset} $S$ on a set $X$ is a map $\mathbf{1}_S \colon X \to \mathbb{N}$, called a \emph{multiplicity function,} such that the set $\{x \in X : \mathbf{1}_S(x) \neq 0\}$ is finite. Then the sum $\sum_{x \in X} \mathbf{1}_S(x)$ is a well-defined natural number, and it is called the \emph{cardinality} of $S$. For each $x \in X$, the number $\mathbf{1}_S(x)$ is called the \emph{multiplicity} of $x$ in $S$. We denote the set of all finite multisets on $X$ by $\mathcal{M}(X)$, and we denote the set of all finite multisets of cardinality $n$ on $X$ by $\mathcal{M}_n(X)$.

We may represent a finite multiset $S$ as a list enclosed in set brackets, where each element $x \in X$ occurs $\mathbf{1}_S(x)$ times, e.g., $\{0, 0, 0, 1, 1, 2\}$. Also, if $(a_i)_{i \in I}$ is a finite indexed family of elements of $X$, then we will write $\{a_i : i \in I\}$ to denote the multiset in which the multiplicity of each $x \in X$ equals $\card{\{i \in I : a_i = x\}}$. While this notation is similar to that used for sets, it will always be clear from the context whether we refer to a set or to a multiset.

Let $S$ and $T$ be finite multisets over $X$. The \emph{multiset sum} $S \uplus T$ and the \emph{difference} $S \setminus T$ of $S$ and $T$ are defined by the multiplicity functions
\begin{gather*}
\mathbf{1}_{S \uplus T}(x) = \mathbf{1}_S(x) + \mathbf{1}_T(x), \\
\mathbf{1}_{S \setminus T}(x) = \max(\mathbf{1}_S(x) - \mathbf{1}_T(x), 0).
\end{gather*}

If $S$ is a multiset on $X$, then we write $\set(S)$ to denote the set $\{x \in X : \mathbf{1}_S(x) \neq 0\}$, called the \emph{underlying set} of $S$.

\subsection{Functions of several arguments and identification minors}

A \emph{function} (\emph{of several arguments}) from $A$ to $B$ is a map $f \colon A^n \to B$ for some positive integer $n$, called the \emph{arity} of $f$. Functions of several arguments from $A$ to $A$ are called \emph{operations} on $A$. Operations on $\{0, 1\}$ are called \emph{Boolean functions.} We denote the set of all $n$-ary functions from $A$ to $B$ by $\cl{F}_{AB}^{(n)}$, and we denote the set of all functions from $A$ to $B$ of any finite arity by $\cl{F}_{AB}$; in other words, $\cl{F}_{AB}^{(n)} = B^{A^n}$ and $\cl{F}_{AB} = \bigcup_{n \geq 1} \cl{F}_{AB}^{(n)}$.
For any $\cl{C} \subseteq \cl{F}_{AB}$, we denote $\cl{C}^{(n)} := \cl{C} \cap \cl{F}_{AB}^{(n)}$; this is called the \emph{$n$-ary part} of $\cl{C}$.

For integers $n$ and $i$ such that $1 \leq i \leq n$, the $i$-th $n$-ary \emph{projection} on $A$ is the operation $\pr_i^{(n)} \colon A^n \to A$, $\vect{a} \mapsto a_i$ for all $\vect{a} \in A^n$. 

Let $f \colon A^n \to B$. For $i \in \nset{n}$, the $i$-th argument of $f$ is \emph{essential,} or $f$ \emph{depends} on the $i$-th argument, if there exist elements $\vect{a}, \vect{b} \in A^n$ such that $a_j = b_j$ for all $j \in \nset{n} \setminus \{i\}$ and $f(\vect{a}) \neq f(\vect{b})$. Arguments that are not essential are \emph{inessential.}

We say that a function $f \colon A^n \to B$ is a \emph{minor} of another function $g \colon A^m \to B$, and we write $f \leq g$, if there exists a map $\sigma \colon \nset{m} \to \nset{n}$ such that $f(\vect{a}) = g(\vect{a} \sigma)$ for all $\vect{a} \in A^m$.
The minor relation $\leq$ is a quasiorder on $\cl{F}_{AB}$, and, as for all quasiorders, it induces an equivalence relation on $\cl{F}_{AB}$ by the following rule: $f \equiv g$ if and only if $f \leq g$ and $g \leq f$. We say that $f$ and $g$ are \emph{equivalent} if $f \equiv g$. Furthermore, $\leq$ induces a partial order on the quotient $\cl{F}_{AB} / {\equiv}$. (Informally speaking, $f$ is a minor of $g$, if $f$ can be obtained from $g$ by permutation of arguments, addition of inessential arguments, deletion of inessential arguments, and identification of arguments. If $f$ and $g$ are equivalent, then each one can be obtained from the other by permutation of arguments, addition of inessential arguments, and deletion of inessential arguments.) We will often distinguish between functions only up to equivalence, i.e., we are dealing with the $\equiv$-classes of functions. We denote the $\equiv$-class of $f$ by $f / {\equiv}$.
Note that equivalent functions have the same number of essential arguments and every nonconstant function is equivalent to a function with no inessential arguments.
Note also in particular that if $f, g \colon A^n \to B$, then $f \equiv g$ if and only if there exists a bijection $\sigma \colon \nset{n} \to \nset{n}$ such that $f(\vect{a}) = g(\vect{a} \sigma)$ for all $\vect{a} \in A^n$.

A set $\cl{C} \subseteq \cl{F}_{AB}$ of functions is \emph{closed under formation of minors} if for all $f, g \in \cl{F}_{AB}$, the conditions $f \leq g$ and $g \in \cl{C}$ together imply $f \in \cl{C}$. All clones (see \cite{DenWis,Lau,Szendrei}) are closed under formation of minors. A characterization of sets of functions closed under formation of minors in terms of a Galois connection between functions and so-called constraints was presented by Pippenger~\cite{Pippenger} for functions with finite domains, and this result was later extended to functions with arbitrary domains by Couceiro and Foldes~\cite{CouFol2005}.

Of particular interest to us are the following minors. Let $n \geq 2$, and let $f \colon A^n \to B$. For each $I \in \couples$, we define the function $f_I \colon A^{n-1} \to B$ by the rule
$f_I(\vect{a}) = f(\vect{a} \delta_I)$ for all $\vect{a} \in A^{n-1}$,
where $\delta_I \colon \nset{n} \to \nset{n - 1}$ is defined as
\begin{equation}
\label{eq:deltaI}
\delta_I(i) =
\begin{cases}
i, & \text{if $i < \max I$,} \\
\min I, & \text{if $i = \max I$,} \\
i - 1, & \text{if $i > \max I$.}
\end{cases}
\end{equation}
In other words, if $I = \{i, j\}$ with $i < j$, then
\[
f_I(a_1, \dots, a_{n-1}) =
f(a_1, \dots, a_{j-1}, a_i, a_j, \dots, a_{n-1}).
\]
Note that $a_i$ occurs twice on the right side of the above equality: both at the $i$-th and at the $j$-th position. We will refer to the function $f_I$ as an \emph{identification minor} of $f$. This name is motivated by the fact that $f_I$ is obtained from $f$ by identifying the arguments indexed by the couple $I$.

\begin{example}
Let $f \colon \mathbb{R}^4 \to \mathbb{R}$ be given by $f(x_1, x_2, x_3, x_4) = x_1^2 x_2 - x_1 x_3 + 2 x_3 x_4$. The identification minors of $f$ are
\begin{align*}
f_{\{1,2\}}(x_1, x_2, x_3) &= x_1^3 - x_1 x_2 + 2 x_2 x_3, &
  f_{\{2,3\}}(x_1, x_2, x_3) &= x_1^2 x_2 - x_1 x_2 + 2 x_2 x_3, \\
f_{\{1,3\}}(x_1, x_2, x_3) &= x_1^2 x_2 - x_1^2 + 2 x_1 x_3, &
  f_{\{2,4\}}(x_1, x_2, x_3) &= x_1^2 x_2 - x_1 x_3 + 2 x_2 x_3, \\
f_{\{1,4\}}(x_1, x_2, x_3) &= x_1^2 x_2 + x_1 x_3, &
  f_{\{3,4\}}(x_1, x_2, x_3) &= x_1^2 x_2 - x_1 x_3 + 2 x_3^2.
\end{align*}
\end{example}

\begin{example}
Let $g \colon \{0, 1\}^3 \to \{0, 1\}$ be given by $g(x_1, x_2, x_3) = x_1 x_2 + x_2 x_3$ (addition and multiplication modulo $2$). The identification minors of $g$ are
\begin{align*}
g_{\{1,2\}}(x_1, x_2) &= x_1 + x_1 x_2, & 
g_{\{1,3\}}(x_1, x_2) &= 0, &
g_{\{2,3\}}(x_1, x_2) &= x_1 x_2 + x_2.
\end{align*}
Note that $g_{\{1,2\}} \equiv g_{\{2,3\}}$.
\end{example}

\begin{example}
Let $n$ be an integer at least $2$, let $A := \{1, \dots n\}$, let $B$ be a set with at least two elements, and let $\alpha$ and $\beta$ be distinct elements of $B$. Define the function $h \colon A^n \to B$ by the rule
\[
h(a_1, \dots, a_n) =
\begin{cases}
\alpha, & \text{if $(a_1, \dots, a_n) = (1, \dots, n)$,} \\
\beta, & \text{otherwise.}
\end{cases}
\]
It is clear that $h$ depends on all of its arguments, and for every $I \in \couples$, the identification minor $h_I \colon A^{n-1} \to B$ is the constant map taking value $\beta$.
\end{example}

\subsection{Permutations and permutation groups}

We will use both the standard one-line and cycle notations to denote permutations. In \emph{one-line notation,} an $n$-tuple $(a_1, \dots, a_n) \in \nset{n}^n$ with pairwise distinct entries denotes the permutation $\sigma$ on $\nset{n}$ satisfying $\sigma(i) = a_i$ for all $i \in \nset{n}$. (Compare this with the discussion on $n$-tuples in Subsection~\ref{sec:preliminaries:general}.) In \emph{cycle notation,} the expression $(a_1 \; \cdots \; a_r)$, where $r \geq 1$ and $a_1, \dots, a_r$ are pairwise distinct elements of $A$, denotes the permutation $\sigma$ on $A$ satisfying $\sigma(a_i) = a_{i+1}$ whenever $1 \leq i \leq r - 1$, $\sigma(a_r) = a_1$, and $\sigma(m) = m$ whenever $m \in A \setminus \{a_1, \dots, a_r\}$; such a permutation is called a \emph{cycle}, and the number $r$ is its \emph{length.} Every permutation is a product of disjoint cycles. For example, consider the permutation $\sigma$ on $\{1, \dots, 7\}$ given by the following table.
\[
\begin{matrix}
i         & 1 & 2 & 3 & 4 & 5 & 6 & 7 \\
\sigma(i) & 5 & 1 & 7 & 4 & 2 & 6 & 3
\end{matrix}
\]
The representation of $\sigma$ in one-line notation is $(5, 1, 7, 4, 2, 6, 3)$ and one possible representation of $\sigma$ in cycle notation is $(1 \; 5 \; 2) (3 \; 7)$. We denote the identity permutation on any underlying set by $\id$.

The symmetric group and the alternating group on $\nset{n}$ are denoted by $\symm{n}$ and $\alt{n}$, respectively. For a subset $S \subseteq \nset{n}$, we denote by $\symm{S}$ the subgroup of $\symm{n}$ comprising those permutations that fix all elements of $\nset{n} \setminus S$. Similarly, we denote by $\alt{S}$ the subgroup of $\alt{n}$ comprising those even permutations that fix all elements of $\nset{n} \setminus S$.

We recall here some well-known facts about permutations and permutation groups (see, e.g., \cite{Lang}), and we will use these facts throughout the paper without explicit mention.
\begin{fact}
\mbox{}
\begin{enumerate}[\rm (i)]
\item If $1 < p < r \leq n$, then
\begin{itemize}
\item $(1 \; 2 \; \cdots \; p) (p \;\; p + 1 \; \cdots \; r) = (1 \; 2 \; \cdots \; r)$,
\item $(1 \; \cdots \; r) (2 \; \cdots \; r)^{-1} = (1 \; 2)$.
\end{itemize}

\item
Examples of generating sets of the symmetric group $\symm{n}$ include
\begin{itemize}
\item $(1 \; 2 \; \cdots \; n)$ and $(1 \; 2)$,
\item $(2 \; 3 \; \cdots \; n)$ and $(1 \; 2)$,
\item $(1 \; 2 \; \cdots \; n)$ and $(2 \; 3 \; \cdots \; n)$,
\item the set of all adjacent transpositions $(i \;\; i + 1)$, $1 \leq i \leq n - 1$,
\item $A_n$ and any odd permutation.
\end{itemize}

\item
Examples of generating sets of the alternating group $\alt{n}$ include
\begin{itemize}
\item $(1 \; 2 \; 3)$ and $(1 \; 2 \; \cdots \; n)$, if $n$ is odd and $n \geq 3$,
\item $(1 \; 2 \; 3)$ and $(2 \; 3 \; \cdots \; n)$, if $n$ is even and $n \geq 4$,
\item the set of all cycles $(i \;\; i + 1 \;\; i + 2)$, where $i$ is odd and $1 \leq i \leq n - 2$, if $n$ is odd and $n \geq 3$.
\end{itemize}
\end{enumerate}
\end{fact}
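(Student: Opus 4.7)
This Fact collects standard identities in $\symm{n}$ and generating sets for $\symm{n}$ and $\alt{n}$. My plan is to verify the two cycle identities in (i) by direct pointwise computation, and then to derive the generation claims in (ii) and (iii) by reducing each proposed set of generators either to the canonical generating set of adjacent transpositions $(i \; i+1)$ for $\symm{n}$, or to that of consecutive $3$-cycles $(i \; i+1 \; i+2)$ for $\alt{n}$.

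For (i), I would apply both sides to an arbitrary $i \in \nset{n}$, consistently using the right-to-left composition convention fixed in Subsection~\ref{sec:preliminaries:general}. For the first identity, the inner factor $(p \; p+1 \; \cdots \; r)$ fixes $\{1,\dots,p-1\}$ and cycles $p \to p+1 \to \cdots \to r \to p$, while the outer factor $(1 \; 2 \; \cdots \; p)$ sends $p \to 1$ and propagates $1 \to 2 \to \cdots \to p$. A four-case split on the location of $i$ (in $\{1,\dots,p-1\}$, equal to $r$, in $\{p+1,\dots,r-1\}$, or equal to $p$) then yields $(1 \; 2 \; \cdots \; r)$, and the second identity is handled analogously.

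For (ii), with $c = (1 \; 2 \; \cdots \; n)$ the conjugate $c^k (1 \; 2) c^{-k} = (k+1 \; k+2)$ yields all adjacent transpositions from $\{c,(1 \; 2)\}$. For $\{(2 \; \cdots \; n),(1 \; 2)\}$, conjugation of $(1 \; 2)$ by powers of $(2 \; \cdots \; n)$ produces every star transposition $(1 \; j)$, another standard generating set. For $\{(1 \; \cdots \; n),(2 \; \cdots \; n)\}$, the second identity of (i) with $r = n$ immediately produces $(1 \; 2)$, reducing to the previous case. The final bullet is immediate from $[\symm{n} : \alt{n}] = 2$. Part (iii) follows an entirely analogous pattern: conjugating $(1 \; 2 \; 3)$ by the proposed $n$-cycle (respectively $(n-1)$-cycle) produces a family of $3$-cycles from which the consecutive $3$-cycles can be extracted through further products and conjugations, with the parity hypotheses on $n$ ensuring that the conjugating cycle itself lies in $\alt{n}$; the last bullet is then a short induction showing that the odd-indexed consecutive $3$-cycles already generate the full family.

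The entire argument is routine; the only point demanding sustained care is the right-to-left composition convention, which must be applied uniformly in every cycle-product calculation, lest a spurious inverse appear somewhere in the reductions above.
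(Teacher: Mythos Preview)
The paper does not prove this Fact at all: it is stated immediately after the sentence ``We recall here some well-known facts about permutations and permutation groups (see, e.g., \cite{Lang}),'' and no argument is given. Your outline is correct and already supplies far more detail than the paper, which simply cites a standard reference.
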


\subsection{Invariance groups of functions}

The symmetric group $\symm{n}$ acts on $A^n$ by the rule $\vect{a}^\sigma = \vect{a} \sigma$ for each $\sigma \in \symm{n}$. A function $f \colon A^n \to B$ is \emph{invariant} under a permutation $\sigma \in \symm{n}$, if for all $\vect{a} \in A^n$ it holds that $f(\vect{a}) = f(\vect{a} \sigma)$. Let $\Inv f$ denote the set of permutations of $\nset{n}$ under which $f$ is invariant. Clearly every function is invariant under the identity permutation. If $f$ is invariant under $\sigma$ and $\sigma'$, then $f$ is also invariant under $\sigma^{-1}$ and $\sigma \sigma'$. Hence $\Inv f$ constitutes a subgroup of $\symm{n}$, and it is called the \emph{invariance group} of $f$. If $f$ is invariant under all permutations, i.e., if $\Inv f = \symm{n}$, then $f$ is called \emph{totally symmetric.}


\section{Reconstructing functions from identification minors}
\label{sec:reconstruction}

\subsection{Reconstruction problem for functions of several arguments}

We recall the usual terminology of reconstruction problems in the setting of functions of several arguments and identification minors.
Assume that $n \geq 2$ and let $f \colon A^n \to B$.

\begin{enumerate}[\rm (i)]
\item The \emph{deck} of $f$, denoted $\deck f$, is the multiset $\{f_I / {\equiv} : I \in \couples\}$ of the equivalence classes of the identification minors of $f$.
Any element of the deck of $f$ is called a \emph{card} of $f$.

\item A function $g \colon A^n \to B$ is a \emph{reconstruction} of $f$, if $\deck f = \deck g$, or, equivalently, if there exists a bijection $\phi \colon \couples \to \couples$ such that $f_I \equiv g_{\phi(I)}$ for all $I \in \couples$.

\item A function is \emph{reconstructible} if it is equivalent to all of its reconstructions, or, equivalently, if it is equivalent to all functions with the same deck.

\item A parameter defined for all functions is \emph{reconstructible} if it has the same value for all the reconstructions of any function.

\item A class $\cl{C} \subseteq \cl{F}_{AB}$ of functions is \emph{recognizable} if all reconstructions of the members of $\mathcal{C}$ are members of $\mathcal{C}$.

\item A class $\cl{C} \subseteq \cl{F}_{AB}$ of functions is \emph{weakly reconstructible} if for every $f \in \cl{C}$, all reconstructions of $f$ that are members of $\cl{C}$ are equivalent to $f$.

\item A class $\cl{C} \subseteq \cl{F}_{AB}$ of functions is \emph{reconstructible} if all members of $\cl{C}$ are reconstructible.
\end{enumerate}

Note that if a class of functions is recognizable and weakly reconstructible, then it is reconstructible.

A reconstruction problem for functions of several arguments can be formulated as follows.

\begin{question}
\label{q:rec1}
Let $A$ and $B$ be sets with at least two elements, and let $n$ be an integer greater than or equal to $2$. Is every function $f \colon A^n \to B$ reconstructible?
\end{question}

\begin{remark}
\label{rem:finiteA}
The answer to Question~\ref{q:rec1} is negative if $n$ is not sufficiently large. Namely, if $n \leq \card{A}$, then the set $A^n_{\neq}$ is nonempty (see Subsection~\ref{sec:preliminaries:general}), and the elements of $A^n_{\neq}$ do not contribute to any identification minor of $f$. Thus, if $f$ and $g$ are $n$-ary functions that coincide on $A^n \setminus A^n_{\neq}$, then $f_I = g_I$ for every $I \in \couples$ but $f$ and $g$ need not be equivalent---consider, for example, any functions $f$ and $g$ that coincide on $A^n \setminus A^n_{\neq}$ such that $f|_{A^n_{\neq}}$ is constant and $g|_{A^n_{\neq}}$ is nonconstant. This also shows that functions with infinite domains are not reconstructible. Furthermore, Example~\ref{ex:A+1} shows that the answer to Question~\ref{q:rec1} is negative if $n = \card{A} + 1$, and Proposition~\ref{prop:FnequivG} shows that the answer is negative if $\card{A} \equiv 0 \pmod{4}$ or $\card{A} \equiv 3 \pmod{4}$ and $n = \card{A} + 2$.
\end{remark}

\begin{remark}
Should we like to answer Question~\ref{q:rec1} in the negative for all $A$ and $B$, for a fixed $n$, it would suffice to find counterexamples among Boolean functions. For, let $A$, $B$, $C$, and $D$ be sets such that $A \subseteq C$ and $B \subseteq D$, and let $b$ be an arbitrary element of $B$. Any function $f \colon A^n \to B$ can be extended into a function $f^\dagger \colon C^n \to D$ as follows:
\[
f^\dagger(\vect{a}) =
\begin{cases}
f(\vect{a}), & \text{if $\vect{a} \in A^n$,} \\
b, & \text{otherwise.}
\end{cases}
\]
It is easy to verify that $(f^\dagger)_I = (f_I)^\dagger$ for every $I \in \binom{n}{2}$. Thus, if $f, g \colon A^n \to B$, $f \not\equiv g$ and $\deck f = \deck g$, then $f^\dagger \not\equiv g^\dagger$ and $\deck f^\dagger = \deck g^\dagger$.
\end{remark}

\subsection{Examples of reconstructible functions: constant functions}

\begin{example}
\label{ex:constant}
It is easy to verify that if $f \colon A^n \to B$ is a constant function and $n > \card{A}$, then $f$ is reconstructible.
For, if $\alpha \in B$ and $f(\vect{a}) = \alpha$ for all $\vect{a} \in A^n$, then for every $I \in \couples$, $f_I(\vect{b}) = \alpha$ for all $\vect{b} \in A^{n-1}$. Assume that $g \colon A^n \to B$ is a reconstruction of $f$. Then there exists a bijection $\sigma \colon \couples \to \couples$ such that $f_I \equiv g_{\sigma(I)}$ for all $I \in \couples$, and for each $I \in \couples$, there exists a permutation $\rho_I \in \symm{n-1}$ such that $f_I(\vect{b}) = g_{\sigma(I)}(\vect{b} \rho_I)$ for all $\vect{b} \in A^{n-1}$. Let $\vect{a} \in A^n$. Since $n > \card{A}$, there exist $\vect{b} \in A^{n-1}$ and $I \in \couples$ such that $\vect{a} = \vect{b} \delta_I$. Then $g(\vect{a}) = g(\vect{b} \delta_I) = g_I(\vect{b}) = f_{\sigma(I)}(\vect{b} \rho_I) = \alpha$. Thus $f = g$, and we conclude that $f$ is reconstructible.
\end{example}

\subsection{Examples of reconstructible functions: functions determined by $\supp$ and $\oddsupp$}

Following the definitions presented by Berman and Kisielewicz~\cite{BerKis}, the maps $\supp \colon \bigcup_{n \geq 1} A^n \to \mathcal{P}(A)$ and $\oddsupp \colon \bigcup_{n \geq 1} A^n \to \mathcal{P}(A)$ are given by the rules
\begin{align*}
\supp(a_1, \dots, a_n) &= \{a_1, \dots, a_n\}, \\
\oddsupp(a_1, \dots, a_n) &= \{a \in A : \text{$\card{\{i \in \nset{n} : a_i = a\}}$ is odd}\}.
\end{align*}
A function $f \colon A^n \to B$ is \emph{determined by $\supp$,} if there exists a map $f^* \colon \mathcal{P}(A) \to B$ such that $f = f^* \circ {\supp}|_{A^n}$. Similarly, a function $f \colon A^n \to B$ is \emph{determined by $\oddsupp$,} if there exists a map $f^* \colon \mathcal{P}(A) \to B$ such that $f = f^* \circ {\oddsupp}|_{A^n}$.

\begin{remark}
Functions determined by $\supp$ or $\oddsupp$ are totally symmetric. Hence, they depend on all of their arguments or on none of them.
\end{remark}

\begin{remark}
For all $\vect{a} \in A^{n-1}$ and for all $I \in \couples$, we have that $\supp(\vect{a}) = \supp(\vect{a} \delta_I)$.
\end{remark}

\begin{remark}
\label{rem:suppoddsupp}
Let $f \colon A^n \to B$. It is easy to verify that if $f = f^* \circ {\supp}|_{A^n}$ for some $f^* \colon \mathcal{P}(A) \to B$, then $f_I = f^* \circ {\supp}|_{A^{n-1}}$ for all $I \in \couples$. Also, if $f = f^* \circ {\oddsupp}|_{A^n}$, then for all $I \in \couples$, the $(\min I)$-th argument of $f_I$ is inessential and $f_I \equiv f^* \circ {\oddsupp}|_{A^{n-2}}$.

For $n \geq 1$, denote
\begin{align*}
\mathcal{P}_{\leq n}(A) &:= \{S \subseteq A : \card{S} \leq n\}, \\
\mathcal{P}'_n(A) &:= \{S \subseteq A : \card{S} \in \{n, n-2, n-4, \dots\}\}.
\end{align*}
The range of ${\supp}|_{A^n}$ equals $\mathcal{P}_{\leq n}(A)$. Thus, only the restriction of $f^*$ to $\mathcal{P}_{\leq n}(A)$ is relevant in the composition $f^* \circ {\supp}|_{A^n}$, and $f^* \circ {\supp}|_{A^n} = g^* \circ {\supp}|_{A^n}$ if and only if $f^*|_{\mathcal{P}_{\leq n}(A)} = g^*|_{\mathcal{P}_{\leq n}(A)}$.
Similarly, the range of ${\oddsupp}|_{A^n}$ equals $\mathcal{P}'_n(A)$. Thus, only the restriction of $f^*$ to $\mathcal{P}'_n(A)$ is relevant in the composition $f^* \circ {\oddsupp}|_{A^n}$, and $f^* \circ {\oddsupp}|_{A^n} = g^* \circ {\oddsupp}|_{A^n}$ if and only if $f^*|_{\mathcal{P}'_n(A)} = g^*|_{\mathcal{P}'_n(A)}$.
\end{remark}

Let us recall a few useful results about totally symmetric functions from the paper by Willard~\cite{Willard}.

\begin{lemma}[{Willard~\cite[Lemma~2.2]{Willard}}]
\label{lem:WillardLem2.2}
Assume that $f \colon A^n \to B$ is totally symmetric and depends on all of its arguments. If $n > 2$ and for some $I \in \couples$, $f_I$ is essentially $(n-1)$-ary and totally symmetric, then $f$ is determined by $\supp$.
\end{lemma}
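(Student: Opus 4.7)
The plan is to extract from the hypothesis a \emph{swap property}: for any $(n-1)$-multiset $M$ on $A$ and any $a, b \in \set(M)$, $f$ takes the same value on the $n$-multisets $M \uplus \{a\}$ and $M \uplus \{b\}$. Once this is established, a short iteration shows that $f$ depends only on the underlying set of its multiset of arguments, i.e., on $\supp$.

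Since $f$ is totally symmetric, I may assume (after composing with a suitable permutation of arguments) that $I = \{1,2\}$, so that $f_I(b_1, \dots, b_{n-1}) = f(b_1, b_1, b_2, \dots, b_{n-1})$. Fixing any $k$ with $2 \leq k \leq n-1$ and applying the total symmetry of $f_I$ with the transposition $(1\ k) \in \symm{n-1}$ gives
\[
f(b_1, b_1, b_2, \dots, b_{n-1}) = f(b_k, b_k, b_2, \dots, b_{k-1}, b_1, b_{k+1}, \dots, b_{n-1}).
\]
By the total symmetry of $f$, both sides depend only on the multiset of their $n$ arguments; the left multiset is $\{b_1, \dots, b_{n-1}\} \uplus \{b_1\}$ and the right one is $\{b_1, \dots, b_{n-1}\} \uplus \{b_k\}$. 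Setting $M := \{b_1, \dots, b_{n-1}\}$, $a := b_1$, and $b := b_k$ then yields the swap property, since as $b_1, \dots, b_{n-1}$ and $k$ vary, the pair $(a, b)$ ranges over all of $\set(M) \times \set(M)$.

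It remains to verify that the swap property forces $f$ to be determined by $\supp$. Given two $n$-multisets $N_1, N_2$ with $\set(N_1) = \set(N_2) = S$, I claim $f(N_1) = f(N_2)$. If $\card{S} = n$, both multisets coincide with $S$ viewed as a multiset of multiplicity $1$, so there is nothing to prove. Otherwise $\card{S} < n$, and by pigeonhole every $n$-multiset with underlying set $S$ contains some element of multiplicity at least $2$. Whenever the current multiset $N$ has such an element $a$, one may write $N = M \uplus \{a\}$ with $\set(M) = S$, and the swap property lets us replace one copy of $a$ in $N$ by one copy of any $b \in S$ without changing the value of $f$. A straightforward iteration (decrement any over-represented multiplicity, increment any under-represented one) transforms $N_1$ into $N_2$ in finitely many such swaps, each preserving $f$. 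Hence $f(N_1) = f(N_2)$, so $f$ is determined by $\supp$.

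The only subtle point is translating the index-permutation invariance of $f_I$ into a multiset identity for $f$, which is precisely where the total symmetry of $f$ is indispensable; the hypothesis $n > 2$ is used only to guarantee that $\symm{n-1}$ contains a nontrivial transposition.
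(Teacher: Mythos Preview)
Your argument is correct. The paper does not give its own proof of this lemma; it merely cites Willard~\cite[Lemma~2.2]{Willard}, so there is no in-paper proof to compare against. Your reduction to the swap identity $f(M\uplus\{a\})=f(M\uplus\{b\})$ for $a,b\in\set(M)$ via the transposition $(1\;k)$ in $\Inv f_I$, followed by the multiset-transport argument, is clean and complete.

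One remark worth making explicit: your proof never invokes the hypothesis that $f_I$ is essentially $(n-1)$-ary (nor, for that matter, that $f$ depends on all of its arguments). This is not a gap---your argument simply establishes a slightly stronger statement than the one quoted. The essential-arity hypotheses in Willard's formulation are presumably there for the role the lemma plays in his paper rather than because the implication fails without them. Also, your ``WLOG $I=\{1,2\}$'' step is justified by the total symmetry of $f$: since $f_I\equiv f_J$ for all $I,J$ when $\Inv f=\symm{n}$ (see Lemma~\ref{lem:invperm}), and total symmetry of a function is preserved under permutation of arguments, the hypothesis on $f_I$ transfers to $f_{\{1,2\}}$.
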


\begin{lemma}[{Willard~\cite[Corollary~2.3]{Willard}}]
\label{lem:WillardCor2.3}
Assume that $n > \max(\card{A}, 3)$ and $f \colon A^n \to B$ depends on all of its arguments. If no identification minor of $f$ depends on all of its arguments, then $f$ is determined by $\oddsupp$.
\end{lemma}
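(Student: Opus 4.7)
The plan is to follow Willard's strategy, proving the lemma in two stages: first establish that $f$ is totally symmetric, then extract the $\oddsupp$-dependence. A key preliminary observation is that the hypothesis $n > \card{A}$ forces every tuple $\vect{a} \in A^n$ to contain a repeated entry (by pigeonhole), so every value $f(\vect{a})$ arises as $f_I(\vect{b})$ for some $I \in \couples$ and $\vect{b} \in A^{n-1}$; thus $f$ is entirely recoverable from the family $\{f_I : I \in \couples\}$, and any identity on every $f_I$ translates into an identity on $f$.

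For each $I = \{i,j\}$ the hypothesis furnishes an inessential position $k(I) \in \nset{n-1}$ of $f_I$. The next step would be to classify the effect on $f$: if $k(I)$ is the position onto which both $i$ and $j$ are collapsed by $\delta_I$, then $f(\vect{a})$ with $a_i = a_j$ does not depend on the common value $a_i$; otherwise, $k(I)$ corresponds to a unique original coordinate of $f$, and $f$ is independent of that coordinate on the stratum of tuples satisfying $a_i = a_j$. Either way one obtains a local ``coordinate-elimination'' identity valid on the stratum of tuples carrying a prescribed repetition.

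The first main step is to promote these local identities to total symmetry $\Inv f = \symm{n}$. Using that every $\vect{a}$ contains a repetition and that $\symm{n}$ is generated by adjacent transpositions, one aims to show $f(\vect{a}) = f(\vect{a}\tau)$ for every $\vect{a}$ and every adjacent transposition $\tau$: locate a repetition in $\vect{a}$, move it to a convenient position using an already-established identity, apply the swap there, and undo the move. The arity hypothesis $n > \max(\card{A},3)$ enters here to guarantee enough free coordinates for these manoeuvres simultaneously. This is where I expect the principal obstacle to lie: bookkeeping the position $k(I)$ as $I$ varies, and threading together the local identities into global $\symm{n}$-invariance, is the combinatorial heart of the argument, and it is exactly the place where the slogan ``no $f_I$ is essentially $(n-1)$-ary'' gets converted into structural rigidity of $f$.

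Once $f$ is totally symmetric, the conclusion follows quickly. For any $\vect{a}$ containing some $c \in A$ of multiplicity at least $2$, total symmetry lets us write $f(\vect{a}) = f_{\{1,2\}}(c, a_3, \dots, a_{n-1})$ with $a_1 = a_2 = c$. By total symmetry of $f$, the minor $f_{\{1,2\}}$ is symmetric in the ``unpaired'' coordinates $2, \dots, n-1$; its unique inessential coordinate must therefore be either all of these coordinates or the ``paired'' coordinate. The former alternative, propagated through every $I$ via the same analysis, would force $f$ itself to lose an essential coordinate, contradicting the hypothesis. Hence the paired coordinate is inessential, meaning the pair $(c,c)$ can be replaced by any other constant pair, or effectively deleted, without changing $f(\vect{a})$. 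Iterating this ``pair removal'' strips off all even-multiplicity portions of the multiset of $\vect{a}$, leaving precisely the entries of $\oddsupp(\vect{a})$; therefore $f(\vect{a})$ depends only on $\oddsupp(\vect{a})$, which is the required conclusion.
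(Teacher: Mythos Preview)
The paper does not supply its own proof of this lemma; it is quoted verbatim from Willard~\cite[Corollary~2.3]{Willard} and used as a black box. So there is no ``paper's proof'' to compare against beyond the citation. Your two-stage outline---first establish total symmetry of $f$, then use the inessential ``paired'' coordinate of $f_{\{1,2\}}$ to strip off matched pairs and reduce to $\oddsupp$---is exactly Willard's strategy.

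Your second stage is essentially correct. Once $f$ is totally symmetric, $f_{\{1,2\}}$ is symmetric in its last $n-2$ coordinates, so an inessential coordinate among those would make all of them inessential; then $f_{\{1,2\}}$ depends only on its first argument, and (using $n>\card{A}$ to find tuples with two different repeated values) one forces $f$ to be constant, contradicting essentiality. Hence the paired coordinate is inessential, and repeated pair-replacement gives the $\oddsupp$ dependence. (Minor typo: $f_{\{1,2\}}$ is $(n-1)$-ary, so the displayed argument list should have $n-1$ entries, not $n-3$.)

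The genuine gap is the one you flag yourself: the promotion of the local ``inessential-coordinate'' data for each $f_I$ to global $\symm{n}$-invariance of $f$. Your sketch (``move a repetition to a convenient position using an already-established identity'') is circular as written---it presupposes some invariance in order to derive invariance. Willard's actual argument here is a careful case analysis on \emph{which} coordinate $k(I)$ is inessential in each $f_I$, showing first that one may take $k(I) = \min I$ for every $I$ (otherwise $f$ already has an inessential argument or one derives symmetry directly), and then combining the identities $f(\ldots,\pos{i}{a},\ldots,\pos{j}{a},\ldots) = f(\ldots,\pos{i}{b},\ldots,\pos{j}{b},\ldots)$ across overlapping pairs $I$ to generate adjacent transpositions. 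This bookkeeping is not difficult but is not a one-liner; if you want a self-contained proof rather than a citation, that step must be written out.
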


The reconstructibility of functions determined by $\supp$ or $\oddsupp$ (of sufficiently large arity) follows almost immediately from the previous two lemmas.

\begin{proposition}
\label{prop:suppreconstructible}
Let $f \colon A^n \to B$, and assume that $n > \card{A}$ and $f$ is determined by $\supp$. Then the function $f$ is reconstructible.
\end{proposition}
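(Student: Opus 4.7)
The plan is to exploit the fact that when $f$ is determined by $\supp$, all of its identification minors are not merely equivalent but literally equal to a single function, and then leverage the pigeonhole principle (via $n > \card{A}$) to recover $f$ pointwise from this single function.

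First I would record the two structural observations from Remark~\ref{rem:suppoddsupp}. Writing $f = f^* \circ \supp|_{A^n}$, we have $f_I = f^* \circ \supp|_{A^{n-1}}$ for every $I \in \couples$, so the entire deck of $f$ consists of $\binom{n}{2}$ copies of the single equivalence class $(f^* \circ \supp|_{A^{n-1}}) / {\equiv}$. Now let $g \colon A^n \to B$ be any reconstruction of $f$. Each $g_I$ is then equivalent to the totally symmetric function $h := f^* \circ \supp|_{A^{n-1}}$, i.e., there exists $\sigma_I \in \symm{n-1}$ with $g_I(\vect{a}) = h(\vect{a}\sigma_I) = f^*(\supp(\vect{a}\sigma_I))$ for all $\vect{a} \in A^{n-1}$. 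Since $\supp(\vect{a}\sigma_I) = \supp(\vect{a})$ (the support of a tuple is invariant under permutation of its entries), we actually obtain the equality $g_I = h$ for every $I \in \couples$.

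Next I would reconstruct $g$ from its cards. Let $\vect{a} \in A^n$ be arbitrary. Because $n > \card{A}$, by the pigeonhole principle there exist indices $i < j$ with $a_i = a_j$; put $I = \{i, j\}$, and choose $\vect{b} \in A^{n-1}$ so that $\vect{b}\delta_I = \vect{a}$ (this is possible: take $\vect{b}$ by deleting the $j$-th entry of $\vect{a}$). Since $\delta_I \colon \nset{n} \to \nset{n-1}$ is surjective, $\supp(\vect{a}) = \supp(\vect{b}\delta_I) = \supp(\vect{b})$. Therefore
\[
g(\vect{a}) = g(\vect{b}\delta_I) = g_I(\vect{b}) = h(\vect{b}) = f^*(\supp(\vect{b})) = f^*(\supp(\vect{a})) = f(\vect{a}).
\]
Since this holds for every $\vect{a} \in A^n$, we conclude that $g = f$; in particular $g \equiv f$, so $f$ is reconstructible.

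There is essentially no obstacle here: the whole proof rests on the two easy observations that (a) all identification minors of a function determined by $\supp$ collapse to one and the same totally symmetric function, and (b) the condition $n > \card{A}$ forces every element of $A^n$ to lie in the image of some $\delta_I$. The only small subtlety is in passing from $g_I \equiv h$ to $g_I = h$, which uses the total symmetry of $h$; but this is immediate because $\supp$ itself is invariant under permutation of coordinates.
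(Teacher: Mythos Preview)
Your proof is correct and follows essentially the same approach as the paper: both arguments observe that every identification minor of $f$ equals the totally symmetric function $f^* \circ {\supp}|_{A^{n-1}}$, upgrade $g_I \equiv h$ to $g_I = h$ via the total symmetry of $h$, and then use $n > \card{A}$ to write any $\vect{a} \in A^n$ as $\vect{b}\delta_I$ and recover $g(\vect{a}) = f(\vect{a})$. The paper additionally splits off the constant case at the outset, but this is not logically necessary, and your unified treatment is slightly cleaner.
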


\begin{proof}
Since functions determined by $\supp$ are totally symmetric, $f$ depends either on all of its arguments or on none of them. If $f$ has no essential arguments, then it is constant and hence it is reconstructible by Example~\ref{ex:constant}. Assume thus that $f$ depends on all of its arguments. Then there exists a map $f^* \colon \mathcal{P}(A) \to B$ such that $f = f^* \circ {\supp}|_{A^n}$ and $f^*|_{\mathcal{P}_{\leq n}(A)}$ is not a constant function. By Remark~\ref{rem:suppoddsupp}, $f_I = f^* \circ {\supp}|_{A^{n-1}}$ for all $I \in \couples$. Since $n > \card{A}$, we have that $\mathcal{P}_{\leq n-1}(A) = \mathcal{P}(A) = \mathcal{P}_{\leq n}(A)$; hence $f_I$ depends on all of its $n - 1$ arguments.
Let $g \colon A^n \to B$ be a reconstruction of $f$. Then $g_I \equiv f^* \circ {\supp}|_{A^{n-1}}$ for all $I \in \couples$. Since $f^* \circ {\supp}|_{A^{n-1}}$ is totally symmetric, we have in fact that $g_I = f^* \circ {\supp}|_{A^{n-1}}$ for all $I \in \couples$.

We claim that $g = f$. In order to prove this, let $\vect{a} \in A^n$. Since $n > \card{A}$, there exist $\vect{b} \in A^{n-1}$ and $I \in \couples$ such that $\vect{a} = \vect{b} \delta_I$. Then
\[
g(\vect{a})
= g(\vect{b} \delta_I)
= g_I(\vect{b})
= f^*(\supp(\vect{b}))
= f^*(\supp(\vect{b} \delta_I))
= f^*(\supp(\vect{a}))
= f(\vect{a}).
\]
Thus $g = f$, and we conclude that $f$ is reconstructible.
\end{proof}

\begin{proposition}
Let $f \colon A^n \to B$, and assume that $n > \max(\card{A}, 3)$ and $f$ is determined by $\oddsupp$. Then the function $f$ is reconstructible.
\end{proposition}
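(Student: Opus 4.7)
The plan is to mirror the proof of Proposition~\ref{prop:suppreconstructible}, using Lemma~\ref{lem:WillardCor2.3} at the decisive step. Since functions determined by $\oddsupp$ are totally symmetric, $f$ is either constant (in which case Example~\ref{ex:constant} applies) or depends on all of its arguments. Assume the latter, so $f = f^* \circ \oddsupp|_{A^n}$ with $f^*|_{\mathcal{P}'_n(A)}$ non-constant. The assumption $n > \card{A}$ gives $\mathcal{P}'_n(A) = \mathcal{P}'_{n-2}(A)$: every $S \subseteq A$ satisfies $\card{S} \leq \card{A} < n$, and if $\card{S} \equiv n \pmod{2}$ then in fact $\card{S} \leq n-2$. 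By Remark~\ref{rem:suppoddsupp}, each $f_I$ has its $(\min I)$-th argument inessential and, after deletion of that argument, equals the totally symmetric function $f^* \circ \oddsupp|_{A^{n-2}}$; this function depends on all $n-2$ of its arguments because $f^*|_{\mathcal{P}'_{n-2}(A)} = f^*|_{\mathcal{P}'_n(A)}$ is non-constant. Thus every card of $f$ has essential arity exactly $n-2$.

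Let $g \colon A^n \to B$ be a reconstruction of $f$. Since essential arity is an equivalence invariant, every $g_I$ has essential arity $n-2$ as well, so no $g_I$ depends on all of its $n-1$ arguments. My key intermediate step will be to show that $g$ itself depends on all of its arguments. Granting this, Lemma~\ref{lem:WillardCor2.3} applies (its hypotheses hold since $n > \max(\card{A}, 3)$) and yields $g = g^* \circ \oddsupp|_{A^n}$ for some $g^* \colon \mathcal{P}(A) \to B$. A second application of Remark~\ref{rem:suppoddsupp} gives $g_I \equiv g^* \circ \oddsupp|_{A^{n-2}}$, while $g_I \equiv f^* \circ \oddsupp|_{A^{n-2}}$ by the deck condition. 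The two totally symmetric $(n-2)$-ary functions $g^* \circ \oddsupp|_{A^{n-2}}$ and $f^* \circ \oddsupp|_{A^{n-2}}$ are thus equivalent, hence equal; so $g^*|_{\mathcal{P}'_{n-2}(A)} = f^*|_{\mathcal{P}'_{n-2}(A)}$, and the identity $\mathcal{P}'_{n-2}(A) = \mathcal{P}'_n(A)$ then gives $g = f$.

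The main obstacle is the claim that $g$ has no inessential argument, which I would prove by contradiction. If $g$ has a single inessential argument, say the $k$-th, pick any $I \in \couples$ with $k \in I$. A direct inspection of $\delta_I$ shows that in this case every argument of $g_I$ is essential: when $k = \max I$, the restriction of $\delta_I$ to $\nset{n} \setminus \{k\}$ is a bijection onto $\nset{n-1}$, so $g_I$ becomes a relabeling of the essentially $(n-1)$-ary restriction of $g$; when $k = \min I$, the argument $b_k$ of $g_I$ still reaches an essential position of $g$ through the other preimage $\max I$ under $\delta_I$. In either situation $g_I$ has essential arity $n-1$, contradicting $n-2$. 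If $g$ has two or more inessential arguments $k_1 < k_2$, then since $n \geq 4$ one can pick $I \in \couples$ disjoint from $\{k_1, k_2\}$; a case distinction on the relative position of $k_1, k_2$ and $\max I$ shows that $k_1$ and $k_2$ translate via $\delta_I$ to two distinct inessential arguments of $g_I$, giving essential arity $\leq n-3$, again a contradiction. Once this routine case analysis is carried out, the rest of the proof is formal.
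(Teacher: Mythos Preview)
Your proposal is correct and follows essentially the same route as the paper: reduce to the case where $f$ depends on all arguments, invoke Willard's Lemma~\ref{lem:WillardCor2.3} to conclude that any reconstruction $g$ is determined by $\oddsupp$, then compare $f^*$ and $g^*$ on $\mathcal{P}'_{n-2}(A) = \mathcal{P}'_n(A)$ via Remark~\ref{rem:suppoddsupp}. The one place you are more careful than the paper is in verifying that $g$ depends on all of its arguments before applying the lemma; the paper invokes Lemma~\ref{lem:WillardCor2.3} directly without this check, whereas your essential-arity argument (one inessential argument forces some $g_I$ to have arity $n-1$, two force some $g_I$ to have arity $\leq n-3$) cleanly fills that gap.
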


\begin{proof}
Since functions determined by $\oddsupp$ are totally symmetric, $f$ depends either on all of its arguments or on none of them. If $f$ has no essential arguments, then it is constant and hence it is reconstructible by Example~\ref{ex:constant}. Assume thus that $f$ depends on all of its arguments.
Then there exists a map $f^* \colon \mathcal{P}(A) \to B$ such that $f = f^* \circ {\oddsupp}|_{A^n}$ and $f^*|_{\mathcal{P}'_n(A)}$ is not a constant function.
By Remark~\ref{rem:suppoddsupp}, for all $I \in \couples$, $f_I$ has an inessential argument and $f_I \equiv f^* \circ {\oddsupp}|_{A^{n-2}}$. Let $g \colon A^n \to B$ be a reconstruction of $f$. Lemma~\ref{lem:WillardCor2.3} implies that $g$ is determined by $\oddsupp$. Thus, $g = g^* \circ {\oddsupp}|_{A^n}$ for some $g \colon \mathcal{P}(A) \to B$. By Remark~\ref{rem:suppoddsupp}, $g_I \equiv g^* \circ {\oddsupp}|_{A^{n-2}}$ for all $I \in \couples$. Then it holds that $f^* \circ {\oddsupp}|_{A^{n-2}} \equiv g^* \circ {\oddsupp}|_{A^{n-2}}$, because $\deck f = \deck g$. Since ${\supp}|_{A^{n-1}}$ is totally symmetric, we have that $f^* \circ {\oddsupp}|_{A^{n-2}} = g^* \circ {\oddsupp}|_{A^{n-2}}$, and Remark~\ref{rem:suppoddsupp} implies that $f^*|_{\mathcal{P}'_{n-2}(A)} = g^*|_{\mathcal{P}'_{n-2}(A)}$. Since $n > \card{A}$, it holds that $\mathcal{P}'_{n-2}(A) = \mathcal{P}'_n(A)$, and it follows that $f = f^*|_{\mathcal{P}'_n(A)} \circ {\oddsupp}|_{A^n} = g^*|_{\mathcal{P}'_n(A)} \circ {\oddsupp}|_{A^n} = g$. We conclude that $f$ is reconstructible.
\end{proof}

We are going to extend these results in Section~\ref{sec:symmetric}, in which we prove that all totally symmetric functions of sufficiently large arity are reconstructible.

\subsection{Examples of nonreconstructible functions}

We present here a scheme for producing functions of arity $\card{A} + 1$ with a predetermined deck of a special form. With a suitable choice of parameters, nonequivalent functions with the same deck will arise.

\begin{definition}
\label{def:fGPphi}
Assume that $n = k + 1$, and let $A$ be a set such that $\card{A} = k$.
Let $g^* \colon \mathcal{P}(A) \to B$ and let $g \colon A^k \to B$, $g = g^* \circ {\supp}|_{A^k}$. Let $G := (g^I)_{I \in \couples}$ be a family of functions $g^I \colon A^k \to B$ satisfying $g^I(\vect{a}) = g(\vect{a})$ whenever $\supp(\vect{a}) \neq A$, and let $P := (\rho_I)_{I \in \couples}$ be a family of permutations from $\symm{k}$. Let $\phi \colon \couples \to \couples$ be a bijection. Define $f_{G,P,\phi} \colon A^n \to B$ by the rule $f_{G,P,\phi}(\vect{b}) = g^{\phi(I)}(\vect{a} \rho_I)$ if $\vect{b} = \vect{a} \delta_I$. This definition is good, because if $\supp(\vect{b}) = A$, then there is a unique $\vect{a} \in A^k$ and a unique $I \in \couples$ such that $\vect{b} = \vect{a} \delta_I$; and if $\supp(\vect{b}) \neq A$, then for every $\vect{a} \in A^k$ and $I \in \couples$ satisfying $\vect{b} = \vect{a} \delta_I$, we have $\supp(\vect{a}) = \supp(\vect{b}) \neq A$ and $g^{\phi(I)}(\vect{a} \rho_I) = g^*(\supp(\vect{a} \rho_I)) = g^*(\supp(\vect{b}))$.
\end{definition}

\begin{lemma}
\label{lem:fGPphi}
For any family $G := (g^I)_{I \in \couples}$ of functions, any family $P := (\rho_I)_{I \in \couples}$ of permutations, and any bijection $\phi \colon \couples \to \couples$ as in Definition~\ref{def:fGPphi}, it holds that $(f_{G,P,\phi})_I \equiv g^{\phi(I)}$ for every $I \in \couples$. Consequently, $\deck f_{G,P,\phi}$ equals the multiset of the equivalence classes $g^I / {\equiv}$ with $I$ ranging in $\couples$.
\end{lemma}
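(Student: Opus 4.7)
The plan is to show directly that $(f_{G,P,\phi})_I(\vect{a}) = g^{\phi(I)}(\vect{a}\rho_I)$ for every $\vect{a} \in A^k$ and every $I \in \couples$; then the equivalence $(f_{G,P,\phi})_I \equiv g^{\phi(I)}$ is witnessed by the permutation $\rho_I$, and the statement about decks is automatic.

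Fix $I \in \couples$ and $\vect{a} \in A^k$. By definition of the identification minor, $(f_{G,P,\phi})_I(\vect{a}) = f_{G,P,\phi}(\vect{a}\delta_I)$. Setting $\vect{b} := \vect{a}\delta_I$, I would split into two cases according to whether $\supp(\vect{a}) = A$ or not, exactly as in the well-definedness argument given in Definition~\ref{def:fGPphi}. If $\supp(\vect{a}) = A$, then $\supp(\vect{b}) = A$ as well, and the pair $(\vect{a}, I)$ is the unique pair with $\vect{b} = \vect{a}\delta_I$ (this is precisely why the definition is unambiguous in this regime), so the defining clause gives $f_{G,P,\phi}(\vect{b}) = g^{\phi(I)}(\vect{a}\rho_I)$ directly.

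If instead $\supp(\vect{a}) \neq A$, then $\supp(\vect{b}) \neq A$. In this regime I would use the two hypotheses of Definition~\ref{def:fGPphi}: the functions $g^J$ all agree with $g$ on tuples whose support is not $A$, and $g = g^* \circ {\supp}|_{A^k}$ is totally symmetric. Since $\supp(\vect{a}\rho_I) = \supp(\vect{a}) \neq A$, we obtain
\[
g^{\phi(I)}(\vect{a}\rho_I) = g(\vect{a}\rho_I) = g^*(\supp(\vect{a}\rho_I)) = g^*(\supp(\vect{a})) = g(\vect{a}),
\]
while on the other hand the argument in Definition~\ref{def:fGPphi} already evaluates $f_{G,P,\phi}(\vect{b})$ to $g^*(\supp(\vect{b})) = g^*(\supp(\vect{a})) = g(\vect{a})$. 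Both sides agree.

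Hence in every case $(f_{G,P,\phi})_I(\vect{a}) = g^{\phi(I)}(\vect{a}\rho_I)$, i.e.\ $(f_{G,P,\phi})_I$ is obtained from $g^{\phi(I)}$ by the permutation of arguments $\rho_I \in \symm{k}$, so $(f_{G,P,\phi})_I \equiv g^{\phi(I)}$. For the consequence, the deck of $f_{G,P,\phi}$ is $\{(f_{G,P,\phi})_I / {\equiv} : I \in \couples\} = \{g^{\phi(I)} / {\equiv} : I \in \couples\}$, and since $\phi$ is a bijection on $\couples$ this multiset coincides with $\{g^I / {\equiv} : I \in \couples\}$. There is no real obstacle here; the only point that needs care is the case $\supp(\vect{a}) \neq A$, where one must check that the two expressions $g^{\phi(I)}(\vect{a}\rho_I)$ and $g(\vect{a})$ match, and this is exactly where the total symmetry of $g$ and the compatibility hypothesis on the family $G$ are used.
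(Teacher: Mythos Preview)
Your proof is correct and follows essentially the same approach as the paper: both compute $(f_{G,P,\phi})_I(\vect{a}) = f_{G,P,\phi}(\vect{a}\delta_I) = g^{\phi(I)}(\vect{a}\rho_I)$ directly from the definition, and deduce the deck statement from the bijectivity of $\phi$. The only difference is that the paper dispenses with your case split, since the well-definedness argument in Definition~\ref{def:fGPphi} already guarantees that the defining formula $f_{G,P,\phi}(\vect{b}) = g^{\phi(I)}(\vect{a}\rho_I)$ holds for \emph{every} representation $\vect{b} = \vect{a}\delta_I$, not just the unique one when $\supp(\vect{b}) = A$; so one may apply it immediately to the given pair $(\vect{a},I)$ without distinguishing cases.
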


\begin{proof}
It follows directly from the definitions that, for all $\vect{a} \in A^k$,
\[
(f_{G,P,\phi})_I(\vect{a}) = f_{G,P,\phi}(\vect{a} \delta_I) = g^{\phi(I)}(\vect{a} \rho_I),
\]
that is, $(f_{G,P,\phi})_I \equiv g^{\phi(I)}$. The claim about $\deck f_{G,P,\phi}$ follows immediately.
\end{proof}

Thus, for a fixed family $G$ of functions and for families $P$ and $P'$ of permutations and for bijections $\phi$ and $\phi'$, the functions $f_{G,P,\phi}$ and $f_{G,P',\phi'}$ are reconstructions of each other but they are not necessarily equivalent. As the following example illustrates, it is indeed possible that $f_{G,P,\phi} \not\equiv f_{G,P',\phi'}$ for a suitable choice of $G$, $P$, $P'$, $\phi$, and $\phi'$. Thus, the answer to Question~\ref{q:rec1} is negative if $n = \card{A} + 1$.

\begin{example}
\label{ex:A+1}
Let $n = k + 1$, and let $A$ and $B$ be sets such that $\card{A} = k$ and $A \subseteq B$. Let $\beta \in B$, and let $h \colon A^k \to B$ be the function
\[
h(\vect{a}) =
\begin{cases}
a_1, & \text{if $\supp(\vect{a}) = A$,} \\
\beta, & \text{otherwise.}
\end{cases}
\]
(Note that, letting $g = g^* \circ {\supp}|_{A^k}$, where $g^* \colon \mathcal{P}(A) \to B$ is the constant map $S \mapsto \beta$ for all $S \in \mathcal{P}(A)$, we have that $h$ satisfies the condition that $h(\vect{a}) = g(\vect{a})$ whenever $\supp(\vect{a}) \neq A$.) For each $I \in \couples$, let $g^I := h$; let $\rho_I$ be the identity permutation if $\min I = 1$, and let $\rho_I$ be the transposition of $1$ and $\min I$ if $\min I \neq 1$; and let $\rho'_I$ be the identity permutation. Let $\phi$ be the identity map on $\couples$. Denote $G := (g^I)_{I \in \couples}$, $P := (\rho_I)_{I \in \couples}$, and $P' := (\rho'_I)_{I \in \couples}$, and let $f := f_{G,P,\phi}$, $g := f_{G,P',\phi}$.
Then we have
\begin{align*}
f(\vect{a}) &=
\begin{cases}
b, & \text{if $\supp(\vect{a}) = A$ and $b$ occurs twice in $\vect{a}$ ($b \in A$),} \\
\alpha, & \text{otherwise,}
\end{cases}
\\
g(\vect{a}) &=
\begin{cases}
a_1, & \text{if $\supp(\vect{a}) = A$,} \\
\alpha, & \text{otherwise.}
\end{cases}
\end{align*}
By Lemma~\ref{lem:fGPphi}, we have that $f_I \equiv h \equiv g_I$ for all $I \in \couples$, and $\deck f = \deck g$.
Furthermore, it is easy to verify that $f \not\equiv g$. To see this, note that if $c$ and $d$ are distinct elements of $A$, then, on the one hand, there exists a tuple $\vect{a} \in A^n$ such that $\supp(\vect{a}) = A$, $\vect{a}$ has two occurrences of $c$ and $g(\vect{a}) = d$, but, on the other hand, for every tuple $\vect{b} \in A^n$ such that $\supp(\vect{b}) = A$ and $\vect{b}$ has two occurrences of $c$, it holds that $f(\vect{b}) = c$.

Note that the function $f$ is totally symmetric, and $g$ is determined by the order of first occurrence (see Section~\ref{subsec:ofo}).
\end{example}

\subsection{Examples of reconstructible parameters}

\begin{example}
Let $f \colon A^n \to B$. The \emph{diagonal} of $f$ is the map $\Delta_f \colon A \to B$ given by $\Delta_f(a) = f(a, a, \dots, a)$ for all $a \in A$. It is easy to verify that the diagonal of every minor of $f$ equals $\Delta_f$. It follows that the diagonal is a reconstructible parameter of functions.
\end{example}

\subsection{Examples of recognizable classes of functions}

Let $\cl{C} \subseteq \cl{F}_{AB}$ be a class of functions, and let $d$ and $r$ be positive integers. Let $S \subseteq (A^r)^d$ and $T \subseteq B^r$. The couple $(S, T)$ is called a \emph{$d$-dimensional nonmembership witness scheme} for $\cl{C}$, if it holds for every $n \geq 1$ and $f \in \cl{F}_{AB}^{(n)}$ that $f \notin \cl{C}$ if and only if there exists elements $a_{ij} \in A$ ($1 \leq i \leq r$, $1 \leq j \leq n$) and a map $\rho \colon \nset{d} \to \nset{n}$ such that $a_{1j} = a_{2j} = \dots = a_{rj}$ for every $j \in \nset{n} \setminus \range \rho$ and
\begin{gather*}
\bigl( (a_{1 \rho(1)}, a_{2 \rho(1)}, \dots, a_{r \rho(1)}), \dots, (a_{1 \rho(d)}, a_{2 \rho(d)}, \dots, a_{r \rho(d)}) \bigr) \in S, \\
\bigl( f(a_{11}, a_{12}, \dots, a_{1n}), \dots, f(a_{r1}, a_{r2}, \dots, a_{rn}) \bigr) \in T.
\end{gather*}

\begin{proposition}
\label{prop:witness}
Assume that $\cl{C} \subseteq \cl{F}_{AB}$ is a class of functions that is closed under formation of minors and there exists a $d$-dimensional nonmembership witness scheme $(S, T)$ for $\cl{C}$. Then the class $\cl{D} := \bigcup_{n \geq d + \card{A} + 1} \cl{C}_{AB}^{(n)}$ is recognizable.
\end{proposition}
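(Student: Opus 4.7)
The plan is to prove the contrapositive: assuming $f \in \cl{D}$ and $g$ is a reconstruction of $f$ with $g \notin \cl{C}$, derive a contradiction. Since equivalence of cards preserves arity, $g$ has arity $n \geq d + \card{A} + 1$, so it suffices to show $g \in \cl{C}$. First I observe that every identification minor $g_I$ lies in $\cl{C}$: by definition of reconstruction, $g_I$ is equivalent to some $f_J$, and $f_J \leq f \in \cl{C}$, so $f_J \in \cl{C}$ by minor-closure; since equivalent functions are mutual minors, $g_I \in \cl{C}$ as well.

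Now suppose for contradiction $g \notin \cl{C}$. The witness scheme supplies elements $a_{ij} \in A$ ($1 \leq i \leq r$, $1 \leq j \leq n$) and a map $\rho \colon \nset{d} \to \nset{n}$ with the stated properties. The crucial combinatorial observation is that $\card{\nset{n} \setminus \range \rho} \geq n - d \geq \card{A} + 1$, so among the ``frozen'' columns $j \in \nset{n} \setminus \range \rho$ (on which $a_{1j} = \dots = a_{rj}$, say with common value $c_j$), the pigeonhole principle forces two indices $p < q$ with $c_p = c_q$. I will then set $I := \{p, q\}$ and construct a witness showing $g_I \notin \cl{C}$, contradicting the previous paragraph.

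The construction is a bookkeeping step: delete column $q$ to define $a'_{ij'} := a_{ij'}$ for $j' < q$ and $a'_{ij'} := a_{i,j'+1}$ for $j' \geq q$, and let $\rho' \colon \nset{d} \to \nset{n-1}$ be given by $\rho'(s) = \rho(s)$ if $\rho(s) < q$ and $\rho'(s) = \rho(s) - 1$ if $\rho(s) > q$ (the case $\rho(s) \in \{p, q\}$ does not occur). Then I verify three things: (a) for $j' \in \nset{n-1} \setminus \range \rho'$ the column $(a'_{1j'}, \dots, a'_{rj'})$ is constant, since it comes from a constant column of the original witness; (b) $a'_{i\rho'(s)} = a_{i\rho(s)}$ for every $s$, so the $S$-membership condition transfers verbatim from the old witness to the new one; and (c) the tuple $(a'_{i1}, \dots, a'_{i,n-1}) \delta_I$ agrees with $(a_{i1}, \dots, a_{in})$ coordinate-wise --- the only nontrivial position is $k = q$, where $\delta_I(q) = p$ gives entry $a'_{ip} = a_{ip}$, which equals $a_{iq}$ precisely by the pigeonhole choice of $p$ and $q$. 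Consequently $g_I(a'_{i1}, \dots, a'_{i,n-1}) = g(a_{i1}, \dots, a_{in})$, so the $T$-membership condition also transfers, yielding a witness for $g_I \notin \cl{C}$.

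The main obstacle is just finding the right columns to ``collapse'' and matching up the two witness schemes. The pigeonhole step is the conceptual heart: it is why the arity threshold $d + \card{A} + 1$ appears, providing enough frozen columns to guarantee a repeated value that can be exhibited as an identification of two arguments. Once $p$ and $q$ are chosen, the passage from a witness for $g$ to a witness for $g_I$ is purely mechanical index manipulation.
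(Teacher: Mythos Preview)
Your proof is correct and follows essentially the same approach as the paper: pigeonhole on the at least $\card{A}+1$ frozen columns outside $\range\rho$ to find two with equal common value, then transfer the witness to the identification minor $g_{\{p,q\}}$ via the obvious reindexing (your $a'_{ij'}$ and $\rho'$ are exactly the paper's $b_{ij}$ and $\tau=\delta_J\circ\rho$). The only cosmetic difference is that the paper first proves the equivalence ``$f\in\cl{C}\iff f_I\in\cl{C}$ for all $I$'' for arbitrary $f$ of arity $\geq d+\card{A}+1$ and then deduces recognizability, whereas you fold the reconstruction $g$ directly into the argument.
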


\begin{proof}
Let $f \colon A^n \to B$ with $n \geq d + \card{A} + 1$. We claim that $f_I \in \cl{C}$ for every $I \in \couples$ if and only if $f \in \cl{C}$. The sufficiency is clear, because $\cl{C}$ is closed under formation of minors. For necessity, assume that $f \notin \cl{C}$. Then there exist a positive integer $r$ and elements $a_{ij} \in A$ ($1 \leq i \leq r$, $1 \leq j \leq n$) such that $a_{1j} = a_{2j} = \dots = a_{rj}$ for every $j \in \nset{n} \setminus \range \rho$ and
\begin{gather*}
\bigl( (a_{1 \rho(1)}, a_{2 \rho(1)}, \dots, a_{r \rho(1)}), \dots, (a_{1 \rho(d)}, a_{2 \rho(d)}, \dots, a_{r \rho(d)}) \bigr) \in S, \\
\bigl( f(a_{11}, a_{12}, \dots, a_{1n}), \dots, f(a_{r1}, a_{r2}, \dots, a_{rn}) \bigr) \in T.
\end{gather*}
Since $n - d \geq \card{A} + 1$, there exist indices $p, q \in \nset{n} \setminus \range \rho$ such that $a_{1p} = a_{1q}$. Set $J := \{p, q\}$, and let $b_{ij} \in A$ ($1 \leq i \leq r$, $1 \leq j \leq n - 1$) be the unique elements such that $a_{ij} = b_{i, \delta_J(j)}$ for all $1 \leq i \leq r$, $1 \leq j \leq n$, and let $\tau = \delta_J \circ \rho$. Then $b_{i,\tau(\ell)} = b_{i,\delta_J(\rho(\ell))} = a_{i,\rho(\ell)}$, and it is easy to verify that
$b_{1j} = b_{2j} = \dots = b_{rj}$ for every $j \in \nset{n - 1} \setminus \range \tau$ and
\begin{gather*}
\bigl( (b_{1 \tau(1)}, b_{2 \tau(1)}, \dots, b_{r \tau(1)}), \dots, (b_{1 \tau(d)}, b_{2 \tau(d)}, \dots, b_{r \tau(d)}) \bigr) \in S, \\
(f_J(b_{11}, b_{12}, \dots, b_{1,n-1}), \dots, f_J(b_{r1}, b_{r2}, \dots, b_{r,n-1})) \in T.
\end{gather*}
Thus, $f_J \notin \cl{C}$, because $(S, T)$ is a nonmembership witness scheme for $\cl{C}$.

This implies that every reconstruction of every member of $\cl{D}$ is again a member of $\cl{D}$, that is, $\cl{D}$ is recognizable.
\end{proof}

\begin{example}
Let $(A; \leq_A)$ and $(B; \leq_B)$ be partially ordered sets. A function $f \colon A^n \to B$ is \emph{order-preserving} if for all $\vect{a}, \vect{b} \in A^n$, the condition $\vect{a} \leq_A \vect{b}$ (i.e., $a_i \leq_A b_i$ for all $i \in \nset{n}$) implies $f(\vect{a}) \leq_B f(\vect{b})$. We claim that $(\leq_A, B^2 \setminus {\leq_B})$ is a $1$-dimensional nonmembership witness scheme for the class of order-preserving functions from $A$ to $B$. Consequently, by Proposition~\ref{prop:witness}, the class of order-preserving functions from $A$ to $B$ of arity at least $\card{A} + 2$ is recognizable.

In order to prove the claim, observe first that if $f \colon A^n \to B$ is order-preserving, then for all $a_{ij}$ ($1 \leq i \leq 2$, $1 \leq j \leq n$) and for every $\rho \colon \nset{1} \to \nset{n}$ such that $a_{1j} = a_{2j}$ for all $j \in \nset{n} \setminus \range \rho$ and $a_{1 \rho(1)} \leq_A a_{2 \rho(2)}$, we have that $(a_{11}, \dots, a_{1n}) \leq_A (a_{2n}, \dots, a_{2n})$; hence $f(a_{11}, \dots, a_{1n}) \leq_B f(a_{21}, \dots, a_{2n})$.

If $f \colon A^n \to B$ is not order-preserving, then there exist tuples $\vect{a}, \vect{b} \in A^n$ such that $\vect{a} \leq_A \vect{b}$ and $f(\vect{a}) \not\leq_B f(\vect{b})$. Consider the sequence
\begin{align*}
\vect{c}_1 &:= (a_1, a_2, a_3, \dots, a_n) = \vect{a}, \\
\vect{c}_2 &:= (b_1, a_2, a_3, \dots, a_n), \\
\vect{c}_3 &:= (b_1, b_2, a_3, \dots, a_n), \\
& \,\,\,\,\, \vdots \\
\vect{c}_n &:= (b_1, \dots, b_{n-1}, a_n), \\
\vect{c}_{n+1} &:= (b_1, \dots, b_{n-1}, b_n) = \vect{b}.
\end{align*}
It holds that $\vect{c}_\ell \leq_A \vect{c}_{\ell + 1}$ for all $\ell \in \nset{n}$. There exists an index $s \in \nset{n}$ such that $f(\vect{c}_s) \not\leq_B f(\vect{c}_{s+1})$ (otherwise we would have $f(\vect{a}) \leq_B f(\vect{b})$ by the transitivity of $\leq_B$, a contradiction). Choosing $a_{ij} := \vect{c}_{s - i + 1}(j)$ ($1 \leq i \leq 2$, $1 \leq j \leq n$) and $\rho \colon \nset{1} \to \nset{n}$, $1 \mapsto s$, the desired conditions for a nonmembership witness are satisfied.
\end{example}


\section{Functions with a unique identification minor}
\label{sec:unique}

\subsection{Unique identification minors}

A function $f \colon A^n \to B$ has a \emph{unique identification minor} if $f_I \equiv f_J$ for all $I, J \in \couples$. The deck of such a function thus consists of a single element with multiplicity equal to the binomial coefficient $\binom{n}{2}$. The class of functions with a unique identification minor is obviously recognizable.  As a first step towards answering the reconstruction problem for functions, we focus on functions with a unique identification minor. In this section we present some wide classes of functions with a unique identification minor, namely the class of $2$-set-transitive functions (which include the totally symmetric functions) and the class of functions (weakly) determined by the order of first occurrence.

The property of a function's having a unique identification minor is closely related to the property of having a unique lower cover in the minor partial order. The former is a more restrictive property than the latter; if a function $f$ has a unique identification minor, then this minor is obviously the unique lower cover $f$. Boolean functions with unique lower covers have been studied by Bouaziz, Couceiro and Pouzet~\cite{BCP}.

\begin{lemma}
Let $f \colon A^n \to B$. Then $f$ has a unique identification minor if and only if for each $(I, J) \in \couples^2$ there exists a bijection $\tau_{IJ} \colon \nset{n - 1} \to \nset{n - 1}$ such that
$f(\vect{a} \delta_I) = f(\vect{a} \tau_{IJ} \delta_J)$ for all $\vect{a} \in A^{n-1}$.
\end{lemma}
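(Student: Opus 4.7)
The plan is pure definition-chasing; there is no substantive mathematical content to overcome, so I would present it as a direct reformulation of the defining condition, carefully tracking the right-to-left composition convention.

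First, I would unpack both sides of the claimed equivalence. By definition, $f$ has a unique identification minor iff $f_I \equiv f_J$ for all $I, J \in \couples$. Since $f_I$ and $f_J$ both have arity $n-1$, the characterization of $\equiv$ for same-arity functions (recorded in Subsection 2.3) says that $f_I \equiv f_J$ iff there exists a bijection $\tau \in \symm{n-1}$ with $f_I(\vect{a}) = f_J(\vect{a} \tau)$ for every $\vect{a} \in A^{n-1}$. I would then expand each side using the definitions $f_I(\vect{a}) = f(\vect{a} \delta_I)$ and $f_J(\vect{b}) = f(\vect{b} \delta_J)$: taking $\vect{b} = \vect{a} \tau$ and using associativity of composition (together with the convention $\vect{a} \sigma = \vect{a} \circ \sigma$), we get $f_J(\vect{a} \tau) = f((\vect{a}\tau) \delta_J) = f(\vect{a} \tau \delta_J)$. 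Thus $f_I \equiv f_J$ via $\tau$ translates precisely to $f(\vect{a} \delta_I) = f(\vect{a} \tau \delta_J)$ for all $\vect{a} \in A^{n-1}$.

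From here both implications are immediate. For the forward direction, given $f_I \equiv f_J$, set $\tau_{IJ}$ to be the bijection provided by the equivalence; the displayed equality is what we need. For the reverse direction, given the family $(\tau_{IJ})$, reading the same chain of equalities backwards yields $f_I(\vect{a}) = f_J(\vect{a} \tau_{IJ})$ for every $\vect{a} \in A^{n-1}$, which is exactly the condition that $f_I \equiv f_J$ via $\tau_{IJ}$; since this holds for every pair $(I, J) \in \couples^2$, the function $f$ has a unique identification minor. No step is difficult — the only point that needs a little care is the type-checking of the composition $\vect{a} \tau_{IJ} \delta_J$, where $\vect{a} \colon \nset{n-1} \to A$, $\tau_{IJ} \colon \nset{n-1} \to \nset{n-1}$, and $\delta_J \colon \nset{n} \to \nset{n-1}$, so that $\vect{a} \tau_{IJ} \delta_J \colon \nset{n} \to A$ is an $n$-tuple to which $f$ is applicable.
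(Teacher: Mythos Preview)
Your proof is correct and follows essentially the same approach as the paper's own proof: both unpack the definition of unique identification minor as $f_I \equiv f_J$ for all $I, J$, translate equivalence of same-arity functions into the existence of a bijection $\tau_{IJ}$ with $f_I(\vect{a}) = f_J(\vect{a}\tau_{IJ})$, and then expand both sides via $f_I(\vect{a}) = f(\vect{a}\delta_I)$ and $f_J(\vect{a}\tau_{IJ}) = f(\vect{a}\tau_{IJ}\delta_J)$. Your version is slightly more explicit about the composition conventions and type-checking, but the argument is identical in substance.
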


\begin{proof}
The condition that $f$ has a unique identification minor is equivalent to the condition that $f_I \equiv f_J$ for all $I, J \in \couples$. This is equivalent to the condition that for each $(I, J) \in \couples^2$ there exists a bijection $\tau_{IJ} \colon \nset{n - 1} \to \nset{n - 1}$ such that
$f_I(\vect{a}) = f_J(\vect{a} \tau_{IJ})$ for all $\vect{a} \in A^{n-1}$. By the definition of identification minor, we have $f_I(\vect{a}) = f(\vect{a} \delta_I)$ and $f_J(\vect{a} \tau_{IJ}) = f(\vect{a} \tau_{IJ} \delta_J)$.
By putting these equivalences together, the claim follows.
\end{proof}

\subsection{$2$-set-transitive functions}

A permutation group $G \leq \symm{n}$ is called
\begin{itemize}
\item \emph{$m$-transitive,} if for all $m$-element subsets $\{a_1, \dots, a_m\}$ and $\{b_1, \dots, b_m\}$ of $\nset{n}$, there exists a permutation $\sigma \in G$ such that $\sigma(a_i) = b_i$ for all $1 \leq i \leq m$;
\item \emph{$m$-set-transitive,} if for all $m$-element subsets $\{a_1, \dots, a_m\}$ and $\{b_1, \dots, b_m\}$ of $\nset{n}$, there exists a permutation $\sigma \in G$ such that $\{\sigma(a_1), \dots, \sigma(a_m)\} = \{b_1, \dots, b_m\}$.
\end{itemize}
A function $f \colon A^n \to B$ is \emph{$m$-transitive} (\emph{$m$-set-transitive}) if the invariance group of $f$ is $m$-transitive ($m$-set-transitive, respectively).
Observe that $m$-transitivity implies $m$-set-transitivity for all $m \geq 1$, and $m$-transitivity implies $m'$-transitivity for all $1 \leq m' \leq m$, and total symmetry implies $m$-transitivity and hence $m$-set-transitivity for all $m \geq 1$.

The symmetric group $\symm{n}$ acts on $\couples$ by the rule $I^\sigma = I \sigma := \{\sigma(i) : i \in I\}$ for all $\sigma \in \symm{n}$.

\begin{lemma}
\label{lem:hatsigma}
Let $\sigma \in \symm{n}$ and $I \in \couples$. Then there exists a permutation $\hat{\sigma} \in \symm{n-1}$ that satisfies $\hat{\sigma} \circ \delta_{I \sigma^{-1}} = \delta_I \circ \sigma$ and $\hat{\sigma}(\min I \sigma^{-1}) = \min I$.
\end{lemma}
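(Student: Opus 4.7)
The plan is to set $J := I\sigma^{-1}$ and construct $\hat{\sigma}$ by showing that the map $\delta_I \circ \sigma \colon \nset{n} \to \nset{n-1}$ factors uniquely through the surjection $\delta_J \colon \nset{n} \to \nset{n-1}$.

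The key observation is that $\delta_J$ identifies exactly the two elements of $J$, collapsing them to $\min J$ and being injective elsewhere. So to produce $\hat{\sigma}$ it is enough to verify that $\delta_I \circ \sigma$ is constant on the fibers of $\delta_J$, i.e., that $(\delta_I \circ \sigma)(p) = (\delta_I \circ \sigma)(q)$ whenever $\{p,q\} = J$. But by the definition $J = I\sigma^{-1}$, if $\{p,q\} = J$ then $\{\sigma(p), \sigma(q)\} = I$, and $\delta_I$ sends both elements of $I$ to $\min I$. Hence $\delta_I \circ \sigma$ takes the same value at $p$ and $q$, so there is a well-defined map $\hat{\sigma} \colon \nset{n-1} \to \nset{n-1}$ with $\hat{\sigma} \circ \delta_J = \delta_I \circ \sigma$. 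Since $\sigma$ is a bijection and $\delta_I$ is surjective, the composite $\delta_I \circ \sigma$ is surjective, which forces $\hat{\sigma}$ to be surjective, hence a bijection of the finite set $\nset{n-1}$.

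Finally I would verify the second condition. Since $\min J \in J$, we have $\delta_J(\min J) = \min J$, so
\[
\hat{\sigma}(\min J) = (\hat{\sigma} \circ \delta_J)(\min J) = (\delta_I \circ \sigma)(\min J) = \delta_I(\sigma(\min J)).
\]
Now $\sigma(\min J) \in \sigma(J) = I$, so $\delta_I(\sigma(\min J)) = \min I$. Recalling $J = I\sigma^{-1}$, this is exactly $\hat{\sigma}(\min I\sigma^{-1}) = \min I$.

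There is no real obstacle here; the lemma is essentially the universal property of the quotient map $\delta_J$. The only point requiring a moment's care is the verification that the pair $\{\sigma^{-1}(i), \sigma^{-1}(j)\}$ is indeed $J$ and that $\delta_J$ collapses precisely this pair, which is immediate from the definition \eqref{eq:deltaI} of $\delta_I$.
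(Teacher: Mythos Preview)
Your proof is correct and takes essentially the same approach as the paper: both factor $\delta_I \circ \sigma$ through the surjection $\delta_{I\sigma^{-1}}$. The only difference is cosmetic---the paper writes down an explicit section $\beta_J$ of $\delta_J$ and defines $\hat{\sigma} := \delta_I \circ \sigma \circ \beta_{I\sigma^{-1}}$, whereas you invoke the universal property of the quotient directly; the content is identical.
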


\begin{proof}
Observe first that for any $J \in \symm{n}$, the restriction of the map $\delta_J$ to the set $\nset{n} \setminus \{\max J\}$ is a bijection. Define the map $\beta_J \colon \nset{n-1} \to \nset{n}$ as $\beta_J(\ell) = ((\delta_J)|_{\nset{n} \setminus \{\max J\}})^{-1}(\ell)$ for all $\ell \in \nset{n-1}$. In other words, $\beta_J$ is obtained from $((\delta_J)|_{\nset{n} \setminus \{\max J\}})^{-1}$ by extending the codomain; both maps are given by the rule $\ell \mapsto \ell$ for $1 \leq \ell < \max J$ and $\ell \mapsto \ell + 1$ for $\max J \leq \ell \leq n - 1$. Then $\beta_J \circ \delta_J \colon \nset{n} \to \nset{n}$ is the map $\ell \mapsto \ell$ for $\ell \neq \max J$ and $\max J \mapsto \min J$.

Let $\sigma \in \symm{n}$ and $I \in \couples$. We have that $(\delta_I \circ \sigma)(\sigma^{-1}(\min I)) = (\delta_I \circ \sigma)(\sigma^{-1}(\max I)) = \min I$. Based on the above observations, it is easy to see that $\hat{\sigma} := \delta_I \circ \sigma \circ \beta_{I \sigma^{-1}}$ is a permutation of $\nset{n-1}$. Furthermore, $\hat{\sigma} \circ \delta_{I \sigma^{-1}} = \delta_I \circ \sigma \circ \beta_{I \sigma^{-1}} \circ \delta_{I \sigma^{-1}} = \delta_I \circ \sigma$.

For the last equality, it holds that
$\hat{\sigma}(\min I \sigma^{-1}) = \delta ( \sigma ( \beta_{I \sigma^{-1}} (\min I \sigma^{-1}))) = \delta_I ( \sigma (\min I \sigma^{-1}))$. Since $\sigma(\min I \sigma^{-1}) \in I$ and $\delta_I$ maps both elements of $I$ to $\min I$, we have that $\delta_I ( \sigma (\min I \sigma^{-1})) = \min I$.
\end{proof}

\begin{lemma}
\label{lem:invperm}
Assume that $f \colon A^n \to B$ is invariant under a permutation $\sigma \in \symm{n}$. Then $f_I \equiv f_{I \sigma}$ for all $I \in \binom{n}{2}$.
\end{lemma}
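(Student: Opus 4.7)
The plan is to exhibit an explicit permutation $\tau \in \symm{n-1}$ that witnesses the equivalence $f_I \equiv f_{I\sigma}$, i.e., a bijection with $f_I(\vect{a}) = f_{I\sigma}(\vect{a}\tau)$ for every $\vect{a} \in A^{n-1}$. Unwinding the definition of the identification minor, this amounts to showing $f(\vect{a} \circ \delta_I) = f(\vect{a} \circ \tau \circ \delta_{I\sigma})$ for all $\vect{a}$. Since $f$ is $\sigma$-invariant, it is also $\sigma^{-1}$-invariant, so $f(\vect{a} \circ \delta_I) = f(\vect{a} \circ \delta_I \circ \sigma^{-1})$. Hence it suffices to find $\tau \in \symm{n-1}$ with the map-level identity
\[
\delta_I \circ \sigma^{-1} = \tau \circ \delta_{I\sigma}.
\]

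The key step is to recognize that exactly such a $\tau$ is produced by Lemma~\ref{lem:hatsigma}, applied with the permutation $\sigma^{-1}$ in place of $\sigma$ and with the same set $I$. Indeed, since $I \cdot (\sigma^{-1})^{-1} = I\sigma$, that lemma supplies a permutation $\widehat{\sigma^{-1}} \in \symm{n-1}$ satisfying $\widehat{\sigma^{-1}} \circ \delta_{I\sigma} = \delta_I \circ \sigma^{-1}$. Setting $\tau := \widehat{\sigma^{-1}}$ gives precisely the identity displayed above.

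With this $\tau$ in hand, the proof is a short chain of equalities:
\[
f_I(\vect{a}) = f(\vect{a} \circ \delta_I) = f(\vect{a} \circ \delta_I \circ \sigma^{-1}) = f(\vect{a} \circ \tau \circ \delta_{I\sigma}) = f_{I\sigma}(\vect{a}\tau),
\]
valid for every $\vect{a} \in A^{n-1}$. Since $\tau$ is a bijection on $\nset{n-1}$, this establishes $f_I \equiv f_{I\sigma}$.

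The only genuinely delicate point is the correct bookkeeping of indices: one must apply Lemma~\ref{lem:hatsigma} to $\sigma^{-1}$ rather than to $\sigma$ (otherwise the lemma produces $\delta_I \circ \sigma$, not $\delta_I \circ \sigma^{-1}$, and the identity one needs does not materialize). This is a minor obstacle, easily navigated by first pinning down the target identity at the level of maps $\nset{n} \to \nset{n-1}$ before invoking the lemma. Everything else is routine substitution and an appeal to the $\sigma^{-1}$-invariance of $f$.
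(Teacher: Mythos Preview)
Your proof is correct and follows essentially the same approach as the paper's. The only cosmetic difference is that the paper applies Lemma~\ref{lem:hatsigma} to $\sigma$ itself (obtaining $f_I \equiv f_{I\sigma^{-1}}$ and then noting that the claim follows by replacing $\sigma$ with $\sigma^{-1}$), whereas you apply the lemma to $\sigma^{-1}$ and invoke $\sigma^{-1}$-invariance of $f$ to reach $f_I \equiv f_{I\sigma}$ directly; these are the same argument with the ``invert'' step placed at a different point.
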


\begin{proof}
Let $\sigma \in \Inv f$ and $I \in \couples$.
Let $\hat{\sigma}$ be the permutation of $\nset{n-1}$ given by Lemma~\ref{lem:hatsigma}. We have that for all $\vect{a} \in A^{n-1}$,
\[
f_I(\vect{a}) =
f(\vect{a} \delta_I) =
f(\vect{a} \delta_I \sigma) =
f(\vect{a} \hat{\sigma} \delta_{I \sigma^{-1}}) =
f_{I \sigma^{-1}}(\vect{a} \hat{\sigma}).
\]
The first and the last equalities hold by the definition of identification minor. The second equality holds because $f$ is invariant under $\sigma$. The third equality holds by Lemma~\ref{lem:hatsigma}. We conclude that $f_I \equiv f_{I \sigma^{-1}}$, whence the claim follows.
\end{proof}

\begin{proposition}
\label{prop:2trans}
If $f \colon A^n \to B$ is $2$-set-transitive, then $f$ has a unique identification minor.
\end{proposition}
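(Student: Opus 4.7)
The plan is to combine the definition of $2$-set-transitivity with Lemma~\ref{lem:invperm} in the most direct way possible. By definition, $f$ has a unique identification minor if $f_I \equiv f_J$ for all $I, J \in \couples$, so it suffices to establish this equivalence for an arbitrary pair.

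First, I would fix arbitrary $I, J \in \couples$. Since $f$ is $2$-set-transitive, its invariance group $\Inv f$ acts transitively on the $2$-element subsets of $\nset{n}$, so there exists a permutation $\sigma \in \Inv f$ such that $\{\sigma(i) : i \in I\} = J$, that is, $I\sigma = J$ under the action of $\symm{n}$ on $\couples$.

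Next, because $\sigma \in \Inv f$, Lemma~\ref{lem:invperm} applies and yields $f_I \equiv f_{I\sigma}$. Substituting $I\sigma = J$ gives $f_I \equiv f_J$, as desired. Since $I$ and $J$ were arbitrary, all identification minors of $f$ are equivalent, so $f$ has a unique identification minor.

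There is no real obstacle here: the argument is a one-line application of Lemma~\ref{lem:invperm} after invoking the definition of $2$-set-transitivity to produce the requisite invariance permutation. The only subtlety worth highlighting is that Lemma~\ref{lem:invperm} is stated for $f_I \equiv f_{I\sigma}$ (not $f_{I\sigma^{-1}}$), so one just needs to apply it to the permutation $\sigma \in \Inv f$ that directly sends $I$ to $J$, which is exactly what $2$-set-transitivity provides.
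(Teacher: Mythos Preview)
Your proof is correct and follows essentially the same approach as the paper: pick $\sigma \in \Inv f$ with $I\sigma = J$ via $2$-set-transitivity, then apply Lemma~\ref{lem:invperm} to conclude $f_I \equiv f_{I\sigma} = f_J$.
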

\begin{proof}
Let $I, J \in \couples$. By the $2$-set-transitivity of $f$, there exists a permutation $\sigma \in \Inv f$ such that $I \sigma = J$. Lemma~\ref{lem:invperm} implies that $f_I \equiv f_J$.
\end{proof}

\subsection{Functions determined by the order of first occurrence}
\label{subsec:ofo}

We will present another wide class of functions with a unique identification minor.
Let $A^\sharp := \bigcup_{n \geq 1} A^n_{\neq}$, where $A^n_{\neq}$ is as defined in Equation~\eqref{eq:Anneq} in Subsection~\ref{sec:preliminaries:general}.
We define the function $\ofo \colon \bigcup_{n \geq 1} A^n \to A^\sharp$ by the rule that
$(a_1, \dots, a_n)$ is mapped to the unique tuple $(b_1, \dots, b_r) \in A^\sharp$ that has the following properties:
\begin{itemize}
\item $\{a_1, \dots, a_n\} = \{b_1, \dots, b_r\}$ (i.e., $\supp(a_1, \dots, a_n) = \supp(b_1, \dots, b_r)$),
\item setting $\alpha_j := \min \{i \in \nset{n} : a_i = b_j\}$ for every $j \in \nset{r}$, it holds that $\alpha_1 < \alpha_ 2 < \dots < \alpha_r$.
\end{itemize}
Informally speaking, $\ofo(\vect{a})$ lists the elements occurring in $\vect{a}$ in the \fBold{order} of their \fBold{first} \fBold{occurrence} (hence the acronym $\ofo$). In other words, $\ofo(\vect{a})$ is obtained from $\vect{a}$ by removing all repetitions of elements, retaining only the first occurrence of each element in $\vect{a}$.
A function $f \colon A^n \to B$ is \emph{determined by the order of first occurrence,} if there exists a map $f^* \colon A^\sharp \to B$ such that $f = f^* \circ {\ofo}|_{A^n}$. A function $f \colon A^n \to B$ is \emph{weakly determined by the order of first occurrence,} if there exist a map $f^* \colon A^\sharp \to B$ and a permutation $\sigma \in \symm{n}$ such that $f(\vect{a}) = f^*(\ofo(\vect{a} \sigma))$ for all $\vect{a} \in A^n$, i.e., if $f$ is equivalent to an $n$-ary function determined by the order of first occurrence.

\begin{remark}
The function $\ofo$ is idempotent, i.e., $\ofo(\ofo(\vect{a})) = \ofo(\vect{a})$ for every tuple $\vect{a}$.
\end{remark}

\begin{remark}
For every $\vect{a} \in A^{n-1}$ and for every $I \in \couples$, it holds that $\ofo(\vect{a}) = \ofo(\vect{a} \delta_I)$.
\end{remark}

\begin{remark}
\label{rem:ofolambda}
If $A = \nset{k}$ and $\ofo(\vect{k} \sigma) = \vect{k} \tau$ for a map $\sigma \colon \nset{n} \to \nset{k}$ and an injective map $\tau \colon \nset{\ell} \to \nset{k}$, then $\ofo(\vect{k} \lambda \sigma) = \vect{k} \lambda \tau$ for every permutation $\lambda \in \symm{k}$.

More generally, if $\ofo(\vect{k} \sigma) = \ofo(\vect{k} \tau)$ for some maps $\sigma \colon \nset{n} \to \nset{k}$ and $\tau \colon \nset{\ell} \to \nset{k}$, then $\ofo(\vect{k} \lambda \sigma) = \ofo(\vect{k} \lambda \tau)$ for every map $\lambda \colon \nset{k} \to \nset{k}$.
\end{remark}

\begin{remark}
\label{rem:restrord}
The range of ${\ofo}|_{A^n}$ equals the set
\[
A^\sharp_n := A^\sharp \cap \bigcup_{i = 1}^n A^i.
\]
Thus, only the restriction of $f^*$ to $A^\sharp_n$ is relevant in the composition $f^* \circ {\ofo}|_{A^n}$, and $f^* \circ {\ofo}|_{A^n} = g^* \circ {\ofo}|_{A^n}$ if and only if $f^*|_{A^\sharp_n} = g^*|_{A^\sharp_n}$. If $\card{A} \leq n$, then obviously $A^\sharp_n = A^\sharp$.
\end{remark}

\begin{remark}
A function $f \colon \{0, 1\}^n \to B$ is determined by the order of first occurrence if and only if there exist unary functions $\phi, \gamma \colon \{0, 1\} \to B$ such that
\[
f(a_1, \dots, a_n) =
\begin{cases}
\phi(a_1), & \text{if $a_1 = \dots = a_n$,} \\
\gamma(a_1), & \text{otherwise.}
\end{cases}
\]
\end{remark}

The following lemma shows that the intersection of the class of $2$-set-transitive functions and the class of functions that are determined by the order of first occurrence is precisely the class of functions determined by $\supp$. It is also easy to see that none of the two classes is included in the other.

Before stating the result, let us introduce a notational device that will be used many times in the sequel. We write expressions such as
\[
(\dots, \pos{i}{a}, \dots, \pos{j}{b}, \dots)
\qquad \text{or} \qquad
(a_1, \dots, \pos{i}{a}, \dots, \pos{j}{b}, \dots, a_n)
\]
to denote an $n$-tuple whose $i$-th component is $a$ and the $j$-th component is $b$. The remaining components are irrelevant to the argument at hand and they are clear from the context. The indices $i$ and $j$ are always distinct and they may be equal to $1$ or $n$, but it does not necessarily hold that $i < j$; however, if it is known that $i < j$, then we usually write the $i$-th component to the left of the $j$-th one, especially in arguments involving functions determined by the order of first occurrence. Also, whenever possible, we write components indexed by $i$ and $i + 1$ next to each other, and we write components indexed by $1$ or $n$ at the beginning and at the end of the tuple, respectively, as in the following:
\[
(\dots, \pos{i}{a}, \pos{\; i+1}{b}, \dots, \pos{\ell}{c}, \dots, \pos{n}{d}).
\]

\begin{lemma}
\label{lem:suppord}
Assume that $n > \card{A} + 1$, and let $f \colon A^n \to B$. Then the following conditions are equivalent:
\begin{enumerate}[\rm (i)]
\item\label{lem:suppord:ts}
$f$ is totally symmetric and determined by the order of first occurrence.

\item\label{lem:suppord:2st}
$f$ is $2$-set-transitive and determined by the order of first occurrence.

\item\label{lem:suppord:piIJ}
$f$ is determined by the order of first occurrence and for all $I, J \in \couples$, there exists a bijection $\pi_{IJ} \colon \nset{n-1} \to \nset{n-1}$ such that $\pi_{IJ}(\min J) = \min I$ and
$f(\vect{a} \delta_I) = f(\vect{a} \pi_{IJ} \delta_J)$ for all $\vect{a} \in A^{n-1}$.

\item\label{lem:suppord:supp}
$f$ is determined by $\supp$.
\end{enumerate}
\end{lemma}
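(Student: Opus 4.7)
My plan is to establish the equivalence through the cycle (iv) $\Rightarrow$ (i) $\Rightarrow$ (ii) $\Rightarrow$ (iii) $\Rightarrow$ (iv). The first three implications are essentially bookkeeping. If $f = g \circ {\supp}|_{A^n}$, then $f$ is totally symmetric because $\supp$ is permutation-invariant, and defining $f^*(\vect{b}) := g(\set(\vect{b}))$ on $A^\sharp$ gives $f = f^* \circ {\ofo}|_{A^n}$ via $\supp(\vect{a}) = \set(\ofo(\vect{a}))$; thus (iv) implies (i). Total symmetry yields $\Inv f = \symm{n}$, which is trivially $2$-set-transitive, giving (i) $\Rightarrow$ (ii). For (ii) $\Rightarrow$ (iii), given $I, J \in \couples$ I would invoke $2$-set-transitivity to choose $\sigma \in \Inv f$ with $I \sigma^{-1} = J$, apply Lemma~\ref{lem:hatsigma} to obtain $\hat{\sigma} \in \symm{n-1}$ with $\hat{\sigma} \circ \delta_J = \delta_I \circ \sigma$ and $\hat{\sigma}(\min J) = \min I$, and set $\pi_{IJ} := \hat{\sigma}$. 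The chain of equalities $f(\vect{a}\delta_I) = f(\vect{a}\delta_I \sigma) = f(\vect{a}\hat{\sigma}\delta_J) = f(\vect{a}\pi_{IJ}\delta_J)$ (the first using $\sigma \in \Inv f$, the second using Lemma~\ref{lem:hatsigma}) then verifies condition (iii).

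The substantive implication is (iii) $\Rightarrow$ (iv). Writing $f = f^* \circ {\ofo}|_{A^n}$ and using $\ofo(\vect{a}\delta_I) = \ofo(\vect{a})$ for any $\vect{a} \in A^{n-1}$ and any $I$, the equation of (iii) rearranges to $f^*(\ofo(\vect{a})) = f^*(\ofo(\vect{a}\pi_{IJ}))$ for every $\vect{a} \in A^{n-1}$. Define
\[
H := \bigl\{\pi \in \symm{n-1} : f^*(\ofo(\vect{a})) = f^*(\ofo(\vect{a}\pi)) \text{ for all } \vect{a} \in A^{n-1}\bigr\},
\]
a subgroup of $\symm{n-1}$; the reformulated (iii) says $H$ contains a permutation sending $j \mapsto i$ for every $(i,j) \in \nset{n-1}^2$, so in particular $H$ acts transitively on $\nset{n-1}$. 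Since $n - 1 > \card{A}$, every tuple in $A^\sharp$ has length at most $n-1$, so $A^\sharp_n = A^\sharp_{n-1} = A^\sharp$. Thus establishing (iv) reduces to showing that $f^*(\vect{b})$ depends only on $\set(\vect{b})$ for every $\vect{b} = (b_1, \ldots, b_r) \in A^\sharp$; and as adjacent transpositions generate $\symm{r}$, it suffices to prove $f^*(b_1, \ldots, b_s, b_{s+1}, \ldots, b_r) = f^*(b_1, \ldots, b_{s+1}, b_s, \ldots, b_r)$ for every $s \in \nset{r-1}$.

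For this I plan to apply (iii) with $I = \{s, s+1\}$ and $J = \{s+1, s+2\}$ (legitimate as $s + 2 \leq r+1 \leq k + 1 < n$), obtaining $\pi \in H$ with $\pi(s+1) = s$ and hence $\pi(s) \neq s$. I then aim to construct an explicit $\vect{a} \in A^{n-1}$ with $\ofo(\vect{a}) = (b_1, \ldots, b_r)$ and $\ofo(\vect{a}\pi) = (b_1, \ldots, b_{s-1}, b_{s+1}, b_s, b_{s+2}, \ldots, b_r)$; the equation $f^*(\ofo(\vect{a})) = f^*(\ofo(\vect{a}\pi))$ then yields the desired swap. The construction places each $b_v$ at the position $\pi(\ell_v)$, where $\ell_v$ is the intended position of the first occurrence of $b_v$ in $\vect{a}\pi$ (so that $(\vect{a}\pi)_{\ell_v} = a_{\pi(\ell_v)} = b_v$), and fills the remaining $n - 1 - r$ slots (available thanks to $n - 1 > k \geq r$) with a benign filler value that preserves the first-occurrence order of $\vect{a}$.

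The main obstacle will be the case analysis forced by $\pi$. Condition (iii) pins down only a single value, $\pi(s+1) = s$, leaving $\pi(1), \ldots, \pi(s), \pi(s+2), \ldots, \pi(n-1)$ out of our control, so the construction of $\vect{a}$ must be adapted to whatever $\pi$ turns out to be; in particular, the placements of $b_v$ for $v < s$ and $v > s+1$ may require splitting into subcases according to whether $\pi(s) \gtrless s$ and how $\pi$ permutes $\nset{s-1}$ and $\{s+2, \ldots, n-1\}$, with filler positions deployed so that both first-occurrence orderings come out right. The key structural feature that makes the argument go through is the abundance of slots $n - 1 > k \geq r$, which provides enough slack to accommodate any admissible $\pi$.
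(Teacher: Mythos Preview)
Your reductions (iv) $\Rightarrow$ (i) $\Rightarrow$ (ii) $\Rightarrow$ (iii) are correct and essentially match the paper. The gap is in (iii) $\Rightarrow$ (iv): the tuple $\vect{a}$ you hope to build need not exist. Take $n = 5$, $k = r = 3$, $s = 2$; your choice $I = \{2,3\}$, $J = \{3,4\}$ pins down only $\pi(3) = 2$, and nothing in (iii) prevents $\pi_{IJ}$ from being $(4,1,2,3)$ in one-line notation. Then $\vect{a}\pi = (a_4, a_1, a_2, a_3)$, and the requirements $\ofo(\vect{a}) = (b_1,b_2,b_3)$ and $\ofo(\vect{a}\pi) = (b_1,b_3,b_2)$ force $a_1 = a_4 = b_1$, after which both $\ofo(\vect{a})$ and $\ofo(\vect{a}\pi)$ reduce to $\ofo(b_1,a_2,a_3)$ and hence coincide---they can never be the two distinct orderings you need. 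The analogous cyclic shift in $\symm{n-1}$ produces the same obstruction regardless of how many extra slots $n-1-r$ are available, so the ``abundance of slots'' does not rescue the construction. (You introduce the transitive group $H$ but never exploit its group structure; a single $\pi_{IJ}$ is not enough.)

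The paper sidesteps this by taking $J = \{n-1,n\}$ for \emph{every} $I$, and proving (iii) $\Rightarrow$ (i) rather than going straight to (iv). With this $J$, the right-hand side $f(\vect{a}\pi_{IJ}\delta_J)$ is $f$ evaluated at an $n$-tuple whose last two entries are both $a_{\pi_{IJ}(n-1)} = a_{\min I}$; since $f$ is determined by $\ofo$, those trailing repeats may be replaced by any value already present among the first $n-2$ entries without changing the value of $f$. Reading the identity backwards yields a clean replacement rule: in any $n$-tuple with equal entries at the two positions of $I$, that common value can be swapped to any other value occurring elsewhere. A short case analysis (using that $n > \card{A}+1$ guarantees at least two positions carrying repeated values in every $n$-tuple) then shows $f$ is invariant under every adjacent transposition, hence totally symmetric; the passage to (iv) is immediate. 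The idea you are missing is precisely this choice of $J$, which pushes the duplication to the end where $\ofo$ discards it and thereby renders the unknown permutation $\pi_{IJ}$ harmless.
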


\begin{proof}
We will prove the implications $\eqref{lem:suppord:ts} \implies \eqref{lem:suppord:2st} \implies \eqref{lem:suppord:piIJ} \implies \eqref{lem:suppord:ts}$ and $\eqref{lem:suppord:ts} \implies \eqref{lem:suppord:supp} \implies \eqref{lem:suppord:ts}$.

$\eqref{lem:suppord:ts} \implies \eqref{lem:suppord:2st}$:
Total symmetry implies $2$-set-transitivity.

$\eqref{lem:suppord:2st} \implies \eqref{lem:suppord:piIJ}$:
Assume that $f$ is $2$-set-transitive, and let $I, J \in \couples$. Then there exists a permutation $\sigma \in \Inv f$ such that $I \sigma^{-1} = J$. By Lemma~\ref{lem:hatsigma}, there exists a permutation $\hat{\sigma} \in \symm{n-1}$ such that $\hat{\sigma} \circ \delta_J = \delta_I \circ \sigma$ and $\hat{\sigma}(\min J) = \min I$. Setting $\pi_{IJ} := \hat{\sigma}$, we have $f(\vect{a} \delta_I) = f(\vect{a} \delta_I \sigma) = f(\vect{a} \pi_{IJ} \delta_J)$ for all $\vect{a} \in A^{n-1}$.

$\eqref{lem:suppord:piIJ} \implies \eqref{lem:suppord:ts}$:
Assume that condition \eqref{lem:suppord:piIJ} holds. Observe first that for all integers $k$ and $\ell$ such that $1 \leq k < \ell \leq n$ and for all $a_1, \dots, a_n \in A$ and for $b, c \in \{a_1, \dots, a_{k-1}, a_{k+1}, \dots, a_{\ell-1}, a_{\ell}, \dots, a_{n-1}\}$, by choosing $I := \{k, \ell\}$ and $J := \{n-1, n\}$, we have that
\begin{equation}
\label{eq:swapbc}
\begin{split}
& f(a_1, \dots, a_{k-1}, b, a_{k+1}, \dots, a_{\ell-1}, b, a_{\ell}, \dots, a_{n-1}) = \\
& f(a_{\pi_{IJ}(1)}, \dots, a_{\pi_{IJ}(n-2)}, b, b) = \\
& f(a_{\pi_{IJ}(1)}, \dots, a_{\pi_{IJ}(n-2)}, c, c) = \\
& f(a_1, \dots, a_{k-1}, c, a_{k+1}, \dots, a_{\ell-1}, c, a_{\ell}, \dots, a_{n-1}),
\end{split}
\end{equation}
where the first and third equalities hold by condition \eqref{lem:suppord:piIJ}, and the second equality holds because $f$ is determined by the order of first occurrence and both $b$ and $c$ occur among $a_{\pi_{IJ}(1)}, \dots, a_{\pi_{IJ}(n-2)}$.

We will show that $f$ is totally symmetric. To this end, it is sufficient to show that $\Inv f$ contains all adjacent transpositions $(m \;\: m + 1)$, $1 \leq m \leq n - 1$, i.e., for every $m \in \nset{n - 1}$,
\begin{equation}
\label{eq:mm+1}
f(a_1, \dots, a_n) =
f(a_1, \dots, a_{m-1}, a_{m+1}, a_m, a_{m+2}, \dots, a_n),
\end{equation}
for all $a_1, \dots, a_n \in A$.

Let $m \in \nset{n-1}$, and let $(a_1, \dots, a_n) \in A^n$. If $a_m = a_{m+1}$, then equality~\eqref{eq:mm+1} obviously holds, so let us assume that $a_m \neq a_{m+1}$; let $x := a_m$, $y := a_{m+1}$. Since $n > \card{A} + 1$, there exist indices $i < j$ and $i' < i$, $j' < j$ such that $\alpha := a_{i'} = a_i$ and $\beta := a_{j'} = a_j$. We need to consider several cases according to the order of elements $i$, $j$ and $m$.
In what follows, we will write
\begin{itemize}
\item ``$\stackrel{\ofo}{=}$'' to indicate that the equality holds because $f$ is determined by the order of first occurrence,
\item ``$\stackrel{pq}{=}$'', where $p, q \in \nset{n}$, to indicate that the equality holds by \eqref{eq:swapbc} for $I = \{p, q\}$.
\end{itemize}

\begin{asparaenum}[\it {Case} 1:]
\item $\{i, j\} \cap \{m, m+1\} = \emptyset$. We only give the details in the case when $i < m$, $m + 1 < j$. The other two cases ($i < j < m$; $m + 1 < i < j$) are proved in a similar way.
\begin{align*}
&
f(\dots, \pos{i}{\alpha}, \dots, \pos{m}{x}, \pos{\;\; m+1}{y}, \dots, \pos{j}{\beta}, \dots)
\stackrel{\ofo}{=}
f(\dots, \pos{i}{\alpha}, \dots, \pos{m}{x}, \pos{\;\; m+1}{y}, \dots, \pos{j}{\alpha}, \dots)
\stackrel{ij}{=} \\ &
f(\dots, \pos{i}{x}, \dots, \pos{m}{x}, \pos{\;\; m+1}{y}, \dots, \pos{j}{x}, \dots)
\stackrel{mj}{=}
f(\dots, \pos{i}{x}, \dots, \pos{m}{y}, \pos{\;\; m+1}{y}, \dots, \pos{j}{y}, \dots)
\stackrel{m+1, j}{=} \\ &
f(\dots, \pos{i}{x}, \dots, \pos{m}{y}, \pos{\;\; m+1}{x}, \dots, \pos{j}{x}, \dots)
\stackrel{ij}{=}
f(\dots, \pos{i}{\alpha}, \dots, \pos{m}{y}, \pos{\;\; m+1}{x}, \dots, \pos{j}{\alpha}, \dots)
\stackrel{\ofo}{=} \\ &
f(\dots, \pos{i}{\alpha}, \dots, \pos{m}{y}, \pos{\;\; m+1}{x}, \dots, \pos{j}{\beta}, \dots)
.
\end{align*}

\item $\{i, j\} \cap \{m, m+1\} \neq \emptyset$. Then $x$ or $y$ occurs before the $m$-th position and we clearly have
\[
f(a_1, \dots, \pos{m}{x}, \pos{\;\; m+1}{y}, \dots, a_n)
\stackrel{\ofo}{=}
f(a_1, \dots, \pos{m}{y}, \pos{\;\; m+1}{x}, \dots, a_n)
.
\]
\end{asparaenum}

We conclude that \eqref{eq:mm+1} holds for all $a_1, \dots, a_n \in A$, for any $m \in \nset{n}$, i.e., $\Inv f$ contains all adjacent transpositions $(m \;\: m + 1)$. This implies that $f$ is totally symmetric, as claimed.

$\eqref{lem:suppord:ts} \implies \eqref{lem:suppord:supp}$:
Assume that $f$ is determined by the order of first occurrence. Then there exists $f' \colon A^\sharp \to B$ such that $f = f' \circ {\ofo}|_{A^n}$. Since $f$ is totally symmetric, $f'(\vect{a}) = f(\vect{a} \sigma)$ for any permutation $\sigma$ of $\nset{r}$, for all $\vect{a} \in A^\sharp \cap A^r$, $r \geq 1$. Hence the function $f^* \colon \mathcal{P}(A) \to B$ given by setting $f^*(S) := f'(\vect{a})$, where $\vect{a}$ is any element of $A^\sharp$ such that $\supp(\vect{a}) = S$, is well defined. We have
\begin{multline*}
(f^* \circ \supp) (\vect{a}) =
f^*(\supp(\vect{a})) =
f^*(\supp(\ofo(\vect{a})) = \\
f'(\ofo(\vect{a})) =
(f' \circ \ofo) (\vect{a}) = 
f(\vect{a}),
\end{multline*}
for all $\vect{a} \in A^n$. Thus, $f = f^* \circ {\supp}|_{A^n}$, i.e., $f$ is determined by $\supp$.

$\eqref{lem:suppord:supp} \implies \eqref{lem:suppord:ts}$:
Assume that $f$ is determined by $\supp$. Then $f$ is totally symmetric. Furthermore, $f = f^* \circ {\supp}|_{A^n}$ for some $f^* \colon \mathcal{P}(A) \to B$. Define $f' \colon A^\sharp \to B$ as $f'(a_1, \dots, a_r) := f^*(\{a_1, \dots, a_r\})$, for all $(a_1, \dots, a_r) \in A^\sharp$. Then
\begin{multline*}
(f' \circ \ofo) (\vect{a}) = 
f'(\ofo(\vect{a})) =
f^*(\supp(\ofo(\vect{a})) = \\
f^*(\supp(\vect{a})) =
(f^* \circ \supp) (\vect{a}) =
f(\vect{a}),
\end{multline*}
for all $\vect{a} \in A^n$. Thus, $f = f' \circ {\ofo}|_{A^n}$, i.e., $f$ is determined by the order of first occurrence.
\end{proof}

\begin{proposition}
\label{prop:minord}
Let $f^* \colon A^\sharp \to B$, and let $f \colon A^n \to B$.
\begin{enumerate}[\rm (i)]
\item\label{prop:minord:item1}
If $f = f^* \circ {\ofo}|_{A^n}$, then $f_I = f^* \circ {\ofo}|_{A^{n-1}}$ for all $I \in \couples$.

\item\label{prop:minord:item2}
If $f_I = f^* \circ {\ofo}|_{A^{n-1}}$ for all $I \in \couples$, then there exists a map $h^* \colon A^\sharp \to B$ such that $f = h^* \circ {\ofo}|_{A^n}$ and $f^*(\vect{a}) = h^*(\vect{a})$ whenever $\card{\supp(\vect{a})} < n$.

\item\label{prop:minord:item3}
If $n > \card{A}$ and $f_I = f^* \circ {\ofo}|_{A^{n-1}}$ for all $I \in \couples$, then $f = f^* \circ {\ofo}|_{A^n}$.

\item\label{prop:minord:item4}
If $f$ is weakly determined by the order of first occurrence, then $f$ has a unique identification minor.
\end{enumerate}
\end{proposition}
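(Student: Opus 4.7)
Here is my plan for Proposition~\ref{prop:minord}.

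For part \eqref{prop:minord:item1}, the assertion is immediate from the preceding remark that $\ofo(\vect{a}) = \ofo(\vect{a}\delta_I)$ for any $\vect{a} \in A^{n-1}$ and any $I \in \couples$. Indeed, for such $\vect{a}$, $f_I(\vect{a}) = f(\vect{a}\delta_I) = f^*(\ofo(\vect{a}\delta_I)) = f^*(\ofo(\vect{a}))$.

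For part \eqref{prop:minord:item3}, the hypothesis $n > \card{A}$ ensures $A^n_{\neq} = \emptyset$, so every $\vect{a} \in A^n$ may be written as $\vect{a} = \vect{b}\delta_I$ for some $\vect{b} \in A^{n-1}$ and $I \in \couples$ (pick any repeated coordinate pair). Then $f(\vect{a}) = f_I(\vect{b}) = f^*(\ofo(\vect{b})) = f^*(\ofo(\vect{b}\delta_I)) = f^*(\ofo(\vect{a}))$, exactly as in part \eqref{prop:minord:item1} read in reverse.

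Part \eqref{prop:minord:item2} is the delicate one, since without $n > \card{A}$ the values of $f$ on $A^n_{\neq}$ are not witnessed by any identification minor and so need not match $f^* \circ \ofo$ there. The plan is to define $h^* \colon A^\sharp \to B$ piecewise: on tuples $\vect{c} \in A^\sharp$ of length at most $n-1$ set $h^*(\vect{c}) := f^*(\vect{c})$; on tuples $\vect{c} \in A^\sharp$ of length exactly $n$ — which is meaningful only when $n \leq \card{A}$, and in that case $\vect{c}$ belongs to $A^n_{\neq}$ and satisfies $\ofo(\vect{c}) = \vect{c}$ — set $h^*(\vect{c}) := f(\vect{c})$; and on tuples of length greater than $n$ define $h^*$ arbitrarily (the value is irrelevant). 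To verify $f = h^* \circ \ofo|_{A^n}$, split on $\vect{a} \in A^n$: if $\vect{a} \in A^n_{\neq}$ then $\ofo(\vect{a}) = \vect{a}$ and $h^*(\ofo(\vect{a})) = f(\vect{a})$ by construction; otherwise $\vect{a} = \vect{b}\delta_I$ for some $\vect{b}$ and $I$, and $f(\vect{a}) = f_I(\vect{b}) = f^*(\ofo(\vect{b})) = f^*(\ofo(\vect{a})) = h^*(\ofo(\vect{a}))$, since $\ofo(\vect{a})$ has length at most $n-1$. The compatibility condition $f^*(\vect{c}) = h^*(\vect{c})$ whenever $\card{\supp(\vect{c})} < n$ then holds because such a $\vect{c} \in A^\sharp$ has length equal to $\card{\supp(\vect{c})} < n$, placing it in the region where $h^* = f^*$ by definition.

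For part \eqref{prop:minord:item4}, write $f(\vect{a}) = f^*(\ofo(\vect{a}\sigma))$ for some $\sigma \in \symm{n}$ and $f^* \colon A^\sharp \to B$, and set $g := f^* \circ \ofo|_{A^n}$. By part \eqref{prop:minord:item1}, every identification minor of $g$ equals $f^* \circ \ofo|_{A^{n-1}}$ on the nose, so the identification minors of $g$ coincide. It then suffices to transfer this through $f \equiv g$: for each $I \in \couples$, using Lemma~\ref{lem:hatsigma} to write $\delta_I \circ \sigma = \hat{\sigma} \circ \delta_{I\sigma^{-1}}$, we obtain $f_I(\vect{a}) = g(\vect{a}\delta_I\sigma) = g(\vect{a}\hat\sigma\delta_{I\sigma^{-1}}) = g_{I\sigma^{-1}}(\vect{a}\hat\sigma)$ for all $\vect{a} \in A^{n-1}$, whence $f_I \equiv g_{I\sigma^{-1}}$. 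Since the functions $g_J$ are literally all the same, it follows that $f_I \equiv f_J$ for all $I, J \in \couples$, which is precisely the statement that $f$ has a unique identification minor.

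The only genuine obstacle is part \eqref{prop:minord:item2}, where one must recognise that the values of $f$ on $A^n_{\neq}$ are not constrained by the hypothesis and therefore force the redefinition of $f^*$ on the length-$n$ stratum of $A^\sharp$; the other three parts are straightforward consequences of the identity $\ofo(\vect{a}) = \ofo(\vect{a}\delta_I)$ combined, in part \eqref{prop:minord:item4}, with the translation supplied by Lemma~\ref{lem:hatsigma}.
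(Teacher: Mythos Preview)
Your proof is correct and follows essentially the same approach as the paper. The only cosmetic differences are that for part~\eqref{prop:minord:item3} the paper derives the result from the construction in part~\eqref{prop:minord:item2} (observing that when $n > \card{A}$ the new map $h^*$ coincides with $f^*$), whereas you argue directly; and for part~\eqref{prop:minord:item4} the paper simply says ``follows immediately from part~\eqref{prop:minord:item1}'', leaving implicit the transfer from $g$ to $f$ via the equivalence, which you spell out using Lemma~\ref{lem:hatsigma}.
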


\begin{proof}
\eqref{prop:minord:item1}
For any $I \in \couples$ and for any $\vect{a} \in A^{n-1}$, we have
\[
f_I(\vect{a}) =
f(\vect{a} \delta_I) =
f^* \circ \ofo (\vect{a} \delta_I) =
f^* \circ \ofo (\vect{a}).
\]
Therefore $f_I = f^* \circ {\ofo}|_{A^{n-1}}$ for all $I \in \couples$.

\eqref{prop:minord:item2}
Assume that $f_I = f^* \circ {\ofo}|_{A^{n-1}}$ for all $I \in \couples$. Define $h^* \colon A^\sharp \to B$ as follows:
\[
h^*(\vect{a}) =
\begin{cases}
f^*(\vect{a}), & \text{if $\card{\supp(\vect{a})} < n$,} \\
f(\vect{a}), & \text{if $\card{\supp(\vect{a})} = n$.}
\end{cases}
\]
Let $\vect{b} \in A^n$. If $\vect{b} = \vect{a} \delta_I$ for some $\vect{a} \in A^{n-1}$ and $I \in \couples$, then $\card{\supp(\vect{a})} < n$ and we have
\[
f(\vect{b}) = f(\vect{a} \delta_I) = f_I(\vect{a}) = f^*(\ofo(\vect{a})) = h^*(\ofo(\vect{a})) = h^*(\ofo(\vect{a} \delta_I)) = h^*(\ofo(\vect{b})).
\]
Otherwise $\vect{b}$ has no repeated entries, and we have $\ofo(\vect{b}) = \vect{b}$, $\card{\supp(\vect{b})} = n$, and
\[
f(\vect{b}) = h^*(\vect{b}) = h^*(\ofo(\vect{b})).
\]
Thus, we have $f = h^* \circ {\ofo}|_{A^n}$. By the definition of $h^*$, we also have $f^*(\vect{a}) = h^*(\vect{a})$ whenever $\card{\supp(\vect{a})} < n$.

\eqref{prop:minord:item3}
If $n > \card{A}$, then the function $h^*$ constructed in part \eqref{prop:minord:item2} equals $f^*$, and the claim follows.

\eqref{prop:minord:item4}
Follows immediately from part~\eqref{prop:minord:item1}.
\end{proof}

As the following example illustrates, the equalities involving $\ofo$ that occur in the conditions of Proposition~\ref{prop:minord}~\eqref{prop:minord:item2} and~\eqref{prop:minord:item3} cannot be relaxed to equivalences, at least not in the case when $n = \card{A} + 1$. (However, as we will see in Corollary~\ref{cor:gofo}, the equalities can be relaxed to equivalences if $\card{A} = 2$ and $n \geq 4$.) We apply again the scheme of Definition~\ref{def:fGPphi} to show that if $n = k + 1$ and $f \colon A^n \to B$ is a function such that $f_I \equiv f^* \circ {\ofo}|_{A^{n-1}}$ for all $I \in \couples$, then $f$ is not necessarily weakly determined by the order of first occurrence.

\begin{example}
\label{ex:notofonor2settransitive}
Assume that $n = k + 1$, let $A = \nset{k}$, and assume that $\alpha$ and $\beta$ are distinct elements of $B$. Define the function $h \colon A^k \to B$ by the rule
\[
h(\vect{a}) =
\begin{cases}
\alpha, & \text{if $\vect{a} = \vect{k}$,} \\
\beta, & \text{otherwise.}
\end{cases}
\]
For $I \in \couples$, let $g^I = h$ and let $\rho_I = (1 \; 2 \; 3 \; \cdots \; k)^i = (i, i+1, \dots, k, 1, \dots, i-1)$, where $i = \min I$. Let $\phi$ be the identity map on $\couples$. Denote $G := (g^I)_{I \in \couples}$, $P := (\rho_I)_{I \in \couples}$. Let $f \colon A^n \to B$ be the function $f_{G,P,\phi}$ as in Definition~\ref{def:fGPphi}. Then $f(\vect{b}) = \alpha$ if and only if $\vect{b} = \vect{k} \rho_I \delta_I$ for some $I \in \couples$.

For example, if $k = 4$ and $n = 5$, then $f$ is the function that takes on value $\alpha$ at points
\begin{align*}
&
(1, 1, 2, 3, 4),
(1, 2, 1, 3, 4),
(1, 2, 3, 1, 4),
(1, 2, 3, 4, 1),
(4, 1, 1, 2, 3), \\ &
(4, 1, 2, 1, 3),
(4, 1, 2, 3, 1),
(3, 4, 1, 1, 2),
(3, 4, 1, 2, 1),
(2, 3, 4, 1, 1)
\end{align*}
and value $\beta$ elsewhere.

We claim that $f$ is not weakly determined by the order of first occurrence. To see this, suppose on the contrary that $f(\vect{b}) = f^*(\ofo(\vect{b} \sigma))$ for some $f^* \colon A^\sharp \to B$ and $\sigma \in \symm{k}$. Let $I \in \couples$, and let $\vect{c}$ be the unique tuple that has two occurrences of $2$ and satisfies $\ofo(\vect{c} \sigma) = \ofo(\vect{k} \rho_I \delta_I \sigma)$. Then we have
\[
\beta = f(\vect{c}) = f^*(\ofo(\vect{c} \sigma)) = f^*(\ofo(\vect{k} \rho_I \delta_I \sigma)) = f(\vect{k} \rho_I \delta_I) = \alpha,
\]
a contradiction.

We also claim that if $k > 2$, then the only permutation under which $f$ is invariant is the identity permutation (and hence, in particular, $f$ is not $2$-set-transitive). To see this, let $\sigma \in \symm{n}$ and assume that $f(\vect{b}) = f(\vect{b} \sigma)$ for all $\vect{b} \in A^n$. Then $\vect{b} \mapsto \vect{b} \sigma$ maps the set $\{\vect{k} \rho_I \delta_I : I \in \couples\}$ onto itself. Let $I = \{1, 2\}$, and let $J$ be the unique couple in $\couples$ such that $\vect{k} \rho_I \delta_I \sigma = \vect{k} \rho_J \delta_J$. Assume that $J = \{p, q\}$ with $p < q$.

Suppose first that $p \geq 2$ and $p + 1 < q < n$. Then
\[
(1, 1, 2, 3, \dots, k-1, k) \sigma = (\dots, \pos{p-1}{k}, \pos{p}{1}, \pos{p+1}{2}, \dots, p-q, \pos{q}{1}, p-q+1, \dots),
\]
and, depending on whether $\sigma(p) = 1$ and $\sigma(q) = 2$, or $\sigma(p) = 2$ and $\sigma(q) = 1$, it holds that $(k, 1, 2, 3, \dots, k-1, 1) \sigma$ equals 
\[
(\dots, \pos{p-1}{1}, \pos{p}{k}, \pos{p+1}{2}, \dots, p-q, \pos{q}{1}, p-q+1, \dots) 
\quad\text{or}\quad
(\dots, \pos{p-1}{1}, \pos{p}{1}, \pos{p+1}{2}, \dots, p-q, \pos{q}{k}, p-q+1, \dots),
\]
and in both cases we arrive at a contradiction.

Suppose then that $p = 2$ and $q = n$. Then
\[
(1, 1, 2, 3, \dots, k-1, k) \sigma = (k, 1, 2, \dots, k-1, 1).
\]
Thus $\sigma$ fixes all elements in $\{3, \dots, n-1\}$ and $\sigma(1) = n$, and we have that either $\sigma = (1 \; n)$ or $\sigma = (1 \; n \; 2)$.
If $n > 4$, then
\begin{gather*}
(k-1, k, 1, 1, 2, \dots, k-3, k-2) (1 \; n) = (k-2, k, 1, 1, 2, \dots, k-3, k-1), \\
(k-1, k, 1, 1, 2, \dots, k-3, k-2) (1 \; n \; 2) = (k-2, k-1, 1, 1, 2, \dots, k-3, k),
\end{gather*}
and both possibilities for $\sigma$ give rise to a contradiction.
If $n = 4$, then
\[
(3, 1, 1, 2) (1 \; 4) = (2, 1, 1, 3),
\qquad
(1, 2, 3, 1) (1 \; 4 \; 2) = (1, 1, 3, 2),
\]
and we arrive again at a contradiction.

Suppose then that $p \geq 2$ and $q = p + 1$.
Then
\[
(1, 1, 2, 3, \dots, k-1, k) \sigma = (\dots, \pos{p-1}{k}, \pos{p}{1}, \pos{p+1}{1}, 2, \dots),
\]
and, depending on whether $\sigma(p) = 1$ and $\sigma(p+1) = 2$, or $\sigma(p) = 2$ and $\sigma(p+1) = 2$, it holds that
$(k, 1, 2, 3, \dots, k-1, 1) \sigma$ equals
\[
(\dots, \pos{p-1}{1}, \pos{p}{k}, \pos{p+1}{1}, 2, \dots)
\qquad \text{or} \qquad
(\dots, \pos{p-1}{1}, \pos{p}{1}, \pos{p+1}{k}, 2, \dots),
\]
and in both cases we arrive at a contradiction.

Suppose then that $p = 1$ and $q \geq 4$. Then
\[
(1, 1, 2, 3, \dots, k-1, k) \sigma = (\pos{1}{1}, 2, 3, \dots, q-1, \pos{q}{1}, q, \dots, k),
\]
and, depending on whether $\sigma(1) = 1$ and $\sigma(q) = 2$, or $\sigma(1) = 2$ and $\sigma(q) = 1$, it holds that $(1, 2, 1, 3, \dots, k-1, k) \sigma$ equals
\[
(\pos{1}{1}, 1, 3, \dots, q-1, \pos{q}{2}, q, \dots, k)
\qquad \text{or} \qquad
(\pos{1}{2}, 1, 3, \dots, q-1, \pos{q}{1}, q, \dots, k),
\]
and in both cases we arrive at a contradiction.

Suppose then that $p = 1$ and $q = 3$. Then
\[
(1, 1, 2, 3, \dots, k-1, k) \sigma = (1, 2, 1, 3, 4, \dots, k-1, k),
\]
and, depending on whether $\sigma(1) = 1$ and $\sigma(3) = 2$, or $\sigma(1) = 2$ and $\sigma(3) = 1$, it holds that
$(k, 1, 2, 3, \dots, k-1, 1) \sigma$ equals
\[
(k, 2, 1, 3, 4, \dots, k-1, 1)
\qquad \text{or} \qquad
(1, 2, k, 3, 4, \dots, k-1, 1),
\]
and in both cases we arrive at a contradiction.

Finally, suppose that $p = 1$ and $q = 2$. Then
\[
(1, 1, 2, 3, \dots, k-1, k) \sigma = (1, 1, 2, 3, \dots, k-1, k),
\]
and we have that either $\sigma = (1 \; 2)$ or $\sigma$ is the identity permutation. If $\sigma = (1 \; 2)$, then
\[
(1, 2, 1, 3, \dots, n-1) \sigma = (2, 1, 1, 3, \dots, n-1),
\]
a contradiction.
The only remaining possibility is that $\sigma$ is the identity permutation.
\end{example}

\subsection{Open problems}

Functions with a unique identification minor are a topic that is worth investigating on its own right. We have seen in this section examples of wide classes of functions with a unique identification minor: $2$-set-transitive functions and functions weakly determined by the order of first occurrence (see Propositions~\ref{prop:2trans} and~\ref{prop:minord}). Example~\ref{ex:notofonor2settransitive} shows that
\begin{itemize}
\item if $n = \card{A} + 1$, then there exist functions $f \colon A^n \to B$ such that $f$ has a unique identification minor and $f$ is not $2$-set-transitive nor weakly determined by the order of first occurrence; and
\item if $n = \card{A} + 1$, then there exist functions $f \colon A^n \to B$ such that $f$ is determined by the order of first occurrence and $f$ is not reconstructible.
\end{itemize}
Whether the above statements remain true also in the case when $n > \card{A} + 1$, however, remains an open problem.

\begin{question}
Describe all functions that have a unique identification minor.
\end{question}

\begin{question}
\label{que:ofo}
Let $\mathcal{C}$ be the class of functions $f \colon A^n \to B$ such that $n > \card{A} + 1$ and $f$ is weakly determined by the order of first occurrence. Is $\mathcal{C}$ recognizable?
\end{question}

We will see in Corollary~\ref{cor:Booleanwofo-reconstructible} that the answer to Question~\ref{que:ofo} is positive if $\card{A} = 2$.


\section{Totally symmetric functions are reconstructible}
\label{sec:symmetric}

In this section, we investigate totally symmetric functions, an especially important class of functions with a unique identification minor. Our main result, Theorem~\ref{thm:symmetric-reconstructible}, asserts that totally symmetric functions $f \colon A^n \to B$ are reconstructible, provided $n \geq \card{A} + 2$. We also show that totally symmetric functions $f \colon A^n \to B$ are weakly reconstructible if $n > \max(\card{A}, 3)$ (Proposition~\ref{prop:symmetric-weakly-reconstructible}). The lower bound of Theorem~\ref{thm:symmetric-reconstructible} is sharp: in Example~\ref{ex:A+1} we have seen totally symmetric functions of arity $n = \card{A} + 1$ that are not reconstructible.

Let $\msupp \colon \bigcup_{n \geq 1} A^n \to \mathcal{M}(A)$ be the map given by the rule $\vect{a} \mapsto \{a_i : i \in \nset{n}\}$ for every $\vect{a} \in A^n$ ($n \geq 1$), i.e., $\msupp$ maps each tuple to the multiset of its entries.
It is easy to verify that a function $f \colon A^n \to B$ is totally symmetric if and only if $f = \phi \circ {\msupp}|_{A^n}$ for some $\phi \colon \mathcal{M}_n(A) \to B$.

\begin{theorem}
\label{thm:symmetric-reconstructible}
Assume that $n \geq k + 2$ and $\card{A} = k$. If $f \colon A^n \to B$ is totally symmetric, then $f$ is reconstructible.
\end{theorem}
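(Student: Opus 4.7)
The plan is to show that every reconstruction $g$ of $f$ must equal $f$; because $f$ is totally symmetric, $f \circ \sigma = f$ for every $\sigma \in \symm{n}$, so $g \equiv f$ coincides with $g = f$. I would write $f = \phi \circ \msupp|_{A^n}$ for a unique $\phi \colon \mathcal{M}_n(A) \to B$ and observe that each card has the shape $f_I(\vect{a}) = \phi(\msupp(\vect{a}) \uplus \{a_{\min I}\})$, which is invariant under the stabilizer $\symm{\nset{n-1}\setminus\{\min I\}}$ of $\min I$ inside $\symm{n-1}$. Since $n \geq k+2 > k$, pigeonhole forces every multiset $\mathcal{S} \in \mathcal{M}_n(A)$ to have some element $a$ of multiplicity at least $2$, so $\mathcal{S}$ arises as $\msupp(\vect{a}) \uplus \{a_{\min I}\}$ for suitable $\vect{a}$ and $I$. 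Hence any single card already recovers $\phi|_{\mathcal{M}_n(A)}$ and thereby determines $f$ itself; in particular, no two totally symmetric functions of arity $\geq k+2$ can share a deck unless they are equal (this already gives weak reconstructibility).

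With $\phi$ in hand, the remaining task is to prove that $g$ is totally symmetric. Using the equivalences $g_I \equiv f_I$, I would choose permutations $\tau_I \in \symm{n-1}$ with $g_I = f_I \circ \tau_I$; each $\tau_I$ is defined modulo $\Inv f_I \supseteq \symm{\nset{n-1}\setminus\{\min I\}}$, so the only genuinely free datum is the orbit of $\tau_I(\min I)$. For any $\vect{b} = \vect{a}\delta_I$, the identity $g(\vect{b}) = g_I(\vect{a}) = \phi(\msupp(\vect{a}) \uplus \{a_{\tau_I(\min I)}\})$ matches $f(\vect{b}) = \phi(\msupp(\vect{b}))$ precisely when $a_{\tau_I(\min I)} = b_{\min I}$, that is, when $\tau_I$ sends $\min I$ to a coordinate of $\vect{a}$ whose value is $b_{\min I}$. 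The arity bound $n \geq k+2$ makes this condition enforceable: a refined pigeonhole argument (since $n \geq k+2$) shows that every $\vect{b} \in A^n$ has either a triply-repeated value or two disjoint pairs of equal entries, which gives at least two different representations $\vect{b} = \vect{a}\delta_I = \vect{a}'\delta_{I'}$ with overlapping doubled coordinates; the consistency of $g(\vect{b})$ under these different representations pins $\tau_I(\min I)$ into the fibre of $\delta_I$ above the doubled position, producing $g(\vect{b}) = \phi(\msupp(\vect{b})) = f(\vect{b})$.

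The main obstacle is the second step: coordinating the choices of $\tau_I$ across all $I \in \couples$ so as to deduce that $g$ is totally symmetric. The bound $n \geq k+2$ is tight, as Example~\ref{ex:A+1} displays a totally symmetric, non-reconstructible function of arity $k+1$; any correct argument must therefore genuinely exploit the extra slack in the arity, presumably through the refined pigeonhole just described (or, equivalently, by producing a suitable nonmembership witness scheme for total symmetry that is compatible with Proposition~\ref{prop:witness} at this arity). Once $g$ has been shown totally symmetric, the first step applied to $g$ in place of $f$ immediately gives $g = f$ and finishes the proof.
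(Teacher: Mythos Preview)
Your setup is exactly right: writing $f=\phi\circ\msupp|_{A^n}$, observing that $f_I(\vect{a})=\phi(\msupp(\vect{a})\uplus\{a_{\min I}\})$, and noting that the only free datum in $g_I=f_I\circ\tau_I$ is the single index $\tau_I(\min I)$ (equivalently, the paper's $q_I:=\min\delta_I^{-1}(\rho_I(1))$). The gap is in the sentence ``the consistency of $g(\vect{b})$ under these different representations pins $\tau_I(\min I)$ into the fibre of $\delta_I$ above the doubled position, producing $g(\vect{b})=\phi(\msupp(\vect{b}))$.'' That pinning does not happen. Take $n=k+2$ and a tuple $\vect{b}$ with exactly two disjoint repeated pairs, say $b_i=b_j=\alpha$ and $b_p=b_q=\beta$ and all other entries pairwise distinct and distinct from $\alpha,\beta$. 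There are precisely two representations $\vect{b}=\vect{a}\delta_I=\vect{a}'\delta_J$ with $I=\{i,j\}$, $J=\{p,q\}$, and consistency gives only
\[
\phi\bigl(\msupp(\vect{b})\setminus\{\alpha\}\uplus\{b_{q_I}\}\bigr)=g(\vect{b})=\phi\bigl(\msupp(\vect{b})\setminus\{\beta\}\uplus\{b_{q_J}\}\bigr).
\]
If $q_I\notin I$ and $q_J\notin J$, neither side is $\phi(\msupp(\vect{b}))$, so you cannot conclude $g(\vect{b})=f(\vect{b})$ from this alone; nor does it force $q_I\in I$.

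What the paper actually proves is a dichotomy that your sketch misses. Either $q_I\in I$ for every $I\in\couples$, in which case Equation~\eqref{eq:gvalues} immediately yields $g=f$; or some $q_I\notin I$, and then the consistency equations, chased across many tuples and many couples in a substantial case analysis (Claims~\ref{clm:IJdisjoint1}--\ref{clm:qInotinI}), force $\phi(S)=\phi(T)$ whenever $\set(S)=\set(T)$. In the second branch $f$ is determined by $\supp$, and one finishes by invoking Proposition~\ref{prop:suppreconstructible}. So the consistency relations do not localize $\tau_I(\min I)$; they instead globally constrain $\phi$. Your refined pigeonhole observation (triple or two disjoint pairs) is exactly the input the paper uses to generate those relations, but turning them into ``$f$ is determined by $\supp$'' is the real work, and it is not a one-line consequence. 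Your first paragraph, incidentally, gives a clean route to weak reconstructibility (compare Proposition~\ref{prop:symmetric-weakly-reconstructible}), but it does not bypass the dichotomy needed for full reconstructibility.
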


\begin{proof}
Since $f$ is totally symmetric, there exists a map $\phi \colon \mathcal{M}_n(A) \to B$ such that $f = \phi \circ {\msupp}|_{A^n}$. Let $h \colon A^{n-1} \to B$ be the function given by the rule $h(\vect{b}) = \phi(\msupp(\vect{b}) \uplus \{b_1\})$ for all $\vect{b} \in A^{n-1}$. Then for every $I \in \couples$, it holds that $f_I \equiv h$.
Let $g \colon A^n \to B$ be a reconstruction of $f$. Then for every $I \in \couples$, it holds that $g_I \equiv h$ and hence there exists a permutation $\rho_I \in \symm{n-1}$ such that $g_I(\vect{b}) = h(\vect{b} \rho_I)$ for all $\vect{b} \in A^{n-1}$. Let $q_I := \min \delta_I^{-1}(\rho_I(1))$.

Let $\vect{a} \in A^n$. Since $n \geq k + 2$, there exist $I \in \couples$ and $\vect{b} \in A^{n-1}$ such that $\vect{a} = \vect{b} \delta_I$. It holds that $g(\vect{a}) = g(\vect{b} \delta_I) = g_I(\vect{b}) = h(\vect{b} \rho_I) = \phi(\msupp(\vect{b} \rho_I) \uplus \{b_{\rho_I(1)}\})$. Since $a_i = b_{\delta_I(i)}$ for every $i \in \nset{n}$ and $\delta_I(q_I) = \rho_I(1)$, we have $b_{\rho_I(1)} = a_{q_I}$. Therefore, for any $I \in \couples$ and for every $\vect{a} \in A^n$ such that $a_{\min I} = a_{\max I}$, it holds that
\begin{equation}
\label{eq:gvalues}
g(\vect{a}) =
\phi(\msupp(\vect{a}) \setminus \{a_{\max I}\} \uplus \{a_{q_I}\}).
\end{equation}
(In the sequel, we write ``$\stackrel{J}{=}$'' for some $J \in \couples$ to indicate that the equality in question holds by Equation~\eqref{eq:gvalues} for $I = J$.)

\begin{claim}
\label{clm:IJdisjoint1}
If there exist $I, J \in \couples$ such that $I \cap J = \emptyset$ and $q_I \notin I$ and $q_J \notin J$, then $\phi(S) = \phi(T)$ for all $S, T \in \mathcal{M}_n(A)$ such that $\set(S) = \set(T)$.
\end{claim}

\begin{pfclaim}[Proof of Claim~\ref{clm:IJdisjoint1}]
Let $I = \{i, j\}$, $J = \{p, q\}$, and assume that $I \cap J = \emptyset$ and $q_I \notin I$ and $q_J \notin J$. We split the analysis into several cases.

\begin{asparaenum}[\it {Case} 1:]
\item\label{clm:IJdisjoint1:case1}
$q_I, q_J \notin I \cup J$, $q_I \neq q_J$. In this case $n \geq 6$.
We have for any $\alpha, \beta, \gamma, \delta \in A$ and for any $\vect{u} \in A^{n - 6}$ that
\[
\phi(\{\alpha, \alpha, \beta, \gamma, \delta, \delta\} \uplus \msupp(\vect{u}))
\stackrel{I}{=} g(\pos{q_I}{\alpha}, \pos{i}{\beta}, \pos{j}{\beta}, \pos{q_J}{\gamma}, \pos{p}{\delta}, \pos{q}{\delta}, \vect{u}) \stackrel{J}{=}
\phi(\{\alpha, \beta, \beta, \gamma, \gamma, \delta\} \uplus \msupp(\vect{u})).
\]
Let $K = \{q_I, q_J\}$. If $q_K \in K$, then for any $\alpha, \beta, \gamma, \delta \in A$ and for any $\vect{u} \in A^{n-6}$,
\begin{multline*}
\phi(\{\alpha, \alpha, \alpha, \beta, \gamma, \delta\} \uplus \msupp(\vect{u}))
\stackrel{I}{=} g(\pos{q_I}{\alpha}, \pos{i}{\delta}, \pos{j}{\delta}, \pos{q_J}{\alpha}, \pos{p}{\beta}, \pos{q}{\gamma}, \vect{u}) \stackrel{K}{=} \\
\phi(\{\alpha, \alpha, \beta, \gamma, \delta, \delta\} \uplus \msupp(\vect{u}))
= \phi(\{\alpha, \beta, \beta, \gamma, \gamma, \delta\} \uplus \msupp(\vect{u})).
\end{multline*}
If $q_K \notin K \cup I$, then for any $\alpha, \beta, \gamma \in A$ and for any $\vect{u} \in A^{n-5}$,
\[
\phi(\{\alpha, \alpha, \alpha, \beta, \gamma, \msupp(\vect{u})\})
\stackrel{I}{=} g(\pos{q_I}{\alpha}, \pos{i}{\beta}, \pos{j}{\beta}, \pos{q_J}{\alpha}, \pos{q_K}{\gamma}, \vect{u}) \stackrel{K}{=}
\phi(\{\alpha, \beta, \beta, \gamma, \gamma\} \uplus \msupp(\vect{u})).
\]
If $q_K \notin K \cup J$, then for any $\alpha, \beta, \gamma \in A$ and for any $\vect{u} \in A^{n-5}$,
\[
\phi(\{\alpha, \alpha, \alpha, \beta, \gamma, \msupp(\vect{u})\})
\stackrel{J}{=} g(\pos{q_I}{\alpha}, \pos{q_J}{\alpha}, \pos{p}{\beta}, \pos{q}{\beta}, \pos{q_K}{\gamma}, \vect{u}) \stackrel{K}{=}
\phi(\{\alpha, \beta, \beta, \gamma, \gamma\} \uplus \msupp(\vect{u})).
\]

Thus, for all $\alpha, \beta, \gamma, \delta \in A$ and for all $\vect{u} \in A^{n-6}$, it holds that
\begin{multline}
\label{eq:alphabetagammadelta}
\phi(\{\alpha, \alpha, \alpha, \beta, \gamma, \delta\} \uplus \msupp(\vect{u})) =
\phi(\{\alpha, \alpha, \beta, \gamma, \delta, \delta\} \uplus \msupp(\vect{u})) = \\
\phi(\{\alpha, \beta, \beta, \gamma, \gamma, \delta\} \uplus \msupp(\vect{u})).
\end{multline}

Let $S \in \mathcal{M}_n(A)$, let $E := \set(S)$, and fix an element $e \in E$. Let $F$ be the multiset on $A$ given by the multiplicity function
\[
\mathbf{1}_F(x) =
\begin{cases}
\card{E} - n + 1, & \text{if $x = e$,} \\
1, & \text{if $x \in E \setminus \{e\}$,} \\
0, & \text{if $x \notin E$.}
\end{cases}
\]
We will construct a sequence $S = S_0, S_1, \dots, S_r = F$ ($r \geq 1$) of multisets in $\mathcal{M}_n(A)$ that satisfy $\phi(S_i) = \phi(S_{i-1})$ for all $i \in \nset{r}$. Let $S_0 := S$, and define $S_1$ by the following rules.
\begin{itemize}
\item If $\mathbf{1}_S(e) > 1$, then let $S_1 := S_0$. In this case obviously $\phi(S_0) = \phi(S_1)$.
\item If $\mathbf{1}_S(e) = 1$ and there exists $a \in A$ such that $\mathbf{1}_S(a) \geq 3$, then let $S_1 := S_0 \uplus \{e\} \setminus \{a\}$. Then $\phi(S_0) = \phi(S_1)$ by \eqref{eq:alphabetagammadelta} (take $\alpha = a$ and $\delta = e$, and consider the first and second expression in \eqref{eq:alphabetagammadelta}).
\item Otherwise we have that $\mathbf{1}_S(e) = 1$ and, since $n \geq \card{A} + 2$, there exist distinct elements $a$ or $b$ of $A$ such that $\mathbf{1}_S(a) = \mathbf{1}_S(b) = 2$. Let $S_1 := S_0 \uplus \{e, e\} \setminus \{a, b\}$. Then $\phi(S_0) = \phi(S_1)$ by \eqref{eq:alphabetagammadelta} (take $\alpha = e$, $\beta = a$, and $\gamma = b$, and consider the first and third expression in \eqref{eq:alphabetagammadelta}).
\end{itemize}
Thus $S_1$ is a multiset with $\mathbf{1}_{S_1}(e) > 1$. We proceed by the following recursion.
\begin{itemize}
\item If $i \geq 1$ and there exists $a \in A$ such that $\mathbf{1}_{S_i}(a) > 1$ and $a \neq e$, then let $S_{i+1} := S_i \uplus \{e\} \setminus \{a\}$. Then $\phi(S_i) = \phi(S_{i+1})$ by \eqref{eq:alphabetagammadelta} (take $\alpha = e$ and $\delta = a$, and consider the first and second expression in \eqref{eq:alphabetagammadelta}). Furthermore, $\mathbf{1}_{S_{i+1}}(e) > 1$, and we can apply the recursive step again.
\item Otherwise $S_i = U$, and we let $r := i$ and stop the recursion.
\end{itemize}
The recursion will stop after a finite number of steps, and we have that $\phi(S) = \phi(S_0) = \phi(S_1) = \dots = \phi(S_r) = \phi(F)$. We conclude that if $S, T \in \mathcal{M}_N(A)$ are multisets such that $\set(S) = \set(T)$, then we have $\phi(S) = \phi(F) = \phi(T)$, as claimed.

\item $q_I, q_J \notin I \cup J$, $q_I = q_J$. In this case $n \geq 5$.
We have for any $\alpha, \beta, \gamma \in A$ and for any $\vect{u} \in A^{n - 5}$ that
\[
\phi(\{\alpha, \alpha, \beta, \gamma, \gamma\} \uplus \msupp(\vect{u}))
\stackrel{I}{=}
g(\pos{q_I = q_J \;\;\;\;}{\alpha}, \pos{i}{\beta}, \pos{j}{\beta}, \pos{p}{\gamma}, \pos{q}{\gamma}, \vect{u})
\stackrel{J}{=}
\phi(\{\alpha, \alpha, \beta, \beta, \gamma\} \uplus \msupp(\vect{u})).
\]

Let $K = \{q_I, p\}$. If $q_K \in K$, then for all $\alpha, \beta \in A$ and for all $\vect{u} \in A^{n-4}$,
\[
\phi(\{\alpha, \alpha, \alpha, \beta\} \uplus \msupp(\vect{u}))
\stackrel{I}{=} 
g(\pos{q_I}{\alpha}, \pos{i}{\beta}, \pos{j}{\beta}, \pos{p}{\alpha}, \vect{u})
\stackrel{K}{=}
\phi(\{\alpha, \alpha, \beta, \beta\} \uplus \msupp(\vect{u})).
\]
If $q_K \notin J \cup K$, then for all $\alpha, \beta \in A$ and for all $\vect{u} \in A^{n-4}$,
\[
\phi(\{\alpha, \alpha, \alpha, \beta\} \uplus \msupp(\vect{u}))
\stackrel{J}{=}
g(\pos{q_I}{\alpha}, \pos{p}{\alpha}, \pos{q}{\alpha}, \pos{q_K}{\beta}, \vect{u})
\stackrel{K}{=}
\phi(\{\alpha, \alpha, \beta, \beta\} \uplus \msupp(\vect{u})).
\]
If $q_K \notin I \cup K$, then for all $\alpha, \beta, \gamma \in A$ and for all $\vect{u} \in A^{n-5}$,
\[
\phi(\{\alpha, \alpha, \alpha, \beta, \gamma\} \uplus \msupp(\vect{u}))
\stackrel{I}{=}
g(\pos{q_I}{\alpha}, \pos{i}{\beta}, \pos{j}{\beta}, \pos{p}{\alpha}, \pos{q_K}{\gamma}, \vect{u})
\stackrel{K}{=}
\phi(\{\alpha, \beta, \beta, \gamma, \gamma\} \uplus \msupp(\vect{u})).
\]
Thus, for all $\alpha, \beta, \gamma \in A$ and for all $\vect{u} \in A^{n-5}$, it holds that
\begin{multline*}
\phi(\{\alpha, \alpha, \alpha, \beta, \gamma\} \uplus \msupp(\vect{u})) =
\phi(\{\alpha, \alpha, \beta, \beta, \gamma\} \uplus \msupp(\vect{u})) = \\
\phi(\{\alpha, \alpha, \beta, \gamma, \gamma\} \uplus \msupp(\vect{u})) =
\phi(\{\alpha, \beta, \beta, \gamma, \gamma\} \uplus \msupp(\vect{u})).
\end{multline*}
Proceeding in a similar way as in Case~\ref{clm:IJdisjoint1:case1}, we can show that the above identities imply that $\phi(S) = \phi(T)$ for all $S, T \in \mathcal{M}_n(A)$ such that $\set(S) = \set(T)$.

\item $q_I \notin I \cup J$, $q_J \in I$. In this case $n \geq 5$.
We have for any $\alpha, \beta, \gamma \in A$ and for any $\vect{u} \in A^{n - 5}$ that
\[
\phi(\{\alpha, \alpha, \beta, \gamma, \gamma\} \uplus \msupp(\vect{u}))
\stackrel{I}{=}
g(\pos{q_I}{\alpha}, \pos{i}{\beta}, \pos{j}{\beta}, \pos{p}{\gamma}, \pos{q}{\gamma}, \vect{u})
\stackrel{J}{=}
\phi(\{\alpha, \beta, \beta, \beta, \gamma\} \uplus \msupp(\vect{u})).
\]
Let $K = \{q_I, p\}$. If $q_K \in K$, then for all $\alpha, \beta, \gamma \in A$ and for all $\vect{u} \in A^{n-5}$,
\[
\phi(\{\alpha, \alpha, \alpha, \beta, \gamma\} \uplus \msupp(\vect{u}))
\stackrel{I}{=}
g(\pos{q_I}{\alpha}, \pos{i}{\beta}, \pos{j}{\beta}, \pos{p}{\alpha}, \pos{q}{\gamma}, \vect{u})
\stackrel{K}{=}
\phi(\{\alpha, \alpha, \beta, \beta, \gamma\} \uplus \msupp(\vect{u})).
\]
If $q_K \in I$, then for all $\alpha, \beta, \gamma \in A$ and for all $\vect{u} \in A^{n-5}$,
\[
\phi(\{\alpha, \alpha, \alpha, \beta, \gamma\} \uplus \msupp(\vect{u}))
\stackrel{I}{=}
g(\pos{q_I}{\alpha}, \pos{i}{\beta}, \pos{j}{\beta}, \pos{p}{\alpha}, \pos{q}{\gamma}, \vect{u})
\stackrel{K}{=}
\phi(\{\alpha, \beta, \beta, \beta, \gamma\} \uplus \msupp(\vect{u})).
\]
If $q_K = q$, then for all $\alpha, \beta, \gamma \in A$ and for all $\vect{u} \in A^{n-5}$,
\[
\phi(\{\alpha, \alpha, \beta, \gamma, \gamma\} \uplus \msupp(\vect{u}))
\stackrel{J}{=}
g(\pos{q_I}{\alpha}, \pos{i}{\beta}, \pos{j}{\gamma}, \pos{p}{\alpha}, \pos{q}{\alpha}, \vect{u})
\stackrel{K}{=}
\phi(\{\alpha, \alpha, \alpha, \beta, \gamma\} \uplus \msupp(\vect{u})).
\]
If $q_K \notin I \cup J \cup \{q_I\}$, then for all $\alpha, \beta, \gamma \in A$ and for all $\vect{u} \in A^{n-5}$,
\[
\phi(\{\alpha, \alpha, \beta, \beta, \gamma\} \uplus \msupp(\vect{u}))
\stackrel{J}{=}
g(\pos{q_I}{\alpha}, \pos{q_J}{\beta}, \pos{p}{\alpha}, \pos{q}{\alpha}, \pos{q_K}{\gamma}, \vect{u})
\stackrel{K}{=}
\phi(\{\alpha, \alpha, \beta, \gamma, \gamma\} \uplus \msupp(\vect{u})).
\]
Thus, for all $\alpha, \beta, \gamma \in A$ and for all $\vect{u} \in A^{n-5}$, it holds that
\begin{align*}
&
\phi(\{\alpha, \alpha, \alpha, \beta, \gamma\} \uplus \msupp(\vect{u})) =
\phi(\{\alpha, \beta, \beta, \beta, \gamma\} \uplus \msupp(\vect{u})) = \\ &
\phi(\{\alpha, \beta, \gamma, \gamma, \gamma\} \uplus \msupp(\vect{u})) =
\phi(\{\alpha, \alpha, \beta, \beta, \gamma\} \uplus \msupp(\vect{u})) = \\ &
\phi(\{\alpha, \alpha, \beta, \gamma, \gamma\} \uplus \msupp(\vect{u})) =
\phi(\{\alpha, \beta, \beta, \gamma, \gamma\} \uplus \msupp(\vect{u})).
\end{align*}
As in the previous cases, we can show that $\phi(S) = \phi(T)$ for all $S, T \in \mathcal{M}_n(A)$ such that $\set(S) = \set(T)$.

\item\label{clm:IJdisjoint1:case4}
$q_I \in J$, $q_J \in I$. Without loss of generality, we may assume that $q_I = p$ and $q_J = j$.
We have for any $\alpha, \beta \in A$ and for any $\vect{u} \in A^{n-4}$,
\[
\phi(\{\alpha, \beta, \beta, \beta\} \uplus \msupp(\vect{u}))
\stackrel{I}{=}
g(\pos{i}{\alpha}, \pos{j}{\alpha}, \pos{p}{\beta}, \pos{q}{\beta}, \vect{u})
\stackrel{J}{=}
\phi(\{\alpha, \alpha, \alpha, \beta\} \uplus \msupp(\vect{u})).
\]
Let $K = \{i, q\}$. If $q_K \in \{i, j, q\}$, then for all $\alpha, \beta \in A$ and for all $\vect{u} \in A^{n-4}$,
\[
\phi(\{\alpha, \alpha, \beta, \beta\} \uplus \msupp(\vect{u}))
\stackrel{I}{=}
g(\pos{i}{\alpha}, \pos{j}{\alpha}, \pos{p}{\beta}, \pos{q}{\alpha}, \vect{u})
\stackrel{K}{=}
\phi(\{\alpha, \alpha, \alpha, \beta\} \uplus \msupp(\vect{u})).
\]
If $q_K \in \{i, p, q\}$, then for all $\alpha, \beta \in A$ and for all $\vect{u} \in A^{n-4}$,
\[
\phi(\{\alpha, \alpha, \beta, \beta\} \uplus \msupp(\vect{u}))
\stackrel{J}{=}
g(\pos{i}{\alpha}, \pos{j}{\beta}, \pos{p}{\alpha}, \pos{q}{\alpha}, \vect{u})
\stackrel{K}{=}
\phi(\{\alpha, \alpha, \alpha, \beta\} \uplus \msupp(\vect{u})).
\]
If $q_K \notin \{i, j, p, q\}$, then $n \geq 5$. Let $L = \{q_K, i\}$. If $q_L \in \{q_K, i, j\}$, then for all $\alpha, \beta, \gamma \in A$ and for all $\vect{u} \in A^{n-5}$,
\[
\phi(\{\alpha, \alpha, \beta, \beta, \gamma\} \uplus \msupp(\vect{u}))
\stackrel{I}{=}
g(\pos{q_K}{\alpha}, \pos{i}{\alpha}, \pos{j}{\alpha}, \pos{p}{\beta}, \pos{q}{\gamma}, \vect{u})
\stackrel{L}{=}
\phi(\{\alpha, \alpha, \alpha, \beta, \gamma\} \uplus \msupp(\vect{u})).
\]
If $q_L \in \{p, q\}$, then for all $\alpha, \beta, \gamma \in A$ and for all $\vect{u} \in A^{n-5}$,
\[
\phi(\{\alpha, \alpha, \beta, \beta, \gamma\} \uplus \msupp(\vect{u}))
\stackrel{J}{=}
g(\pos{q_K}{\alpha}, \pos{i}{\alpha}, \pos{j}{\beta}, \pos{p}{\gamma}, \pos{q}{\gamma}, \vect{u})
\stackrel{L}{=}
\phi(\{\alpha, \beta, \gamma, \gamma, \gamma\} \uplus \msupp(\vect{u})).
\]
If $q_L \notin \{i, j, p, q\}$, then for all $\alpha, \beta \in A$ and for all $\vect{u} \in A^{n-4}$,
\[
\phi(\{\alpha, \alpha, \alpha, \beta\} \uplus \msupp(\vect{u}))
\stackrel{K}{=}
g(\pos{q_K}{\alpha}, \pos{i}{\alpha}, \pos{q}{\alpha}, \pos{q_L}{\gamma}, \vect{u})
\stackrel{L}{=}
\phi(\{\alpha, \alpha, \beta, \beta\} \uplus \msupp(\vect{u})).
\]
Thus, for all $\alpha, \beta \in A$ and for all $\vect{u} \in A^{n-4}$, it holds that
\[
\phi(\{\alpha, \alpha, \alpha, \beta\} \uplus \msupp(\vect{u})) =
\phi(\{\alpha, \alpha, \beta, \beta\} \uplus \msupp(\vect{u})) =
\phi(\{\alpha, \beta, \beta, \beta\} \uplus \msupp(\vect{u})).
\]
As in the previous cases, we can show that $\phi(S) = \phi(T)$ for all $S, T \in \mathcal{M}_n(A)$ such that $\set(S) = \set(T)$.
\end{asparaenum}

Cases~\ref{clm:IJdisjoint1:case1}--\ref{clm:IJdisjoint1:case4} exhaust all possibilities, and the proof of the claim is complete.
\end{pfclaim}

\begin{claim}
\label{clm:IJdisjoint2}
If there exist $I, J \in \couples$ such that $I \cap J = \emptyset$ and $q_I, q_J \in J$, then $\phi(S) = \phi(T)$ for all $S, T \in \mathcal{M}_n(A)$ such that $\set(S) = \set(T)$.
\end{claim}

\begin{pfclaim}[Proof of Claim~\ref{clm:IJdisjoint2}]
Let $I = \{i, j\}$, $J = \{p, q\}$, and assume that $I \cap J = \emptyset$ and $q_I, q_J \in J$. Then for any $\alpha, \beta \in A$ and for any $\vect{u} \in A^{n - 4}$ we have
\[
\phi(\{\alpha, \beta, \beta, \beta\} \uplus \msupp(\vect{u}))
\stackrel{I}{=}
g(\pos{i}{\alpha}, \pos{j}{\alpha}, \pos{p}{\beta}, \pos{q}{\beta}, \vect{u})
\stackrel{J}{=}
\phi(\{\alpha, \alpha, \beta, \beta\} \uplus \msupp(\vect{u})).
\]
Proceeding as we did in the proof of Claim~\ref{clm:IJdisjoint1}, we can show that $\phi(S) = \phi(T)$ for all $S, T \in \mathcal{M}_n(A)$ such that $\set(S) = \set(T)$.
\end{pfclaim}

\begin{claim}
\label{clm:qInotinI}
If there exists $I \in \couples$ such that $q_I \notin I$, then $\phi(S) = \phi(T)$ for all $S, T \in \mathcal{M}_n(A)$ such that $\set(S) = \set(T)$.
\end{claim}

\begin{pfclaim}[Proof of Claim~\ref{clm:qInotinI}]
Assume that $I \in \couples$ is such that $q_I \notin I$. Since $n \geq 4$, there exists $p \in \nset{n} \setminus (I \cup \{q_I\})$. Let $J = \{p, q_I\}$. Depending on whether $q_J \notin J$ or $q_J \in J$, Claim~\ref{clm:IJdisjoint1} or Claim~\ref{clm:IJdisjoint2} implies that $\phi(S) = \phi(T)$ for all $S, T \in \mathcal{M}_n(A)$ such that $\set(S) = \set(T)$.
\end{pfclaim}

(Proof of Theorem~\ref{thm:symmetric-reconstructible} continued) If there exists $I \in \couples$ such that $q_I \notin I$, then by Claim~\ref{clm:qInotinI}, we have that $\phi(S) = \phi(T)$ for all $S, T \in \mathcal{M}_n(A)$ such that $\set(S) = \set(T)$. Then $f$ is determined by $\supp$, and it is reconstructible by Proposition~\ref{prop:suppreconstructible}. Otherwise $q_I \in I$ for all $I \in \couples$, and we have for every $\vect{a} \in A^n$ that if $a_{\min I} = a_{\max I}$ for some $I \in \couples$, then $a_{q_I} = a_{\min I} = a_{\max I}$. Then Equation~\eqref{eq:gvalues} yields $g(\vect{a}) = \phi(\msupp(\vect{a}) \setminus \{a_{\max I}\} \uplus \{a_{q_I}\}) = \phi(\msupp(\vect{a}))$ for all $\vect{a} \in A^n$, i.e., $g = f$. We conclude that $f$ is reconstructible.
\end{proof}

\begin{proposition}
\label{prop:symmetric-weakly-reconstructible}
Assume that $n > \max (k, 3)$ and $f, g \colon A^n \to B$ are totally symmetric. If $\deck f = \deck g$, then $f = g$.
\end{proposition}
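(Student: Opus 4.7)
My plan begins by exploiting the total symmetry of $f$ and $g$: write $f = \phi \circ {\msupp}|_{A^n}$ and $g = \psi \circ {\msupp}|_{A^n}$ for uniquely determined maps $\phi, \psi \colon \mathcal{M}_n(A) \to B$, so proving $f = g$ reduces to proving $\phi = \psi$ on $\mathcal{M}_n(A)$. Since $\delta_I$ duplicates the entry at position $\min I$, for $I = \{i,j\}$ with $i < j$ we have $\msupp(\vect{b}\delta_I) = \msupp(\vect{b}) \uplus \{b_i\}$, so $f_I(\vect{b}) = \phi(\msupp(\vect{b}) \uplus \{b_{\min I}\})$, and analogously for $g_I$. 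Because $f$ is totally symmetric, all of its identification minors lie in one $\equiv$-class, so $\deck f$ is $\binom{n}{2}$ copies of that class. The hypothesis $\deck f = \deck g$ forces every $g_I$ to belong to this same class; in particular there exists a permutation $\tau \in \symm{n-1}$ with $f_{\{1,2\}}(\vect{b}) = g_{\{1,2\}}(\vect{b}\tau)$ for all $\vect{b} \in A^{n-1}$. Writing $j := \tau(1)$, this is
\[
\phi(\msupp(\vect{b}) \uplus \{b_1\}) = \psi(\msupp(\vect{b}) \uplus \{b_j\}) \quad \text{for every } \vect{b} \in A^{n-1}.
\]

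If $j = 1$, the displayed equation immediately yields $\phi(M) = \psi(M)$ for every $M \in \mathcal{M}_n(A)$ containing an element of multiplicity at least $2$; and the bound $n > k$ ensures by pigeonhole that \emph{every} $M \in \mathcal{M}_n(A)$ has this property, finishing the case. The substantive case is $j \neq 1$; there the equation rephrases as
\[
\phi(M_0 \uplus \{a\}) = \psi(M_0 \uplus \{c\}) \quad \text{for all } M_0 \in \mathcal{M}_{n-1}(A), \; a,c \in \set(M_0),
\]
with the proviso that $a = c$ requires the multiplicity of $a$ in $M_0$ to be at least $2$ (so that some $\vect{b}$ with $b_1 = a$, $b_j = c$, $\msupp(\vect{b}) = M_0$ can be produced).

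The key step is a decoupling argument: for a fixed $M_0$, the left side depends only on $a$ and the right side only on $c$, so varying one while fixing the other will show that $\phi(M_0 \uplus \{x\})$ is constant in $x \in \set(M_0)$, and likewise for $\psi$. I will verify this constancy whenever $|\set(M_0)| \geq 2$: if $|\set(M_0)| \geq 3$, pick a $c$ distinct from the two candidate $a$'s; if $|\set(M_0)| = 2$, the assumption $n \geq 4$ (hence $|M_0| \geq 3$) forces at least one element of $\set(M_0)$ to have multiplicity $\geq 2$, and that element serves as $c$ in both relevant instances. Chaining the two constancy statements back through the original equation delivers $\phi(M_0 \uplus \{x\}) = \psi(M_0 \uplus \{x\})$ for every $x \in \set(M_0)$ whenever $|\set(M_0)| \geq 2$.

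To finish, given $M \in \mathcal{M}_n(A)$, pick $x \in \set(M)$ of multiplicity $\geq 2$ (guaranteed by $n > k$) and set $M_0 := M \setminus \{x\}$; then $x \in \set(M_0)$ and $M = M_0 \uplus \{x\}$. If $|\set(M)| \geq 2$ then $|\set(M_0)| \geq 2$ and the previous step gives $\phi(M) = \psi(M)$. If $|\set(M)| = 1$, so $M = \{x\}^n$, then $M_0 = \{x\}^{n-1}$ has $x$ of multiplicity $n - 1 \geq 3$, so $(M_0, a, c) = (M_0, x, x)$ is a valid instance of the boxed equality, directly giving $\phi(M) = \psi(M)$. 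In every case $\phi(M) = \psi(M)$, hence $f = g$. The main obstacle I anticipate is exactly the case $|\set(M_0)| = 2$ in the decoupling step, where both candidate swap targets are constrained; the arithmetic $|M_0| = n - 1 \geq 3$ is what saves us and explains the hypothesis $n > 3$.
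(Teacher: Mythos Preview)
Your proof is correct and follows essentially the same strategy as the paper's: both reduce to a single equivalence $f_I \equiv g_I$ via a permutation $\tau$, split into cases according to whether $\tau$ fixes the distinguished index, and in the nontrivial case exploit a third position (available because $n>3$) to chain through the functional equation. Your multiset formulation and ``decoupling'' presentation make the mechanism more transparent than the paper's explicit tuple-swapping chain $f\to g\to f\to g$, but the underlying argument is the same.
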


\begin{proof}
Proposition~\ref{prop:2trans} and the assumption that $\deck f = \deck g$ imply that $f_I \equiv g_J$ for all $I, J \in \couples$.
In particular, setting $N := \{n-1, n\}$, we have that $f_N \equiv g_N$; hence there exists a permutation $\tau \in \symm{n-1}$ such that $f_N(\vect{a}) = g_N(\vect{a} \tau)$ for all $\vect{a} \in A^{n-1}$. By the definition of identification minor, we have
\begin{equation}
f(\vect{a} \delta_N) = f_N(\vect{a}) = g_N(\vect{a} \tau) = g(\vect{a} \tau \delta_N)
\label{eq:f2gtau2}
\end{equation}
for all $\vect{a} \in A^{n-1}$.

We want to show that $f = g$, that is $f(\vect{a}) = g(\vect{a})$ for all $\vect{a} \in A^n$. Let $\vect{a} \in A^n$ be arbitrary. Since $n > k$, there is an element $\alpha \in A$ that has at least two occurrences in $\vect{a}$. By the total symmetry of $f$ and $g$, we may assume that the last two components of $\vect{a}$ are equal to $\alpha$, i.e., $a_{n-1} = a_n = \alpha$. Let $\vect{b}$ be the $(n-1)$-tuple that is obtained by removing the last entry from $\vect{a}$. We clearly have $\vect{a} = \vect{b} \delta_N$.

We need to distinguish between two cases depending on whether $\tau(n-1) = n-1$ or not. Consider first the case that $\tau(n-1) = n-1$. By Equation~\eqref{eq:f2gtau2} and the total symmetry (TS) of $g$ we have
\[
f(\vect{a})
= f(\vect{b} \delta_N)
\stackrel{\eqref{eq:f2gtau2}}{=} g(\vect{b} \tau \delta_N)
\stackrel{\text{TS}}{=} g(\vect{a}).
\]

Consider then the case that $\tau(n-1) = r \neq n-1$. Fix an element $s$ of $\nset{n-1} \setminus \{r, n-1\}$; this set is nonempty since $n > 3$. Let $\beta := a_r$, $\gamma := a_s$. Repeated applications of \eqref{eq:f2gtau2} and the total symmetry of $f$ and $g$ yield
\begin{align*}
f(\vect{a})
&= f(a_1, \dots, \pos{r}{\beta}, \dots, \pos{s}{\gamma}, \dots, a_{n-2}, \pos{n-1 \;\;}{\alpha}, \pos{n}{\alpha}) 
\stackrel{\eqref{eq:f2gtau2}}{=}
g(a_{\tau(1)}, \dots, \pos{\tau^{-1}(n-1) \;\;}{\alpha}, \dots, \pos{\;\; \tau^{-1}(s)}{\gamma}, \dots, a_{\tau(n-2)}, \pos{n-1 \;\;}{\beta}, \pos{n}{\beta}) \\
&\stackrel{\text{TS}}{=}
g(a_{\tau(1)}, \dots, \pos{\tau^{-1}(n-1) \;\;}{\gamma}, \dots, \pos{\;\; \tau^{-1}(s)}{\alpha}, \dots, a_{\tau(n-2)}, \pos{n-1 \;\;}{\beta}, \pos{n}{\beta})
\stackrel{\eqref{eq:f2gtau2}}{=}
f(a_1, \dots, \pos{r}{\beta}, \dots, \pos{s}{\alpha}, \dots, a_{n-2}, \pos{n-1 \;\;}{\gamma}, \pos{n}{\gamma}) \\
&\stackrel{\text{TS}}{=}
f(a_1, \dots, \pos{r}{\alpha}, \dots, \pos{s}{\beta}, \dots, a_{n-2}, \pos{n-1 \;\;}{\gamma}, \pos{n}{\gamma})
\stackrel{\eqref{eq:f2gtau2}}{=}
g(a_{\tau(1)}, \dots, \pos{\tau^{-1}(n-1) \;\;}{\gamma}, \dots, \pos{\;\; \tau^{-1}(s)}{\beta}, \dots, a_{\tau(n-2)}, \pos{n-1 \;\;}{\alpha}, \pos{n}{\alpha}) \\
&\stackrel{\text{TS}}{=}
g(a_1, \dots, \pos{r}{\beta}, \dots, \pos{s}{\gamma}, \dots, a_{n-2}, \pos{n-1 \;\;}{\alpha}, \pos{n}{\alpha})
= g(\vect{a}).
\end{align*}
This shows that $f(\vect{a}) = g(\vect{a})$ for all $\vect{a} \in A^n$, i.e., $f = g$.
\end{proof}

The following example shows that, in Theorem~\ref{thm:symmetric-reconstructible} and Proposition~\ref{prop:symmetric-weakly-reconstructible}, the bound $n \geq 4$ is sharp when $k = 2$.

\begin{example}
\label{ex:symmofo}
Let $A = \{0, 1\}$, let $a, b, c, d \in B$, and let $f, g, h \colon A^3 \to B$ be given by the following table.

\begin{center}
\begin{tabular}{|ccc|ccc|}
\hline
$x$ & $y$ & $z$ & $f(x, y, z)$ & $g(x, y, z)$ & $h(x, y, z)$ \\
\hline
$0$ & $0$ & $0$ & $a$ & $a$ & $a$ \\
$0$ & $0$ & $1$ & $b$ & $c$ & $b$ \\
$0$ & $1$ & $0$ & $b$ & $c$ & $b$ \\
$0$ & $1$ & $1$ & $c$ & $b$ & $b$ \\
$1$ & $0$ & $0$ & $b$ & $c$ & $c$ \\
$1$ & $0$ & $1$ & $c$ & $b$ & $c$ \\
$1$ & $1$ & $0$ & $c$ & $b$ & $c$ \\
$1$ & $1$ & $1$ & $d$ & $d$ & $d$ \\
\hline
\end{tabular}
\end{center}
Functions $f$ and $g$ are totally symmetric, and $h$ is determined by the order of first occurrence, and these functions are pairwise nonequivalent (unless $b = c$). It is not difficult to verify that for every $I \in \couples$, both $f_I$, $g_I$, and $h_I$ are equivalent to the function $(0, 0) \mapsto a$, $(0, 1) \mapsto b$, $(1, 0) \mapsto c$, $(1, 1) \mapsto d$. Hence $f$, $g$, and $h$ are reconstructions of each other.
\end{example}


\section{Functions determined by $(\pr, \supp)$ are reconstructible}
\label{sec:prsupp}

The remainder of this paper deals with the reconstructibility of functions that are weakly determined by the order of first occurrence. We first consider a special subclass, namely the functions determined by $(\pr, \supp)$, and we show that this subclass is reconstructible. In the case of functions defined on a two-element set, this subclass actually coincides with the class of all functions weakly determined by the order of first occurrence.

A function $f \colon A^n \to B$ is \emph{determined by $(\pr, \supp)$} if there exists a map $f^* \colon A \times \mathcal{P}(A) \to B$ and $i \in \nset{n}$ such that $f = f^* \circ(\pr_i^{(n)}, {\supp}|_{A^n})$.

\begin{lemma}
\label{lem:prsuppofo}
\begin{inparaenum}[\rm (i)]
\item\label{lem:prsuppofo:minors}
If $f = f^* \circ (\pr_i^{(n)}, {\supp}|_{A^n})$ for some $i \in \nset{n}$ and $f^* \colon A \times \mathcal{P}(A) \to B$, then $f_I = f^* \circ (\pr_{\delta_I(i)}^{(n-1)}, {\supp}|_{A^{n-1}})$ for all $I \in \couples$.

\item\label{lem:prsuppofo:tool}
Let $f^* \colon A \times \mathcal{P}(A) \to B$ and $f^\sharp \colon A^\sharp \to B$ be functions satisfying $f^\sharp(\vect{a}) = f^*(a_1, \supp(\vect{a}))$ for all $\vect{a} \in A^\sharp$. Then $f^\sharp \circ {\ofo}|_{A^n} = f^* \circ (\pr_1^{(n)}, {\supp}|_{A^n})$.

\item\label{lem:prsuppofo1}
Every function determined by $(\pr, \supp)$ is weakly determined by the order of first occurrence.

\item\label{lem:prsuppofoBoolean}
If $\card{A} = 2$, then a function $f \colon A^n \to B$ is determined by $(\pr, \supp)$ if and only if it is weakly determined by the order of first occurrence.
\end{inparaenum}
\end{lemma}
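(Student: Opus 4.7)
The plan is to verify (i) and (ii) by directly unfolding definitions, derive (iii) as a corollary of (ii) by precomposing with a suitable permutation, and deduce (iv) from (iii) together with the structural description of functions determined by $\ofo$ on a two-element set recorded in the remark just after Remark~\ref{rem:restrord}. None of the four parts looks like it will present a real obstacle; the only point requiring some care is keeping track of the direction of the permutation in (iii) and (iv).

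I would start with (i). Directly from the definition of identification minor, $f_I(\vect{a}) = f(\vect{a}\delta_I) = f^*((\vect{a}\delta_I)_i, \supp(\vect{a}\delta_I))$. The earlier remark gives $\supp(\vect{a}\delta_I) = \supp(\vect{a})$, and one has $(\vect{a}\delta_I)_i = a_{\delta_I(i)} = \pr_{\delta_I(i)}^{(n-1)}(\vect{a})$, yielding $f_I = f^* \circ (\pr_{\delta_I(i)}^{(n-1)}, {\supp}|_{A^{n-1}})$. For (ii), the definition of $\ofo$ forces $\ofo(\vect{a})$ to start with $a_1$ (since position $1$ is the first occurrence of the value $a_1$) and to preserve the underlying set: $\supp(\ofo(\vect{a})) = \supp(\vect{a})$. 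Hence $f^\sharp(\ofo(\vect{a})) = f^*(a_1, \supp(\vect{a})) = (f^* \circ (\pr_1^{(n)}, {\supp}|_{A^n}))(\vect{a})$.

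For (iii), suppose $f = f^* \circ (\pr_i^{(n)}, {\supp}|_{A^n})$ and pick any $\sigma \in \symm{n}$ with $\sigma(1) = i$ (for instance, the identity if $i = 1$, and the transposition $(1 \; i)$ otherwise). Define $f^\sharp \colon A^\sharp \to B$ by $f^\sharp(\vect{b}) := f^*(b_1, \supp(\vect{b}))$. Applying (ii) with $\vect{a}\sigma$ in place of $\vect{a}$ gives $f^\sharp(\ofo(\vect{a}\sigma)) = f^*(a_{\sigma(1)}, \supp(\vect{a}\sigma)) = f^*(a_i, \supp(\vect{a})) = f(\vect{a})$, so $f$ is weakly determined by the order of first occurrence.

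For (iv), the forward direction is exactly (iii). For the converse, assume $\card{A} = 2$ and that $f(\vect{a}) = g^*(\ofo(\vect{a}\sigma))$ for some $g^* \colon A^\sharp \to B$ and $\sigma \in \symm{n}$. Setting $g(\vect{b}) := g^*(\ofo(\vect{b}))$ makes $g$ determined by $\ofo$, so the Boolean remark immediately after Remark~\ref{rem:restrord} supplies $\phi, \gamma \colon A \to B$ with $g(\vect{b}) = \phi(b_1)$ when $b_1 = \dots = b_n$ and $g(\vect{b}) = \gamma(b_1)$ otherwise. Since permuting entries preserves both ``all entries equal'' and $\supp$, the identity $f(\vect{a}) = g(\vect{a}\sigma)$ becomes $f(\vect{a}) = \phi(a_{\sigma(1)})$ when $\card{\supp(\vect{a})} = 1$ and $f(\vect{a}) = \gamma(a_{\sigma(1)})$ otherwise. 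Defining $f^* \colon A \times \mathcal{P}(A) \to B$ by $f^*(a, \{a\}) := \phi(a)$ and $f^*(a, A) := \gamma(a)$ (with remaining values arbitrary), we obtain $f = f^* \circ (\pr_{\sigma(1)}^{(n)}, {\supp}|_{A^n})$, so $f$ is determined by $(\pr, \supp)$.
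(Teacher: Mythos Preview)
Your proof is correct and follows essentially the same approach as the paper's: parts (i)--(iii) are argued identically, and in (iv) the only difference is that you route the converse through the Boolean characterization remark (the $\phi,\gamma$ description) whereas the paper defines $f^*$ directly from the four relevant values of $f^\sharp$, which amounts to the same thing.
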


\begin{proof}
\begin{inparaenum}[\rm (i)]
\item
Let $f = f^* \circ (\pr_i^{(n)}, {\supp}|_{A^n})$, and let $I \in \couples$. Then for any $\vect{b} \in A^{n-1}$, we have
\[
f_I(\vect{b}) = f(\vect{b} \delta_I) = f^*(\pr_i^{(n)}(\vect{b} \delta_I), \supp(\vect{b} \delta_I)) = f^*(\pr_{\delta_I(i)}^{(n-1)}(\vect{b}), \supp(\vect{b})).
\]

\item
Let $\vect{a} \in A^n$. Since $\ofo(\vect{a}) \in A^\sharp$ and the first component of $\ofo(\vect{a})$ is $a_1$, it holds that $f^\sharp(\ofo(\vect{a})) = f^*(a_1, \supp(\ofo(\vect{a})))$. Since $\supp(\ofo(\vect{a})) = \supp(\vect{a})$, we have $f^*(a_1, \supp(\ofo(\vect{a}))) = f^*(a_1, \supp(\vect{a}))$. Consequently, $f^\sharp \circ {\ofo}|_{A^n} = f^* \circ (\pr_1^{(n)}, {\supp}|_{A^n})$.

\item
Assume that $f \colon A^n \to B$ be determined by $(\pr, \supp)$. Then there exists a map $f^* \colon A \times \mathcal{P}(A) \to B$ and $i \in \nset{n}$ such that $f = f^* \circ(\pr_i^{(n)}, {\supp}|_{A^n})$. Let $\sigma \in \symm{n}$ be any permutation that maps $1$ to $i$, and let $f^\sharp \colon A^\sharp \to B$ be given by the rule $f^\sharp(\vect{a}) = f^*(a_1, \supp(\vect{a}))$, for all $\vect{a} \in A^\sharp$. It follows from part~\eqref{lem:prsuppofo:tool} and from the fact that $\supp(\vect{a}) = \supp(\vect{a} \sigma)$ that for all $\vect{a} \in A^n$, it holds that
\begin{multline*}
f(\vect{a})
= f^* \circ (\pr_i^{(n)}, {\supp}|_{A^n})(\vect{a})
= f^* \circ (\pr_1^{(n)}, {\supp}|_{A^n})(\vect{a} \sigma)
= f^\sharp \circ {\ofo}|_{A^n} (\vect{a} \sigma).
\end{multline*}
Thus, $f \equiv f^\sharp \circ {\ofo}|_{A^n}$, i.e., $f$ is weakly determined by the order of first occurrence.

\item
Without loss of generality, we assume that $A = \{0, 1\}$.
In light of part~\eqref{lem:prsuppofo1}, we only need to show that if $f \colon A^n \to B$ is weakly determined by the order of first occurrence, then it is determined by $(\pr, \supp)$. Let $f^\sharp \colon A^\sharp \to B$, $\sigma \in \symm{n}$, and assume that $f(\vect{a}) = f^\sharp(\ofo(\vect{a} \sigma))$ for all $\vect{a} \in A^n$. Let $f^* \colon A \times \mathcal{P}(A) \to B$ be any function satisfying
\begin{align*}
f^*(0, \{0\}) &= f^\sharp(0), &
f^*(0, \{0, 1\}) &= f^\sharp(0, 1), \\
f^*(1, \{1\}) &= f^\sharp(1), &
f^*(1, \{0, 1\}) &= f^\sharp(1, 0).
\end{align*}
Then $f^*$ satisfies the condition $f^*(a_1, \supp(\vect{a})) = f^\sharp(\vect{a})$ for all $\vect{a} \in A^\sharp$. By part~\eqref{lem:prsuppofo:tool}, we have
\begin{multline*}
f(\vect{a})
= f^\sharp(\ofo(\vect{a} \sigma))
= f^* \circ (\pr_1^{(n)}, {\supp}|_{A^n})(\vect{a} \sigma)
= f^* \circ (\pr_{\sigma(1)}^{(n)}, {\supp}|_{A^n}) (\vect{a}),
\end{multline*}
that is, $f$ is determined by $(\pr, \supp)$.
\end{inparaenum}
\end{proof}

\begin{theorem}
\label{thm:prsupp-reconstructible}
Assume that $n \geq k + 2$, $\card{A} = k$, and $f \colon A^n \to B$ is determined by $(\pr, \supp)$. Then $f$ is reconstructible.
\end{theorem}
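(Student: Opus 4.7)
The plan is to split into two cases. If $f$ is determined by $\supp$, then reconstructibility follows at once from Proposition~\ref{prop:suppreconstructible} since $n \geq k + 2 > k$. Otherwise, using that the property of being determined by $(\pr, \supp)$ is preserved under equivalence (because $\supp$ is symmetric, so permuting arguments merely shifts the index $i$), I may assume after relabeling that $f = f^* \circ (\pr_1^{(n)}, \supp|_{A^n})$ and that there exists $S^* \in \mathcal{P}(A)$ for which $f^*(\cdot, S^*)$ is non-constant on $S^*$. Since $\delta_I(1) = 1$ for every $I \in \couples$, Lemma~\ref{lem:prsuppofo}\eqref{lem:prsuppofo:minors} yields $f_I = h := f^* \circ (\pr_1^{(n-1)}, \supp|_{A^{n-1}})$ for every $I$; in particular, $f$ has a unique identification minor.

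Let $g$ be a reconstruction of $f$. For each $I$, $g_I \equiv h$, so there exists $\tau_I \in \symm{n-1}$ with $g_I(\vect{b}) = h(\vect{b}\tau_I) = f^*(b_{\tau_I(1)}, \supp(\vect{b}))$; set $i_I := \tau_I(1)$. I would first argue that $i_I$ is unambiguously determined by $g_I$. Since $\tau_I$ is unique up to right-multiplication by elements of $\Inv h$, it suffices to check that every $\sigma \in \Inv h$ fixes $1$. Otherwise, choosing an $(n-1)$-tuple $\vect{b}$ with $\supp(\vect{b}) = S^*$ and arbitrary values of $b_1, b_{\sigma(1)} \in S^*$ (feasible because $n - 1 \geq k + 1$ so every $S \in \mathcal{P}(A)$ lies in the range of $\supp|_{A^{n-1}}$) would force $f^*(\cdot, S^*)$ to be constant on $S^*$, a contradiction.

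Let $q_I$ be the unique element of $\nset{n} \setminus \{\max I\}$ with $\delta_I(q_I) = i_I$; concretely $q_I = i_I$ if $i_I < \max I$ and $q_I = i_I + 1$ otherwise. For any $\vect{a} \in A^n$ with $a_{\min I} = a_{\max I}$, writing $\vect{a} = \vect{b}\delta_I$ gives $b_{i_I} = a_{q_I}$ and $\supp(\vect{b}) = \supp(\vect{a})$, whence
\[
g(\vect{a}) = g_I(\vect{b}) = f^*(a_{q_I}, \supp(\vect{a})).
\]
Because $n > k$, every $\vect{a} \in A^n$ admits at least one valid $I$, so the family $(q_I)_{I \in \couples}$ determines $g$ completely; and if $\vect{a}$ admits two valid couples $I, J$, then single-valuedness of $g$ forces $f^*(a_{q_I}, \supp(\vect{a})) = f^*(a_{q_J}, \supp(\vect{a}))$.

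The remaining step, which I expect to be the main obstacle, is to produce an index $s \in \nset{n}$ with $g(\vect{a}) = f^*(a_s, \supp(\vect{a}))$ for every $\vect{a}$; this yields $g = f^* \circ (\pr_s^{(n)}, \supp|_{A^n})$ and hence $g \equiv f$ via any permutation sending $1$ to $s$. Applying Lemma~\ref{lem:prsuppofo}\eqref{lem:prsuppofo:minors} to the candidate $f^* \circ (\pr_s^{(n)}, \supp|_{A^n})$ shows that the $q$-pattern associated with $s$ is $q_I = s$ when $\max I \neq s$, and $q_I = \min I$ when $\max I = s$. To locate $s$, I would probe the family $(q_I)$ on test tuples: tuples $\vect{a}$ with $\supp(\vect{a}) = S^*$ and exactly one pair of coincident entries pin down $a_{q_I}$ unambiguously through the non-constancy of $f^*(\cdot, S^*)$, supplying a candidate for $a_s$; tuples with two pairs of coincident entries (available because $n \geq k + 2$) then propagate the choice across different couples via the consistency relation. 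The bulk of the remaining work is a finite case analysis over the relative position of $s$ inside each couple $I$, complicated by the possibility that $f^*(\cdot, S)$ collapses distinctions for supports $S \neq S^*$; this bookkeeping is the most delicate part of the argument.
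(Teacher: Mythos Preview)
Your approach matches the paper's: set up indices $q_I$ from the equivalences $g_I \equiv f^* \circ (\pr_1^{(n-1)}, \supp|_{A^{n-1}})$, derive $g(\vect{a}) = f^*(a_{q_I}, \supp(\vect{a}))$ whenever $a_{\min I} = a_{\max I}$, and then analyze the family $(q_I)$ via tuples admitting two valid couples.

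The step you flag as the main obstacle is shorter than you anticipate. The paper does not isolate ``$f$ not determined by $\supp$'' as a standing hypothesis; instead it proves directly that either $f$ is determined by $\supp$, or there exists $s$ with, for every $J$, $q_J = s$ or $J = \{s, q_J\}$. Concretely: if $q_I \in I$ for all $I$, comparing two disjoint couples $\{1,2\}$ and $\{3,4\}$ on a tuple with arbitrary support $S$ forces $f^*(\alpha,S)=f^*(\beta,S)$ for all $\alpha,\beta\in S$; if some $q_I \notin I$, set $s := q_I$ and, for each $J$, compare a tuple valid for both $I$ and $J$ (bringing in an auxiliary $K$ disjoint from $J$ when $s \in J$) to see that any deviation from the stated pattern again forces $f^*(\cdot,S)$ constant. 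In the second alternative $a_{q_J} = a_s$ holds automatically whenever $a_{\min J}=a_{\max J}$, giving $g = f^* \circ (\pr_s^{(n)}, \supp|_{A^n}) \equiv f$. Your well-definedness argument for $i_I$ is correct but unnecessary in this dichotomy, and your concern about supports $S \neq S^*$ dissolves: ambiguity in $q_I$ and collapse of $f^*(\cdot,S)$ at some supports are both absorbed into the ``determined by $\supp$'' branch rather than complicating the case analysis. The whole thing is a handful of lines per sub-case, not the delicate bookkeeping you feared.
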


\begin{proof}
Assume that $f = f^* \circ (\pr_i^{(n)}, {\supp}|_{A^n})$ for some $i \in \nset{n}$ and $f^* \colon A \times \mathcal{P}(A) \to B$. By Lemma~\ref{lem:prsuppofo}~\eqref{lem:prsuppofo:minors}, $f_I \equiv f^* \circ (\pr_1^{(n-1)}, {\supp}|_{A^{n-1}})$ for all $I \in \couples$. Let $g \colon A^n \to B$ be a reconstruction of $f$. Then for every $I \in \couples$, there exists a bijection $\rho_I \in \symm{n-1}$ such that $g_I(\vect{a}) = f^* \circ (\pr_1^{(n-1)}, {\supp}|_{A^{n-1}})(\vect{a} \rho_I)$ for all $\vect{a} \in A^{n-1}$, i.e., $g_I = f^* \circ (\pr_{\rho_I(1)}^{(n-1)}, {\supp}|_{A^{n-1}})$. Let $q_I := \min \delta_I^{-1}(\rho_I(1))$.

Let $\vect{a} \in A^n$. Since $n \geq k + 2$, there exist $I \in \couples$ and $\vect{b} \in A^{n-1}$ such that $\vect{a} = \vect{b} \delta_I$. It holds that $g(\vect{a}) = g(\vect{b} \delta_I) = g_I(\vect{b}) = f^*(b_{\rho_I(1)}, \supp(\vect{b}))$. Since $a_i = b_{\delta_I(i)}$ for every $i \in \nset{n}$ and $\delta_I(1) = \rho_I(1)$, we have $b_{\rho_I(1)} = a_{q_I}$. Therefore,
\begin{equation}
\label{eq:f*aqIsuppa}
g(\vect{a}) = f^*(a_{q_I}, \supp(\vect{a})).
\end{equation}
(In the sequel, we will write ``$\stackrel{J}{=}$'' for a couple $J \in \couples$ to indicate that the equality holds by Equation~\eqref{eq:f*aqIsuppa} for $I = J$.)

\begin{claim}
\label{clm:qIinI}
If $q_I \in I$ for all $I \in \couples$, then $f$ is determined by $\supp$.
\end{claim}
\begin{pfclaim}[Proof of Claim~\ref{clm:qIinI}]
Assume that $q_I \in I$ for all $I \in \couples$. Since $n \geq k + 2$, for every $\alpha, \beta \in A$ and for every $S \subseteq A$ with $\card{S} \geq 2$, there exists a tuple $\vect{u} \in A^n$ such that $\alpha \neq \beta$, $a_1 = a_2 = \alpha$, $a_3 = a_4 = \beta$, $\supp(\vect{u}) = S$. Then
\[
\phi(\alpha, S)
\stackrel{\{1,2\}}{=}
g(\vect{u})
\stackrel{\{3,4\}}{=}
\phi(\beta, S).
\]
Varying $\alpha$ and $\beta$, we have that $f^*(\alpha, S) = f^*(\beta, S)$ for all $\alpha, \beta \in S$, and we can conclude that $f$ is determined by $\supp$.
\end{pfclaim}

\begin{claim}
\label{clm:qIout}
If there exists $I \in \couples$ such that $q_I \notin I$, then $f$ is determined by $\supp$ or for all $J \in \couples$ either $q_J = q_I$ or $J = \{q_I, q_J\}$.
\end{claim}
\begin{pfclaim}[Proof of Claim~\ref{clm:qIout}]
Assume that $I \in \couples$ is a couple that satisfies $q_I \notin I$. Let $J \in \couples$ be a couple. We split the analysis in two cases according to whether $q_I \in J$.

\begin{inparaenum}[\it {Case} 1:]
\item\label{clm:qIout:case1}
$q_I \notin J$; in this case it is not possible that $J = \{q_I, q_J\}$. If $q_J = q_I$, then $J$ has the desired property, so assume that $q_J \neq q_I$. Since $n \geq k + 2$, then for every $S \subseteq A$ with $\card{S} \geq 2$ and for all $\alpha, \beta \in A$ such that $\alpha \neq \beta$, there exists a tuple $\vect{u} \in A^n$ that satisfies $u_{q_I} = \alpha$, $u_{q_J} = \beta$, $u_{\min I} = u_{\max I}$, $u_{\min J} = u_{\max J}$, $\supp(\vect{u}) = S$. Then we have
\[
f^*(\alpha, S)
\stackrel{I}{=}
g(\vect{u})
\stackrel{J}{=}
f^*(\beta, S).
\]
Varying $\alpha$ and $\beta$, we have that $f^*(\alpha, S) = f^*(\beta, S)$ for all $\alpha, \beta \in S$, and we can conclude that $f$ is determined by $\supp$.

\item
$q_I \in J$. If $q_J \in J$, then either $q_J = q_I$ or $J = \{q_I, q_J\}$, and $J$ has the desired property. Assume thus that $q_J \notin J$. Since $n \geq 4$, there exists a couple $K \in \couples$ that is disjoint from $J$; thus $q_I \notin K$. By Case~\ref{clm:qIout:case1}, we have that $f$ is determined by $\supp$ or $q_K = q_I$; thus we assume that $q_K = q_I$. Since $n \geq k + 2$, then for every $S \subseteq A$ with $\card{S} \geq 2$ and for all $\alpha, \beta \in A$ such that $\alpha \neq \beta$, there exists a tuple $\vect{u} \in A^n$ that satisfies $u_{q_J} = \alpha$, $u_{q_K} = \beta$, $u_{\min J} = u_{\max J}$, $u_{\min K} = u_{\max K}$, $\supp(\vect{u}) = S$. Then we have
\[
f^*(\alpha, S)
\stackrel{J}{=}
g(\vect{u})
\stackrel{K}{=}
f^*(\beta, S).
\]
Varying $\alpha$ and $\beta$, we have that $f^*(\alpha, S) = f^*(\beta, S)$ for all $\alpha, \beta \in S$, and we can conclude that $f$ is determined by $\supp$.
\end{inparaenum}
\end{pfclaim}

(Proof of Theorem~\ref{thm:prsupp-reconstructible} continued) By Claims~\ref{clm:qIinI} and~\ref{clm:qIout}, $f$ is determined by $\supp$ or there exists an element $s \in \nset{n}$ such that for all $I \in \couples$, $q_I = s$ or $I = \{q_I, s\}$. In the former case, $f$ is reconstructible by Proposition~\ref{prop:suppreconstructible}. We claim that $g \equiv f$ in the latter case. Let $\vect{a} \in A^n$. Since $n > \card{A}$, there exist $I \in \couples$ and $\vect{b} \in A^{n-1}$ such that $\vect{a} = \vect{b} \delta_I$. Since $a_{q_I} = a_s$, Equation~\eqref{eq:f*aqIsuppa} yields $g(\vect{a}) = f^*(a_s, \supp(\vect{a}))$. We conclude that $g = (\pr_s^{(n)}, {\supp}|_{A^n})$; hence $g \equiv f$, and $f$ is reconstructible.
\end{proof}

\begin{corollary}
\label{cor:Booleanwofo-reconstructible}
Assume that $n \geq 4$ and $\card{A} = 2$. If $f \colon A^n \to B$ is weakly determined by the order of first occurrence, then $f$ is reconstructible.
\end{corollary}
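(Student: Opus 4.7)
The proof of Corollary~\ref{cor:Booleanwofo-reconstructible} will be essentially a one-step combination of two results already established in the section. The plan is to reduce immediately to Theorem~\ref{thm:prsupp-reconstructible} via the Boolean characterization given by Lemma~\ref{lem:prsuppofo}.

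More precisely, I would proceed as follows. Let $f \colon A^n \to B$ with $\card{A} = 2$ and $n \geq 4$, and assume that $f$ is weakly determined by the order of first occurrence. By Lemma~\ref{lem:prsuppofo}~\eqref{lem:prsuppofoBoolean}, in the Boolean case the class of functions weakly determined by the order of first occurrence coincides with the class of functions determined by $(\pr, \supp)$; hence $f$ is determined by $(\pr, \supp)$. Setting $k := \card{A} = 2$, the hypothesis $n \geq 4$ is exactly $n \geq k+2$, so Theorem~\ref{thm:prsupp-reconstructible} applies and yields that $f$ is reconstructible.

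Since both ingredients have already been proved, there is no real obstacle here; the corollary is extracted simply by checking that the arity bound $n \geq k+2$ specialises to the stated bound $n \geq 4$ when $k = 2$, and by invoking the Boolean equivalence of the two notions. The only thing worth noting explicitly in the final writeup is that Lemma~\ref{lem:prsuppofo}~\eqref{lem:prsuppofoBoolean} is what collapses the (a priori larger) class of functions weakly determined by the order of first occurrence onto the class handled by Theorem~\ref{thm:prsupp-reconstructible}, so that no further analysis is needed.
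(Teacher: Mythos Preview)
Your proposal is correct and matches the paper's own proof essentially verbatim: the paper invokes Lemma~\ref{lem:prsuppofo}~\eqref{lem:prsuppofoBoolean} to conclude that $f$ is determined by $(\pr,\supp)$, and then applies Theorem~\ref{thm:prsupp-reconstructible} (with $k=2$, so $n\geq 4=k+2$) to obtain reconstructibility.
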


\begin{proof}
If $f$ is as described, then by Lemma~\ref{lem:prsuppofo}~\eqref{lem:prsuppofoBoolean}, $f$ is determined by $(\pr, \supp)$, and by Theorem~\ref{thm:prsupp-reconstructible}, $f$ is reconstructible.
\end{proof}

By Corollary~\ref{cor:Booleanwofo-reconstructible}, the answer to Question~\ref{que:ofo} is positive if $\card{A} = 2$. Example~\ref{ex:symmofo} shows that the bound $n \geq 4$ in Corollary~\ref{cor:Booleanwofo-reconstructible} is sharp. The following corollary shows that the equalities in Proposition~\ref{prop:minord}~\eqref{prop:minord:item3} can be relaxed into equivalences in the case when $\card{A} = 2$ and $n \geq 4$.

\begin{corollary}
\label{cor:gofo}
Assume that $n \geq 4$ and $\card{A} = 2$. Let $f \colon A^n \to B$ and $f^* \colon A^\sharp \to B$. If $f_I \equiv f^* \circ {\ofo}|_{A^{n-1}}$ for all $I \in \couples$, then $f \equiv f^* \circ {\ofo}|_{A^n}$.
\end{corollary}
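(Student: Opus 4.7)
The plan is to reduce this statement to Theorem~\ref{thm:prsupp-reconstructible} via a direct reconstruction argument. Set $g := f^* \circ {\ofo}|_{A^n}$. Our goal is to prove $f \equiv g$, and we will do so by showing that $f$ is a reconstruction of $g$ and that $g$ is reconstructible.

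First I would establish that $\deck f = \deck g$. By Proposition~\ref{prop:minord}~\eqref{prop:minord:item1}, applied to $g$, we have $g_I = f^* \circ {\ofo}|_{A^{n-1}}$ for every $I \in \couples$. Combining this with the hypothesis $f_I \equiv f^* \circ {\ofo}|_{A^{n-1}}$, we obtain $f_I \equiv g_I$ for every $I \in \couples$. Both decks therefore consist of $\binom{n}{2}$ copies of the equivalence class of $f^* \circ {\ofo}|_{A^{n-1}}$, so $\deck f = \deck g$, i.e., $f$ is a reconstruction of $g$.

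Next, I would invoke the Boolean hypothesis to place $g$ in the scope of Theorem~\ref{thm:prsupp-reconstructible}. By construction, $g$ is determined by the order of first occurrence, hence in particular weakly determined by the order of first occurrence. Since $\card{A} = 2$, Lemma~\ref{lem:prsuppofo}~\eqref{lem:prsuppofoBoolean} implies that $g$ is determined by $(\pr, \supp)$. The arity assumption $n \geq 4 = \card{A} + 2$ is exactly the hypothesis required by Theorem~\ref{thm:prsupp-reconstructible}, so that theorem yields that $g$ is reconstructible.

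Finally, since $f$ is a reconstruction of the reconstructible function $g$, we conclude $f \equiv g = f^* \circ {\ofo}|_{A^n}$, as required. There is no real obstacle here: the work has already been done in Proposition~\ref{prop:minord}, Lemma~\ref{lem:prsuppofo}, and Theorem~\ref{thm:prsupp-reconstructible}, and the only mild subtlety is noticing that the equivalence $f_I \equiv f^* \circ {\ofo}|_{A^{n-1}}$ in the hypothesis is exactly what is needed to match the deck of $g$, whose identification minors are equal (not merely equivalent) to $f^* \circ {\ofo}|_{A^{n-1}}$.
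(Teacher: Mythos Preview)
Your proof is correct and follows essentially the same approach as the paper's: set $g = f^* \circ {\ofo}|_{A^n}$, observe via Proposition~\ref{prop:minord}\eqref{prop:minord:item1} that $f$ is a reconstruction of $g$, and conclude $f \equiv g$ from the reconstructibility of $g$. The only cosmetic difference is that the paper cites Corollary~\ref{cor:Booleanwofo-reconstructible} directly, whereas you unpack that corollary inline by invoking Lemma~\ref{lem:prsuppofo}\eqref{lem:prsuppofoBoolean} and Theorem~\ref{thm:prsupp-reconstructible}.
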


\begin{proof}
Let $g = f^* \circ {\ofo}|_{A^n}$. By Proposition~\ref{prop:minord}~\eqref{prop:minord:item1}, $g_I = f^* \circ {\ofo}|_{A^n}$. By Corollary~\ref{cor:Booleanwofo-reconstructible}, $g$ is reconstructible. Since $f$ is a reconstruction of $g$, we have $f \equiv f^* \circ {\ofo}|_{A^n}$.
\end{proof}


\section{Equivalence-equalizing couples for functions determined by the order of first occurrence}
\label{sec:eqeq}

We now investigate conditions under which functions determined by the order of first occurrence that are equivalent are actually equal. The main result of this section, Theorem~\ref{thm:eqeq}, finds applications in Section~\ref{sec:results}, where we show that the class of functions weakly determined by the order of first occurrence is weakly reconstructible.

Let $n$ and $k$ be integers greater than or equal to $2$. The couple $(n, k)$ is \emph{equivalence-equalizing for functions determined by the order of first occurrence} (or briefly \emph{equalizing}), if for all $f, g \colon A^n \to B$ with $\card{A} = k$ it holds that if $f$ and $g$ are determined by the order of first occurrence and $f \equiv g$, then $f = g$. Without loss of generality, we will assume throughout this section that $A = \nset{k}$.

\begin{theorem}
\label{thm:eqeq}
Let $n$ and $k$ be integers greater than or equal to $2$. The couple $(n, k)$ is equalizing if and only if
$k \equiv 1, 2 \pmod{4}$ and $n \geq k + 1$; or
$k \equiv 0, 3 \pmod{4}$ and $n \geq k + 2$.
\end{theorem}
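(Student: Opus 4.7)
The plan is to prove both implications separately.

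For the necessity direction (``only if''), I would construct counterexamples when the conditions fail. If $n\le k$, then $A^n_\neq\subseteq A^\sharp_n$ is nonempty; for any $\vect{c}\in A^n_{\neq}$ there is a \emph{unique} preimage under $\ofo|_{A^n}$, namely $\vect{c}$ itself. The equivalence requirement $f(\vect{a})=g(\vect{a}\sigma)$ at such $\vect{a}$ reduces to $f^*(\vect{c})=g^*(\vect{c}\sigma)$, so I can take any non-identity $\sigma\in\symm{n}$ and choose the common value $f^*|_{A^\sharp_n}$ on length-$n$ tuples to differ from $g^*|_{A^\sharp_n}$ subject to the single relation $f^*(\vect{c})=g^*(\vect{c}\sigma)$. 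The harder subcase is $k\equiv 0,3\pmod 4$ and $n=k+1$: here I would build explicit $f\not\equiv g$ with the same deck using a scheme of the same flavor as Definition~\ref{def:fGPphi}, choosing $\sigma$ to be, say, a cyclic shift whose sign/parity matches the mod-4 obstruction so that the $\ofo$-labels on length-$k$ tuples get permuted coherently by $\sigma$. Concretely, I expect this to be the construction that will reappear in Proposition~\ref{prop:FnequivG}.

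For the sufficiency direction (``if''), suppose $f=g\circ\sigma$ with both functions determined by $\ofo$, and let $f^*,g^*\colon A^\sharp\to B$ be witnesses. The strategy reduces to the following observation: for each $\vect{c}\in A^\sharp_n$, if some $\vect{a}\in A^n$ satisfies $\ofo(\vect{a})=\ofo(\vect{a}\sigma)=\vect{c}$, then $f^*(\vect{c})=f(\vect{a})=g(\vect{a}\sigma)=g^*(\vect{c})$. So it suffices to produce, for every $\vect{c}$ of length $r\le\min(n,k)$, such a "common preimage" tuple $\vect{a}$. I would proceed by induction on $r$: the case $r=1$ is trivial (constant tuples are fixed by $\sigma$), and for $2\le r<k$ there are at least two repeats available, which provides enough freedom to place the first occurrences of $c_1,\dots,c_r$ so that they arise in the correct order both in $\vect{a}$ and in $\vect{a}\sigma$ (one simply makes $c_1$ occupy both positions $1$ and $\sigma(1)$, $c_2$ occupy a position early in both orderings, and fill the rest with $c_1$).

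The delicate case is $r=k$, where $\vect{a}$ uses every element of $A$ exactly once or twice. When $n\ge k+2$ there are at least two repeated slots, giving enough freedom to force $\ofo(\vect{a}\sigma)=\vect{c}$ regardless of $\sigma$; this handles both residue classes mod $4$. When $n=k+1$ there is a unique repeated slot, and the parity of $k$ enters through a permutation $\sigma_\vect{c}\in\symm{k}$ which records how the reading order of first occurrences changes when we pass from $\vect{a}$ to $\vect{a}\sigma$: the set of achievable $\sigma_\vect{c}$ coincides with a specific coset of $\alt{k}$, and the product of signs over all $\vect{c}\in A^\sharp\cap A^k$ works out to $(-1)^{\binom{k}{2}\cdot\text{something}}$, which is trivial precisely when $k\equiv 1,2\pmod 4$. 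This combinatorial–group-theoretic step, matching the reading-order permutation to the parity of $k$, is where I expect all the work to concentrate and is the main obstacle. Once it is resolved, we conclude $f^*|_{A^\sharp_n}=g^*|_{A^\sharp_n}$, and by Remark~\ref{rem:restrord} this yields $f=g$.
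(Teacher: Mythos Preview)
Your sufficiency strategy has a genuine gap. You propose to show $f^*=g^*$ by finding, for each $\vect{c}\in A^\sharp_n$, a single tuple $\vect{a}\in A^n$ with $\ofo(\vect{a})=\ofo(\vect{a}\sigma)=\vect{c}$. This ``common preimage'' trick does work when $n\ge 2k-1$ (and the paper uses exactly this in Lemma~\ref{lem:equalizing}\eqref{lem:equalizing:item2k-1}), but it \emph{fails} in the tight range $k+1\le n\le 2k-2$ that the theorem is really about. Concretely, take $k=5$ (so $k\equiv 1\pmod 4$), $n=6=k+1$, and $\sigma=\theta_6=(1\;2)(3\;4)(5\;6)$. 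Every $\vect{a}\in\nset{5}^6$ with $\ofo(\vect{a})=\vect{k}$ is of the form $\vect{k}\delta_I$ for some $I\in\couples[6]$, and by Lemma~\ref{lem:lambda} one has $\ofo(\vect{a}\theta_6)=\vect{k}\lambda^\ell_5$ with $\lambda^\ell_5\in\{(2\;3)(4\;5),(1\;2)(4\;5),(1\;2)(3\;4)\}$, none of which is the identity. Hence there is \emph{no} $\vect{a}$ with $\ofo(\vect{a})=\ofo(\vect{a}\theta_6)=\vect{k}$, yet $(6,5)$ is equalizing. The same obstruction arises already for $r<k$: with $k=4$, $n=6$, $\sigma=\theta_6$, and $\vect{c}=(1,2,3,4)$, no common preimage exists either, so your inductive claim for $r<k$ also breaks down.

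What the paper does instead is an indirect ``zigzag'' argument: declare $\pi\triangleright^\sigma_k\tau$ when some $\vect{a}$ has $\ofo(\vect{a})=\vect{k}\pi$ and $\ofo(\vect{a}\sigma)=\vect{k}\tau$, and then close under alternating chains $\pi_1\triangleright^\sigma_k\pi_2\triangleleft^\sigma_k\pi_3\triangleright^\sigma_k\cdots$ to obtain a relation $\sim^\sigma_k$. One checks that $f^+(\vect{k}\pi)=g^+(\vect{k}\tau)$ whenever $\pi\sim^\sigma_k\tau$, so reflexivity of $\sim^\sigma_k$ yields $f^+=g^+$. Reflexivity is then reduced (Lemmas~\ref{lem:simreflUG}--\ref{lem:translation}) to showing $U^\sigma_k\cap G^\sigma_k\neq\emptyset$, where $U^\sigma_k=\{\pi:\id\triangleright^\sigma_k\pi\}$ and $G^\sigma_k$ is the group generated by differences $\pi^{-1}\tau$ with $\pi,\tau\in U^\sigma_k$. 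The bulk of the work (Lemmas~\ref{lem:sigmarules}--\ref{lem:vsigmaG}) is a detailed case analysis on the ``fundamental partition'' of $\check{\sigma}_{k+1}$ to exhibit enough elements of $G^\sigma_k$ that a specific element $\pi^\sigma_k\in U^\sigma_k$ lies in $G^\sigma_k$. Your intuition that parity and a coset of $\alt{k}$ are involved is correct---indeed $U^{\theta_n}_k$ consists of permutations of a single parity, which is exactly why the method fails for $n=k+1\equiv 0,1\pmod 4$---but turning this into a proof for all other $\sigma$ requires the group-theoretic machinery, not the direct construction you sketch.

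On the necessity side for $n=k+1$ with $k\equiv 0,3\pmod 4$: your guess that $\sigma$ should be a cyclic shift is wrong. The permutation that witnesses non-equalizing is $\theta_n$, the product of disjoint adjacent transpositions $(1\;2)(3\;4)\cdots$ (or $(2\;3)(4\;5)\cdots$ when $n$ is odd); see Lemma~\ref{lem:thetannotkequalizing}. A cyclic shift will not produce the required uniform parity flip on the $\lambda^\ell_k$. Also, Proposition~\ref{prop:FnequivG} concerns $n=k+2$, not $n=k+1$; the $n=k+1$ obstruction is handled separately in Lemma~\ref{lem:thetannotkequalizing}, though both use the same parity-based functions $\phi^+_k,\psi^+_k$ of Definition~\ref{def:phipsi}.
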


The proof of Theorem~\ref{thm:eqeq} is rather long and somewhat technical. We proceed in small steps, establishing several auxiliary lemmas. The general plan is the following. We are going to proceed by induction on $k$. We first establish in Lemma~\ref{lem:equalizing} the basis of the induction and in Lemma~\ref{lem:indstep} the arguments needed in the inductive step. The inductive argument relies on Lemma~\ref{lem:k-equalizing}, a necessary and sufficient condition for a permutation $\sigma$ to be $k$-equalizing (see definition below). The necessity of the condition is established in Lemmas~\ref{lem:thetannotkequalizing}, \ref{lem:equalizing}~\eqref{lem:equalizing:itemnleqk}, and \ref{lem:indstep}. Proving sufficiency occupies most of this section. We formulate a sufficient condition for a permutation $\sigma$ to be $k$-equalizing (see Lemma~\ref{lem:translation}). This condition is expressed in terms of a permutation group that is derived in a certain way from $\sigma$. Then we develop tools with which we can find some elements of the group the presence of which guarantees that the sufficient condition is satisfied, and we verify that all permutations satisfying the condition of Lemma~\ref{lem:k-equalizing} do also satisfy the condition of Lemma~\ref{lem:translation}.

\begin{definition}
In the argument that follows, the following permutations play a very special role. For $n \geq 2$, define $\theta_n \in \symm{n}$ as the following product of disjoint adjacent transpositions:
\[
\theta_n :=
\begin{cases}
(1 \; 2) (3 \; 4) \cdots (n - 1 \;\; n), & \text{if $n$ is even,} \\
(2 \; 3) (4 \; 5) \cdots (n - 1 \;\; n), & \text{if $n$ is odd.}
\end{cases}
\]
If $\ell$ and $k$ have the same parity and $1 \leq \ell \leq k$, then we define $\lambda^\ell_k$ as follows:
\[
\lambda^\ell_k :=
\begin{cases}
(1 \; 2)(3 \; 4) \cdots (\ell - 2 \;\; \ell - 1) (\ell + 1 \;\; \ell + 2) \cdots (k - 1 \;\; k), & \text{if $k$ is odd,} \\
(2 \; 3)(4 \; 5) \cdots (\ell - 2 \;\; \ell - 1) (\ell + 1 \;\; \ell + 2) \cdots (k - 1 \;\; k), & \text{if $k$ is even.}
\end{cases}
\]
\end{definition}

\begin{remark}
\label{rem:thetanlambdaparities}
Note that $\theta_n$ is an even permutation if and only if $n \equiv 0, 1 \pmod{4}$. Note also that for any $\ell$, the permutation $\lambda^\ell_{n-1}$ has parity opposite to that of $\theta_n$.
\end{remark}

\begin{lemma}
\label{lem:lambda}
Let $n$ and $k$ be positive integers such that $n = k + 1 \geq 3$, and let $I \in \couples$. Let $\ell := \min(\max I, \theta_n(\max I))$. Then $\ofo(\vect{k} \delta_I \theta_n) = \vect{k} \lambda^\ell_k$.
\end{lemma}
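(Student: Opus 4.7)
The plan is to prove the lemma by directly computing the tuple $\vect{k} \delta_I \theta_n$ and reading off its $\ofo$. Let $i := \min I$ and $j := \max I$. By the definition of $\delta_I$, the tuple $\vect{k} \delta_I$ has the explicit form $(1, 2, \ldots, j-1, i, j, j+1, \ldots, k)$ when $j \leq k$, and $(1, 2, \ldots, k, i)$ when $j = n$; in either case, the only repeated value is $i$, occurring at positions $i$ and $j$.

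The key observation is that $\theta_n$ is a product of disjoint transpositions of \emph{adjacent} positions (and fixes position $1$ exactly when $n$ is odd). Post-composing with $\theta_n$ therefore swaps the entries of $\vect{k} \delta_I$ at each transposed pair of positions. Two cases arise according to the pair containing $j$: either $\theta_n(j) = j+1$ and the special pair is $(j, j+1)$, in which case $\ell = j$; or $\theta_n(j) = j-1$ and the special pair is $(j-1, j)$, in which case $\ell = j-1$. All other transposed pairs are disjoint from $\{j\}$ and consist of positions whose entries in $\vect{k} \delta_I$ are consecutive integers, so that the swap turns $(p, p+1)$ into $(p+1, p)$ uniformly.

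In the first case ($\ell = j$), the special pair holds entries $(i, j)$ before the swap and $(j, i)$ after; in the second ($\ell = j-1$), it holds $(j-1, i)$ before and $(i, j-1)$ after. To compute $\ofo$, I note that $i$ appears in $\vect{k} \delta_I \theta_n$ exactly at positions $\theta_n(i)$ and $\theta_n(j)$; a short verification, using $i < j$ together with the near-neighbour nature of $\theta_n$, shows that the second occurrence of $i$ is always at position $\ell + 1$, with the only subtlety being the sub-case $i = j-1$ of Case B, in which $\{\theta_n(i), \theta_n(j)\} = \{j-1, j\}$. Deleting this second occurrence leaves a tuple that naturally breaks into three pieces: the first $\ell - 1$ entries exhibit the ``swap-adjacent-pairs'' pattern on $\{1, \ldots, \ell-1\}$; the $\ell$-th entry is $\ell$ itself; and the final $k - \ell$ entries exhibit the same pattern on $\{\ell + 1, \ldots, k\}$. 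Comparing with the explicit form of $\vect{k} \lambda^\ell_k$, which by the definition of $\lambda^\ell_k$ fixes $\ell$ and performs precisely those two partial $\theta$-style products on $\{1,\ldots,\ell-1\}$ and $\{\ell+1,\ldots,k\}$ (with the additional fixed point at $1$ when $k$ is even), yields the claimed equality.

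The main obstacle is purely combinatorial bookkeeping in the boundary cases: $j = 2$, where the first block of positions is empty; $j = n$, where the second block is empty and $\vect{k} \delta_I$ takes the alternative form noted above; and $i = j-1$, where both copies of $i$ lie in a single $\theta_n$-transposition and collapse onto positions $\ell$ and $\ell+1$. Each of these is dispatched by a brief separate verification that the general pattern still yields $\vect{k} \lambda^\ell_k$.
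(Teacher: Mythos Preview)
Your approach is the same as the paper's: a direct computation of $\vect{k}\delta_I\theta_n$ followed by reading off its $\ofo$, with a case split according to which side of its $\theta_n$-pair the index $j=\max I$ lies on. One detail needs correction, though. In Case~B ($\theta_n(j)=j-1$, $\ell=j-1$) with $i<j-1$, the two occurrences of $i$ in $\vect{k}\delta_I\theta_n$ are at positions $\theta_n(i)\leq i+1<\ell$ and $\theta_n(j)=j-1=\ell$, so the \emph{second} occurrence sits at position $\ell$, not $\ell+1$. Your blanket claim ``the second occurrence of $i$ is always at position $\ell+1$'' therefore fails here. The conclusion survives: deleting the entry at position $\ell$ lets the value $\ell$ (sitting at position $\ell+1$) shift left into position $\ell$, and the resulting $k$-tuple is still $\vect{k}\lambda^\ell_k$. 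The paper avoids this slip by arguing only that $\min I\leq\ell$, so that whichever of the two positions $\ell,\ell+1$ carries $\min I$ is a repeat and gets deleted, without committing to which one it is.
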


\begin{proof}
The tuple $\vect{k} \delta_I$ contains exactly one repetition of elements: the entries at the positions indexed by $I$ are equal to $\min I$, while the remaining entries are pairwise distinct and also distinct from $\min I$.
If $n$ is even, then $\vect{k} \delta_I$ is of one of the following two forms for some odd $\ell$:
\newlength{\widmaxI}\settowidth{\widmaxI}{$\max I$}
\begin{gather*}
(1, 2, 3, 4, \dots, \ell - 2, \ell - 1, \pos{\smash[b]{\theta_n(\max I)}}{\makebox[\widmaxI][c]{$\ell$}}, \pos{\max I}{\min I}, \ell + 1, \ell + 2, \dots, k - 1, k), \\
(1, 2, 3, 4, \dots, \ell - 2, \ell - 1, \pos{\max I}{\min I}, \pos{\smash[b]{\theta_n(\max I)}}{\makebox[\widmaxI][c]{$\ell$}}, \ell + 1, \ell + 2, \dots, k - 1, k).
\end{gather*}
In fact, $\ell = \min \{\max I, \theta_n(\max I)\}$. Then $\vect{k} \delta_I \theta_n$ is of one of the following two forms:
\begin{gather*}
(2, 1, 4, 3, \dots, \ell - 1, \ell - 2, \pos{\smash[b]{\theta_n(\max I)}}{\min I}, \pos{\max I}{\makebox[\widmaxI][c]{$\ell$}}, \ell + 2, \ell + 1, \dots, k, k - 1), \\
(2, 1, 4, 3, \dots, \ell - 1, \ell - 2, \pos{\max I}{\makebox[\widmaxI][c]{$\ell$}}, \pos{\smash[b]{\theta_n(\max I)}}{\min I}, \ell + 2, \ell + 1, \dots, k, k - 1).
\end{gather*}
Since $\min I \leq \ell$, we have that $\ofo(\vect{k} \delta_I \theta_n)$ equals
\[
(2, 1, 4, 3, \dots, \ell - 1, \ell - 2, \ell, \ell + 2, \ell + 1, \dots, k, k - 1) = \vect{k} \lambda^\ell_k.
\]
The argument in the case when $n$ is odd is similar.
\end{proof}

We say that a permutation $\sigma \in \symm{n}$ is \emph{$k$-equalizing} if for every $f^+, g^+ \colon A^k_{\neq} \to B$, the condition that $f^+(\ofo(\vect{a})) = g^+(\ofo(\vect{a} \sigma))$ for every $\vect{a} \in A^n$ with $\supp(\vect{a}) = A$ implies $f^+ = g^+$.

\begin{definition}
\label{def:phipsi}
Let $\alpha$ and $\beta$ be distinct elements of $B$, and let $\gamma$ be an element of $B$ distinct from $\alpha$. Define the functions $\phi^+_k, \psi^+_k \colon A^k_{\neq} \to B$, as follows.
\begin{itemize}
\item If $k$ is odd, then let
\begin{align}
\label{eq:4f+}
\phi^+_k(\vect{a}) &=
\begin{cases}
\alpha, & \text{if $\vect{a} = \vect{k} \rho$ for some even $\rho \in \symm{k}$,} \\
\beta,  & \text{if $\vect{a} = \vect{k} \rho$ for some odd $\rho \in \symm{k}$.}
\end{cases} \\
\label{eq:4g+}
\psi^+_k(\vect{a}) &=
\begin{cases}
\alpha, & \text{if $\vect{a} = \vect{k} \rho$ for some odd $\rho \in \symm{k}$,} \\
\beta,  & \text{if $\vect{a} = \vect{k} \rho$ for some even $\rho \in \symm{k}$.}
\end{cases}
\end{align}

\item If $k$ is even, then let
\begin{align}
\label{eq:5f+}
\phi^+_k(\vect{a}) &=
\begin{cases}
\alpha, & \text{if $\vect{a} = \vect{k} \rho$ for some even $\rho \in \symm{k}$ with $\rho(1) = 1$,} \\
\beta,  & \text{if $\vect{a} = \vect{k} \rho$ for some odd $\rho \in \symm{k}$ with $\rho(1) = 1$,} \\
\gamma, & \text{otherwise.}
\end{cases} \\
\label{eq:5g+}
\psi^+_k(\vect{a}) &=
\begin{cases}
\alpha, & \text{if $\vect{a} = \vect{k} \rho$ for some odd $\rho \in \symm{k}$ with $\rho(1) = 1$,} \\
\beta,  & \text{if $\vect{a} = \vect{k} \rho$ for some even $\rho \in \symm{k}$ with $\rho(1) = 1$,} \\
\gamma, & \text{otherwise.}
\end{cases}
\end{align}
\end{itemize}
\end{definition}

\begin{lemma}
\label{lem:thetannotkequalizing}
Let $n$ and $k$ be positive integers such that $n = k + 1 \equiv 0, 1 \pmod{4}$. Then $\phi^+_k(\ofo(\vect{a})) = \psi^+_k(\ofo(\vect{a} \theta_n))$ for all $\vect{a} \in A^n$ with $\supp(\vect{a}) = A$. Consequently, $\theta_n$ is not $k$-equalizing.
\end{lemma}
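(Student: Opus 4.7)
The plan is to parametrize the tuples $\vect{a} \in A^n$ with $\supp(\vect{a}) = A$ by pairs $(\rho, I) \in \symm{k} \times \couples$, writing $\vect{a} = \vect{k}\rho\delta_I$. First I would observe directly from the definition of $\delta_I$ that $\vect{k}\delta_I = (1, 2, \ldots, \max I - 1, \min I, \max I, \ldots, k)$, where the only repetition is $\min I$ at position $\max I$; stripping this repeat gives $\ofo(\vect{k}\delta_I) = \vect{k}$. By Remark~\ref{rem:ofolambda} it then follows that $\ofo(\vect{a}) = \ofo(\vect{k}\rho\delta_I) = \vect{k}\rho$, and similarly, applying Lemma~\ref{lem:lambda} followed by Remark~\ref{rem:ofolambda},
\[
\ofo(\vect{a}\theta_n) \;=\; \ofo(\vect{k}\rho\delta_I\theta_n) \;=\; \vect{k}\rho\lambda^\ell_k,
\qquad \text{where } \ell := \min(\max I, \theta_n(\max I)).
\]

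The task then reduces to verifying the identity $\phi^+_k(\vect{k}\rho) = \psi^+_k(\vect{k}\rho\lambda^\ell_k)$ for every $\rho \in \symm{k}$, where the permutations $\phi^+_k$ and $\psi^+_k$ are characterized by the parity of $\rho$ (and, when $k$ is even, by whether $\rho(1) = 1$). The key parity bookkeeping is that since $n = k + 1 \equiv 0, 1 \pmod 4$, the permutation $\theta_n$ is even, so by Remark~\ref{rem:thetanlambdaparities} the permutation $\lambda^\ell_k$ is odd; hence $\rho\lambda^\ell_k$ always has parity opposite to $\rho$. This immediately aligns the two cases of \eqref{eq:4f+}--\eqref{eq:4g+} when $k$ is odd.

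When $k$ is even, one additionally needs $\rho(1) = 1 \iff (\rho\lambda^\ell_k)(1) = 1$. This is immediate from the explicit form of $\lambda^\ell_k$: for even $k$ (hence even $\ell$) the cycles $(2\;3)(4\;5)\cdots(\ell-2\;\ell-1)(\ell+1\;\ell+2)\cdots(k-1\;k)$ all fix $1$, so $\lambda^\ell_k(1) = 1$ and hence $(\rho\lambda^\ell_k)(1) = \rho(1)$. Splitting the three cases of \eqref{eq:5f+}--\eqref{eq:5g+} then matches the three cases of $\psi^+_k$ applied to $\vect{k}\rho\lambda^\ell_k$, completing the verification.

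For the consequence that $\theta_n$ is not $k$-equalizing, I would simply observe that $\phi^+_k \neq \psi^+_k$: evaluating at $\vect{k}$ itself (corresponding to $\rho = \id$, which is even and fixes $1$) gives $\phi^+_k(\vect{k}) = \alpha \neq \beta = \psi^+_k(\vect{k})$. The main technical ingredient is the parity computation of $\lambda^\ell_k$, but this is essentially absorbed into Remark~\ref{rem:thetanlambdaparities}, so I expect no real obstacle; the whole argument is a clean composition of Remark~\ref{rem:ofolambda}, Lemma~\ref{lem:lambda}, and a parity case check.
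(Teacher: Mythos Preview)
Your proposal is correct and follows essentially the same approach as the paper's proof: parametrize $\vect{a}$ as $\vect{k}\rho\delta_I$, apply Lemma~\ref{lem:lambda} together with Remark~\ref{rem:ofolambda} to compute $\ofo(\vect{a})$ and $\ofo(\vect{a}\theta_n)$, invoke Remark~\ref{rem:thetanlambdaparities} for the parity of $\lambda^\ell_k$, and in the even-$k$ case use that $\lambda^\ell_k$ fixes $1$. Your treatment is slightly more explicit in spelling out the case split and in justifying $\phi^+_k \neq \psi^+_k$ via evaluation at $\vect{k}$, but there is no substantive difference.
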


\begin{proof}
Let $\vect{a} \in A^n$ such that $\supp(\vect{a}) = A$. Since $n = k + 1$, we have that $\vect{a} = \vect{k} \pi \delta_I$ for some $I \in \couples$ and for some permutation $\pi \in \symm{k}$. Let $\ell = \min(\max I, \theta_n(\max I))$. Then $\ofo(\vect{a}) = \vect{k} \pi$ and $\ofo(\vect{a} \theta_n) = \ofo(\vect{k} \pi \delta_I \theta_n) =  \vect{k} \pi \lambda^\ell_k$ by Lemma~\ref{lem:lambda} and Remark~\ref{rem:ofolambda}. Since $n \equiv 0, 1 \pmod{4}$, $\lambda^\ell_k$ is an odd permutation by Remark~\ref{rem:thetanlambdaparities}. Hence, the permutations $\pi$ and $\pi \lambda^\ell_k$ have opposite parities. Moreover, if $n \equiv 1 \pmod{4}$, i.e., $k \equiv 0 \pmod{4}$, then $\lambda^\ell_k$ fixes $1$, and it holds that $\pi(1) = \pi \lambda^\ell_k (1)$. Therefore, it holds that $\phi^+_k(\ofo(\vect{a})) = \phi^+_k(\vect{k} \pi) = \psi^+_k(\vect{k} \pi \lambda^\ell_k) = \psi^+_k(\ofo(\vect{a} \theta_n))$. Since $\phi^+_k \neq \psi^+_k$, we conclude that $\theta_n$ is not $k$-equalizing.
\end{proof}

\begin{lemma}
\label{lem:equalizing}
Let $n$ and $k$ be integers greater than or equal to $2$.
\begin{enumerate}[\rm (i)]
\item\label{lem:equalizing:item2k-1}
If $n \geq 2k - 1$, then $(n, k)$ is equalizing.
\item\label{lem:equalizing:itemnleqk}
If $n \leq k$, then $(n, k)$ is not equalizing.
\end{enumerate}
\end{lemma}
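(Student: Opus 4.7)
The plan is as follows. For part~\eqref{lem:equalizing:item2k-1}, write $f = f^* \circ {\ofo}|_{A^n}$ and $g = g^* \circ {\ofo}|_{A^n}$, and choose $\sigma \in \symm{n}$ with $f(\vect{a}) = g(\vect{a}\sigma)$ for all $\vect{a} \in A^n$, so that $f^*(\ofo(\vect{a})) = g^*(\ofo(\vect{a}\sigma))$. By Remark~\ref{rem:restrord} it suffices to show $f^*(\vect{b}) = g^*(\vect{b})$ for every $\vect{b} \in A^\sharp_n$. I therefore aim to exhibit, for each such $\vect{b} = (b_1, \dots, b_r)$, a tuple $\vect{a} \in A^n$ satisfying $\ofo(\vect{a}) = \vect{b} = \ofo(\vect{a}\sigma)$; since the entries of $\vect{b}$ are pairwise distinct elements of $A$, we have $r \leq k$, whence $n \geq 2k - 1 \geq 2r - 1$.

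Constructing such an $\vect{a}$ reduces to producing a partition $\nset{n} = S_1 \sqcup \dots \sqcup S_r$ into nonempty blocks with $\min S_1 < \dots < \min S_r$ and $\min \sigma^{-1}(S_1) < \dots < \min \sigma^{-1}(S_r)$; then setting $a_p := b_i$ for $p \in S_i$ gives the required witness. I will build the partition greedily: for $i = 1, \dots, r - 1$, let $p_i$ be the smallest still-unassigned position and $q_i$ the smallest index with $\sigma(q_i)$ still unassigned, and place both $p_i$ and $\sigma(q_i)$ into $S_i$; finally, dump all remaining positions into $S_r$. Each of the first $r - 1$ steps consumes at most two positions, so the hypothesis $n \geq 2r - 1$ leaves at least one position for $S_r$. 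By the greedy choice, $\sigma(q_i) \geq p_i$ (since $p_i$ is the smallest unassigned) and $q_i \leq \sigma^{-1}(p_i)$ (since $\sigma^{-1}(p_i)$ is a valid candidate for $q_i$), so $\min S_i = p_i$ and $\min \sigma^{-1}(S_i) = q_i$; both sequences strictly increase because the smallest unassigned position jumps after each step, and the same argument bounds $\min S_r$ and $\min \sigma^{-1}(S_r)$ below by $p_{r-1}$ and $q_{r-1}$ respectively.

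For part~\eqref{lem:equalizing:itemnleqk}, I would exhibit an explicit counterexample. With $A = \nset{k}$, set $\vect{a}_0 := (1, 2, \dots, n) \in A^n_{\neq}$ (which exists because $n \leq k$), let $\sigma := (1 \; 2) \in \symm{n}$, and fix distinct $\alpha, \beta \in B$. Define $f^*, g^* \colon A^\sharp \to B$ by $f^*(\vect{a}_0) = \alpha$, $g^*(\vect{a}_0 \sigma) = \alpha$, and all other values equal to $\beta$; put $f := f^* \circ {\ofo}|_{A^n}$ and $g := g^* \circ {\ofo}|_{A^n}$. Because $\vect{a}_0$ and $\vect{a}_0 \sigma$ are length-$n$ tuples with pairwise distinct entries, the equation $\ofo(\vect{a}) = \vect{a}_0$ forces $\vect{a} = \vect{a}_0$, and $\ofo(\vect{a}\sigma) = \vect{a}_0 \sigma$ forces $\vect{a}\sigma = \vect{a}_0 \sigma$, hence again $\vect{a} = \vect{a}_0$. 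Thus $f(\vect{a}) = \alpha$ iff $\vect{a} = \vect{a}_0$ iff $g(\vect{a}\sigma) = \alpha$, so $f \equiv g$ via $\sigma$, while $f(\vect{a}_0) = \alpha \neq \beta = g(\vect{a}_0)$; this shows $(n, k)$ is not equalizing. The only real obstacle is the partition step in part~\eqref{lem:equalizing:item2k-1}: a naive attempt to find $r$ positions that form an increasing chain in the product order on $\nset{n}$ induced by $\sigma$ fails already for ``reversing'' permutations, whose associated poset has Dilworth width as large as $n$. Exploiting instead that each $S_i$ may contain several positions allows the greedy pair-matching to succeed, and $n \geq 2r - 1$ is precisely the threshold that keeps the pair-removal from exhausting $\nset{n}$ before $S_r$ is reached.
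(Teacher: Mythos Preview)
Your proof is correct and follows essentially the same approach as the paper's. For part~\eqref{lem:equalizing:item2k-1}, your greedy partition $S_i = \{p_i, \sigma(q_i)\}$ is precisely the paper's $M_\ell = \{\min N_\ell, \sigma(\min \sigma^{-1}(N_\ell))\}$, and for part~\eqref{lem:equalizing:itemnleqk} your counterexample is the paper's construction specialized to $\sigma = (1\;2)$.
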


\begin{proof}
\eqref{lem:equalizing:item2k-1} Assume that $n \geq 2k - 1$. Let $f, g \colon A^n \to B$ be functions determined by the order of first occurrence. Then there exist maps $f^*, g^* \colon A^\sharp \to B$ such that $f = f^* \circ {\ofo}|_{A^n}$, $g = g^* \circ {\ofo}|_{A^n}$. Assume that $f \equiv g$; thus there exists a permutation $\sigma \in \symm{n}$ such that $f(\vect{a}) = g(\vect{a} \sigma)$ for all $\vect{a} \in A^n$.

Let $\vect{a} \in A^n$, and let $(\alpha_1, \dots, \alpha_r) := \ofo(\vect{a})$. Define the tuple $\vect{b} \in A^n$ as follows.
\begin{compactenum}
\item Set $N_1 := \nset{n}$.
\item For $\ell = 1, \dots, r - 1$, set $M_\ell := \{\min(N_\ell), \sigma(\min(\sigma^{-1}(N_\ell)))\}$ and $N_{\ell+1} := N_\ell \setminus M_\ell$.
\item Set $M_r := N_r$.
\item For every $i \in \nset{n}$, set $b_i := \alpha_\ell$ if $i \in M_\ell$.
\end{compactenum}
Note that $1 \leq \card{M_\ell} \leq 2$ for every $\ell \in \nset{r-1}$; hence $\card{N_\ell} \geq n + 2 - 2 \ell$ for every $\ell \in \nset{r}$. Since $r \leq k$ and we assume that $n \geq 2k - 1$, it holds that $\card{N_r} \geq n + 2 - 2k \geq 1$. Thus $N_1 \supset N_2 \supset \dots \supset N_r \neq \emptyset$, and $(M_1, \dots, M_r)$ is a partition of $\nset{n}$. The tuple $\vect{b} = (b_1, \dots, b_n)$ is thus well defined.
Moreover, it is clear from the construction that $\ofo(\vect{b}) = (\alpha_1, \dots, \alpha_r) = \ofo(\vect{b} \sigma)$.
It follows that $f(\vect{a}) = f(\vect{b}) = g(\vect{b} \sigma) = g(\vect{a})$ for all $\vect{a} \in A^n$, i.e., $f = g$.

\eqref{lem:equalizing:itemnleqk} Let $\sigma \in \symm{n}$ be a non-identity permutation. Let $\alpha$ and $\beta$ be distinct elements of $B$. Define the functions $f, g \colon A^n \to B$ by the rules
\begin{align*}
f(\vect{a}) &=
\begin{cases}
\alpha, & \text{if $\vect{a} = (1, 2, \dots, n)$,} \\
\beta, & \text{otherwise,}
\end{cases} \\
g(\vect{a}) &=
\begin{cases}
\alpha, & \text{if $\vect{a} = (\sigma(1), \sigma(2), \dots, \sigma(n))$,} \\
\beta, & \text{otherwise.}
\end{cases}
\end{align*}
It is clear that both $f$ and $g$ are determined by the order of first occurrence, $f \neq g$, and $f(\vect{a}) = g(\vect{a} \sigma)$ for all $\vect{a} \in A^n$, that is, $f \equiv g$.
\end{proof}

\begin{lemma}
\label{lem:indstep}
For $k \geq 3$, the couple $(n, k)$ is equalizing if and only if $(n, k - 1)$ is equalizing and every $\sigma \in \symm{n}$ is $k$-equalizing.
\end{lemma}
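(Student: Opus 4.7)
The statement is an equivalence, so the plan is to handle the two implications separately. The underlying idea is that a function $f \colon A^n \to B$ determined by the order of first occurrence naturally decomposes into two parts: its \emph{full-support part} $f^+ := f^*|_{A^k_{\neq}}$, which governs $f$ on tuples $\vect{a}$ with $\supp(\vect{a}) = A$, and its remaining part $f^*|_{A^\sharp \setminus A^k_{\neq}}$, which governs $f$ on tuples with $\supp(\vect{a}) \subsetneq A$. Since any $\sigma \in \symm{n}$ leaves $\supp(\vect{a})$ invariant, an equivalence $f(\vect{a}) = g(\vect{a}\sigma)$ splits cleanly along this dichotomy; the two hypotheses in the lemma will handle these two parts.

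For the forward direction, assume $(n,k)$ is equalizing. To show $(n,k-1)$ is equalizing, start with $f,g \colon \nset{k-1}^n \to B$ determined by $\ofo$ with $f \equiv g$ via some $\sigma$. Extend the corresponding $f^*, g^*$ to $\nset{k}^\sharp$ by setting both to a fixed constant $b_0 \in B$ on every tuple whose set contains $k$; this yields $\tilde f, \tilde g \colon \nset{k}^n \to B$ determined by $\ofo$. Because the property ``$k \in \supp(\vect{a})$'' is $\sigma$-invariant, one verifies $\tilde f(\vect{a}) = \tilde g(\vect{a}\sigma)$ in both cases, so $\tilde f \equiv \tilde g$; the hypothesis then gives $\tilde f = \tilde g$, which restricts to $f = g$. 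To show every $\sigma \in \symm{n}$ is $k$-equalizing, take $f^+, g^+ \colon A^k_{\neq} \to B$ satisfying $f^+(\ofo(\vect{a})) = g^+(\ofo(\vect{a}\sigma))$ for all $\vect{a}$ with $\supp(\vect{a}) = A$, and extend to $f^*, g^* \colon A^\sharp \to B$ that agree on all tuples of set $\subsetneq A$ (set them equal to a fixed constant there). Then $f := f^* \circ \ofo|_{A^n}$ and $g := g^* \circ \ofo|_{A^n}$ are determined by $\ofo$ and a straightforward case split on $\supp(\vect{a})$ shows $f(\vect{a}) = g(\vect{a}\sigma)$ for every $\vect{a} \in A^n$. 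The hypothesis gives $f = g$, and reading off values on the full-support tuples yields $f^+ = g^+$ (if $n < k$ the condition is vacuous).

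For the backward direction, assume $(n,k-1)$ is equalizing and every $\sigma \in \symm{n}$ is $k$-equalizing. Let $f,g \colon A^n \to B$ with $A = \nset{k}$ be determined by $\ofo$ and equivalent via some $\sigma$. For each $i \in A$, the restrictions $f|_{(A\setminus\{i\})^n}$ and $g|_{(A\setminus\{i\})^n}$ are determined by $\ofo$ on the $(k-1)$-element set $A \setminus \{i\}$ and are equivalent via the same $\sigma$ (here one uses that $\sigma$ permutes coordinates, so $\vect{a}\sigma$ lies in $(A\setminus\{i\})^n$ whenever $\vect{a}$ does). Applying $(n,k-1)$ equalizing yields that $f$ and $g$ coincide on $\bigcup_{i \in A}(A \setminus \{i\})^n$, i.e., on every tuple with $\supp(\vect{a}) \neq A$. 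On the remaining tuples, writing $f^+ := f^*|_{A^k_{\neq}}$ and $g^+ := g^*|_{A^k_{\neq}}$, the equivalence rewrites precisely as $f^+(\ofo(\vect{a})) = g^+(\ofo(\vect{a}\sigma))$ for all $\vect{a}$ with $\supp(\vect{a}) = A$; the $k$-equalizing property of $\sigma$ then gives $f^+ = g^+$, so $f = g$ on full-support tuples as well.

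The whole argument is bookkeeping once the full-support/small-support decomposition is set up; the only point that requires a moment's care is ensuring that the extensions introduced in the forward direction are witnessed by the \emph{same} permutation $\sigma$, which works precisely because permutations preserve $\supp$. No genuine obstacle is anticipated — the real content of Theorem~\ref{thm:eqeq} lies in the analysis of which permutations $\sigma$ are $k$-equalizing, which this lemma reduces the problem to.
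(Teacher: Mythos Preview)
Your proposal is correct and follows essentially the same approach as the paper: both proofs split along the dichotomy $\supp(\vect{a}) = A$ versus $\supp(\vect{a}) \subsetneq A$, extend by a constant in the forward direction, and restrict to $(A \setminus \{i\})^n$ in the backward direction, using throughout that $\supp$ is invariant under permutation of coordinates. The only cosmetic difference is that the paper handles the two halves of the forward direction in the opposite order; your parenthetical about $n < k$ is slightly muddled (the hypothesis, not the conclusion, becomes vacuous), but that case is moot anyway since $(n,k)$ fails to be equalizing when $n \leq k$.
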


\begin{proof}
Assume first that $(n, k)$ is equalizing. Let $f^+, g^+ \colon A^k_{\neq} \to B$, and let $\sigma \in \symm{n}$. Assume that $f^+(\ofo(\vect{a})) = g^+(\ofo(\vect{a} \sigma))$ for every $\vect{a} \in A^n$ with $\supp(\vect{a}) = A$. Let $\beta$ be an arbitrary element of $B$, and extend $f^+$ and $g^+$ into functions $f^*, g^* \colon A^\sharp \to B$ as follows:
\[
f^*(\vect{c}) =
\begin{cases}
f^+(\vect{c}), & \text{if $\vect{c} \in A^k_{\neq}$,} \\
\beta, & \text{otherwise,}
\end{cases}
\qquad
g^*(\vect{c}) =
\begin{cases}
g^+(\vect{c}), & \text{if $\vect{c} \in A^k_{\neq}$,} \\
\beta, & \text{otherwise.}
\end{cases}
\]
It clearly holds that $f^*(\ofo(\vect{a})) = g^*(\ofo(\vect{a} \sigma))$ for all $\vect{a} \in A^n$. By the assumption that $(n, k)$ is equalizing, it holds that $f^* \circ {\ofo}|_{A^n} = g^* \circ {\ofo}|_{A^n}$; hence $f^* = g^*$ by Remark~\ref{rem:restrord}. Since $f^+$ and $g^+$ are the restrictions of $f^*$ and $g^*$ to the subset $A^k_{\neq}$ of $A^\sharp$, we have that $f^+ = g^+$. Thus, every $\sigma \in \symm{n}$ is $k$-equalizing.

In order to show that $(n, k - 1)$ is equalizing, denote $E := A \setminus \{k\} = \nset{k-1}$, and let $f, g \colon E^n \to B$ be functions satisfying $f = f^* \circ {\ofo}|_{E^n}$, $g = g^* \circ {\ofo}|_{E^n}$ for some $f^*, g^* \colon E^\sharp \to B$ and $f \equiv g$. Thus there exists a permutation $\sigma$ of $\nset{n}$ such that $f(\vect{a}) = g(\vect{a} \sigma)$ for all $\vect{a} \in E^n$.

Let $\beta$ be an arbitrary element of $B$. Extend $f$ and $g$ into functions $f^\dagger, g^\dagger \colon A^n \to B$ as follows:
\[
f^\dagger(\vect{a}) =
\begin{cases}
f(\vect{a}), & \text{if $\vect{a} \in E^n$,} \\
\beta, & \text{otherwise,}
\end{cases}
\qquad
g^\dagger(\vect{a}) =
\begin{cases}
g(\vect{a}), & \text{if $\vect{a} \in E^n$,} \\
\beta, & \text{otherwise.}
\end{cases}
\]
It is easy to verify that $f^\dagger(\vect{a}) = g^\dagger(\vect{a} \sigma)$ for all $\vect{a} \in A^n$; hence $f^\dagger \equiv g^\dagger$. Moreover, $f^\dagger = f^\diamond \circ {\ofo}|_{A^n}$, $g^\dagger = g^\diamond \circ {\ofo}|_{A^n}$, where $f^\diamond, g^\diamond \colon A^\sharp \to B$ are defined as
\[
f^\diamond(\vect{c}) =
\begin{cases}
f^*(\vect{c}), & \text{if $\vect{c} \in E^\sharp$,} \\
\beta, & \text{otherwise,}
\end{cases}
\qquad
g^\diamond(\vect{c}) =
\begin{cases}
g^*(\vect{c}), & \text{if $\vect{c} \in E^\sharp$,} \\
\beta, & \text{otherwise.}
\end{cases}
\]
By the assumption that $(n, k)$ is equalizing, we have that $f^\dagger = g^\dagger$. Since $f$ and $g$ are restrictions of $f^\dagger$ and $g^\dagger$ to the subset $E^n$ of $A^n$, it obviously holds that $f = g$. We conclude that $(n, k - 1)$ is equalizing.

For the converse implication, assume that $(n, k - 1)$ is equalizing and every $\sigma \in \symm{n}$ is $k$-equalizing. Let $f, g \colon A^n \to B$, and assume that $f = f^* \circ {\ofo}|_{A^n}$, $g = g^* \circ {\ofo}|_{A^n}$, and $f \equiv g$. Then there exists a permutation $\sigma \in \symm{n}$ such that $f(\vect{a}) = g(\vect{a} \sigma)$ for all $\vect{a} \in A^n$. From our assumptions it follows immediately that $f^*|_{A^k_{\neq}} = g^*|_{A^k_{\neq}}$, that is, $f(\vect{a}) = g(\vect{a})$ for all $\vect{a} \in A^n$ with $\supp(\vect{a}) = A$. Furthermore, for any subset $E$ of $A$ with $\card{E} = k - 1$, it clearly holds that $f|_{E^n} = f^* \circ {\ofo}|_{E^n}$, $g|_{E^n} = g^* \circ {\ofo}|_{E^n}$, and $f|_{E^n} \equiv g|_{E^n}$. By the assumption that $(n, k - 1)$ is equalizing, it holds that $f|_{E^n} = g|_{E^n}$. Thus $f$ and $g$ coincide at all points in $A^n$, i.e., $f = g$. We conclude that $(n, k)$ is equalizing.
\end{proof}

Let $\sigma \in \symm{n}$. Define the relation $\triangleright^\sigma_k$ on $\symm{k}$ by the following rule: $\pi \triangleright^\sigma_k \tau$ if and only if there exists a tuple $\vect{a} \in A^n$ such that $\ofo(\vect{a}) = \vect{k} \pi$ and $\ofo(\vect{a} \sigma) = \vect{k} \tau$.
Denote the converse relation of $\triangleright^\sigma_k$ by $\triangleleft^\sigma_k$, i.e., $\pi \triangleleft^\sigma_k \tau$ if and only if $\tau \triangleright^\sigma_k \pi$. Define the relation $\sim^\sigma_k$ on $\symm{k}$ as $\pi \sim^\sigma_k \tau$ if and only if $\pi (\triangleright^\sigma_k \circ (\triangleleft^\sigma_k \circ \triangleright^\sigma_k)^m) \tau$ for some $m \geq 0$, i.e., there exist a positive integer $m$ and $\pi_1, \pi_2, \dots, \pi_{2m} \in \symm{k}$ such that $\pi = \pi_1$, $\tau = \pi_{2m}$, and $\pi_{2i-1} \triangleright^\sigma_k \pi_{2i}$ whenever $1 \leq i \leq m$, and $\pi_{2i} \triangleleft^\sigma_k \pi_{2i+1}$ whenever $1 \leq i \leq m-1$.

\begin{lemma}
\label{lem:pitaulambda}
For every $\pi, \tau, \lambda \in \symm{k}$, the condition $\pi \triangleright^\sigma_k \tau$ implies $\lambda \pi \triangleright^\sigma_k \lambda \tau$. Consequently, for every $\pi, \tau, \lambda \in \symm{k}$, the condition $\pi \sim^\sigma_k \tau$ implies $\lambda \pi \sim^\sigma_k \lambda \tau$.
\end{lemma}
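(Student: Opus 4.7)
The key tool is Remark~\ref{rem:ofolambda}, which states that if $\ofo(\vect{k}\mu) = \vect{k}\pi$ for some map $\mu \colon \nset{n}\to\nset{k}$ and some injective $\pi$, then $\ofo(\vect{k}\lambda\mu) = \vect{k}\lambda\pi$ for every $\lambda \in \symm{k}$. The plan is to apply this remark twice: once to the tuple witnessing $\pi \triangleright^\sigma_k \tau$ and once to its $\sigma$-shift.

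Concretely, assume $\pi \triangleright^\sigma_k \tau$, so there is some $\vect{a}\in A^n$ with $\ofo(\vect{a}) = \vect{k}\pi$ and $\ofo(\vect{a}\sigma) = \vect{k}\tau$. Since $A = \nset{k}$, view $\vect{a}$ as a map $\mu\colon\nset{n}\to\nset{k}$ so that $\vect{a} = \vect{k}\mu$ and $\vect{a}\sigma = \vect{k}\mu\sigma$. Given $\lambda \in \symm{k}$, define $\vect{b} := \vect{k}\lambda\mu \in A^n$. By Remark~\ref{rem:ofolambda} applied to $\vect{k}\mu$ we have $\ofo(\vect{b}) = \ofo(\vect{k}\lambda\mu) = \vect{k}\lambda\pi$, and applied to $\vect{k}\mu\sigma$ we have $\ofo(\vect{b}\sigma) = \ofo(\vect{k}\lambda\mu\sigma) = \vect{k}\lambda\tau$. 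Thus $\vect{b}$ witnesses $\lambda\pi \triangleright^\sigma_k \lambda\tau$.

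For the consequence about $\sim^\sigma_k$, observe first that left-multiplication by $\lambda$ preserves the converse relation as well: $\pi \triangleleft^\sigma_k \tau$ means $\tau \triangleright^\sigma_k \pi$, which by what we just proved gives $\lambda\tau \triangleright^\sigma_k \lambda\pi$, i.e., $\lambda\pi \triangleleft^\sigma_k \lambda\tau$. If $\pi \sim^\sigma_k \tau$, unfold the definition to obtain a chain
\[
\pi = \pi_1 \triangleright^\sigma_k \pi_2 \triangleleft^\sigma_k \pi_3 \triangleright^\sigma_k \cdots \triangleright^\sigma_k \pi_{2m} = \tau,
\]
and apply the previous observations termwise to obtain
\[
\lambda\pi = \lambda\pi_1 \triangleright^\sigma_k \lambda\pi_2 \triangleleft^\sigma_k \lambda\pi_3 \triangleright^\sigma_k \cdots \triangleright^\sigma_k \lambda\pi_{2m} = \lambda\tau,
\]
which shows $\lambda\pi \sim^\sigma_k \lambda\tau$.

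The whole argument is essentially bookkeeping on top of Remark~\ref{rem:ofolambda}; there is no real obstacle, the only thing to be careful about is the right-to-left composition convention, ensuring that relabelling the entries of $\vect{a}$ via $\lambda$ corresponds to prepending $\lambda$ on the \emph{left} of $\mu$ (and hence on the left of $\pi$ and $\tau$), which is exactly the form in which the remark is stated.
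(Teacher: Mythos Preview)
Your proof is correct and follows essentially the same approach as the paper: both write the witnessing tuple as $\vect{k}\gamma$ (your $\mu$), apply Remark~\ref{rem:ofolambda} to $\gamma$ and to $\gamma\sigma$, and read off the new witness $\vect{k}\lambda\gamma$. Your treatment of $\sim^\sigma_k$ is just a more explicit unfolding of what the paper dismisses with ``follows immediately.''
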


\begin{proof}
If $\pi \triangleright^\sigma_k \tau$, then there exists a tuple $\vect{a} \in A^n$ such that $\ofo(\vect{a}) = \vect{k} \pi$ and $\ofo(\vect{a} \sigma) = \vect{k} \tau$. As $\vect{a} = \vect{k} \gamma$ for some $\gamma \colon \nset{n} \to \nset{k}$, we have $\ofo(\vect{k} \gamma) = \vect{k} \pi$ and $\ofo(\vect{k} \gamma \sigma) = \vect{k} \tau$. By Remark~\ref{rem:ofolambda}, we have $\ofo(\vect{k} \lambda \gamma) = \vect{k} \lambda \pi$ and $\ofo(\vect{k} \lambda \gamma \sigma) = \vect{k} \lambda \tau$. Hence $\lambda \pi \triangleright^\sigma_k \lambda \tau$. The claim about $\sim^\sigma_k$ follows immediately.
\end{proof}

Let
\[
U^\sigma_k := \{\pi \in \symm{k} : \id \triangleright^\sigma_k \pi\},
\qquad
\Delta^\sigma_k := \{\pi^{-1} \tau \mid \pi, \tau \in U^\sigma_k\},
\]
and let $G^\sigma_k$ be the subgroup of $\symm{k}$ generated by $\Delta^\sigma_k$. Note that since $(\pi^{-1} \tau)^{-1} = \tau^{-1} \pi$, the set $\Delta^\sigma_k$ contains the inverse permutation of each element of $\Delta^\sigma_k$.

\begin{lemma}
\label{lem:pirho}
If $\pi$ and $\rho$ are permutations such that $\pi \sim^\sigma_k \rho$ and $\tau \in \Delta^\sigma_k$, then $\pi \sim^\sigma_k \rho \tau$.
Consequently, if $\pi$ and $\rho$ are permutations such that $\pi \sim^\sigma_k \rho$ and $\tau \in G^\sigma_k$, then $\pi \sim^\sigma_k \rho \tau$.
\end{lemma}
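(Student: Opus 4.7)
The plan is to prove the first assertion directly by extending a $\sim^\sigma_k$-chain by two steps, and then obtain the second assertion by a straightforward induction on the length of a product representation in $G^\sigma_k$.

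For the first statement, suppose $\pi \sim^\sigma_k \rho$, witnessed by a chain $\pi = \pi_1 \triangleright^\sigma_k \pi_2 \triangleleft^\sigma_k \pi_3 \triangleright^\sigma_k \cdots \triangleright^\sigma_k \pi_{2m} = \rho$. Let $\tau \in \Delta^\sigma_k$, and write $\tau = \mu^{-1} \nu$ for some $\mu, \nu \in U^\sigma_k$, so that $\id \triangleright^\sigma_k \mu$ and $\id \triangleright^\sigma_k \nu$. Now apply Lemma~\ref{lem:pitaulambda} with left-multiplier $\lambda = \rho \mu^{-1}$ to both of these relations: from $\id \triangleright^\sigma_k \mu$ we get $\rho\mu^{-1} \triangleright^\sigma_k \rho\mu^{-1}\mu = \rho$, and from $\id \triangleright^\sigma_k \nu$ we get $\rho\mu^{-1} \triangleright^\sigma_k \rho\mu^{-1}\nu = \rho\tau$. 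Appending $\pi_{2m+1} := \rho\mu^{-1}$ and $\pi_{2m+2} := \rho\tau$ to the original chain yields $\pi_{2m} \triangleleft^\sigma_k \pi_{2m+1} \triangleright^\sigma_k \pi_{2m+2}$, so we obtain a valid $\sim^\sigma_k$-chain of length $2(m+1)$ from $\pi$ to $\rho\tau$. Hence $\pi \sim^\sigma_k \rho\tau$.

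For the second statement, note that $\Delta^\sigma_k$ is closed under inverses (as observed in the paper), so every element of the generated subgroup $G^\sigma_k$ can be written as a finite product $\tau = \tau_1 \tau_2 \cdots \tau_r$ with each $\tau_i \in \Delta^\sigma_k$. Proceeding by induction on $r$, the base case $r = 0$ is trivial ($\tau = \id$, so $\rho\tau = \rho$) and the inductive step is immediate from the first part applied to $\rho\tau_1 \cdots \tau_{r-1}$ and $\tau_r \in \Delta^\sigma_k$: if $\pi \sim^\sigma_k \rho\tau_1\cdots\tau_{r-1}$ then $\pi \sim^\sigma_k (\rho\tau_1\cdots\tau_{r-1})\tau_r = \rho\tau$.

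There is no genuine obstacle here; the only subtlety is the bookkeeping with the chain definition of $\sim^\sigma_k$. One must verify that extending by two elements preserves the prescribed alternating pattern $\triangleright,\triangleleft,\triangleright,\ldots,\triangleright$ ending in $\triangleright$, which is exactly what the two applications of Lemma~\ref{lem:pitaulambda} are designed to ensure.
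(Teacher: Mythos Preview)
Your proof is correct and follows essentially the same approach as the paper: both write $\tau = \mu^{-1}\nu$ with $\mu,\nu \in U^\sigma_k$, left-multiply the relations $\id \triangleright^\sigma_k \mu$ and $\id \triangleright^\sigma_k \nu$ by $\rho\mu^{-1}$ via Lemma~\ref{lem:pitaulambda}, and append the resulting two steps to the chain; the second part is handled by the same induction on product length. Your version is slightly more explicit about the chain bookkeeping, but the argument is identical.
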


\begin{proof}
Since $\pi \sim^\sigma_k \rho$, there exist a positive integer $m$ and permutations $\pi_1, \dots, \pi_{2m}$ such that $\pi = \pi_1$, $\rho = \pi_{2m}$, and $\pi_{2i-1} \triangleright^\sigma_k \pi_{2i}$ whenever $1 \leq i \leq m$, and $\pi_{2i} \triangleleft^\sigma_k \pi_{2i+1}$ whenever $1 \leq i \leq m-1$.
Since $\tau \in \Delta^\sigma_k$, there exist permutations $\tau_1, \tau_2 \in U^\sigma_k$ such that $\tau = \tau_1^{-1} \tau_2$. By the definition of $U^\sigma_k$, we have $\id \triangleright^\sigma_k \tau_1$ and $\id \triangleright^\sigma_k \tau_2$. It follows from Lemma~\ref{lem:pitaulambda} that $\rho \tau_1^{-1} \triangleright^\sigma_k \rho \tau_1^{-1} \tau_1 = \rho$ and $\rho \tau_1^{-1} \triangleright^\sigma_k \rho \tau_1^{-1} \tau_2 = \rho \tau$. Thus $\pi \sim^\sigma_k \rho \tau$.

If $\tau \in G^\sigma_k$, then there exist a nonnegative integer $m$ and $\tau_1, \tau_2, \dots, \tau_m \in \Delta^\sigma_k$ such that $\tau = \tau_1 \tau_2 \cdots \tau_m$. An easy induction on $m$ then shows that $\pi \sim^\sigma_k \rho$ implies $\pi \sim^\sigma_k \rho \tau$.
\end{proof}

\begin{lemma}
\label{lem:simreflUG}
Let $\sigma \in \symm{n}$. The relation $\sim^\sigma_k$ is reflexive if and only if $U^\sigma_k \cap G^\sigma_k \neq \emptyset$.
\end{lemma}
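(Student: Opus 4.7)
The plan is to unpack the definitions of $\sim^\sigma_k$, $U^\sigma_k$, $\Delta^\sigma_k$, and $G^\sigma_k$ and then combine them with Lemmas~\ref{lem:pitaulambda} and~\ref{lem:pirho}. A preliminary observation is that, by Lemma~\ref{lem:pitaulambda}, $\sim^\sigma_k$ is reflexive if and only if $\id \sim^\sigma_k \id$: left-multiplication by any $\lambda \in \symm{k}$ transforms a zigzag witnessing $\id \sim^\sigma_k \id$ into one witnessing $\lambda \sim^\sigma_k \lambda$, and the converse is trivial. Hence it suffices to show that $\id \sim^\sigma_k \id$ is equivalent to $U^\sigma_k \cap G^\sigma_k \neq \emptyset$.

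For the direction \textquotedblleft $U^\sigma_k \cap G^\sigma_k \neq \emptyset \Longrightarrow \id \sim^\sigma_k \id$\textquotedblright: given $\alpha \in U^\sigma_k \cap G^\sigma_k$, the definition of $U^\sigma_k$ gives $\id \triangleright^\sigma_k \alpha$, which is the $m=1$ instance of the zigzag defining $\sim^\sigma_k$, so $\id \sim^\sigma_k \alpha$. Since $G^\sigma_k$ is a group, $\alpha^{-1} \in G^\sigma_k$, and Lemma~\ref{lem:pirho} then yields $\id \sim^\sigma_k \alpha \cdot \alpha^{-1} = \id$.

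For the converse, assume $\id \sim^\sigma_k \id$, fix a witnessing zigzag $\id = \pi_1, \pi_2, \ldots, \pi_{2m} = \id$ with $\pi_{2i-1} \triangleright^\sigma_k \pi_{2i}$ for $1 \leq i \leq m$ and $\pi_{2i+1} \triangleright^\sigma_k \pi_{2i}$ for $1 \leq i \leq m - 1$, and aim to show that $\pi_2 \in U^\sigma_k \cap G^\sigma_k$. The initial step $\id = \pi_1 \triangleright^\sigma_k \pi_2$ already gives $\pi_2 \in U^\sigma_k$. To prove $\pi_2 \in G^\sigma_k$ I would telescope: applying Lemma~\ref{lem:pitaulambda} (with left-translation by $\pi_{2i+1}^{-1}$) to the relations $\pi_{2i+1} \triangleright^\sigma_k \pi_{2i}$ and $\pi_{2i+1} \triangleright^\sigma_k \pi_{2i+2}$ shows that both $\pi_{2i+1}^{-1}\pi_{2i}$ and $\pi_{2i+1}^{-1}\pi_{2i+2}$ lie in $U^\sigma_k$; hence their quotient $\pi_{2i}^{-1}\pi_{2i+2}$ lies in $\Delta^\sigma_k$. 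Multiplying these for $i = 1, \ldots, m - 1$ gives
\[
\pi_2^{-1} = \pi_2^{-1}\pi_{2m} = (\pi_2^{-1}\pi_4)(\pi_4^{-1}\pi_6) \cdots (\pi_{2m-2}^{-1}\pi_{2m}) \in G^\sigma_k,
\]
so $\pi_2 \in G^\sigma_k$ as desired. The edge case $m = 1$ is even more immediate: the zigzag degenerates to $\id \triangleright^\sigma_k \id$, so $\id \in U^\sigma_k$, and of course $\id \in G^\sigma_k$.

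I do not anticipate a real obstacle here; the proof is essentially a bookkeeping exercise reducing to the translation-invariance of $\triangleright^\sigma_k$ (Lemma~\ref{lem:pitaulambda}) and the right-multiplication closure of $\sim^\sigma_k$ by $G^\sigma_k$ (Lemma~\ref{lem:pirho}). The only small point to handle carefully is the degenerate case $m = 1$ in the forward direction, which is dispatched in one line.
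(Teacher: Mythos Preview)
Your proposal is correct and follows essentially the same route as the paper's own proof: both reduce reflexivity to $\id \sim^\sigma_k \id$ via Lemma~\ref{lem:pitaulambda}, handle the backward direction by invoking Lemma~\ref{lem:pirho} on an element of $U^\sigma_k \cap G^\sigma_k$, and handle the forward direction by left-translating each zigzag step into $U^\sigma_k$ and telescoping the resulting $\Delta^\sigma_k$-elements to land $\pi_2^{-1}$ (hence $\pi_2$) in $G^\sigma_k$. Your explicit treatment of the $m=1$ edge case is a minor addition; the paper's telescoping product is simply empty there.
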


\begin{proof}
By Lemma~\ref{lem:pitaulambda}, the reflexivity of $\sim^\sigma_k$ is equivalent to the condition that $\pi \sim^\sigma_k \pi$ for some $\pi \in \symm{k}$; in particular, this is equivalent to the condition that $\id \sim^\sigma_k \id$.

Assume first that $\id \sim^\sigma_k \id$. Then there exist a positive integer $m$ and permutations $\pi_1, \pi_2, \dots, \pi_{2m} \in \symm{k}$ such that $\id = \pi_1$, $\id = \pi_{2m}$, and $\pi_{2i-1} \triangleright^\sigma_k \pi_{2i}$ whenever $1 \leq i \leq m$, and $\pi_{2i} \triangleleft^\sigma_k \pi_{2i+1}$ whenever $1 \leq i \leq m - 1$. By Lemma~\ref{lem:pitaulambda} and by the definition of $U^\sigma_k$, it holds that $\pi_{2i-1}^{-1} \pi_{2i} \in U^\sigma_k$ whenever $1 \leq i \leq m$ and $\pi_{2i+1}^{-1} \pi_{2i} \in U^\sigma_k$ whenever $1 \leq i \leq m - 1$. By the definition of $\Delta^\sigma_k$, we have $(\pi_{2i-1}^{-1} \pi_{2i-2})^{-1} (\pi_{2i-1}^{-1} \pi_{2i}) = \pi_{2i-2}^{-1} \pi_{2i} \in \Delta^\sigma_k$ whenever $2 \leq i \leq m$. Thus, $U^\sigma_k$ contains $\pi_1^{-1} \pi_2 = \id^{-1} \pi_2 = \pi_2$, and $G^\sigma_k$ contains the permutation
\[
(\pi_2^{-1} \pi_4) (\pi_4^{-1} \pi_6) \cdots (\pi_{2m-2}^{-1} \pi_{2m}) = \pi_2^{-1} \pi_{2m} = \pi_2^{-1} \id = \pi_2^{-1}.
\]
Being a group, $G^\sigma_k$ contains the inverses of its members. Therefore $\pi_2 \in G^\sigma_k$, and we conclude that $U^\sigma_k \cap G^\sigma_k \neq \emptyset$.

Assume then that $U^\sigma_k \cap G^\sigma_k \neq \emptyset$, and let $\rho \in U^\sigma_k \cap G^\sigma_k$. Then $\id \sim^\sigma_k \rho$ by the definition of $U^\sigma_k$. Moreover, $\rho^{-1} \in G^\sigma_k$ because $G^\sigma_k$ is a group and hence contains the inverses of its elements. It then follows from Lemma~\ref{lem:pirho} that $\id \sim^\sigma_k \rho \rho^{-1} = \id$.
\end{proof}

\begin{lemma}
\label{lem:UG}
$U^\sigma_k \cap G^\sigma_k \neq \emptyset$ if and only if $U^\sigma_k \subseteq G^\sigma_k$.
\end{lemma}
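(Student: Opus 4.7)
The plan is to observe that this lemma is essentially a coset statement: the set $U^\sigma_k$ (once nonempty) is contained in a single right coset of the subgroup $G^\sigma_k$, and that coset equals $G^\sigma_k$ itself precisely when $U^\sigma_k$ meets $G^\sigma_k$.

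For the nontrivial implication, $U^\sigma_k \cap G^\sigma_k \neq \emptyset \Longrightarrow U^\sigma_k \subseteq G^\sigma_k$, I would fix some $\rho \in U^\sigma_k \cap G^\sigma_k$ and let $\pi \in U^\sigma_k$ be arbitrary. Since both $\rho$ and $\pi$ lie in $U^\sigma_k$, the very definition of $\Delta^\sigma_k$ gives $\rho^{-1}\pi \in \Delta^\sigma_k \subseteq G^\sigma_k$. As $\rho \in G^\sigma_k$ and $G^\sigma_k$ is a subgroup, I then conclude $\pi = \rho \cdot (\rho^{-1}\pi) \in G^\sigma_k$, so all of $U^\sigma_k$ sits inside $G^\sigma_k$.

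For the converse, I would note that $U^\sigma_k$ is nonempty whenever $n \geq k$ (the standing hypothesis in this section): picking any tuple $\vect{a} \in A^n$ with $\ofo(\vect{a}) = \vect{k}$, for example the tuple $(1, 2, \dots, k, k, \dots, k)$ of length $n$, one sees that $\supp(\vect{a}\sigma) = A$ so $\ofo(\vect{a}\sigma) = \vect{k}\tau$ for some $\tau \in \symm{k}$, which witnesses $\tau \in U^\sigma_k$. Hence under the assumption $U^\sigma_k \subseteq G^\sigma_k$ we obtain $U^\sigma_k \cap G^\sigma_k = U^\sigma_k \neq \emptyset$, as required.

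I do not anticipate any real obstacle here: the argument is a short manipulation inside the group $\symm{k}$, relying only on the definition of $\Delta^\sigma_k$ and the group axioms for $G^\sigma_k$. The only point requiring a brief verification is the nonemptiness of $U^\sigma_k$, so that the backward direction is not vacuous; this is precisely where the hypothesis that $n$ is large enough to accommodate a tuple of support $A$ enters.
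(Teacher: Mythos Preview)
Your proof is correct and follows essentially the same approach as the paper: fix an element in the intersection, use the definition of $\Delta^\sigma_k$ to write any other element of $U^\sigma_k$ as a product of two elements of $G^\sigma_k$, and for the converse use nonemptiness of $U^\sigma_k$. The only minor difference is that you spell out explicitly why $U^\sigma_k \neq \emptyset$ (by exhibiting a concrete tuple), whereas the paper simply asserts this ``by definition''.
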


\begin{proof}
The set $U^\sigma_k$ is nonempty by definition, so if $U^\sigma_k \subseteq G^\sigma_k$, then $U^\sigma_k \cap G^\sigma_k = U^\sigma_k \neq \emptyset$.

Assume then that $U^\sigma_k \cap G^\sigma_k \neq \emptyset$, and let $\pi \in U^\sigma_k \cap G^\sigma_k$. Let $\rho \in U^\sigma_k$. Then $\pi^{-1} \rho \in \Delta^\sigma_k$ and hence $\rho = \pi (\pi^{-1} \rho) \in G^\sigma_k$.
\end{proof}

\begin{lemma}
Assume that $\sim^\sigma_k$ is reflexive. Then for every $\tau \in \symm{k}$, $\id \sim^\rho_k \tau$ if and only if $\tau \in G^\sigma_k$.
\end{lemma}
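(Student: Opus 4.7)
The plan is to verify both directions of the biconditional separately, each one being essentially a direct application of a preceding lemma.

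The backward direction is the shorter one. Suppose $\tau \in G^\sigma_k$. Reflexivity of $\sim^\sigma_k$ gives $\id \sim^\sigma_k \id$, so the second statement of Lemma~\ref{lem:pirho}, applied with $\pi = \rho = \id$, produces $\id \sim^\sigma_k \id \cdot \tau = \tau$, as required.

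For the forward direction, I would first upgrade the reflexivity hypothesis, via Lemmas~\ref{lem:simreflUG} and~\ref{lem:UG}, to the inclusion $U^\sigma_k \subseteq G^\sigma_k$. Assuming $\id \sim^\sigma_k \tau$, unfold the definition of $\sim^\sigma_k$ to obtain a chain $\pi_1 = \id, \pi_2, \ldots, \pi_{2m} = \tau$ in $\symm{k}$ satisfying $\pi_{2i-1} \triangleright^\sigma_k \pi_{2i}$ for $1 \leq i \leq m$ and $\pi_{2i+1} \triangleright^\sigma_k \pi_{2i}$ for $1 \leq i \leq m-1$. Applying Lemma~\ref{lem:pitaulambda} with $\lambda = \pi_{2i-1}^{-1}$ and $\lambda = \pi_{2i+1}^{-1}$ respectively, one deduces that $\pi_{2i-1}^{-1}\pi_{2i} \in U^\sigma_k$ and $\pi_{2i+1}^{-1}\pi_{2i} \in U^\sigma_k$, so all these permutations lie in $G^\sigma_k$.

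The argument then closes by a telescoping identity. Since $\pi_1 = \id$, the product
\[
(\pi_1^{-1}\pi_2) \cdot (\pi_3^{-1}\pi_2)^{-1} \cdot (\pi_3^{-1}\pi_4) \cdot (\pi_5^{-1}\pi_4)^{-1} \cdots (\pi_{2m-1}^{-1}\pi_{2m}) = \pi_1^{-1}\pi_{2m} = \tau
\]
expresses $\tau$ as a product of members of $G^\sigma_k$ and their inverses; since $G^\sigma_k$ is a group, $\tau \in G^\sigma_k$. I do not foresee any substantive obstacle here: the reflexivity-to-inclusion correspondence and Lemma~\ref{lem:pitaulambda} do all the heavy lifting, and what remains is a standard telescoping of the zig-zag chain witnessing $\id \sim^\sigma_k \tau$.
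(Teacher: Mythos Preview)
Your proof is correct and follows essentially the same approach as the paper's: both directions invoke the same lemmas (Lemma~\ref{lem:pirho} for the backward direction; Lemmas~\ref{lem:simreflUG}, \ref{lem:UG}, and~\ref{lem:pitaulambda} for the forward direction) and finish by telescoping the zig-zag chain. The only cosmetic difference is that the paper groups pairs of $U^\sigma_k$-elements into elements of $\Delta^\sigma_k$ before telescoping, whereas you telescope directly over $U^\sigma_k$-elements and their inverses, which is equally valid since $U^\sigma_k \subseteq G^\sigma_k$.
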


\begin{proof}
Assume that $\tau \in G^\sigma_k$. By the reflexivity of $\sim^\sigma_k$, we have $\id \sim^\sigma_k \id$, and Lemma~\ref{lem:pirho} implies that $\id \sim^\sigma_k \tau$.

Assume then that $\id \sim^\sigma_k \tau$. Then there exist a positive integer $m$ and permutations $\pi_1, \dots, \pi_{2m}$ such that $\id = \pi_1 \triangleright^\sigma_k \pi_2 \triangleleft^\sigma_k \pi_3 \triangleright^\sigma_k \cdots \triangleleft^\sigma_k \pi_{2m-1} \triangleright^\sigma_k \pi_{2m} = \tau$. Then $\pi_{2i-2}^{-1} \pi_{2i} \in \Delta^\sigma_k$ for all $i \in \{2, \dots, m\}$ and $\pi_2 \in U^\sigma_k$. By Lemmas~\ref{lem:simreflUG} and~\ref{lem:UG} and by the assumption that $\sim^\sigma_k$ is reflexive, we have $U^\sigma_k \subseteq G^\sigma_k$. Thus,
\[
\tau = \pi_{2m} =
\pi_2 (\pi_2^{-1} \pi_4) \cdots (\pi_{2i-2}^{-1} \pi_{2i}) \cdots (\pi_{2m-4}^{-1} \pi_{2m-2}) (\pi_{2m-2}^{-1} \pi_{2m})
\in G^\sigma_k.
\qedhere
\]
\end{proof}

\begin{lemma}
\label{lem:translation}
Let $\sigma \in \symm{n}$. If $U^\sigma_k \cap G^\sigma_k \neq \emptyset$ (or, equivalently, if $\sim^\sigma_k$ is reflexive), then $\sigma$ is $k$-equalizing.
\end{lemma}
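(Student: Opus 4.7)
The plan is to unpack the hypothesis of $k$-equalizing in terms of the relation $\triangleright^\sigma_k$, then chase along a chain witnessing reflexivity of $\sim^\sigma_k$. Suppose $f^+, g^+ \colon A^k_{\neq} \to B$ satisfy $f^+(\ofo(\vect{a})) = g^+(\ofo(\vect{a} \sigma))$ for every $\vect{a} \in A^n$ with $\supp(\vect{a}) = A$. Since every such $\vect{a}$ has $\ofo(\vect{a}) = \vect{k} \pi$ and $\ofo(\vect{a} \sigma) = \vect{k} \tau$ for some $\pi, \tau \in \symm{k}$, the hypothesis is equivalent to the following implication: for all $\pi, \tau \in \symm{k}$,
\[
\pi \triangleright^\sigma_k \tau \implies f^+(\vect{k} \pi) = g^+(\vect{k} \tau).
\]

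Now fix any $\pi \in \symm{k}$. By the assumed reflexivity of $\sim^\sigma_k$ (which holds under the equivalent hypothesis $U^\sigma_k \cap G^\sigma_k \neq \emptyset$ established in the preceding lemmas), there exist a positive integer $m$ and permutations $\pi_1, \dots, \pi_{2m} \in \symm{k}$ with $\pi_1 = \pi_{2m} = \pi$, $\pi_{2i-1} \triangleright^\sigma_k \pi_{2i}$ for $1 \leq i \leq m$, and $\pi_{2i} \triangleleft^\sigma_k \pi_{2i+1}$ for $1 \leq i \leq m-1$. Applying the displayed implication to each link in the chain yields
\[
f^+(\vect{k} \pi_{2i-1}) = g^+(\vect{k} \pi_{2i}) \quad (1 \leq i \leq m), \qquad f^+(\vect{k} \pi_{2i+1}) = g^+(\vect{k} \pi_{2i}) \quad (1 \leq i \leq m-1),
\]
so the values alternately coincide along the chain, giving $f^+(\vect{k} \pi_1) = g^+(\vect{k} \pi_{2m})$, i.e.\ $f^+(\vect{k} \pi) = g^+(\vect{k} \pi)$.

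Since $A^k_{\neq} = \{\vect{k} \pi : \pi \in \symm{k}\}$ and $\pi$ was arbitrary, this proves $f^+ = g^+$, so $\sigma$ is $k$-equalizing. There is no real obstacle here once the relational machinery has been set up; the entire content of the lemma is the straightforward translation between the functional statement ``$f^+ = g^+$'' and the combinatorial statement ``reflexivity of $\sim^\sigma_k$,'' with the ``only if'' direction of reflexivity supplying the needed zig-zag of witnesses.
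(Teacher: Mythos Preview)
Your proof is correct and follows essentially the same approach as the paper: translate the functional hypothesis into the implication $\pi \triangleright^\sigma_k \tau \implies f^+(\vect{k}\pi) = g^+(\vect{k}\tau)$, then chase along the zig-zag chain supplied by reflexivity of $\sim^\sigma_k$ to conclude $f^+(\vect{k}\pi) = g^+(\vect{k}\pi)$ for every $\pi$. The paper packages the chain-chase as a short induction on the length of the chain, while you unfold it directly, but the argument is the same.
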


\begin{proof}
Let $f^+, g^+ \colon A^k_{\neq} \to B$, and assume that $f^+(\ofo(\vect{a})) = g^+(\ofo(\vect{a} \sigma))$ for all $\vect{a} \in A^n$ with $\supp(\vect{a}) = A$. By the definition of $\triangleright^\sigma_k$, we have that $\pi \triangleright^\sigma_k \tau$ implies $f^+(\vect{k} \pi) = g^+(\vect{k} \tau)$, and an easy induction shows that $\pi \sim^\sigma_k \tau$ implies $f^+(\vect{k} \pi) = g^+(\vect{k} \tau)$. The case $m = 0$ holds by the definition of $\triangleright^\sigma_k$. If $\pi_1 (\triangleright^\sigma_k \circ (\triangleleft^\sigma_k \circ \triangleright^\sigma_k)^m) \tau_1 \triangleleft^\sigma_k \pi_2 \triangleright^\sigma_k \tau_2$, then $f^+(\vect{k} \pi_1) = g^+(\vect{k} \tau_1)$ by the induction hypothesis, and, by the definition of $\triangleright^\sigma_k$, there exist tuples $\vect{a}, \vect{b} \in A^n$ with $\supp(\vect{a}) = A = \supp(\vect{b})$ such that $\ofo(\vect{a}) = \vect{k} \pi_2$, $\ofo(\vect{a} \sigma) = \vect{k} \tau_1$, $\ofo(\vect{b}) = \vect{k} \pi_2$, and $\ofo(\vect{b} \sigma) = \vect{k} \tau_2$; hence $f^+(\vect{k} \pi_2) = g^+(\vect{k} \tau_1)$ and $f^+(\vect{k} \pi_2) = g^+(\vect{k} \tau_2)$; thus $f^+(\vect{k} \pi_1) = g^+(\vect{k} \tau_2)$.

Thus, if $\sim^\sigma_k$ is reflexive (or, equivalently, by Lemma~\ref{lem:simreflUG}, if $U^\sigma_k \cap G^\sigma_k \neq \emptyset$), then $f^+ = g^+$.
\end{proof}

Thus, in order to show that $\sigma \in \symm{n}$ is $k$-equalizing, we want to exhibit a permutation that belongs to $U^\sigma_k \cap G^\sigma_k$. We will shortly define a permutation $\pi^\sigma_k$, and our aim is to show that $\pi^\sigma_k \in U^\sigma_k$ (Lemma~\ref{lem:vsigmaU}) and $\pi^\sigma_k \in G^\sigma_k$ (Lemma~\ref{lem:vsigmaG}).

For integers $x$ and $\alpha$, denote
\[
x^{(\alpha)} :=
\begin{cases}
x, & \text{if $x \leq \alpha$,} \\
x - 1, & \text{it $x > \alpha$.}
\end{cases}
\]
We also denote $(a_1, \dots, a_n)^{(\alpha)} := (a_1^{(\alpha)}, \dots, a_n^{(\alpha)})$.

If $n = k + r$ for some $r \geq 1$ and $I_1, \dots, I_r$ are elements of $\couples$ such that $\max I_i \neq \max I_j$ whenever $i \neq j$, then we denote by $\vect{k}_{I_1, \dots, I_r}$ the unique tuple $\vect{a} \in A^n$ such that $\ofo(\vect{a}) = \vect{k}$ and $a_i = a_j$ for all $i, j \in I_\ell$ ($1 \leq \ell \leq r$). Every tuple $\vect{a} \in A^n$ such that $\ofo(\vect{a}) = \vect{k}$ is of the form $\vect{k}_{I_1, \dots, I_r}$ for some $I_1, \dots, I_r$. Note that the condition on distinct maxima is necessary: not every set of $r$ pairwise distinct couples specifies $r$ repetitions (consider, for example, the couples $\{1, 2\}$, $\{1, 3\}$, and $\{2, 3\}$).

For a permutation $\sigma \in \symm{n}$, let $S_\sigma := \{i \in \nset{n} : \sigma(i) \leq k + 1\}$, $b_\sigma := \min S_\sigma$ and $c_\sigma := \sigma(b_\sigma)$. Let $I \in \couples[k+1]$. If $c_\sigma \in I$, then let $d := \min I$; otherwise let $d := c_\sigma^{(\max I)}$. Let $I_1 = I$, and for $i \in \{2, \dots, r\}$, let $I_i = \{d, k + i\}$. Define $\sigma_I$ to be the unique permutation $\rho \in \symm{k}$ such that $\ofo(\vect{k}_{I_1, \dots, I_r} \sigma) = \vect{k} \rho$.

We define $\pi^\sigma_k$ as $\sigma_I$, where
\[
I =
\begin{cases}
\{k + 1,\, c_\sigma\}, & \text{if $c_\sigma \neq k + 1$,} \\
\{k + 1,\, \sigma(\min S_\sigma \setminus \{b_\sigma\})\}, & \text{if $c_\sigma = k + 1$.}
\end{cases}
\]

\begin{remark}
\label{rem:sigmaI}
Observe that for the choice of the sets $I_1, \dots, I_r$ as in the definition of $\sigma_I$ above, we have
\[
\vect{k}_{I_1, \dots, I_r}(i) =
\begin{cases}
i^{(\max I)}, & \text{if $1 \leq i \leq k + 1$ and $i \neq \max I$,} \\
\min I, & \text{if $i = \max I$,} \\
d, & \text{if $i > k + 1$.}
\end{cases}
\]
Hence
\[
\vect{k}_{I_1, \dots, I_r} \sigma (i) =
\begin{cases}
(\sigma(i))^{(\max I)}, & \text{if $1 \leq \sigma(i) \leq k + 1$ and $\sigma(i) \neq \max I$,} \\
\min I, & \text{if $\sigma(i) = \max I$,} \\
d, & \text{if $\sigma(i) > k + 1$.}
\end{cases}
\]
Let us determine $\ofo(\vect{k}_{I_1, \dots, I_r} \sigma)$ by finding the first occurrence of each element of $A$ in $\vect{k}_{I_1, \dots, I_r} \sigma$ and deleting the repetitions. The element $d$ occurs at position $b_\sigma$ and at all positions $i$ such that $\sigma(i) > k + 1$, and, in the case that $b_\sigma \in I$ (i.e., $d = \min I$), also at position $\sigma^{-1}(\max I)$. Since $\sigma(i) > k + 1$ whenever $1 \leq i < b_\sigma$, we actually have that $\vect{k}_{I_1, \dots, I_r} \sigma (i) = d$ whenever $i \leq b_\sigma$, and we can delete the occurrences of $d$ at every position $i$ such that $\sigma(i) > k + 1$. The element $\min I$ occurs at positions $\sigma^{-1}(\min I)$ and $\sigma^{-1}(\max I)$, so the occurrence at the rightmost of these two positions, i.e., at position $\max \sigma^{-1}(I)$, can be deleted. The remaining entries of $\vect{k}_{I_1, \dots, I_r} \sigma$ are pairwise distinct and also distinct from $d$ and $\min I$, so each one of them must remain.

Thus we obtained the following simple procedure for determining $\sigma_I$ directly from the representation $\vect{s} = (\sigma(1), \dots, \sigma(n))$ of $\sigma$ in one-line notation. Remove all entries greater than $k + 1$ from $\vect{s}$. Find the positions where $\min I$ and $\max I$ occur, substitute $\min I$ in the leftmost of these two positions, and remove the entry in the rightmost of these two positions. Finally, decrease by one each remaining entry that is greater than $\max I$. The resulting $k$-tuple is $\ofo(\vect{k}_{I_1, \dots, I_r} \sigma)$, or, equivalently, the representation of $\sigma_I$ in one-line notation.

The representation of $\pi^\sigma_k$ in one-line notation is obtained from that of $\sigma$ simply by removing all entries greater than $k$.
\end{remark}

To facilitate easy working the procedure described in Remark~\ref{rem:sigmaI}, let us denote by $\check{\sigma}_{\ell}$ the permutation on $\nset{\ell}$ whose representation in one-line notation is obtained from that of $\sigma$ by removing all entries greater than $\ell$. Note that if $n = \ell$, then $\check{\sigma}_{\ell} = \sigma$.

\begin{lemma}
\label{lem:vsigmaU}
For any $\sigma \in \symm{n}$ and $I \in \couples[k+1]$, it holds that $\sigma_I \in U^\sigma_k$.
In particular, $\pi^\sigma_k \in U^\sigma_k$.
\end{lemma}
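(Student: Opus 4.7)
The proof proposal is essentially an unpacking of the definitions that precede the lemma. To show $\sigma_I \in U^\sigma_k$, we must show $\id \triangleright^\sigma_k \sigma_I$, which by the definition of $\triangleright^\sigma_k$ reduces to exhibiting a tuple $\vect{a} \in A^n$ satisfying $\ofo(\vect{a}) = \vect{k}$ and $\ofo(\vect{a} \sigma) = \vect{k} \sigma_I$. The natural witness is the very tuple used in the definition of $\sigma_I$, namely $\vect{a} := \vect{k}_{I_1, \dots, I_r}$ with $r = n - k$, $I_1 := I$, $I_i := \{d, k+i\}$ for $2 \leq i \leq r$, and $d$ defined as in the definition of $\sigma_I$.

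To justify that this witness is well defined, I would first check that the sets $I_1, \ldots, I_r$ satisfy the hypotheses required by the notation $\vect{k}_{I_1, \dots, I_r}$. Since $I_1 = I \subseteq \nset{k+1}$, we have $\max I_1 \leq k+1$, whereas $\max I_i = k+i \geq k+2$ for $i \geq 2$, and these latter values are all distinct. Hence $\max I_i \neq \max I_j$ whenever $i \neq j$, so $\vect{k}_{I_1, \dots, I_r}$ is well defined. One must also observe that $d \in \nset{k}$, so that each $I_i = \{d, k+i\}$ is a genuine couple with $d < k+i$; this is immediate from the two cases in the definition of $d$ (either $d = \min I \leq k$, or $d = c_\sigma^{(\max I)} \leq k$ since $c_\sigma \leq k+1$ and $c_\sigma \neq \max I$).

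With the witness validated, the two required equalities are essentially built into the notation. The equality $\ofo(\vect{k}_{I_1, \dots, I_r}) = \vect{k}$ holds by the very definition of $\vect{k}_{I_1, \dots, I_r}$. The equality $\ofo(\vect{k}_{I_1, \dots, I_r} \sigma) = \vect{k} \sigma_I$ is precisely the defining equation of the permutation $\sigma_I$. Consequently $\id \triangleright^\sigma_k \sigma_I$, and so $\sigma_I \in U^\sigma_k$. The ``in particular'' clause is immediate because $\pi^\sigma_k$ is defined to be $\sigma_I$ for a specific choice of $I \in \couples[k+1]$.

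There is no real obstacle here; the lemma is a direct consequence of how $\sigma_I$ and $\vect{k}_{I_1,\dots,I_r}$ are set up. The only thing worth doing carefully in a written proof is pointing to the description given in Remark~\ref{rem:sigmaI} (which spells out explicitly that the prescribed tuple produces $\vect{k}$ under $\ofo$ and $\vect{k} \sigma_I$ under $\ofo \circ (\cdot) \sigma$) so that the reader does not have to re-derive these equalities.
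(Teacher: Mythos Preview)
Your proposal is correct and takes essentially the same approach as the paper: both argue that the defining tuple $\vect{k}_{I_1,\dots,I_r}$ witnesses $\id \triangleright^\sigma_k \sigma_I$, hence $\sigma_I \in U^\sigma_k$, with $\pi^\sigma_k$ as a special case. Your version is simply more explicit about well-definedness (distinct maxima of the $I_i$, $d \in \nset{k}$), whereas the paper condenses the argument to ``follows immediately from the definitions.''
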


\begin{proof}
The claim follows immediately from the definitions. For each $I \in \couples[k+1]$, there exists a tuple $\vect{a} \in A^n$ with $\ofo(\vect{a}) = \vect{k}$ such that $\ofo(\vect{a} \sigma) = \vect{k} \sigma_I$. Therefore, $\id \triangleright^\sigma_k \sigma_I$, that is, $\sigma_I \in U^\sigma_k$. The claim about $\pi^\sigma_k$ is a particular case, $\pi^\sigma_k$ being of the form $\sigma_I$ for a suitable choice of $I$.
\end{proof}

\begin{example}
Let us illustrate the definitions and constructions described above. Let $n := 6$, $k := 4$, $\sigma := (3, 1, 5, 2, 6, 4) \in \symm{n}$. Then $S_\sigma = \{1, 2, 3, 4, 6\}$, $b_\sigma = 1$, $c_\sigma = \sigma(1) = 3$, and $\check{\sigma}_{k+1} = (3, 1, 5, 2, 4)$. Below are listed the permutations $\sigma_I \in \symm{k}$ for each $I \in \couples[k+1]$.

\medskip
\begin{center}
\begin{tabular}{|c|ccccc|}
\hline
$I$        & $\{1, 2\}$     & $\{1, 3\}$     & $\{1, 4\}$     & $\{1, 5\}$     & $\{2, 3\}$     \\
$\sigma_I$ & $(2, 1, 4, 3)$ & $(1, 4, 2, 3)$ & $(3, 1, 4, 2)$ & $(3, 1, 2, 4)$ & $(2, 1, 4, 3)$ \\
\hline
$I$        & $\{2, 4\}$     & $\{2, 5\}$     & $\{3, 4\}$     & $\{3, 5\}$     & $\{4, 5\}$     \\
$\sigma_I$ & $(3, 1, 4, 2)$ & $(3, 1, 2, 4)$ & $(3, 1, 4, 2)$ & $(3, 1, 2, 4)$ & $(3, 1, 4, 2)$ \\
\hline
\end{tabular}
\end{center}
\medskip

The permutation $\pi^\sigma_k$ equals $\sigma_I$ for $I = \{c_\sigma, k+1\} = \{3, 5\}$, or, equivalently, $\check{\sigma}_{k}$, that is, $\pi^\sigma_k = (3, 1, 2, 4)$.

Thus, $U^\sigma_k$ includes the set
\[
\textstyle
\{\sigma_I \in \symm{k} \colon I \in \couples[k+1]\} = \{(1, 4, 2, 3), (2, 1, 4, 3), (3, 1, 2, 4), (3, 1, 4, 2)\}.
\]
The differences of the elements of the set $\{\sigma_I \in \symm{k} \colon I \in \couples[k+1]\}$ are contained in $\Delta^\sigma_k$, and they are indicated in the table below (the entry in row $\rho$ in column $\tau$ is the permutation $\rho^{-1} \tau$).

\medskip
\begin{center}
\begin{tabular}{c|cccc}
               & $(1, 4, 2, 3)$ & $(2, 1, 4, 3)$ & $(3, 1, 2, 4)$ & $(3, 1, 4, 2)$ \\
\hline
$(1, 4, 2, 3)$ & $\id$          & $(3, 1, 2, 4)$ & $(4, 1, 3, 2)$ & $(4, 1, 2, 3)$ \\
$(2, 1, 4, 3)$ & $(2, 3, 1, 4)$ & $\id$          & $(4, 2, 1, 3)$ & $(4, 2, 3, 1)$ \\
$(3, 1, 2, 4)$ & $(2, 4, 3, 1)$ & $(3, 2, 4, 1)$ & $\id$          & $(1, 2, 4, 3)$ \\
$(3, 1, 4, 2)$ & $(2, 3, 4, 1)$ & $(4, 2, 3, 1)$ & $(1, 2, 4, 3)$ & $\id$
\end{tabular}
\end{center}
\medskip

Since $\Delta^\sigma_k$ contains $(2, 3, 4, 1)$ and $(1, 2, 4, 3)$, $G^\sigma_k$ is the full symmetric group $\symm{k}$.

The reader is invited to verify that $U^\sigma_k$ and $\Delta^\sigma_k$ actually contain a few other permutations than the ones found above, namely
\begin{align*}
U^\sigma_k &=
\{
(1, 2, 4, 3),
(1, 3, 2, 4),
(1, 3, 4, 2),
(1, 4, 2, 3),
(2, 1, 3, 4),
(2, 1, 4, 3), \\
{}&\phantom{{}= \{}
(3, 1, 2, 4),
(3, 1, 4, 2)
\},
\\
\Delta^\sigma_k &=
\{
(1, 2, 3, 4),
(1, 2, 4, 3),
(1, 3, 2, 4),
(1, 3, 4, 2),
(1, 4, 2, 3),
(1, 4, 3, 2), \\
{}&\phantom{{}= \{}
(2, 1, 3, 4),
(2, 1, 4, 3),
(2, 3, 1, 4),
(2, 3, 4, 1),
(2, 4, 1, 3),
(2, 4, 3, 1), \\
{}&\phantom{{}= \{}
(3, 1, 2, 4),
(3, 1, 4, 2),
(3, 2, 1, 4),
(3, 2, 4, 1),
(4, 1, 2, 3),
(4, 1, 3, 2), \\
{}&\phantom{{}= \{}
(4, 2, 1, 3),
(4, 2, 3, 1)
\}.
\end{align*}
\end{example}

We can now apply Lemma~\ref{lem:vsigmaU} and Remark~\ref{rem:sigmaI} to establish rules for determining some members of $G^\sigma_k$ directly from the one-line notation for $\sigma$. We are going to work with permutations of the form $\sigma_I$ where $I \in \couples[k+1]$. Therefore, by Remark~\ref{rem:sigmaI}, only the entries of $\sigma$ that are less than or equal to $k + 1$ are relevant, that is, we are only concerned about $\check{\sigma}_{k+1}$.
The following lemma is summarized for easy reference in Table~\ref{table:Gsigma} in the form of deduction rules: if $\check{\sigma}_{k+1}$ has the configuration indicated in the middle column, then $G^\sigma_k$ contains the permutations indicated in the last column.

\begin{lemma}
\label{lem:sigmarules}
Let $\alpha, \beta, \gamma, \delta \in \nset{k+1}$, and assume that $\alpha > x$ whenever $x \in \{\beta, \gamma, \delta\}$. Let $p, q, r, s \in \nset{k+1}$ and assume that $p < q < r < s$.
\begin{enumerate}[\rm (i)]
\item\label{alphabetagamma}
If $\check{\sigma}_{k+1}(p) = \alpha$, $\check{\sigma}_{k+1}(q) = \beta$ and $\check{\sigma}_{k+1}(r) = \gamma$, then $(p \;\; q \;\; \cdots \;\; r-1) \in \Delta^\sigma_k$.

\item\label{alphabetagammadelta}
If $\check{\sigma}_{k+1}(p) = \alpha$, $\check{\sigma}_{k+1}(q) = \beta$, $\check{\sigma}_{k+1}(q+1) = \gamma$, and $\check{\sigma}_{k+1}(r) = \delta$, then $\symm{\{p, q, \dots, r-1\}} \subseteq G^\sigma_k$.

\item\label{betaalphagamma}
If $\check{\sigma}_{k+1}(p) = \beta$, $\check{\sigma}_{k+1}(q) = \alpha$ and $\check{\sigma}_{k+1}(r) = \gamma$, then $(q \;\; \cdots \;\; r-1) \in \Delta^\sigma_k$.

\item\label{alphaalpha+1beta}
If $\check{\sigma}_{k+1}(p) = \alpha$, $\check{\sigma}_{k+1}(q) = \alpha + 1$ and $\check{\sigma}_{k+1}(r) = \beta$ with $\alpha \leq k$, then $(p \;\; q \;\; \cdots \;\; r-1) \in \Delta^\sigma_k$ and consequently $\symm{\{q, \dots, r-1\}} \subseteq G^\sigma_k$.

\item\label{alphabetaalpha+1}
If $\check{\sigma}_{k+1}(p) = \alpha$, $\check{\sigma}_{k+1}(q) = \beta$ and $\check{\sigma}_{k+1}(r) = \alpha + 1$ with $\alpha \leq k$, then $(p \;\; q \;\; \cdots \;\; r-1) \in \Delta^\sigma_k$.

\item\label{alphabetaalpha+1gamma}
If $\check{\sigma}_{k+1}(p) = \alpha$, $\check{\sigma}_{k+1}(q) = \beta$, $\check{\sigma}_{k+1}(r) = \alpha + 1$, and $\check{\sigma}_{k+1}(s) = \gamma$ with $\alpha \leq k$, then $\symm{\{p, q, \dots, s - 1\}} \subseteq G^\sigma_k$.

\item\label{betaalphaalpha+2alpha+1}
If $\check{\sigma}_{k+1}(p) = \beta$, $\check{\sigma}_{k+1}(q) = \alpha$, $\check{\sigma}_{k+1}(q+1) = \alpha + 2$, and $\check{\sigma}_{k+1}(q+2) = \alpha + 1$ with $\alpha \leq k - 1$, then $(q \;\; q + 1) \in \Delta^\sigma_k$.
\end{enumerate}
\end{lemma}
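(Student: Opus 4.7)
The plan is to establish each item by an explicit computation based on the procedure of Remark~\ref{rem:sigmaI}: for any $I \in \couples[k+1]$, the one-line notation of $\sigma_I$ is obtained from $\check{\sigma}_{k+1}$ by substituting $\min I$ at the leftmost of the positions holding $\min I$ and $\max I$, deleting the other of these two positions, and decrementing by one every remaining entry that exceeds $\max I$. Since $\sigma_I \in U^\sigma_k$ for every $I \in \couples[k+1]$ (Lemma~\ref{lem:vsigmaU}), we have $\sigma_I^{-1}\sigma_J \in \Delta^\sigma_k$ whenever $I, J \in \couples[k+1]$. For each item I will choose specific couples $I, J$ (sometimes two or three pairs) so that the one-line forms of $\sigma_I$ and $\sigma_J$ can be compared position by position to extract exactly the desired cycle or set of generators of the claimed subgroup of $G^\sigma_k$.

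As a prototype I carry out item~\eqref{alphabetagamma}. Take $I := \{\beta, \alpha\}$ and $J := \{\gamma, \alpha\}$; both have maximum $\alpha$, so they apply the same decrement. Writing $y_i := \check{\sigma}_{k+1}(i)^{(\alpha)}$, the one-line form of $\sigma_I$ is $(y_1, \dots, y_{p-1}, \beta, y_{p+1}, \dots, y_{q-1}, y_{q+1}, \dots, y_{k+1})$ and that of $\sigma_J$ is $(y_1, \dots, y_{p-1}, \gamma, y_{p+1}, \dots, y_{r-1}, y_{r+1}, \dots, y_{k+1})$. The two tuples agree outside the window $\{p, q, q+1, \dots, r-1\}$; an entry-by-entry comparison on this window shows that $\sigma_I^{-1}\sigma_J$ sends $p \mapsto r-1$, $q \mapsto p$, and $i \mapsto i-1$ for $q < i < r$, while fixing every other index. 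Hence $\sigma_I^{-1}\sigma_J = (p \; q \; q+1 \; \cdots \; r-1)^{-1}$, and since $\Delta^\sigma_k$ is closed under inversion, $(p \; q \; \cdots \; r-1) \in \Delta^\sigma_k$, as required.

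Item~\eqref{betaalphagamma} follows from the same choice of couples as in item~\eqref{alphabetagamma}; only the window of disagreement shifts, producing the cycle $(q \; q+1 \; \cdots \; r-1)$. Item~\eqref{alphabetaalpha+1} is handled by the analogous comparison with $I := \{\beta, \alpha\}$ and $J := \{\alpha, \alpha+1\}$. Item~\eqref{betaalphaalpha+2alpha+1} is obtained by taking $I := \{\beta, \alpha+1\}$ and $J := \{\beta, \alpha\}$: the shifts $x^{(\alpha+1)}$ and $x^{(\alpha)}$ coincide everywhere except at the unique position holding $\alpha+1$, so $\sigma_I$ and $\sigma_J$ differ only at the adjacent positions $q, q+1$, interchanging the values $\alpha$ and $\alpha+1$ there, which yields the transposition $(q \; q+1)$. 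Item~\eqref{alphaalpha+1beta} uses two pairs, $(\{\beta, \alpha\}, \{\alpha, \alpha+1\})$ and $(\{\alpha, \alpha+1\}, \{\beta, \alpha+1\})$, giving respectively the cycles $(p \; q \; \cdots \; r-1)$ and $(q \; q+1 \; \cdots \; r-1)$ in $\Delta^\sigma_k$; their product is the transposition $(p \; q)$, and the long cycle together with $(p \; q)$ generates $\symm{\{p, q, \dots, r-1\}}$, which contains $\symm{\{q, \dots, r-1\}}$. Items~\eqref{alphabetagammadelta} and~\eqref{alphabetaalpha+1gamma} follow from repeated applications of items~\eqref{alphabetagamma}, \eqref{betaalphagamma}, and~\eqref{alphabetaalpha+1} to various sub-triples of the given positions; the resulting collection of long cycles and transpositions generates the full symmetric group on the relevant window by a short argument using conjugation and the standard fact that a cycle together with a transposition of two of its cyclically adjacent letters generates the symmetric group on the underlying set.

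The hard part is the bookkeeping: tracking the decrement shift $x^{(\max I)}$ in parallel with the position deletion requires careful index management, and when $\max I \neq \max J$ (as in items~\eqref{alphaalpha+1beta}--\eqref{betaalphaalpha+2alpha+1}) the shifts $x^{(\alpha)}$ and $x^{(\alpha+1)}$ disagree at every position holding $\alpha+1$. What tames this is the assumption that $\alpha$ and $\alpha+1$ each appear at only one prescribed position of $\check{\sigma}_{k+1}$, so the effect of the shift mismatch is confined to a short consecutive window of positions, from which the resulting permutation can be read off unambiguously.
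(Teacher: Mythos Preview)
Your proposal is correct and follows essentially the same approach as the paper: both proofs compute $\sigma_I$ for the same choices of couples $I \in \couples[k+1]$ via the procedure of Remark~\ref{rem:sigmaI}, compare pairs of these one-line forms to read off explicit cycles in $\Delta^\sigma_k$, and then assemble the symmetric groups in items~\eqref{alphabetagammadelta}, \eqref{alphaalpha+1beta}, and~\eqref{alphabetaalpha+1gamma} from these cycles by standard generation arguments. The only cosmetic differences are that you sometimes compute $\sigma_I^{-1}\sigma_J$ where the paper computes $\sigma_J^{-1}\sigma_I$ (and then invoke closure of $\Delta^\sigma_k$ under inversion), and for item~\eqref{alphaalpha+1beta} you obtain the second cycle $(q\;\cdots\;r-1)$ from an additional explicit pair rather than by invoking item~\eqref{betaalphagamma} as the paper does.
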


\begin{table}
\begin{tabular}{|ccc|}
\hline
Lemma~\ref{lem:sigmarules} & If $\check{\sigma}_{k+1}$ is of the form\ldots, & then $G^\sigma_k$ contains\ldots \\
\hline
\eqref{alphabetagamma} &
$(\dots, \pos{p}{\alpha}, \dots, \pos{q}{\beta}, \dots, \pos{r}{\gamma}, \dots)$ &
$(p \;\; q \;\; \cdots \;\; r-1)$ \\
\eqref{alphabetagammadelta} &
$(\dots, \pos{p}{\alpha}, \dots, \pos{q}{\beta}, \pos{q+1}{\gamma}, \dots, \pos{r}{\delta}, \dots)$ &
$\symm{\{p, q, \dots, r-1\}}$ \\
\eqref{betaalphagamma} &
$(\dots, \pos{p}{\beta}, \dots, \pos{q}{\alpha}, \dots, \pos{r}{\gamma}, \dots)$ &
$(q \;\; \cdots \;\; r-1)$ \\
\eqref{alphaalpha+1beta} &
$(\dots, \pos{p}{\alpha}, \dots, \pos{q}{\alpha + 1}, \dots, \pos{r}{\beta}, \dots)$ &
$\symm{\{p, q, \dots, r-1\}}$ \\
\eqref{alphabetaalpha+1} &
$(\dots, \pos{p}{\alpha}, \dots, \pos{q}{\beta}, \dots, \pos{r}{\alpha + 1}, \dots)$ &
$(p \;\; q \;\; \cdots \;\; r-1)$ \\
\eqref{alphabetaalpha+1gamma} &
$(\dots, \pos{p}{\alpha}, \dots, \pos{q}{\beta}, \dots, \pos{r}{\alpha + 1}, \dots, \pos{s}{\gamma}, \dots)$ &
$\symm{\{p, q, \dots, s-1\}}$ \\
\eqref{betaalphaalpha+2alpha+1} &
$(\dots, \pos{p}{\beta}, \dots, \pos{q}{\alpha}, \pos{q+1}{\alpha + 2}, \pos{q+2}{\alpha + 1}, \dots)$ &
$(q \;\; q+1)$ \\
\hline
\end{tabular}
\bigskip
\caption{Rules for deducing the membership of certain permutations in $G^\sigma_k$ from the one-line notation for $\sigma$. We assume that $p < q < r < s$ and $\alpha > x$ whenever $x \in \{\beta, \gamma, \delta\}$.}
\label{table:Gsigma}
\end{table}

\begin{proof}
\eqref{alphabetagamma}
We have
\begin{align*}
\check{\sigma}_{k+1} &= (\dots, \pos{p}{\alpha}, \dots, \pos{q}{\beta}, \dots, \pos{r}{\gamma}, \dots). \\
\intertext{By Lemma~\ref{lem:vsigmaU}, the following permutations are members of $U^\sigma_k$:}
\sigma_{\{\alpha, \beta\}} &= (\dots, \pos{p}{\beta}, \dots, \pos{q}{\phantom{\beta}} \phantom{,} \dots, \pos{r-1}{\gamma}, \dots)^{(\alpha)}, \\
\sigma_{\{\alpha, \gamma\}} &= (\dots, \pos{p}{\gamma}, \dots, \pos{q}{\beta}, \dots, \pos{r}{\phantom{\gamma}}\phantom{,} \dots)^{(\alpha)}.
\end{align*}
Consequently, $(\sigma_{\{\alpha, \gamma\}})^{-1} \sigma_{\{\alpha, \beta\}} = (p \;\; q \;\; \cdots \;\; r-1)$ is a member of $\Delta^\sigma_k$.

\eqref{alphabetagammadelta}
By part \eqref{alphabetagamma}, $\Delta^\sigma_k$ contains $(p \;\; q \;\; \cdots \;\; r-1)$ and $(p \;\; q+1 \;\; \cdots \;\; r-1)$. These permutations constitute a generating set of $\symm{\{p, q, \dots, r-1\}}$.

\eqref{betaalphagamma}
We have
\begin{align*}
\check{\sigma}_{k+1} &= (\dots, \pos{p}{\beta}, \dots, \pos{q}{\alpha}, \dots, \pos{r}{\gamma}, \dots). \\
\intertext{By Lemma~\ref{lem:vsigmaU}, the following permutations are members of $U^\sigma_k$:}
\sigma_{\{\alpha, \beta\}} &= (\dots, \pos{p}{\beta}, \dots, \pos{q}{\phantom{\alpha}} \phantom{,} \dots, \pos{r-1}{\gamma}, \dots)^{(\alpha)}, \\
\sigma_{\{\alpha, \gamma\}} &= (\dots, \pos{p}{\beta}, \dots, \pos{q}{\gamma}, \dots, \pos{r}{\phantom{\gamma}}\phantom{,} \dots)^{(\alpha)}.
\end{align*}
Consequently, $(\sigma_{\{\alpha, \gamma\}})^{-1} \sigma_{\{\alpha, \beta\}} = (q \;\; \cdots \;\; r-1)$ is a member of $\Delta^\sigma_k$.

\eqref{alphaalpha+1beta}
\newlength{\wida}\settowidth{\wida}{$\alpha + 1$}
We have
\begin{align*}
\check{\sigma}_{k+1} &= (\dots, \pos{p}{\alpha}, \dots, \pos{q}{\alpha + 1}, \dots, \pos{r}{\beta}, \dots). \\
\intertext{By Lemma~\ref{lem:vsigmaU}, the following permutations are members of $U^\sigma_k$:}
\sigma_{\{\alpha, \alpha+1\}} &= (\dots, \pos{p}{\alpha}, \dots, \pos{q}{\phantom{\alpha + 1}}\phantom{,} \dots, \pos{r-1}{\beta}, \dots)^{(\alpha+1)}, \\
\sigma_{\{\alpha, \beta\}} &= (\dots, \pos{p}{\beta}, \dots, \pos{q}{\alpha + 1}, \dots, \pos{r}{\phantom{\beta}}\phantom{,} \dots)^{(\alpha)} \\
&= (\dots, \pos{p}{\beta}, \dots, \pos{q}{\makebox[\wida][c]{$\alpha$}}, \dots, \pos{r}{\phantom{\beta}}\phantom{,} \dots)^{(\alpha+1)}.
\end{align*}
Consequently, $(\sigma_{\{\alpha, \beta\}})^{-1} \sigma_{\{\alpha, \alpha+1\}} = (p \;\; q \;\; \cdots \;\; r-1)$ is a member of $\Delta^\sigma_k$.
By part \eqref{betaalphagamma}, $\Delta^\sigma_k$ also contains $(q \;\; \cdots \;\; r-1)$. These permutations constitute a generating set of $\symm{\{p, q, \dots, r-1\}}$.

\eqref{alphabetaalpha+1}
We have
\begin{align*}
\check{\sigma}_{k+1} &= (\dots, \pos{p}{\alpha}, \dots, \pos{q}{\beta}, \dots, \pos{r}{\alpha + 1}, \dots). \\
\intertext{By Lemma~\ref{lem:vsigmaU}, the following permutations are members of $U^\sigma_k$:}
\sigma_{\{\alpha, \beta\}} &= (\dots, \pos{p}{\beta}, \dots, \pos{q}{\phantom{\beta}}\phantom{,} \dots, \pos{r-1}{\alpha + 1}, \dots)^{(\alpha)} \\
&= (\dots, \pos{p}{\beta}, \dots, \pos{q}{\phantom{\beta}}\phantom{,} \dots, \pos{r-1}{\makebox[\wida][c]{$\alpha$}}, \dots)^{(\alpha + 1)}, \\
\sigma_{\{\alpha, \alpha + 1\}} &= (\dots, \pos{p}{\alpha}, \dots, \pos{q}{\beta}, \dots, \pos{r}{\phantom{\alpha + 1}}\phantom{,} \dots)^{(\alpha + 1)}.
\end{align*}
Consequently, $(\sigma_{\{\alpha, \alpha + 1\}})^{-1} \sigma_{\{\alpha, \beta\}} = (p \;\; q \;\; \cdots \;\; r-1)$ is a member of $\Delta^\sigma_k$.

\eqref{alphabetaalpha+1gamma}
By part \eqref{alphaalpha+1beta}, $\symm{\{p, r, \dots, s-1\}} \subseteq G^\sigma_k$; hence $G^\sigma_k$ contains $(p \;\, r)$. By part \eqref{alphabetaalpha+1}, $\Delta^\sigma_k$ contains $(p \;\, q \;\, \cdots \;\, r-1)$. These permutations constitute a generating set of $\symm{\{p, q, \dots, r\}}$. Thus, $G^\sigma_k$ contains both $\symm{\{p, r, \dots, s-1\}}$ and $\symm{\{p, q, \dots, r\}}$, which together generate $\symm{\{p, q, \dots, s-1\}}$.

\eqref{betaalphaalpha+2alpha+1}
We have
\begin{align*}
\check{\sigma}_{k+1} &= (\dots, \pos{p}{\beta}, \dots, \pos{q}{\alpha}, \pos{q+1}{\alpha + 2}, \pos{q+2}{\alpha + 1}, \dots). \\
\intertext{By Lemma~\ref{lem:vsigmaU}, the following permutations are members of $U^\sigma_k$:}
\sigma_{\{\alpha, \beta\}} &= (\dots, \pos{p}{\beta}, \dots, \phantom{\alpha,} \pos{q}{\alpha + 2}, \pos{q+1}{\alpha + 1}, \dots)^{(\alpha)} \\
&= (\dots, \pos{p}{\beta}, \dots, \phantom{\alpha,} \pos{q}{\alpha + 1}, \pos{q+1}{\makebox[\wida][c]{$\alpha$}}, \dots)^{(\alpha + 2)}, \\
\sigma_{\{\beta, \alpha + 1\}} &= (\dots, \pos{p}{\beta}, \dots, \pos{q}{\alpha}\phantom{,} \pos{q+1}{\alpha + 2}, \pos{q+2}{\phantom{\alpha + 1}}\phantom{,} \dots)^{(\alpha + 1)} \\
&= (\dots, \pos{p}{\beta}, \dots, \pos{q}{\alpha}\phantom{,} \pos{q+1}{\alpha + 1}, \pos{q+2}{\phantom{\alpha + 1}}\phantom{,} \dots)^{(\alpha + 2)}.
\end{align*}
Consequently, $(\sigma_{\{\beta, \alpha + 1\}})^{-1} \sigma_{\{\alpha, \beta\}} = (q \;\, q+1)$ is a member of $\Delta^\sigma_k$.
\end{proof}

Let $\rho \in \symm{m}$ and $S \subseteq \nset{m}$. We say that $\rho$ \emph{preserves} $S$, if $\rho(i) \in S$ for all $i \in S$. Let $\Pi$ be a partition of $\nset{m}$. We say that $\rho$ is \emph{compatible} with $\Pi$, if $\rho$ preserves every block of $\Pi$. We say that $\Pi$ is a \emph{partition into intervals} if each block of $\Pi$ is an interval, i.e., a set of the form $\{i \in \nset{m} : a \leq i \leq b\}$ for some $a, b \in \nset{m}$ with $a \leq b$.

The \emph{fundamental partition} of a permutation $\rho \in \symm{m}$ is the most refined partition of $\nset{m}$ into intervals with which $\rho$ is compatible. (It is easy to verify that this definition is good: the most refined compatible partition into intervals is unique. For, if $\Pi$ and $\Pi'$ are partitions of $\nset{m}$ into intervals with which $\rho$ is compatible, then the partition whose blocks are the nonempty intersections of the blocks of $\Pi$ and $\Pi'$ is a common refinement of $\Pi$ and $\Pi'$, its blocks are intervals, and $\rho$ it compatible with it.)

\begin{example}
We present here the fundamental partitions of some permutations on the set $\nset{7}$. The permutations are given in one-line notation, and the partitions are indicated with boxes grouping together the members of each block.
\[
\begin{gathered}
\left( \framebox{\smash[b]{1,}}\, \framebox{\smash[b]{2,}}\, \framebox{\smash[b]{3,}}\, \framebox{\smash[b]{4,}}\, \framebox{\smash[b]{5,}}\, \framebox{\smash[b]{6,}}\, \framebox{\smash[b]{7}} \right) \\
\left( \framebox{\smash[b]{1,}}\, \framebox{\smash[b]{2,}}\, \framebox{\smash[b]{3,}}\, \framebox{\smash[b]{5, 6, 4,}}\, \framebox{\smash[b]{7}} \right)
\end{gathered}
\quad
\begin{gathered}
\left( \framebox{\smash[b]{2, 1,}}\, \framebox{\smash[b]{3,}}\, \framebox{\smash[b]{4,}}\, \framebox{\smash[b]{7, 6, 5}} \right) \\
\left( \framebox{\smash[b]{1,}}\, \framebox{\smash[b]{3, 2,}}\, \framebox{\smash[b]{5, 4,}}\, \framebox{\smash[b]{7, 6}} \right)
\end{gathered}
\quad
\begin{gathered}
\left( \framebox{\smash[b]{7, 2, 3, 4, 5, 6, 1}} \right) \\
\left( \framebox{\smash[b]{4, 3, 1, 7, 6, 2, 5}} \right)
\end{gathered}
\]
\end{example}

Let $S$ and $T$ be sets of integers. We write $S < T$ if $s < t$ for every $s \in S$ and $t \in T$.

\begin{lemma}
\label{lem:fundamentalthetan}
Let $\rho \in \symm{n}$, and let $\Pi = \{B_1, \dots, B_r\}$ be the fundamental partition of $\rho$, and assume that $B_i < B_j$ whenever $i < j$.
\begin{itemize}[\rm (i)]
\item $\card{B_i} = 2$ for all $i \in \nset{r}$ if and only if $n$ is even and $\rho = \theta_n$.
\item $\card{B_1} = 1$ and $\card{B_i} = 2$ for all $i \in \{2, \dots, r\}$ if and only if $n$ is odd and $\rho = \theta_n$.
\end{itemize}
\end{lemma}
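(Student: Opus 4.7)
The plan is to verify both directions of each equivalence in turn, using the simple observation that a size-2 interval block $\{a,a+1\}$ preserved by a permutation $\rho$ must either be fixed pointwise or have its two elements swapped.

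First I would handle the ``if'' directions. Suppose $\rho = \theta_n$ with $n$ even. The partition $\Pi_0 := \{\{1,2\}, \{3,4\}, \dots, \{n-1,n\}\}$ is clearly a partition into intervals, and $\theta_n$ is compatible with it since it swaps the two elements of each block. To show $\Pi_0$ is the \emph{fundamental} partition, I need to rule out any strictly finer partition into intervals with which $\theta_n$ is compatible. Since every block of $\Pi_0$ has size $2$, any proper refinement would split some block $\{2i-1, 2i\}$ into two singletons $\{2i-1\}$ and $\{2i\}$; compatibility would then force $\theta_n(2i-1) = 2i-1$, contradicting $\theta_n(2i-1) = 2i$. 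Hence $\Pi_0$ is fundamental, and its blocks all have size $2$. The case $n$ odd is analogous with $\Pi_0 := \{\{1\}, \{2,3\}, \{4,5\}, \dots, \{n-1,n\}\}$: again $\theta_n$ is compatible, and the same refinement argument applies to each size-$2$ block, giving the required block-size pattern.

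For the ``only if'' directions, suppose $\Pi = \{B_1, \dots, B_r\}$ is the fundamental partition of $\rho$ with $B_i < B_j$ for $i < j$, and that all blocks have size $2$ (case (i)) or $B_1$ is a singleton and the rest have size $2$ (case (ii)). Summing cardinalities forces $n$ even in case (i) and $n$ odd in case (ii). Now consider any size-$2$ block $B_i = \{a, a+1\}$. Since $\rho$ preserves $B_i$, either $\rho$ swaps $a$ and $a+1$, or $\rho$ fixes both. In the latter case, $\Pi' := (\Pi \setminus \{B_i\}) \cup \{\{a\}, \{a+1\}\}$ is a strictly finer partition of $\nset{n}$ into intervals with which $\rho$ is compatible, contradicting the minimality of $\Pi$. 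Hence $\rho$ swaps the two elements of every size-$2$ block. In case (i), this directly gives $\rho = (1\;2)(3\;4)\cdots(n-1\;\;n) = \theta_n$. In case (ii), the singleton $B_1 = \{1\}$ forces $\rho(1) = 1$, and the remaining blocks give $\rho = (2\;3)(4\;5)\cdots(n-1\;\;n) = \theta_n$.

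I do not expect any genuine obstacle here: the key observation (splitting a fixed-point pair would refine the partition) is essentially the content of the lemma, and the arithmetic on block sizes determines the parity of $n$ automatically. The only point that deserves care is recording that the size-$2$ blocks in a fundamental partition must be acted on \emph{non-trivially} by $\rho$, since otherwise the block could be refined further.
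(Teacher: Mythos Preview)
Your proposal is correct and follows essentially the same approach as the paper's proof: both directions rest on the observation that a size-$2$ block of the fundamental partition must be acted on by a transposition (else it could be refined into two singletons), together with the trivial fact that singleton blocks are fixed. Your write-up is simply a more detailed version of the paper's brief argument.
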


\begin{proof}
It is straightforward to verify that the fundamental partition of $\theta_n$ has the prescribed block sizes. On the other hand, if the fundamental partition of $\rho$ has a block $B$ of cardinality $2$, then $\rho$ must transpose the elements of $B$, for otherwise the partition could be refined by replacing $B$ with the two singleton blocks $\{\min B\}$ and $\{\max B\}$. Moreover, the element of any singleton block is fixed by $\rho$. Thus any permutation having the prescribed block sizes is necessarily $\theta_n$.
\end{proof}

We are now going to apply Lemma~\ref{lem:sigmarules} to find some members of $G^\sigma_k$. We first look into the configurations occurring within a single block of the fundamental partition of $\check{\sigma}_{k+1}$ (Lemma~\ref{lem:FPIi-max}). Then we examine configurations that spread over two consecutive blocks (Lemma~\ref{lem:FPIi}).

\begin{lemma}
\label{lem:FPIi-max}
Let $B$ be a block of the fundamental partition of $\check{\sigma}_{k+1}$. Then $\symm{B \setminus \{\max B\}} \subseteq G^\sigma_k$.
\end{lemma}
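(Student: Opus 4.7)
If $|B| = 1$ the claim is vacuous. Assume $|B| \geq 2$, and set $b_1 := \min B$, $b_2 := \max B$. The restriction $\rho := \check{\sigma}_{k+1}|_B$ is a permutation of $B$ with no invariant proper sub-interval, for any such interval would yield a strict refinement of the fundamental partition~$\Pi$. I will use two consequences: (a)~$t := \rho^{-1}(b_2) \in \{b_1, \ldots, b_2 - 1\}$; (b)~for every $c$ with $b_1 \leq c < b_2$ some position in $\{b_1, \ldots, c\}$ has $\rho$-image exceeding $c$, and symmetrically for proper suffixes.

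\emph{Step 1.} I first show $\symm{\{t, t+1, \ldots, b_2 - 1\}} \subseteq G^\sigma_k$. Since $t$ is the unique position in $B$ of $\rho$-value $b_2$, for any $q, r$ with $t < q < r \leq b_2$ both $\check{\sigma}_{k+1}(q)$ and $\check{\sigma}_{k+1}(r)$ lie in $B \setminus \{b_2\}$, hence are strictly less than $b_2$. Lemma~\ref{lem:sigmarules}~\eqref{alphabetagamma} with $p = t$ and $\alpha = b_2$ yields $(t \;\; q \;\; q+1 \;\; \cdots \;\; r - 1) \in \Delta^\sigma_k$ for every such pair. Specialising to $(q,r) = (t+1, t+2)$ produces the transposition $(t \;\; t+1)$, and $(q,r) = (t+1, b_2)$ produces the full cycle $(t \;\; t+1 \;\; \cdots \;\; b_2 - 1)$; together they generate $\symm{\{t, \ldots, b_2 - 1\}}$.

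\emph{Step 2.} I extend to $\symm{\{b_1, \ldots, b_2 - 1\}}$ by downward induction on $s$ from $t$ to $b_1 + 1$: given $H_s := \symm{\{s, \ldots, b_2 - 1\}} \subseteq G^\sigma_k$, I produce an element $c \in G^\sigma_k$ whose support contains $s - 1$ and meets $\{s, \ldots, b_2 - 1\}$. Let $p \in \{b_1, \ldots, s-1\}$ attain $\alpha := \max \rho|_{\{b_1, \ldots, s-1\}}$; consequence~(b) gives $\alpha \geq s$, and the suffix form of~(b) applied to $\{s+1, \ldots, b_2\}$ furnishes $r \in \{s+1, \ldots, b_2\}$ with $\rho(r) < \alpha$ (the strict inequality follows because $\rho^{-1}(\alpha) = p \notin \{s+1, \ldots, b_2\}$). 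When $p > 1$, Lemma~\ref{lem:sigmarules}~\eqref{betaalphagamma} applies with its rule-$q$ equal to our $p$, any $p' < p$, and the chosen $r$; the inequality $\check{\sigma}_{k+1}(p') < \alpha$ holds automatically, either by maximality of $\alpha$ on $\{b_1, \ldots, s-1\}$ when $p' \geq b_1$, or because positions below $B$ carry values below $b_1 \leq \alpha$ when $p' < b_1$. This produces $c := (p \;\; p+1 \;\; \cdots \;\; r - 1) \in \Delta^\sigma_k$, whose support $\{p, p+1, \ldots, r-1\}$ contains $s - 1$ and meets $\{s, \ldots, b_2 - 1\}$. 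The subgroup $\langle c, H_s \rangle$ then acts transitively on $\{p, p+1, \ldots, b_2 - 1\}$, and iterated conjugation of $H_s$'s adjacent transpositions by $c^{-1}$ (computed as $c^{-1}(j \;\; j+1) c = (j - 1 \;\; j)$ for the relevant $j$) supplies every missing adjacent transposition of $\{p+1, \ldots, b_2 - 1\}$, filling out the stabiliser of $p$. A standard orbit-stabiliser computation then identifies $\langle c, H_s \rangle$ with $\symm{\{p, p+1, \ldots, b_2 - 1\}}$, which contains $\symm{\{s-1, \ldots, b_2 - 1\}}$, closing the inductive step.

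The principal technical obstacle is the edge case $p = b_1 = 1$, where rule~\eqref{betaalphagamma} is inapplicable since no position lies below~$p$. One then replaces~\eqref{betaalphagamma} by~\eqref{alphabetagamma} with $p_{\text{rule}} = 1$ and $q_{\text{rule}} \in \{2, \ldots, s - 1\}$ when $s \geq 3$; in the residual sub-case $s = 2$ one applies~\eqref{alphabetaalpha+1} or~\eqref{alphaalpha+1beta} to the consecutive values $\alpha = 2$ and $\alpha + 1 = 3$, with the choice dictated by whether $\rho^{-1}(1) < \rho^{-1}(3)$ or the reverse inequality holds. In every sub-case the resulting cycle still threads through position $s - 1$ and overlaps $H_s$, and the same orbit-stabiliser reasoning concludes. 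After the induction terminates at $s = b_1 + 1$, we obtain $\symm{B \setminus \{\max B\}} = \symm{\{b_1, \ldots, b_2 - 1\}} \subseteq G^\sigma_k$, as required.
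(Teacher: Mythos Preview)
Your overall architecture matches the paper's: first establish $\symm{\{t,\dots,b_2-1\}}\subseteq G^\sigma_k$ where $t=\rho^{-1}(b_2)$, then push the left endpoint down to $b_1$ by an inductive argument. Step~1 is fine. Step~2, however, has a genuine gap.

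The problem is the first inductive step when $t=b_2-1$. Then $H_t=\symm{\{b_2-1\}}$ is the trivial group, the only admissible $r$ is $b_2$, and your procedure yields a single cycle $c=(p\;\;p{+}1\;\;\cdots\;\;b_2{-}1)$. Your claim that $\langle c,H_s\rangle=\symm{\{p,\dots,b_2-1\}}$ now reduces to $\langle c\rangle=\symm{\{p,\dots,b_2-1\}}$, which is false whenever $b_2-p\geq 3$: a cycle of length $\geq 3$ generates only a cyclic group. A concrete instance is $B=\{1,2,3,4\}$ with $\rho=(3,1,4,2)$ in one-line notation; here $t=3$, $s=3$, $p=1$, $\alpha=3$, $r=4$, $c=(1\;2\;3)$, and $\langle c\rangle=\alt{\{1,2,3\}}$ contains no transposition, so $\symm{\{2,3\}}\not\subseteq\langle c\rangle$. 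Your ``iterated conjugation of $H_s$'s adjacent transpositions'' is vacuous here because $H_s$ has none; the orbit--stabiliser heuristic therefore produces nothing beyond the cyclic group.

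The paper's proof confronts exactly this situation (its case ``$c=b-1$'' inside Claim~\ref{lem:FPIi-max:clm1}) and resolves it by invoking rules~\eqref{alphaalpha+1beta} and~\eqref{alphabetaalpha+1gamma} of Lemma~\ref{lem:sigmarules}, which exploit the consecutive values $m=b_2-1$ and $m+1=b_2$ at positions $d$ and $b_2-1$. These rules deliver a full $\symm{\{d,\dots,b_2-1\}}$ directly, not just a cycle. Your Step~2 never calls on the $\alpha,\alpha{+}1$ rules except in an unrelated edge case, and rules~\eqref{alphabetagamma} and~\eqref{betaalphagamma} alone cannot manufacture the missing odd permutation here. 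To repair the argument you need to import those additional rules into the base of the downward induction.
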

\begin{proof}
Since the blocks of fundamental partitions are intervals, there exist $a, b \in \nset{k+1}$ such that $a \leq b$ and $B = \{a, \dots, b\}$. If $b - a \leq 1$, then the claim is trivial. For, in this case $B$ equals either $\emptyset$ or $\{a\}$, and $\Sigma_\emptyset = \symm{\{a\}} = \{\id\} \subseteq G^\sigma_k$. We may thus assume that $b - a \geq 2$.

Let $c := (\check{\sigma}_{k+1})^{-1}(b)$. Since $B$ is a block of a fundamental partition, we must have that $c < b$; for, if $c = b$, i.e., $\check{\sigma}_{k+1}(b) = b$, then we could refine the partition and take $\{a, \dots, b-1\}$ and $\{b\}$ as new blocks. We claim that $\symm{\{c, \dots, b-1\}} \subseteq G^\sigma_k$. If $c = b - 1$, then this claim holds trivially, as $\symm{\{b-1\}} = \{\id\}$. If $c = b - 2$, then we have $(b - 2 \;\; b - 1) \in G^\sigma_k$ by Lemma~\ref{lem:sigmarules}~\eqref{alphabetagamma}, and the claim clearly holds. If $c \leq b - 3$, then the claim holds by Lemma~\ref{lem:sigmarules}~\eqref{alphabetagammadelta}.

If $c = a$, then we are done. Otherwise, we have $a < c < b$, and we will proceed applying the following claim.

\begin{claim}
\label{lem:FPIi-max:clm1}
Assume that $\check{\sigma}_{k+1}(a) \neq b$ and there is $c$ such that $a < c < b$, $\check{\sigma}_{k+1}(c) > c$, and $\symm{\{c, \dots, b-1\}} \subseteq G^\sigma_k$. Then there exists $d$ such that $a \leq d < c$, $\check{\sigma}_{k+1}(d) > d$, and $\symm{\{d, \dots, b-1\}} \subseteq G^\sigma_k$.
\end{claim}

\begin{pfclaim}[Proof of Claim~\ref{lem:FPIi-max:clm1}]
Let $m = \max \{\check{\sigma}_{k+1}(i) : a \leq i < c\}$, and let $d = (\check{\sigma}_{k+1})^{-1}(m)$. Since $B$ is a block of the fundamental partition of $\check{\sigma}_{k+1}$, we must have that $m \geq c$ (hence we have $\check{\sigma}_{k+1}(d) = \check{\sigma}_{k+1}((\check{\sigma}_{k+1})^{-1}(m)) = m \geq c > d$); for, otherwise we could refine the partition and take $\{a, \dots, c - 1\}$ and $\{c, \dots, b\}$ as new blocks. Consequently, there exists $q \in \{c, \dots, b\}$ such that $\check{\sigma}_{k+1}(q) < c$; in fact $q \neq c$. We are going to show next that $\symm{\{d, \dots, b-1\}} \subseteq G^\sigma_k$.

Consider first the case that $c = b - 1$. Then we have $\check{\sigma}_{k+1}(b - 1) = b$, $m = b - 1$, and $q = b$. If $d = b - 2$, then by Lemma~\ref{lem:sigmarules}~\eqref{alphaalpha+1beta} we have $\symm{\{b-2, b-1\}} = \symm{\{d, \dots, b-1\}} \subseteq G^\sigma_k$. If $d \leq b - 3$, then by Lemma~\ref{lem:sigmarules}~\eqref{alphabetaalpha+1gamma} we have $\symm{\{d, \dots, b-1\}} \subseteq G^\sigma_k$.

Consider then the case that $a + 2 \leq c \leq b - 2$. If $d > a$, then by Lemma~\ref{lem:sigmarules}~\eqref{betaalphagamma}, we have $(d \; \cdots \; q-1) \in G^\sigma_k$. If $d = a$, then we have again $(d \; \cdots \; q-1) \in G^\sigma_k$ by Lemma~\ref{lem:sigmarules}~\eqref{alphabetagamma}. The permutations in $\symm{\{c, \dots, b-1\}}$ together with $(d \; \cdots \; q-1)$ generate $\symm{\{d, \dots, b-1\}}$; thus we have $\symm{\{d, \dots, b-1\}} \in G^\sigma_k$.

Finally, consider the case that $c = a + 1$. Then $d = a$, so the assumption $\check{\sigma}_{k+1}(a) \neq b$ implies that there exists $\ell$ such that $\check{\sigma}_{k+1}(\ell) = m + 1$. If $\ell < q$, then by Lemma~\ref{lem:sigmarules}~\eqref{alphaalpha+1beta} we have $\symm{\{a, \ell, \dots, q-1\}} \subseteq G^\sigma_k$. The permutations in $\symm{\{a, \ell, \dots, q-1\}}$ and $\symm{\{a+1, \dots, b-1\}}$ generate $\symm{\{a, \dots, b-1\}}$; thus we have $\symm{\{d, \dots, b-1\}} \in G^\sigma_k$. If $\ell > q$, then by Lemma~\ref{lem:sigmarules}~\eqref{alphabetaalpha+1} we have $(a \; q \; \cdots \; \ell - 1) \in G^\sigma_k$. The permutations in $\symm{\{a+1, \dots, b-1\}}$ together with $(a \; q \; \cdots \; \ell - 1)$ generate $\symm{\{a, \dots, b-1\}}$; thus we have $\symm{\{d, \dots, b-1\}} \in G^\sigma_k$. This completes the proof.
\end{pfclaim}

(Proof of Lemma~\ref{lem:FPIi-max} continued)
A finite number of applications of Claim~\ref{lem:FPIi-max:clm1} yields that $\symm{\{a, \dots, b-1\}} \subseteq G^\sigma_k$.
\end{proof}

\begin{lemma}
\label{lem:FPIi}
Let $\Pi = \{B_1, \dots, B_r\}$ be the fundamental partition of $\check{\sigma}_{k+1}$, and assume that $B_i < B_j$ whenever $i < j$.
\begin{enumerate}[\rm (i)]
\item\label{lem:FPIinotboth2}
For every $i \in \nset{r-1}$, if $\card{B_i} \geq 2$, and $\card{B_i}$ and $\card{B_{i+1}}$ are not both equal to $2$, then $G^\sigma_k$ includes $\symm{S}$, where $S = B_i \cup B_{i+1} \setminus \{\max B_{i+1}\}$.
\item\label{lem:FPIiboth2}
If $\card{B_i} = \card{B_{i+1}} = 2$, then $(\min B_i \;\; \max B_i \;\; \min B_{i+1}) \in G^\sigma_k$.
\item\label{lem:FPIi2row}
If there exist $s, t \in \nset{r}$ such that $s < t$ and $\card{B_i} = 2$ whenever $s \leq i \leq t$, then $G^\sigma_k$ includes $\alt{S}$, where $S = (\bigcup_{i = s}^t B_i) \setminus \{\max B_t\}$.
\item\label{lem:FPIi2rowup}
If there exist $s, t \in \nset{r}$ such that $s < t < r$, $\card{B_i} = 2$ whenever $s \leq i \leq t$, and $\card{B_{t+1}} \neq 2$, then $G^\sigma_k$ includes $\symm{S}$, where $S = \bigcup_{i = s}^t B_i$.
\item\label{lem:FPIi2rowdown1}
If there exist $s, t \in \nset{r}$ such that $3 \leq s < t$, $\card{B_i} = 2$ whenever $s \leq i \leq t$, and $\card{B_{s-1}} = 1$, then $G^\sigma_k$ includes $\symm{S}$, where $S = (\bigcup_{i = s-1}^t B_i) \setminus \{\max B_t\}$.
\item\label{lem:FPIi2rowdownbig}
If there exist $s, t \in \nset{r}$ such that $2 \leq s < t$, $\card{B_i} = 2$ whenever $s \leq i \leq t$, and $\card{B_{s-1}} > 2$, then $G^\sigma_k$ includes $\symm{S}$, where $S = (\bigcup_{i = s-1}^t B_i) \setminus \{\max B_t\}$.
\end{enumerate}
\end{lemma}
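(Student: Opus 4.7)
The plan is a case analysis on block sizes in $\Pi$, layering two existing tools: Lemma~\ref{lem:FPIi-max}, which places $\symm{B \setminus \{\max B\}}$ in $G^\sigma_k$ for every block $B$, and the deduction rules of Lemma~\ref{lem:sigmarules} (Table~\ref{table:Gsigma}), which produce bridging cycles from patterns in the one-line notation of $\check{\sigma}_{k+1}$. Two structural consequences of the fundamental partition are used throughout: a block of size at least two cannot have its maximum fixed by $\check{\sigma}_{k+1}$ (else the partition could be refined by splitting off that singleton), and a size-$2$ block must swap its two consecutive elements.

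For part (i), set $\alpha := \max B_i$, so $\alpha + 1 \in B_{i+1}$. Lemma~\ref{lem:FPIi-max} already supplies $\symm{B_i \setminus \{\alpha\}}$ and $\symm{B_{i+1} \setminus \{\max B_{i+1}\}}$ in $G^\sigma_k$, so what is missing is a single cycle of $\Delta^\sigma_k$ whose support involves $\alpha$ and reaches either into $B_{i+1} \setminus \{\max B_{i+1}\}$ or to a further element of $B_i$. Splitting on which horn of the hypothesis holds ($|B_i| \geq 3$ or $|B_{i+1}| \neq 2$) identifies a third position $q$ between the position $p$ where $\check{\sigma}_{k+1}$ takes the value $\alpha$ and the position $r$ where it takes the value $\alpha + 1$; this makes one of the rules \eqref{alphaalpha+1beta}--\eqref{alphabetaalpha+1gamma} of Lemma~\ref{lem:sigmarules} applicable, and the resulting cycle, together with the two block-local symmetric groups, generates $\symm{S}$.

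Part (ii) is the technical heart and cannot be read off from Table~\ref{table:Gsigma}. Here $\check{\sigma}_{k+1}$ reads $\ldots, a+1, a, a+3, a+2, \ldots$ on positions $a, a+1, a+2, a+3$, where $a = \min B_i$. Applying the procedure of Remark~\ref{rem:sigmaI} directly to $I_1 := \{a, a+1\}$ and $I_2 := \{a+2, a+3\}$, a short calculation shows that $\sigma_{I_1}$ and $\sigma_{I_2}$ coincide off of $\{a, a+1, a+2\}$ and act on this triple as the transpositions $(a+1 \;\; a+2)$ and $(a \;\; a+1)$ respectively. Hence $\sigma_{I_1}^{-1} \sigma_{I_2} \in \Delta^\sigma_k$ is the $3$-cycle $(a \;\; a+2 \;\; a+1)$, which is the inverse of $(\min B_i \;\; \max B_i \;\; \min B_{i+1})$, and the claim follows by closure of $G^\sigma_k$ under inversion.

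Parts (iii)--(vi) assemble the pieces. For (iii), iterating (ii) across the run $B_s, \ldots, B_t$ yields the $3$-cycles $(\min B_j \;\; \max B_j \;\; \min B_{j+1})$ for $s \leq j < t$; after relabeling $\min B_s \mapsto 1$ these become the cycles $(i \;\; i+1 \;\; i+2)$ for odd $i$ with $1 \leq i \leq |S| - 2$, and since $|S|$ is odd these generate $\alt{S}$ by the preliminaries. For (iv), part (i) applied to the pair $(B_t, B_{t+1})$ yields a transposition inside $B_t$ moving $\max B_t$; adjoining this odd element to $\alt{S \setminus \{\max B_t\}}$ from (iii) produces $\symm{S}$ via the standard fact that $\alt{n-1}$ together with any transposition involving $n$ generates $\symm{n}$. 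Parts (v) and (vi) extend the run to the left in parallel: (vi) invokes (i) on $(B_{s-1}, B_s)$, while (v) uses that $\max B_{s-1}$ is a fixed point of $\check{\sigma}_{k+1}$ together with the swap in $B_s$ and additional data from $B_{s+1}$ (available since $s < t$) to apply a rule of Lemma~\ref{lem:sigmarules} yielding a transposition that crosses from $B_{s-1}$ into $B_s$; combining with (iii) then gives $\symm{S}$ as before. The principal obstacles are the bookkeeping in part (i) (verifying the rule applies regardless of where $B_i, B_{i+1}$ sit in $\nset{k+1}$, in particular when $B_{i+1} = \{k+1\}$) and the direct computation in part (ii), which is the only step producing a fresh element of $\Delta^\sigma_k$ without invoking Table~\ref{table:Gsigma}.
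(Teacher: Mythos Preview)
Your proposal is correct and follows the paper's overall architecture: Lemma~\ref{lem:FPIi-max} supplies the block-local symmetric groups, Lemma~\ref{lem:sigmarules} supplies the bridging cycles, and parts~(iii)--(vi) assemble these. The one place you diverge is part~\eqref{lem:FPIiboth2}, and there your approach is unnecessarily complicated. You assert that this case ``cannot be read off from Table~\ref{table:Gsigma}'' and therefore compute $\sigma_{I_1}^{-1}\sigma_{I_2}$ by hand from Remark~\ref{rem:sigmaI}. In fact the paper handles~\eqref{lem:FPIiboth2} as an immediate specialization of the argument in~\eqref{lem:FPIinotboth2}: with $\alpha := \max B_i$, $p := (\check{\sigma}_{k+1})^{-1}(\alpha)$, $r := (\check{\sigma}_{k+1})^{-1}(\alpha+1)$, and $\beta := \check{\sigma}_{k+1}(\alpha) < \alpha$, rule~\eqref{alphabetaalpha+1} applies \emph{whenever} $\card{B_i}\geq 2$ (no hypothesis on $\card{B_{i+1}}$ is needed) and yields the cycle $(p\;\alpha\;\cdots\;r-1)\in\Delta^\sigma_k$. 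When $\card{B_i}=\card{B_{i+1}}=2$ this cycle is precisely $(\alpha-1\;\alpha\;\alpha+1)=(\min B_i\;\max B_i\;\min B_{i+1})$, so~\eqref{lem:FPIiboth2} drops out for free. Your direct computation is a valid alternative, but it duplicates work and obscures the unity of~\eqref{lem:FPIinotboth2} and~\eqref{lem:FPIiboth2}.

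Two minor remarks. In part~\eqref{lem:FPIi2rowdown1} you say the argument needs ``additional data from $B_{s+1}$''; it does not. Rule~\eqref{betaalphaalpha+2alpha+1} only requires the configuration $(\beta,\alpha,\alpha+2,\alpha+1)$ at four consecutive positions, which is supplied entirely by $B_{s-2}$ (for $\beta$), $B_{s-1}$, and $B_s$; the hypothesis $s<t$ is used only to ensure $\alpha\leq k-1$ and to give the alternating group from~\eqref{lem:FPIi2row} something to act on. In part~\eqref{lem:FPIinotboth2}, your sketch of the case split (``$\card{B_i}\geq 3$ or $\card{B_{i+1}}\neq 2$'') should be checked to cover the subcase $\card{B_i}>2$ and $\card{B_{i+1}}>2$ explicitly; the paper's own case analysis is slightly elliptical here too, but the cycle $(p\;\alpha\;\cdots\;r-1)$ together with the two block-local symmetric groups does generate $\symm{S}$ in that subcase as well.
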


\begin{proof}
By Lemma~\ref{lem:FPIi-max}, $G^\sigma_k$ includes $\symm{B_i \setminus \{\max B_i\}}$ for every $i \in \nset{r}$.

\eqref{lem:FPIinotboth2}
Let $i \in \nset{r-1}$.
Let $\alpha := \max B_i$, $p := (\check{\sigma}_{k+1})^{-1}(\alpha)$, and $\beta := \check{\sigma}_{k+1}(\alpha)$. Since $B_i$ is a block of the fundamental partition of $\check{\sigma}_{k+1}$, we have that $p < \alpha$ and $\beta < \alpha$. Block $B_{i+1}$ contains $\alpha + 1$; let $r := (\check{\sigma}_{k+1})^{-1}(\alpha + 1)$. By Lemma~\ref{lem:sigmarules}~\eqref{alphabetaalpha+1} we have $(p \;\; \alpha \;\; \cdots \;\; r-1) \in G^\sigma_k$. 

Assume first that $\card{B_{i+1}} = 1$. Then $r = \alpha + 1$, and $G^\sigma_k$ includes $\symm{B_i \setminus \{\alpha\}}$ and $(p \;\; \alpha)$, which generate $\symm{B_i}$. The claim thus holds because $B_i \cup B_{i+1} \setminus \{\max B_{i+1}\} = B_i$.

Assume then that $\card{B_{i+1}} \geq 2$; then $r > \alpha + 1$.
If $\card{B_i} > 2$ and $\card{B_{i+1}} = 2$, then $r = \alpha + 2$ and $G^\sigma_k$ includes $\symm{B_i \setminus \{\alpha\}}$ and $(p \;\; \alpha \;\; \alpha + 1)$, which generate $\symm{S}$, where $S = B_i \cup \{\alpha + 1\} = B_i \cup B_{i+1} \setminus \{\max B_{i+1}\}$.
If $\card{B_i} = 2$ and $\card{B_{i+1}} > 2$, then $B_i = \{p, \alpha\}$ and $r > \alpha + 1$, and $G^\sigma_k$ includes $\symm{B_{i+1} \setminus \{\max B_{i+1}\}}$ and $(p \;\; \alpha \;\; \cdots \;\; r - 1)$, which generate $\symm{T}$, where $T = B_i \cup B_{i+1} \setminus \{\max B_{i+1}\}$.

\eqref{lem:FPIiboth2}
We continue the argument from part~\eqref{lem:FPIinotboth2}. If $\card{B_i} = \card{B_{i+1}} = 2$, then we have that $(p \;\; \alpha \;\; \alpha+1) = (\min B_i \;\; \max B_i \;\; \min B_{i+1}) \in G^\sigma_k$.

\eqref{lem:FPIi2row}
By part~\eqref{lem:FPIiboth2}, $G^\sigma_k$ contains the cycle $(\min B_i \;\; \max B_i \;\; \min B_{i+1})$ for each $i \in \{s, \dots, t-1\}$. These generate $\alt{S}$, where $S = (\bigcup_{i = s}^t B_i) \setminus \{\max B_t\}$.

\eqref{lem:FPIi2rowup}
By part~\eqref{lem:FPIi2row}, $G^\sigma_k$ includes $\alt{S}$, where $S = (\bigcup_{i = s}^t I_i) \setminus \{\max I_t\}$. By part~\eqref{lem:FPIinotboth2}, $G^\sigma_k$ includes $\symm{B_t}$. Thus, $G^\sigma_k$ includes a generating set of $\symm{T}$, where $T = \bigcup_{i = s}^t I_i$.

\eqref{lem:FPIi2rowdown1}
By part~\eqref{lem:FPIi2row}, $G^\sigma_k$ includes $\alt{S}$, where $S = (\bigcup_{i = s}^t B_i) \setminus \{\max B_t\}$.
Let $\alpha$ be the unique element of $B_{s-1}$. We have the following configuration in $\check{\sigma}_{k+1}$:
\[
(\dots, \pos{\alpha - 1}{\beta}, \pos{\alpha}{\alpha}, \pos{\alpha+1}{\alpha+2}, \pos{\alpha+2}{\alpha+1}, \dots),
\]
where $\beta < \alpha$. By Lemma~\ref{lem:sigmarules}~\eqref{betaalphaalpha+2alpha+1}, we have $(\alpha \;\; \alpha+1) = (\min I_s - 1 \;\; \min I_s) \in G^\sigma_k$.
Thus $G^\sigma_k$ includes a generating set of $\symm{T}$, where $T = (\bigcup_{i = s-1}^t B_i) \setminus \{\max B_t\}$.

\eqref{lem:FPIi2rowdownbig}
By part~\eqref{lem:FPIi2row}, $G^\sigma_k$ includes $\alt{S}$, where $S = (\bigcup_{i = s}^t B_i) \setminus \{\max B_t\}$.
By part~\eqref{lem:FPIinotboth2}, $G^\sigma_k$ includes $\symm{B_{s-1}}$.
Let $\alpha := \min B_s$. We have the following configuration in $\check{\sigma}_{k+1}$:
\[
(\dots, \pos{p}{\alpha - 1}, \dots, \pos{\alpha-1}{\beta}, \pos{\alpha}{\alpha+1}, \pos{\alpha+1}{\alpha}, \dots),
\]
where $\beta < \alpha - 1$ and $\min B_{s-1} \leq p < \alpha - 1 = \max B_{s-1}$.
By Lemma~\ref{lem:sigmarules}~\eqref{alphabetaalpha+1}, $G^\sigma_k$ contains $(p \;\; \alpha-1 \;\; \alpha)$.
Thus $G^\sigma_k$ includes a generating set of $\symm{T}$, where $T = (\bigcup_{i = s-1}^t B_i) \setminus \{\max B_t\}$.
\end{proof}

\begin{lemma}
\label{lem:vsigmaG}
Let $\sigma \in \symm{n}$, and let $\Pi = \{B_1, \dots, B_r\}$ be the fundamental partition of $\check{\sigma}_{k+1}$, and assume that $B_i < B_j$ whenever $i < j$.
Then $\pi^\sigma_k \in G^\sigma_k$,
unless
$n = k + 1 \equiv 0 \pmod{4}$ and $\card{B_i} = 2$ for all $i \in \nset{r}$,
or
$n = k + 1 \equiv 1 \pmod{4}$ and $\card{B_1} = 1$ and $\card{B_i} = 2$ whenever $2 \leq i \leq r$.
\end{lemma}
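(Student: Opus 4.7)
My plan is to proceed by a case analysis on the shape of the fundamental partition $\Pi = \{B_1, \ldots, B_r\}$ of $\check{\sigma}_{k+1}$, building up $G^\sigma_k$ as a subgroup of $\symm{k}$ via the generators supplied by Lemmas~\ref{lem:FPIi-max} and~\ref{lem:FPIi}. Recall that $\pi^\sigma_k = \check{\sigma}_k$ is obtained from $\check{\sigma}_{k+1}$ by deleting the entry $k+1$, so it is a permutation of $\nset{k}$; in particular, if I can show $\symm{\nset{k}} \subseteq G^\sigma_k$ in a given case then I am done.

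The central dichotomy is whether $\Pi$ matches one of the two ``bad'' block patterns singled out by Lemma~\ref{lem:fundamentalthetan}, namely either (a) all $\card{B_i} = 2$, or (b) $\card{B_1} = 1$ and $\card{B_i} = 2$ for $2 \leq i \leq r$. If $\Pi$ has neither shape, then it must contain a ``defect'' block---one of size $\geq 3$, or a singleton at position $\geq 2$---around which Lemma~\ref{lem:FPIi}(i), (iv), (v), (vi) produce a full symmetric group on a subinterval of $\nset{k+1}$ strictly containing that defect block. Successive applications spread across $\Pi$, and because adjacent such intervals share a boundary element, their symmetric groups combine to give $\symm{\nset{k+1} \setminus \{\max B_r\}} = \symm{\nset{k}} \subseteq G^\sigma_k$; this would immediately yield $\pi^\sigma_k \in G^\sigma_k$. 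I would treat the edge case $r = 1$ (single block) separately as an immediate consequence of Lemma~\ref{lem:FPIi-max}.

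In the remaining ``bad'' cases, Lemma~\ref{lem:fundamentalthetan} forces $\check{\sigma}_{k+1} = \theta_{k+1}$. Lemma~\ref{lem:FPIi}(iii) applied with $s = 1$, $t = r$ (case (a)) or $s = 2$, $t = r$ (case (b)) then yields only $\alt{T} \subseteq G^\sigma_k$, where $T = \nset{k}$ or $T = \{2, \ldots, k\}$ respectively. I would next compute $\pi^\sigma_k = \check{\sigma}_k$ explicitly: it is a product of $(k-1)/2$ disjoint adjacent transpositions in case (a), and of $(k-2)/2$ such transpositions fixing $1$ and $k$ in case (b). A direct parity computation shows $\pi^\sigma_k$ is even precisely when $n = k+1 \equiv 2 \pmod 4$ in case (a) and when $n = k+1 \equiv 3 \pmod 4$ in case (b)---which is exactly the complement, within ``bad pattern with $n = k+1$,'' of the exceptional clause of the lemma. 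In these sub-cases $\pi^\sigma_k$ lies in $\alt{T} \subseteq G^\sigma_k$ and the conclusion follows.

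The main obstacle is the one remaining branch: bad block pattern together with $n > k+1$, where the parity of $\pi^\sigma_k$ may be odd and so $\alt{T}$ alone is not sufficient. The resolution is to exploit the fact that when $r := n - k \geq 2$, the family of tuples $\vect{k}_{I_1, \ldots, I_r}$ with $\ofo = \vect{k}$ is strictly richer than the canonical family used to define the permutations $\sigma_I$ of Lemma~\ref{lem:vsigmaU}: by allowing $I_2, \ldots, I_r$ to vary freely (for instance, by letting some $I_i$ overlap with $I_j$ on a shared index), one can exhibit additional elements of $U^\sigma_k$---most notably the identity---and then forming products $\pi^{-1} \tau \in \Delta^\sigma_k$ with the ordinary $\sigma_I$'s supplies the missing odd permutations in $G^\sigma_k$, delivering $\pi^\sigma_k$. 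Carrying this construction out uniformly across all $\sigma$ with $\check{\sigma}_{k+1} = \theta_{k+1}$ and $n > k+1$ is the delicate part of the proof, and is where the hypothesis $n = k+1$ in the stated exceptional clause is used essentially.
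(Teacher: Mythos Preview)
Your strategy in the non-bad case overshoots: you aim for $\symm{\nset{k}} \subseteq G^\sigma_k$, but Lemmas~\ref{lem:FPIi-max} and~\ref{lem:FPIi} do not in general deliver this. Take for instance $k = 4$ with fundamental partition $B_1 = \{1\}$, $B_2 = \{2\}$, $B_3 = \{3,4,5\}$: this is not one of the two bad patterns, yet every part of Lemma~\ref{lem:FPIi} either requires the leftmost block involved to have size $\geq 2$, or requires $s \geq 3$, so none of them produce a permutation moving the element $1$. All the cited lemmas yield here is $\symm{\{3,4\}}$, and your ``successive applications spread across $\Pi$'' stalls permanently at the leading singletons. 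More generally, whenever $B_1$ is a singleton, Lemmas~\ref{lem:FPIi-max} and~\ref{lem:FPIi} never output a permutation moving $1$, so $\symm{\nset{k}}$ is simply out of reach by that route.

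What you are missing is the paper's key structural observation: $\pi^\sigma_k = \check{\sigma}_k$ preserves each block $B_i$ for $i < r$ and maps $B_r \setminus \{k+1\}$ to itself, so it factors as $\pi^\sigma_k = \tau_1 \cdots \tau_r$ with $\tau_i \in \symm{B_i}$ for $i < r$ and $\tau_r \in \symm{B_r \setminus \{k+1\}}$. Hence it suffices to show $\symm{B_i} \subseteq G^\sigma_k$ for each $i < r$ (trivial when $\card{B_i} = 1$) and $\symm{B_r \setminus \{k+1\}} \subseteq G^\sigma_k$; \emph{this} is exactly what the lemmas provide in the non-bad case, without ever needing to merge across a leading singleton. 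Separately, in the $n > k+1$ bad case your idea of exploiting the extra repetition is correct in spirit, but the specific assertion that $\id \in U^\sigma_k$ is unjustified and likely false for small $n - k$. The paper does not attempt this; instead it writes down two explicit tuples $\vect{a}, \vect{b}$ with $\ofo(\vect{a}) = \ofo(\vect{b}) = \vect{k}$ that differ in a single coordinate after applying $\sigma$, so that the corresponding elements $\pi, \tau \in U^\sigma_k$ satisfy that $\pi^{-1}\tau \in \Delta^\sigma_k$ is a transposition---which, combined with the alternating group already in $G^\sigma_k$, suffices.
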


\begin{proof}
It is clear from the definitions that $\pi^\sigma_k$ preserves the block $B_i$ for every $i \in \nset{r-1}$ and it also preserves $B_r \setminus \{k+1\}$. Thus, $\pi^\sigma_k = \tau_1 \cdots \tau_r$, where $\tau_i \in \symm{B_i}$ for $i \in \nset{r-1}$ and $\tau_r \in \symm{B_r \setminus \{k+1\}}$.

If it is not the case that all blocks of $\Pi$ have cardinality $2$ or all blocks except $B_1$ have cardinality $2$ and $\card{B_1} = 1$, then it follows from Lemma~\ref{lem:FPIi} that $\symm{B_i} \in G^\sigma_k$ for every $i \in \nset{r-1}$ and $\symm{B_r \setminus \{\max B_r\}} \subseteq G^\sigma_k$; hence $\pi^\sigma_k \in G^\sigma_k$.

Assume then that all blocks of $\Pi$ have cardinality $2$. Then we have $k + 1 = 2r$, i.e., $k + 1 \equiv 0, 2 \pmod{4}$. Lemma~\ref{lem:FPIi}~\eqref{lem:FPIi2row} implies that $G^\sigma_k$ includes $\alt{k}$. If $k + 1 \equiv 2 \pmod{4}$, then $r$ is odd and $\pi^\sigma_k = (1 \; 2) (3 \; 4) \cdots (k-2 \;\; k-1)$ is an even permutation and it is thus contained in $\alt{k}$ and hence in $G^\sigma_k$.

If $k + 1 \equiv 0 \pmod{4}$ and $n > k + 1$, then let
\begin{align*}
\vect{a} &= (1, 1, 2, 2, 3, 4, \dots, k, \underbrace{k, \dots, k}_{n - k - 2}) \in A^n, \\
\vect{b} &= (1, 2, 2, 2, 3, 4, \dots, k, \underbrace{k, \dots, k}_{n - k - 2}) \in A^n.
\end{align*}
We clearly have that $\ofo(\vect{a}) = \ofo(\vect{a}) = \vect{k}$. Furthermore,
\begin{align*}
\vect{a}\sigma &= (\dots, \pos{\sigma^{-1}(2)}{1}, \dots, \pos{\sigma^{-1}(1)}{1}, \dots, \pos{\sigma^{-1}(4)}{2}, \dots, \pos{\sigma^{-1}(3)}{2}, \dots), \\
\vect{b}\sigma &= (\dots, \pos{\sigma^{-1}(2)}{2}, \dots, \pos{\sigma^{-1}(1)}{1}, \dots, \pos{\sigma^{-1}(4)}{2}, \dots, \pos{\sigma^{-1}(3)}{2}, \dots),
\end{align*}
where any possible entry in place of the ellipses preceding position $\sigma^{-1}(3)$ is $k$. Since $\vect{a} \sigma$ and $\vect{b} \sigma$ differ only at position $\sigma^{-1}(2)$, we have
\begin{itemize}
\item $\ofo(\vect{a} \sigma) = (1, 2, \dots)$ and $\ofo(\vect{b} \sigma) = (2, 1, \dots)$, or
\item $\ofo(\vect{a} \sigma) = (k, 1, 2, \dots)$ and $\ofo(\vect{b} \sigma) = (k, 2, 1, \dots)$, or
\item $\ofo(\vect{a} \sigma) = (1, k, 2, \dots)$ and $\ofo(\vect{b} \sigma) = (2, k, 1, \dots)$.
\end{itemize}
Let $\pi$ and $\tau$ be the permutations satisfying $\ofo(\vect{a} \sigma) = \vect{k} \pi$ and $\ofo(\vect{b} \sigma) = \vect{k} \tau$. Then we have $\pi, \tau \in U^\sigma_k$, and $G^\sigma_k$ contains $\pi^{-1} \tau$, which is one of the transpositions $(1 \; 2)$, $(1 \; 3)$, and $(2 \; 3)$. Since the alternating group $\alt{k}$ and a transposition generate the full symmetric group $\symm{k}$, we conclude that $G^\sigma_k = \symm{k}$. Then it obviously holds that $\pi^\sigma_k \in G^\sigma_k$.

Assume then that all blocks of $\Pi$ except $B_1$ have cardinality $2$ and $\card{B_1} = 1$. Then we have $k + 1 = 2 (r - 1) + 1 = 2r - 1$, i.e., $k + 1 \equiv 1, 3 \pmod{4}$. Lemma~\ref{lem:FPIi}~\eqref{lem:FPIi2row} implies that $G^\sigma_k$ includes $\alt{\{2, \dots, k\}}$. If $k + 1 \equiv 3 \pmod{4}$, then $r - 1$ is odd and $\pi^\sigma_k = (2 \; 3)(4 \; 5) \cdots (k-2 \;\; k-1)$ is an even permutation fixing $1$ and it is thus an element of $\alt{\{2, \dots, k\}}$ and hence it is contained in $G^\sigma_k$.

If $k + 1 \equiv 1 \pmod{4}$ and $n > k + 1$, then let
\begin{align*}
\vect{a} &= (1, 2, 2, 3, 3, 4, \dots, k, \underbrace{k, \dots, k}_{n - k - 2}) \in A^n, \\
\vect{b} &= (1, 2, 3, 3, 3, 4, \dots, k, \underbrace{k, \dots, k}_{n - k - 2}) \in A^n.
\end{align*}
We clearly have that $\ofo(\vect{a}) = \ofo(\vect{a}) = \vect{k}$. Furthermore,
\begin{align*}
\vect{a}\sigma &= (\dots, \pos{\sigma^{-1}(1)}{1}, \dots, \pos{\sigma^{-1}(3)}{2}, \dots, \pos{\sigma^{-1}(2)}{2}, \dots, \pos{\sigma^{-1}(5)}{3}, \dots, \pos{\sigma^{-1}(4)}{3}, \dots), \\
\vect{b}\sigma &= (\dots, \pos{\sigma^{-1}(1)}{1}, \dots, \pos{\sigma^{-1}(3)}{3}, \dots, \pos{\sigma^{-1}(2)}{2}, \dots, \pos{\sigma^{-1}(5)}{3}, \dots, \pos{\sigma^{-1}(4)}{3}, \dots),
\end{align*}
where any possible entry in place of the ellipses preceding position $\sigma^{-1}(4)$ is $k$. Since $\vect{a} \sigma$ and $\vect{b} \sigma$ differ only at position $\sigma^{-1}(3)$, we have
\begin{itemize}
\item $\ofo(\vect{a} \sigma) = (1, 2, 3, \dots)$ and $\ofo(\vect{b} \sigma) = (1, 3, 2 \dots)$, or
\item $\ofo(\vect{a} \sigma) = (k, 1, 2, 3, \dots)$ and $\ofo(\vect{b} \sigma) = (k, 1, 3, 2, \dots)$, or
\item $\ofo(\vect{a} \sigma) = (1, k, 2, 3, \dots)$ and $\ofo(\vect{b} \sigma) = (1, k, 3, 2, \dots)$, or
\item $\ofo(\vect{a} \sigma) = (1, 2, k, 3, \dots)$ and $\ofo(\vect{b} \sigma) = (1, 3, k, 2, \dots)$.
\end{itemize}
Let $\pi$ and $\tau$ be the permutations satisfying $\ofo(\vect{a} \sigma) = \vect{k} \pi$ and $\ofo(\vect{b} \sigma) = \vect{k} \tau$. Then we have $\pi, \tau \in U^\sigma_k$, and $G^\sigma_k$ contains $\pi^{-1} \tau$, which is one of the transpositions $(2 \; 3)$, $(2 \; 4)$, and $(3 \; 4)$. Thus $G^\sigma$ includes a generating set of $\symm{\{2, \dots, k\}}$. It is clear that $\pi^\sigma_k$ fixes $1$, and we conclude that $\pi^\sigma_k \in G^\sigma_k$.
\end{proof}

\begin{lemma}
\label{lem:thetanemptyintersection}
If $n = k + 1 \equiv 0, 1 \pmod{4}$ and $\sigma = \theta_n$, then $U^\sigma_k \cap G^\sigma_k = \emptyset$.
\end{lemma}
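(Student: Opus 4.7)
The plan is to prove the stronger statement that $U^{\theta_n}_k$ consists entirely of odd permutations while $G^{\theta_n}_k$ is contained in $\alt{k}$; emptiness of the intersection then follows at once.

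First, I would pin down $U^{\theta_n}_k$ explicitly. Since $n=k+1$, any tuple $\vect{a}\in A^n$ with $\ofo(\vect{a})=\vect{k}$ admits exactly one pair of equal entries and must therefore be of the form $\vect{k}\delta_I$ for some $I\in\couples[k+1]$ (the first occurrences $1,2,\dots,k$ appear in order, so the unique repetition is forced into the configuration defining $\delta_I$). Applying Lemma~\ref{lem:lambda} gives $\ofo(\vect{k}\delta_I\theta_n)=\vect{k}\lambda^\ell_k$ with $\ell=\min(\max I,\theta_n(\max I))$. Hence
\[
U^{\theta_n}_k \subseteq \{\lambda^\ell_k : 1\leq \ell\leq k,\ \ell\equiv k\pmod 2\}.
\]

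Next, I would invoke Remark~\ref{rem:thetanlambdaparities}. Under the hypothesis $n\equiv 0,1\pmod 4$, $\theta_n$ is an even permutation, so every $\lambda^\ell_{n-1}=\lambda^\ell_k$ has the opposite parity and is odd. Consequently every element of $U^{\theta_n}_k$ is an odd permutation. Since the product of two odd permutations is even, this gives $\Delta^{\theta_n}_k\subseteq \alt{k}$, and as $G^{\theta_n}_k$ is generated by $\Delta^{\theta_n}_k$ we conclude $G^{\theta_n}_k\subseteq \alt{k}$. Because no odd permutation can lie in $\alt{k}$, we obtain $U^{\theta_n}_k\cap G^{\theta_n}_k=\emptyset$.

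I do not anticipate any real obstacle here: the whole argument is a parity count, with Lemma~\ref{lem:lambda} and Remark~\ref{rem:thetanlambdaparities} already doing the combinatorial work. The only point worth a moment's attention is the first step, where one has to be sure that for $n=k+1$ every realizer of $\id\triangleright^{\theta_n}_k\pi$ is of the form $\vect{k}\delta_I$ (rather than $\vect{k}\rho\delta_I$ with $\rho\neq\id$), but this is immediate from the constraint $\ofo(\vect{a})=\vect{k}$.
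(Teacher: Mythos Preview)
Your proposal is correct and follows essentially the same line as the paper's own proof: both identify every realizer of $\id\triangleright^{\theta_n}_k\pi$ as some $\vect{k}\delta_I$, apply Lemma~\ref{lem:lambda} and Remark~\ref{rem:thetanlambdaparities} to see that $U^{\theta_n}_k$ consists of odd permutations, and then observe that $\Delta^{\theta_n}_k\subseteq\alt{k}$ forces $G^{\theta_n}_k\subseteq\alt{k}$. The paper's version is just slightly terser.
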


\begin{proof}
If $\vect{a} \in A^n$ is such that $\ofo(\vect{a}) = \vect{k}$, then $\vect{a} = \vect{k} \delta_I$ for some $I \in \couples$. Let $\ell := \min(\max I, \theta_n(\max I))$. The $\ofo(\vect{a} \theta_n) = \vect{k} \lambda^\ell_k$ by Lemma~\ref{lem:lambda}, and $\lambda^\ell_k$ is an odd permutation by Remark~\ref{rem:thetanlambdaparities}. Thus $U^\sigma_k$ is a set of odd permutations. Therefore every permutation in $\Delta^\sigma_n$ is even, and it follows that $G^\sigma_k$ is a subgroup of the alternating group $\alt{k}$. Consequently, $U^\sigma_k \cap G^\sigma_k = \emptyset$.
\end{proof}

\begin{lemma}
\label{lem:k-equalizing}
Let $n$ and $k$ be integers greater than or equal to $2$, and assume that $n > k$. Let $\sigma \in \symm{n}$.
Then $\sigma$ is $k$-equalizing precisely unless $n = k + 1 \equiv 0, 1 \pmod{4}$ and $\sigma = \theta_n$.
\end{lemma}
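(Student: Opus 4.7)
The plan is to combine the preceding lemmas, handling the two implications separately. For the negative direction, when $n = k + 1 \equiv 0, 1 \pmod{4}$ and $\sigma = \theta_n$, I would simply invoke Lemma~\ref{lem:thetannotkequalizing}, which already exhibits the concrete pair $\phi^+_k, \psi^+_k$ witnessing that $\theta_n$ fails to be $k$-equalizing. (The infrastructure around $U^\sigma_k$ and $G^\sigma_k$, notably Lemma~\ref{lem:thetanemptyintersection}, offers a second, more conceptual reason why these $\theta_n$ fail, but the explicit functions from Lemma~\ref{lem:thetannotkequalizing} are what actually produce a nonequalizing witness.)

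For the positive direction, I would appeal to Lemma~\ref{lem:translation}: to prove $\sigma$ is $k$-equalizing it suffices to exhibit a permutation in $U^\sigma_k \cap G^\sigma_k$. The natural candidate is $\pi^\sigma_k$. Membership $\pi^\sigma_k \in U^\sigma_k$ is immediate from Lemma~\ref{lem:vsigmaU}, and Lemma~\ref{lem:vsigmaG} gives $\pi^\sigma_k \in G^\sigma_k$ outside two listed exceptional configurations, each of which explicitly requires $n = k + 1$ and a very particular shape of the fundamental partition of $\check{\sigma}_{k+1}$. When $n > k + 1$ there is thus nothing to check beyond quoting these two lemmas.

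The only remaining point is to match the exceptions of Lemma~\ref{lem:vsigmaG} with the exception in the statement. When $n = k + 1$, one has $\check{\sigma}_{k+1} = \sigma$, so I would apply Lemma~\ref{lem:fundamentalthetan} to translate the condition ``all blocks have cardinality $2$'' (respectively ``$\card{B_1} = 1$ and all other blocks have cardinality $2$'') into ``$\sigma = \theta_n$ and $n$ is even'' (respectively ``$\sigma = \theta_n$ and $n$ is odd''). Combined with the congruence conditions $n \equiv 0 \pmod 4$ and $n \equiv 1 \pmod 4$ supplied by Lemma~\ref{lem:vsigmaG}, these are precisely the two cases $n = k + 1 \equiv 0, 1 \pmod 4$ with $\sigma = \theta_n$ excluded in the statement. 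So outside this exception, $\pi^\sigma_k \in U^\sigma_k \cap G^\sigma_k$, Lemma~\ref{lem:translation} applies, and $\sigma$ is $k$-equalizing.

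There is no real obstacle: the proof is essentially a bookkeeping argument that bolts together Lemmas~\ref{lem:thetannotkequalizing}, \ref{lem:translation}, \ref{lem:vsigmaU}, \ref{lem:vsigmaG}, and \ref{lem:fundamentalthetan}. The only thing one must be careful about is checking that the two exceptional partition shapes of Lemma~\ref{lem:vsigmaG} translate to exactly $\sigma = \theta_n$ with the stated mod-$4$ residues, which is immediate from Lemma~\ref{lem:fundamentalthetan} once one observes that the constraint $n = k + 1$ makes $\check{\sigma}_{k+1}$ coincide with $\sigma$.
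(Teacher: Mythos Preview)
Your proposal is correct and takes essentially the same approach as the paper: both directions are handled by combining Lemmas~\ref{lem:thetannotkequalizing}, \ref{lem:translation}, \ref{lem:vsigmaU}, \ref{lem:vsigmaG}, and \ref{lem:fundamentalthetan}, with $\pi^\sigma_k$ serving as the element of $U^\sigma_k \cap G^\sigma_k$ in the positive direction. Your observation that $\check{\sigma}_{k+1} = \sigma$ when $n = k+1$ is exactly what is needed to align the exceptional cases of Lemma~\ref{lem:vsigmaG} with the statement via Lemma~\ref{lem:fundamentalthetan}.
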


\begin{proof}
Let $\sigma \in \symm{n}$, and let $\Pi = \{B_1, \dots, B_r\}$ be the fundamental partition of $\check{\sigma}_{k+1}$, and assume that $B_i < B_j$ whenever $i < j$.

If $n = k + 1 \equiv 0 \pmod{4}$ and $\card{B_i} = 2$ for all $i \in \nset{r}$ or $n = k + 1 \equiv 1 \pmod{4}$ and $\card{B_1} = 1$ and $\card{B_i} = 2$ whenever $2 \leq i \leq r$, then $\sigma = \theta_n$ by Lemma~\ref{lem:fundamentalthetan} and Lemma~\ref{lem:thetannotkequalizing} implies that $\sigma$ is not $k$-equalizing.

Otherwise Lemma~\ref{lem:vsigmaG} guarantees that $\pi^\sigma_k \in G^\sigma_k$. We also have that $\pi^\sigma_k \in U^\sigma_k$ by Lemma~\ref{lem:vsigmaU}. Lemma~\ref{lem:translation} then yields that $\sigma$ is $k$-equalizing.
\end{proof}

\begin{corollary}
Let $\sigma \in \symm{n}$. Then $\sigma$ is $k$-equalizing if and only if $U^\sigma_k \cap G^\sigma_k \neq \emptyset$.
\end{corollary}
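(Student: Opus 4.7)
The forward implication is precisely Lemma~\ref{lem:translation}, so nothing new is required there. For the converse I would prove the contrapositive: assuming $U^\sigma_k \cap G^\sigma_k = \emptyset$, I want to show that $\sigma$ fails to be $k$-equalizing. The whole argument is essentially a harvest of what has already been proved about the distinguished permutation $\pi^\sigma_k$, together with the explicit characterization of the bad case in Lemma~\ref{lem:k-equalizing}.

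The plan is as follows. By Lemma~\ref{lem:vsigmaU} we unconditionally have $\pi^\sigma_k \in U^\sigma_k$. Therefore, if the intersection $U^\sigma_k \cap G^\sigma_k$ is empty, then in particular $\pi^\sigma_k \notin G^\sigma_k$. By the contrapositive of Lemma~\ref{lem:vsigmaG}, this places us in one of the two exceptional configurations of the fundamental partition of $\check{\sigma}_{k+1}$, so in particular $n = k+1$ and $\check{\sigma}_{k+1} = \sigma$; Lemma~\ref{lem:fundamentalthetan} identifies the resulting permutation as $\sigma = \theta_n$, with $n \equiv 0, 1 \pmod{4}$. But then Lemma~\ref{lem:thetannotkequalizing} exhibits explicit witnesses $\phi^+_k, \psi^+_k$ showing that $\theta_n$ is not $k$-equalizing, concluding the contrapositive.

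Thus the proof amounts to writing: ``Lemma~\ref{lem:translation} gives one direction. For the other, if $U^\sigma_k \cap G^\sigma_k = \emptyset$, then $\pi^\sigma_k \in U^\sigma_k$ by Lemma~\ref{lem:vsigmaU} forces $\pi^\sigma_k \notin G^\sigma_k$, so Lemma~\ref{lem:vsigmaG} combined with Lemma~\ref{lem:fundamentalthetan} yields $n = k + 1 \equiv 0, 1 \pmod{4}$ and $\sigma = \theta_n$, whence Lemma~\ref{lem:thetannotkequalizing} shows that $\sigma$ is not $k$-equalizing.'' There is no genuine obstacle: the hard work has all been done, and this corollary is a clean repackaging of the equivalence that is implicit in the chain Lemmas~\ref{lem:translation}--\ref{lem:k-equalizing}. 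The only minor point of care is to observe that the exceptional configurations of Lemma~\ref{lem:vsigmaG} for the fundamental partition of $\check{\sigma}_{k+1}$ indeed single out $\sigma = \theta_n$ (via Lemma~\ref{lem:fundamentalthetan}), which is immediate.
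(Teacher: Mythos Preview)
Your argument is correct and essentially coincides with the paper's, which cites Lemmas~\ref{lem:translation}, \ref{lem:fundamentalthetan}, \ref{lem:thetanemptyintersection}, and \ref{lem:k-equalizing}; you simply unpack the proof of Lemma~\ref{lem:k-equalizing} and invoke its ingredients (Lemmas~\ref{lem:vsigmaU}, \ref{lem:vsigmaG}, \ref{lem:thetannotkequalizing}) directly rather than citing the packaged result. One cosmetic remark: Lemma~\ref{lem:translation} supplies the ``if'' direction of the biconditional, not the forward one, so your directional labels are swapped---but the mathematics is unaffected.
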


\begin{proof}
The statement follows from Lemmas~\ref{lem:translation}, \ref{lem:fundamentalthetan}, \ref{lem:thetanemptyintersection}, and \ref{lem:k-equalizing}.
\end{proof}

We are now in the position to prove the main result of this section.

\begin{proof}[Proof of Theorem~\ref{thm:eqeq}]
Let us first show the necessity of the condition. If $n \leq k$, then $(n, k)$ is not equalizing by Lemma~\ref{lem:equalizing}~\eqref{lem:equalizing:itemnleqk}. If $n = k + 1$ and $k \equiv 0, 3 \pmod{4}$, i.e., $n \equiv 0, 1 \pmod{4}$, then $\theta_n$ is not $k$-equalizing by Lemma~\ref{lem:thetannotkequalizing}; thus $(n, k)$ is not equalizing by Lemma~\ref{lem:indstep}.

For sufficiency, we proceed by induction on $k$. By Lemma~\ref{lem:equalizing}~\eqref{lem:equalizing:item2k-1}, if $k = 2$, then $(n, k)$ is equalizing for every $n \geq 3$; and if $k = 3$, then $(n, k)$ is equalizing for every $n \geq 5$. Let $\ell \geq 3$, and assume that $(n, \ell)$ is equalizing for every $n$ satisfying the following condition: $\ell \equiv 1, 2 \pmod{4}$ and $n \geq k + 1$; or $\ell \equiv 0, 3 \pmod{4}$ and $n \geq k + 2$. Let $m$ be any integer satisfying the following condition: $\ell + 1 \equiv 1, 2 \pmod{4}$ and $m \geq \ell + 2$; or $\ell + 1 \equiv 0, 3 \pmod{4}$ and $m \geq \ell + 3$. By Lemma~\ref{lem:k-equalizing}, every $\sigma \in \symm{m}$ is $(\ell + 1)$-equalizing. Since $m \geq \ell + 2$, the inductive hypothesis yields that $(m, \ell)$ is equalizing. It follows from Lemma~\ref{lem:indstep} that $(m, \ell + 1)$ is equalizing. This completes the proof.
\end{proof}


\section{The class of functions weakly determined by the order of first occurrence is weakly reconstructible}
\label{sec:results}

In this last section we show that the class of functions weakly determined by the order of first occurrence (of sufficiently large arity) are weakly reconstructible (Proposition~\ref{prop:ord}). Throughout this section, we assume that $A = \nset{k}$.

\begin{proposition}
\label{prop:ord}
Let $n$ and $k$ be positive integers such that $k \equiv 1, 2 \pmod{4}$ and $n \geq k + 2$, or $k \equiv 0, 3 \pmod{4}$ and $n \geq k + 3$. Let $f, g \colon A^n \to B$ be functions that are weakly determined by the order of first occurrence. If $\deck f = \deck g$, then $f \equiv g$.
\end{proposition}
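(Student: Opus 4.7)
The plan is to reduce the problem to Theorem~\ref{thm:eqeq}. Since $f$ and $g$ are weakly determined by the order of first occurrence, each is equivalent to a function that is genuinely determined by $\ofo$; replacing $f$ and $g$ by these equivalent representatives (which leaves the decks unchanged), I may assume $f = f^* \circ {\ofo}|_{A^n}$ and $g = g^* \circ {\ofo}|_{A^n}$ for suitable $f^*, g^* \colon A^\sharp \to B$. Proposition~\ref{prop:minord}\eqref{prop:minord:item1} then yields $f_I = f^* \circ {\ofo}|_{A^{n-1}}$ and $g_I = g^* \circ {\ofo}|_{A^{n-1}}$ for every $I \in \couples$, while Proposition~\ref{prop:minord}\eqref{prop:minord:item4} tells me that both $f$ and $g$ have a unique identification minor.

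Next I would exploit $\deck f = \deck g$. Since both decks consist of a single equivalence class of multiplicity $\binom{n}{2}$, the equality of decks forces $f^* \circ {\ofo}|_{A^{n-1}} \equiv g^* \circ {\ofo}|_{A^{n-1}}$. These are two $(n-1)$-ary functions determined by the order of first occurrence that happen to be equivalent, which is exactly the situation Theorem~\ref{thm:eqeq} addresses.

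Now I would invoke Theorem~\ref{thm:eqeq} to promote this equivalence to an equality. The hypothesis on $n$ is calibrated precisely so that the couple $(n-1, k)$ is equalizing: when $k \equiv 1, 2 \pmod{4}$, the bound $n \geq k + 2$ gives $n - 1 \geq k + 1$, and when $k \equiv 0, 3 \pmod{4}$, the bound $n \geq k + 3$ gives $n - 1 \geq k + 2$. Consequently $f^* \circ {\ofo}|_{A^{n-1}} = g^* \circ {\ofo}|_{A^{n-1}}$, and Remark~\ref{rem:restrord} converts this into $f^*|_{A^\sharp_{n-1}} = g^*|_{A^\sharp_{n-1}}$.

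To conclude, since $n - 1 \geq k + 1 > k$, all tuples in $A^\sharp$ have length at most $k \leq n - 1$, so $A^\sharp_{n-1} = A^\sharp = A^\sharp_n$. Hence $f^*$ and $g^*$ agree on the entire range of ${\ofo}|_{A^n}$, which gives $f = f^* \circ {\ofo}|_{A^n} = g^* \circ {\ofo}|_{A^n} = g$, in fact stronger than the required $f \equiv g$. The argument is essentially a bookkeeping exercise once Theorem~\ref{thm:eqeq} is in hand; all the difficulty has already been absorbed by that theorem, and the only point requiring care is verifying that the arity bounds of Proposition~\ref{prop:ord} really match the equalizing conditions after dropping the arity by one.
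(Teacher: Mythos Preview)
Your proposal is correct and follows essentially the same approach as the paper's own proof: reduce to the case where $f$ and $g$ are genuinely determined by $\ofo$, use Proposition~\ref{prop:minord} to identify the common identification minor, apply Theorem~\ref{thm:eqeq} to the $(n-1)$-ary minors to upgrade equivalence to equality, and then use Remark~\ref{rem:restrord} together with $n-1 \geq k$ to conclude $f = g$. Your write-up is in fact slightly more explicit than the paper's in justifying why $(n-1,k)$ is equalizing and why $A^\sharp_{n-1} = A^\sharp$.
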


\begin{proof}
We may assume without loss of generality that $f$ and $g$ are determined by the order of first occurrence (and not just equivalent to functions determined by the order of first occurrence).
Let $f^*$ and $g^*$ be mappings $A^\sharp \to B$ such that $f = f^* \circ {\ofo}|_{A^n}$ and $g = g^* \circ {\ofo}|_{A^n}$. By Proposition~\ref{prop:minord}, $f_I = f^* \circ {\ofo}|_{A^{n-1}}$ and $g_I = g^* \circ {\ofo}|_{A^{n-1}}$ for all $I \in \couples$. The assumption that $\deck f = \deck g$ implies that $f^* \circ {\ofo}|_{A^{n-1}} \equiv g^* \circ {\ofo}|_{A^{n-1}}$. By Theorem~\ref{thm:eqeq}, $(n - 1, k)$ is equalizing, which implies that $f^* \circ {\ofo}|_{A^{n-1}} = g^* \circ {\ofo}|_{A^{n-1}}$. Hence $f^* = g^*$ by Remark~\ref{rem:restrord}. Consequently, $f = g$, and it clearly holds that $f \equiv g$.
\end{proof}

The following result shows that in Proposition~\ref{prop:ord} the lower bound for $n$ in the case that $k \equiv 0, 3 \pmod{4}$ is sharp, and the answer to Question~\ref{q:rec1} is negative if $\card{A} \equiv 0 \pmod{4}$ or $\card{A} \equiv 3 \pmod{4}$ and $n = \card{A} + 2$.

\begin{proposition}
\label{prop:FnequivG}
Let $n$ and $k$ be positive integers such that $k \equiv 0, 3 \pmod{4}$ and $n = k + 2$. Then there exist functions $f, g \colon A^n \to B$ that are determined by the order of first occurrence such that $f \not\equiv g$ and $f_I \equiv g_J$ for all $I, J \in \couples$.
\end{proposition}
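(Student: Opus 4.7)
The plan is to lift the failure of $(k+1, k)$ to be equalizing (witnessed by $\theta_{k+1}$ via Lemma~\ref{lem:thetannotkequalizing}) up to arity $n = k+2$, by extending the functions $\phi^+_k$ and $\psi^+_k$ from $A^k_{\neq}$ to all of $A^\sharp$ in a compatible way. The key insight is that the minor-structure of functions determined by the order of first occurrence is governed by the restriction of $f^*$ to tuples in $A^\sharp$ of the right length, and since all identification minors of such an $f$ are literally equal (by Proposition~\ref{prop:minord}~\eqref{prop:minord:item1}), it suffices to produce a single permutation $\tau \in \symm{n-1}$ such that the $(n-1)$-ary ofo-pullbacks of $f^*$ and $g^*$ become equal after precomposing one of them with $\tau$.

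Concretely, I would assume without loss of generality that $A = \nset{k}$, pick an element $\beta \in B$, and define $f^*, g^* \colon A^\sharp \to B$ by
\[
f^*(\vect{c}) =
\begin{cases}
\phi^+_k(\vect{c}), & \text{if $\vect{c} \in A^k_{\neq}$,} \\
\beta, & \text{otherwise,}
\end{cases}
\qquad
g^*(\vect{c}) =
\begin{cases}
\psi^+_k(\vect{c}), & \text{if $\vect{c} \in A^k_{\neq}$,} \\
\beta, & \text{otherwise,}
\end{cases}
\]
where $\phi^+_k$ and $\psi^+_k$ are as in Definition~\ref{def:phipsi}. Then I set $f := f^* \circ {\ofo}|_{A^n}$ and $g := g^* \circ {\ofo}|_{A^n}$; these are determined by the order of first occurrence by construction.

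To verify that $f_I \equiv g_J$ for all $I, J \in \couples$, I would apply Proposition~\ref{prop:minord}~\eqref{prop:minord:item1} to get $f_I = f^* \circ {\ofo}|_{A^{n-1}}$ and $g_J = g^* \circ {\ofo}|_{A^{n-1}}$, so it suffices to show these are equivalent. I would show that precomposing with $\theta_{n-1}$ works: for any $\vect{a} \in A^{n-1}$, either $\supp(\vect{a}) = A$, in which case Lemma~\ref{lem:thetannotkequalizing} (applied with $n-1 = k+1 \equiv 0, 1 \pmod 4$) gives $\phi^+_k(\ofo(\vect{a})) = \psi^+_k(\ofo(\vect{a}\theta_{n-1}))$, or $\supp(\vect{a}) \neq A$, in which case both $\ofo(\vect{a})$ and $\ofo(\vect{a}\theta_{n-1})$ lie in $A^\sharp \setminus A^k_{\neq}$ (since $\theta_{n-1}$ is a permutation and preserves support), so both sides equal $\beta$. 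Hence $f^* \circ {\ofo}|_{A^{n-1}} \equiv g^* \circ {\ofo}|_{A^{n-1}}$.

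Finally, to show $f \not\equiv g$, I would invoke Theorem~\ref{thm:eqeq}: since $k \equiv 0, 3 \pmod 4$ and $n = k + 2$, the couple $(n, k)$ \emph{is} equalizing, so if $f \equiv g$ then $f = g$. But $f \neq g$, because at any tuple $\vect{a} \in A^n$ with $\supp(\vect{a}) = A$ and $\ofo(\vect{a}) = \vect{k}$ (such a tuple exists since $n \geq k$, e.g.\ $(1, 1, 1, 2, 3, \dots, k)$), one has $f(\vect{a}) = \phi^+_k(\vect{k}) = \alpha \neq \beta = \psi^+_k(\vect{k}) = g(\vect{a})$. There is no real obstacle here; the only point that requires care is verifying that the restriction $f^*|_{A^\sharp \setminus A^k_{\neq}} = g^*|_{A^\sharp \setminus A^k_{\neq}}$ together with the Lemma~\ref{lem:thetannotkequalizing} identity on $A^k_{\neq}$ really does give the desired $\theta_{n-1}$-equivariance on all of $A^{n-1}$, which the case split above handles cleanly.
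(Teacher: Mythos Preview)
Your proof is correct, and the construction of $f$ and $g$ and the verification that $f_I \equiv g_J$ are essentially the same as the paper's (modulo the cosmetic point that the paper uses $\gamma$ rather than $\beta$ as the default value outside $A^k_{\neq}$; your reuse of the symbol $\beta$ from Definition~\ref{def:phipsi} is harmless but a little confusing).

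Where you genuinely diverge is in the proof that $f \not\equiv g$. The paper proves this directly: it exhibits, for each permutation $\sigma \in \symm{k+2}$, a tuple $\vect{a}$ with $f(\vect{a}) \neq g(\vect{a}\sigma)$, via a rather lengthy case analysis on how $\sigma$ acts on the positions $k, k+1, k+2$. You instead invoke Theorem~\ref{thm:eqeq}: since $(k+2, k)$ is equalizing when $k \equiv 0, 3 \pmod 4$, the hypothetical equivalence $f \equiv g$ would force $f = g$, which you then contradict at a single point. This is legitimate (Theorem~\ref{thm:eqeq} is fully proved before Proposition~\ref{prop:FnequivG} and does not depend on it) and substantially shorter. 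What the paper's approach buys is self-containment: its argument for $f \not\equiv g$ stands on its own, independent of the heavy machinery of Section~\ref{sec:eqeq}. What your approach buys is economy and a cleaner conceptual picture---it makes transparent that Proposition~\ref{prop:FnequivG} is really the statement that $(k+1,k)$ fails to be equalizing while $(k+2,k)$ succeeds.
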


\begin{proof}
Let $\alpha$, $\beta$, $\gamma$ and $\phi^+_k, \psi^+_k \colon A^k_{\neq} \to B$ be as in Definition~\ref{def:phipsi}. Extend $\phi^+_k$ and $\psi^+_k$ into functions $f^*, g^* \colon A^\sharp \to B$ as follows:
\[
f^*(\vect{a}) =
\begin{cases}
\phi^+_k(\vect{a}), & \text{if $\vect{a} \in A^k_{\neq}$,} \\
\gamma, & \text{otherwise,}
\end{cases}
\qquad
g^*(\vect{a}) =
\begin{cases}
\psi^+_k(\vect{a}), & \text{if $\vect{a} \in A^k_{\neq}$,} \\
\gamma, & \text{otherwise.}
\end{cases}
\]
Let $f := f^* \circ {\ofo}|_{A^{k+2}}$ and $g := g^* \circ {\ofo}|_{A^{k+2}}$. By Proposition~\ref{prop:minord}, we have $f_I = f^* \circ {\ofo}|_{A^{k+1}}$ and $g_J = g^* \circ {\ofo}|_{A^{k+1}}$ for all $I, J \in \couples$.
Let $\vect{a} \in A^{k+1}$. If $\vect{a} \in A^k_{\neq}$, then by Lemma~\ref{lem:thetannotkequalizing} we have $f_I(\vect{a}) = \phi^+_k(\ofo(\vect{a})) = \psi^+_k(\ofo(\vect{a} \theta_{k+1})) = g_J(\vect{a} \theta_{k+1})$. If $\vect{a} \notin A^k_{\neq}$, then we have $f_I(\vect{a}) = \gamma = g_J(\vect{a} \theta_{k+1})$. Thus, $f_I \equiv g_J$ for all $I, J \in \couples$.

In order to verify that $f \not\equiv g$, we are going to find, for each permutation $\sigma \in \symm{k+2}$, a $(k+2)$-tuple $\vect{a}$ such that $f(\vect{a}) \neq g(\vect{a} \sigma)$. Denote $\vect{u} := (1, 2, \dots, k, k, k) \in A^{k+2}$, and note that $\ofo(\vect{u}) = \vect{k}$.

Let $\sigma \in \symm{n}$, and let $\rho \in \symm{k}$ be the unique permutation satisfying $\ofo(\vect{u} \sigma) = \vect{k} \rho$. If $k \equiv 0 \pmod{4}$ and $\sigma(1) \neq 1$, then $\ofo(\vect{u} \sigma)$ is a tuple whose first component is distinct from $1$; thus $f(\vect{u}) = \alpha \neq \gamma = g(\vect{u} \sigma)$, and we are done. Hence we assume from now on that if $k \equiv 0 \pmod{4}$, then $\sigma(1) = 1$. Furthermore, if $\rho$ is an even permutation, then $f(\vect{u}) = f^* \circ \ofo(\vect{u}) = \phi^+_k(\vect{k}) = \alpha \neq \beta = \psi^+_k(\vect{k} \rho) = g^* \circ \ofo(\vect{u} \sigma) = g(\vect{u} \sigma)$ and we are done. Hence we assume from now on that $\rho$ is odd. We split the analysis into three cases.

\textit{Case~1:} $\sigma(k + 1) = k + 1$ and $\sigma(k + 2) = k + 2$. Then $\rho$ equals the restriction of $\sigma$ to the set $\nset{k}$. Let $p$ be the largest $i \in \nset{k}$ such that $\sigma(i) \neq i$; such an element exists, because we are assuming that $\rho$ is odd and hence it is a nonidentical permutation. Moreover, $p > 1$ and $\sigma(p) < p$. Set $\vect{a} := (1, 2, \dots, p - 1, \sigma(p), p, p + 1, \dots, k, k)$. It is easy to verify that $\ofo(\vect{a}) = \vect{k}$ and $\ofo(\vect{a} \sigma) = \vect{k} \tau \rho$, where $\tau$ is the transposition of $p$ and $\sigma(p)$. Since both $\tau$ and $\rho$ are odd permutations, $\tau \rho$ is an even permutation. Moreover, if $k \equiv 0 \pmod{4}$, then both $p$ and $\sigma(p)$ are distinct from $1$; hence $\tau(1) = 1$ and $\tau \rho(1) = 1$. Consequently, we have $f(\vect{a}) = \phi^+_k(\vect{k}) = \alpha \neq \beta = \psi^+_k(\vect{k} \tau \rho) = g(\vect{a} \sigma)$ also in this case.

\textit{Case~2:} $\{\sigma(k), \sigma(k+1), \sigma(k+2)\} = \{k, k+1, k+2\}$. We shall assume that $\sigma$ does not fix all three elements of the set $\{k, k+1, k+2\}$, because this is already taken care of by Case~1.
The restriction of $\sigma$ to $\nset{k-1}$ is a permutation of $\nset{k-1}$, and we have $\vect{u} \sigma = (\sigma(1), \dots, \sigma(k-1), k, k, k)$ and $\ofo(\vect{u} \sigma) = (\sigma(1), \dots, \sigma(k-1), k) = \vect{k} \rho$. Thus $\rho$ is the permutation of $\nset{k}$ that fixes $k$ and coincides with $\sigma$ on $\nset{k-1}$.
If $\sigma^{-1}(k-1) \neq 1$, then let $d := \sigma(\sigma^{-1}(k-1) - 1)$; if $\sigma^{-1}(k-1) = 1$, then let $d := \sigma(2)$. Note that $d < k - 1$.
Let
\begin{align*}
\vect{a} &:= (1, 2, \dots, k - 2, d, k - 1, k, k), \\
\vect{b} &:= (1, 2, \dots, k - 2, d, k - 1, k, k - 1), \\
\vect{c} &:= (1, 2, \dots, k - 2, d, d, k - 1, k).
\end{align*}
It is clear that $\ofo(\vect{a}) = \ofo(\vect{b}) = \ofo(\vect{c}) = \vect{k}$. In each one of $\vect{a} \sigma$, $\vect{b} \sigma$, and $\vect{c} \sigma$, the first $k - 1$ entries are
\begin{equation}
\label{eq:firstk-1}
(\sigma(1), \dots, \sigma(\sigma^{-1}(k - 1) - 1), d, \sigma(\sigma^{-1}(k - 1) + 1), \dots, \sigma(k - 1)),
\end{equation}
and the last three entries, for every possible $\sigma$, are presented in Table~\ref{table:last3}.
\begin{table}
\begin{center}
\begin{tabular}{|c|ccc|}
\hline
& \multicolumn{3}{|c|}{The last three entries of} \\
$(\sigma(k), \sigma(k+1), \sigma(k+2))$ & $\vect{a} \sigma$ & $\vect{b} \sigma$ & $\vect{c} \sigma$ \\
\hline
$(k, k+2, k+1)$ & $(k-1, k, k)$ & $(k-1, k-1, k)$ & $(d, k, k-1)$ \\
$(k+1, k, k+2)$ & $(k, k-1, k)$ & $(k, k-1, k-1)$ & $(k-1, d, k)$ \\
$(k+1, k+2, k)$ & $(k, k, k-1)$ & $(k, k-1, k-1)$ & $(k-1, k, d)$ \\
$(k+2, k, k+1)$ & $(k, k-1, k)$ & $(k-1, k-1, k)$ & $(k, d, k-1)$ \\
$(k+2, k+1, k)$ & $(k, k, k-1)$ & $(k-1, k, k-1)$ & $(k, k-1, d)$ \\
\hline
\end{tabular}
\end{center}
\caption{The last three entries of $\vect{a} \sigma$, $\vect{b} \sigma$, and $\vect{c} \sigma$.}
\label{table:last3}
\end{table}
Since $d$ equals either $\sigma(\sigma^{-1}(k - 1) - 1)$ or $\sigma(\sigma^{-1}(k - 1) + 1)$, we can read off from \eqref{eq:firstk-1} and Table~\ref{table:last3} that
\begin{align*}
& \{\ofo(\vect{a} \sigma), \ofo(\vect{b} \sigma), \ofo(\vect{c} \sigma)\} \\
& \quad {} = \{ (\sigma(1), \dots, \sigma(\sigma^{-1}(k - 1) - 1), \sigma(\sigma^{-1}(k - 1) + 1), \dots, \sigma(k - 1), k - 1, k), \\
& \quad \phantom{{} = \{} (\sigma(1), \dots, \sigma(\sigma^{-1}(k - 1) - 1), \sigma(\sigma^{-1}(k - 1) + 1), \dots, \sigma(k - 1), k, k - 1) \} \\
& \quad {} = \{(\sigma(1), \dots, \sigma(k-1), k) \zeta, \: (\sigma(1), \dots, \sigma(k-1), k) \xi\} \\
& \quad {} = \{ \vect{k} \rho \zeta, \vect{k} \rho \xi \},
\end{align*}
where $\zeta$ and $\xi$ are the cycles
\begin{align*}
\zeta &= (\sigma^{-1}(k-1) \quad \sigma^{-1}(k-1) + 1 \quad \cdots \quad k - 1), \\
\xi &= (\sigma^{-1}(k-1) \quad \sigma^{-1}(k-1) + 1 \quad \cdots \quad k - 1 \quad k).
\end{align*}
Since the cycles $\zeta$ and $\xi$ have lengths that differ by one, $\zeta$ and $\xi$ have opposite parities; therefore $\rho \zeta$ and $\rho \xi$ have opposite parities. Moreover, if $k \equiv 0 \pmod{4}$, then $\sigma^{-1}(k-1) \neq 1$; hence each one of $\rho$, $\zeta$, and $\xi$ fixes $1$, and so do $\rho \zeta$ and $\rho \xi$. Thus, $\{g^*(\vect{k} \rho \zeta), g^*(\vect{k} \rho \xi)\} = \{\alpha, \beta\}$. We conclude that $f(\vect{a}) \neq g(\vect{a} \sigma)$ or $f(\vect{b}) \neq g(\vect{b} \sigma)$ or $f(\vect{c}) \neq g(\vect{c} \sigma)$.

\textit{Case~3:} $\{\sigma(k), \sigma(k+1), \sigma(k+2)\} \neq \{k, k+1, k+2\}$. Let $r, s, t \in \nset{k+2}$ be the unique elements satisfying $\{\sigma(r), \sigma(s), \sigma(t)\} = \{k, k+1, k+2\}$ and $r < s < t$. It also holds that $r < k$. Then the entries of $\vect{u} \sigma$ that are equal to $k$ occur exactly at the $r$-th, $s$-th, and $t$-th positions. For $i \in \nset{k+2} \setminus \{r, s, t\}$, the $i$-th entry of $\vect{u} \sigma$ equals $\sigma(i)$.

Next we define a tuple $\vect{a} \in A^{k+2}$. The definition depends on the values of $r$, $s$, and $t$. Table~\ref{table:help} illustrates the various possibilities.
\begin{itemize}
\item If $t < k + 2$, then let $d = \sigma(t + 1)$, and let $\vect{a} \in A^{k+2}$ be the tuple obtained from $\vect{u}$ by changing the entries at the $\sigma(r)$-th and $\sigma(s)$-th positions to $d$.

\item If $t = k + 2$ and $s < k + 1$, then let $d = \sigma(s + 1)$, and let $\vect{a} \in A^{k+2}$ be the tuple obtained from $\vect{u}$ by changing the entry at the $\sigma(r)$-th position to $d$.

\item If $t = k + 2$ and $s = k + 1$, then let $d = \sigma(k)$ (note that in this case $r < k$), and let $\vect{a} \in A^{k+2}$ be the tuple obtained from $\vect{u}$ by changing the entry at the $\sigma(r)$-th position to $d$.
\end{itemize}
\begin{table}
\begin{center}
\begin{tabular}{|c|cc|}
\hline
 & $\vect{u} \sigma$ & $\ofo(\vect{u} \sigma)$ \\
 & $\vect{a} \sigma$ & $\ofo(\vect{a} \sigma)$ \\
\hline
$t < k + 2$ & $(\dots, \pos{r}{k}, \dots, \pos{s}{k}, \dots, \pos{t}{k}, \pos{t+1}{d}, \dots)$ & $(\dots, \pos{r}{k}, \dots, \pos{t-1}{d}, \dots)$ \\
            & $(\dots, \pos{r}{d}, \dots, \pos{s}{d}, \dots, \pos{t}{k}, \pos{t+1}{d}, \dots)$ & $(\dots, \pos{r}{d}, \dots, \pos{t-1}{k}, \dots)$ \\
\hline
$t = k + 2$ & $(\dots, \pos{r}{k}, \dots, \pos{s}{k}, \pos{s+1}{d}, \dots, \pos{k+2}{k})$ & $(\dots, \pos{r}{k}, \dots, \pos{s}{d}, \dots)$ \\
$s < k + 1$ & $(\dots, \pos{r}{d}, \dots, \pos{s}{k}, \pos{s+1}{d}, \dots, \pos{k+2}{k})$ & $(\dots, \pos{r}{d}, \dots, \pos{s}{k}, \dots)$ \\
\hline
$t = k + 2$ & $(\dots, \pos{r}{k}, \dots, \pos{k}{d}, \pos{k+1}{k}, \pos{\;\;\;\; k+2}{k})$ & $(\dots, \pos{r}{k}, \dots, \pos{k}{d})$ \\
$s = k + 1$ & $(\dots, \pos{r}{d}, \dots, \pos{k}{d}, \pos{k+1}{k}, \pos{\;\;\;\; k+2}{k})$ & $(\dots, \pos{r}{d}, \dots, \pos{k}{k})$ \\
\hline
\end{tabular}
\end{center}
\caption{Illustration for the different possibilities in the definition of $\vect{a}$ in Case~3 of the proof of Proposition~\ref{prop:FnequivG}.}
\label{table:help}
\end{table}

With the help of Table~\ref{table:help}, it is easy to verify that $\ofo(\vect{a}) = \vect{k}$ and $\ofo(\vect{a} \sigma) = \vect{k} \tau \rho$, where $\tau$ is the transposition of $d$ and $k$. Since both $\tau$ and $\rho$ are odd permutations, $\tau \rho$ is an even permutation. Moreover, if $k \equiv 0 \pmod{4}$, then $r > 1$; hence both $\tau$ and $\rho$ fix $1$ and so does $\tau \rho$. Thus $f(\vect{a}) = \phi^+_k(\vect{k}) = \alpha \neq \beta = \psi^+_k(\vect{k} \tau \rho) = g(\vect{a} \sigma)$.

The three cases analysed above exhaust all possibilities. We found for every permutation $\sigma \in \symm{k+2}$ a tuple $\vect{a} \in A^{k+2}$ satisfying $f(\vect{a}) \neq g(\vect{a} \sigma)$. We conclude that $f \not\equiv g$.
\end{proof}

The results of this section inevitably evoke the following question.

\begin{question}
\label{que:ofo-reconstructible}
Let $n$ and $k$ be positive integers such that $k \equiv 1, 2 \pmod{4}$ and $n \geq k + 2$, or $k \equiv 0, 3 \pmod{4}$ and $n \geq k + 3$. Assume that $\card{A} = k$. Is every function $f \colon A^n \to B$ that is determined by the order of first occurrence reconstructible?
\end{question}

By Corollary~\ref{cor:Booleanwofo-reconstructible}, the answer to Question~\ref{que:ofo-reconstructible} is positive when $k = 2$. For $k > 2$, this question remains an open problem.


\section*{Acknowledgments}

The author would like to thank Miguel Couceiro, Karsten Sch\"olzel, and Tam\'as Waldhauser for inspiring discussions on minors of functions, reconstruction problems, and permutations.

The use of the Magma computer algebra system \cite{Magma} led the author to discover Theorem~\ref{thm:eqeq}.


\end{document}